\begin{document}

\title[$G$-valued crystalline representations]{$G$-valued crystalline representations with minuscule $p$-adic Hodge type}
    \author{Brandon Levin}

\begin{abstract}
We study $G$-valued semi-stable Galois deformation rings where $G$ is a reductive group. We develop a theory of Kisin modules with $G$-structure and use this to identify the connected components of crystalline deformation rings of minuscule $p$-adic Hodge type with the connected components of moduli of ``finite flat models with $G$-structure.'' The main ingredients are a construction in integral $p$-adic Hodge theory using Liu's theory of $(\phz, \widehat{G})$-modules and the local models constructed by Pappas and Zhu.  
\end{abstract}

\maketitle

\tableofcontents

\section{Introduction}

\newtheorem*{thm:locmodels}{Theorem \ref{locmodels}}
\newtheorem*{thm:main}{Theorem \ref{main}}
\newtheorem*{cor:unram}{Theorem \ref{connectedunramified}}

\subsection{Overview}

One of the principal challenges in the study of modularity lifting or more generally automorphy lifting via the techniques introduced in Taylor-Wiles \cite{TW} is understanding local deformation conditions at $\ell = p$. In \cite{MFFGS}, Kisin introduced a ground-breaking new technique for studying one such condition, flat deformations, which led to better modularity lifting theorems.  \cite{PST} extends those techniques to construct potentially semistable deformation rings with specified Hodge-Tate weights. In this paper, we study Galois deformations valued in a reductive group $G$ and extend Kisin's techniques to this setting. In particular, we define and prove structural results about ``flat'' $G$-valued deformations.

Let $G$ be a reductive group over a $\Zp$-finite flat local domain $\La$ with connected fibers. Let $\F$ be the residue field of $\La$ and $F := \La[1/p]$. Let $K/\Qp$ be a finite extension with absolute Galois group $\Gamma_K$ and fix a representation $\overline{\eta}:\Gamma_K \ra G(\F)$. The (framed) $G$-valued deformation functor is represented by a complete local Noetherian $\La$-algebra $R^{\square}_{G, \overline{\eta}}$.  For any geometric cocharacter $\mu$ of $\Res_{(K \otimes_{\Qp} F)/F} G_F$, there exists a quotient
$R^{\st, \mu}_{\overline{\eta}}$ (resp. $R^{\cris, \mu}_{\overline{\eta}}$) of  $R^{\square}_{G, \overline{\eta}}$ whose points over finite extensions $F'/F$ are semi-stable (resp. crystalline) representations with $p$-adic Hodge type $\mu$ (see \cite[Theorem 4.0.12]{Balaji}).    

When $G = \GL_n$ and $\mu$ is minuscule, $R^{\cris, \mu}_{\overline{\eta}}$ is a quotient of a flat deformation ring. For modularity lifting, it is important to know the connected components of $\Spec R^{\cris, \mu}_{\overline{\eta}}[1/p]$.  Intuitively, Kisin's technique introduced in \cite{MFFGS} is to resolve the flat deformation ring by ``moduli of finite flat models'' of deformations of $\overline{\eta}$. When $K/\Qp$ is ramified, the resolution is not smooth, but its singularities are relatively mild, which allowed for the determination of the connected components  in many instances when $G = \GL_2$ \cite[2.5.6, 2.5.15]{MFFGS}.  Kisin's technique extends beyond the flat setting (for $\mu$ arbitrary) where one resolves deformation rings by moduli spaces of integral $p$-adic Hodge theory data called $\fS$-modules of finite height also known as \emph{Kisin modules}. 

In this paper, we define a notion of Kisin module with $G$-structure or as we call them \emph{$G$-Kisin modules} (Definition \ref{GKisinmodule}), and we construct a resolution
$$
\Theta:X^{\cris, \mu}_{\ebar} \ra \Spec R^{\cris, \mu}_{\overline{\eta}}    
$$ 
where $\Theta$ is a projective morphism and $\Theta[1/p]$ is an isomorphism (see Propositions \ref{resolution}, \ref{fhimage}) .  The same construction works for $R^{\st, \mu}_{\overline{\eta}}$ as well. The goal then is to understand the singularities of $X^{\cris, \mu}_{\ebar}$.  The natural generalization of the flat condition for $\GL_n$ to an arbitrary group $G$ is \emph{minuscule} $p$-adic Hodge type $\mu$.  A cocharacter $\mu$ of a reductive group $H$ is minuscule if its weights when acting on $\Lie H$ lie in $\{-1, 0, 1\}$ (see Definition \ref{liegrading} and discussion afterward). Our main theorem is a generalization of the main result of \cite{MFFGS} on the geometry of $X^{\cris, \mu}_{\ebar}$ for $G$ reductive and $\mu$ minuscule:    

\begin{thm:main} Assume $p \nmid \pi_1(G^{\der})$ where $G^{\der}$ is the derived subgroup of $G$. Let $\mu$ be a minuscule geometric cocharacter of $\Res_{(K \otimes_{\Qp} F)/F} G_F$. Then $X^{\cris, \mu}_{\overline{\eta}}$ is normal and $X^{\cris, \mu}_{\overline{\eta}} \otimes_{\La_{[\mu]}} \F_{[\mu]}$ is reduced where $\La_{[\mu]}$ is the ring of integers of the reflex field of $\mu$. 
\end{thm:main} 

When $G = \mathrm{GSp}_{2g}$, this is a result of Broshi \cite{Broshi}; also, this is a stronger result than in \cite{LevinThesis} where we made a more restrictive hypothesis on $\mu$ (see Remark \ref{assume}).  The significance of Theorem \ref{main} is that it allows one to identify the connected components of $\Spec R^{\cris, \mu}_{\overline{\eta}}[1/p]$ with the connected components of the fiber in $X^{\cris, \mu}_{\overline{\eta}}$ over the closed point of $\Spec R^{\cris, \mu}_{\overline{\eta}}$, a projective scheme over $\F_{[\mu]}$ (see Corollary \ref{connectedcomponents}). This identification led to the successful determination of the connected components of $\Spec R^{\cris, \mu}_{\overline{\eta}}[1/p]$ in the case when $G = \GL_2$ (\cite{MFFGS, GeeFlat, Imai1, Imai2, HellmannFlat}).  Outside of $\GL_2$, relatively little is known about the connected components of these deformations rings without restricting the ramification in $K$.

 When $K/\Qp$ is unramified, we have a stronger result: 
\begin{cor:unram} Assume $K/\Qp$ is unramified, $p > 3$, and $p \nmid \pi_1(G^{\mathrm{ad}})$.  Then the universal crystalline deformation ring $R^{\cris, \mu}_{\overline{\eta}}$ is formally smooth over $\La_{[\mu]}$. In particular, $\Spec R^{\cris, \mu}_{\overline{\eta}}[1/p]$ is connected. 
\end{cor:unram}   

\begin{rmk} \label{assume} In \cite{LevinThesis}, we made the assumption on the cocharacter $\mu$ that there exists a representation $\rho:G \ra \GL(V)$ such that  $\rho \circ \mu$ is minuscule. This extra hypothesis on $\mu$ excluded most adjoint groups like $\PGL_n$ as well as exceptional types like $E_6$ and $E_7$ both of which have minuscule cocharacters.  One can weaken the assumptions in Theorem \ref{connectedunramified} if one assumes this stronger condition on $\mu$.
\end{rmk}

\begin{rmk} The groups $\pi_1(G^{\der})$ and $\pi_1(G^{\mathrm{ad}})$ appearing in Theorems \ref{main} and \ref{connectedunramified} are the fundamental groups in the sense of semisimple groups.  Note that $\pi_1(G^{\der})$ is a subgroup of $\pi_1(G^{\mathrm{ad}})$.  The assumption that $p \nmid \pi_1(G^{\der})$ insures that the local models we use have nice geometric property.  The stronger assumption in Theorem \ref{connectedunramified} that $p \nmid \pi_1(G^{\mathrm{ad}})$ is probably not necessary and is a byproduct of the argument which involves reduction to the adjoint group.
\end{rmk}   

There are two main ingredients in the proof of Theorem \ref{main} and its applications, one coming from integral $p$-adic Hodge theory and the other from local models of Shimura varieties. In Kisin's original construction, a key input was an advance in integral $p$-adic Hodge theory, building on work of Breuil, which allows one to describe finite flat group schemes over $\cO_K$ in terms of certain linear algebra objects called \emph{Kisin modules} of height in $[0,1]$ (\cite{MFFGS, Fcrystals}). More precisely, then, $X^{\cris, \mu}_{\overline{\eta}}$ is a moduli space of $G$-Kisin modules with ``type'' $\mu$.  Intuitively, one can imagine $X^{\cris, \mu}_{\overline{\eta}}$ as a moduli of finite flat models with additional structure.  

The proof of Theorem \ref{main} uses a recent advance of Liu \cite{Liu1} in integral $p$-adic Hodge theory to overcome a difficulty in identifying the local structure of $X^{\cris, \mu}_{\overline{\eta}}$. Heuristically, the difficulty arises because for a general group $G$ one cannot work only in the setting of Kisin modules of height in $[0,1]$ where one has a nice equivalence of categories between finite flat group schemes and the category of Kisin modules with height in $[0,1]$. Beyond the height in $[0,1]$ situation, the Kisin module only remembers the Galois action of the subgroup $\Gamma_{\infty} \subset \Gamma_K$ which fixes the field $K( \pi^{1/p}, \pi^{1/p^2}, \ldots)$ for some compatible system of $p$-power roots of  uniformizer $\pi$ of $K$. 

Liu \cite{Liu1} introduced a more complicated linear algebra structure on a Kisin module, called a $(\phz, \widehat{G})$-module, which captures the action of the full Galois group $\Gamma_K$. We call them $(\phz, \widehat{\Gamma})$-modules to avoid confusion with the group $G$. Let $A$ be a finite local $\La$-algebra which is either Artinian or flat. Our principal result (Theorem \ref{keythm}) says roughly that if $\rho:\Gamma_{\infty} \ra G(A)$ has ``type'' $\mu$, i.e., comes from a $G$-Kisin module $(\fP_A, \phi_A)$ over $A$ of type $\mu$ with $\mu$ minuscule, then there exists a canonical extension $\widetilde{\rho}:\Gamma_K \ra G(A)$ and furthermore if $A$ is flat over $\Zp$ then $\widetilde{\rho}[1/p]$ is crystalline. This is rough in the sense that what we actually prove is an isomorphism of certain deformation functors. As a consequence, we get that the local structure of $X^{\cris, \mu}_{\overline{\eta}}$ at a point $(\fP_{\F'}, \phi_{\F'}) \in X^{\cris, \mu}_{\overline{\eta}}(\F')$ is smoothly equivalent to the deformation groupoid $D^{\mu}_{\fP_{\F'}}$ of $\fP_{\F'}$ with type $\mu$.   

To prove Theorem \ref{main}, one studies the geometry of $D^{\mu}_{\fP_{\F'}}$. Here, the key input comes from the theory of local models of Shimura varieties. A \emph{local model} is a projective scheme $X$ over the ring of integers of a $p$-adic field $F$ such that $X$ is supposed to \'etale-locally model the integral structure of a Shimura variety. Classically, local models were built out of moduli spaces of linear algebra structures. Rapoport and Zink \cite{RZ} formalized the theory of local models for Shimura varieties of PEL-type. Subsequent refinements of these local models were studied mostly on a case by case basis by Faltings, G\"ortz, Haines, Pappas, and Rapoport, among others.  

Pappas and Zhu \cite{PZ} define for any triple $(G, P, \mu)$, where $G$ is a reductive group over $F$ (which splits over a tame extension), $P$ is a parahoric subgroup, and $\mu$ is any cocharacter of $G$, a local model $M(\mu)$ over the ring of integers of the reflex field of $\mu$. Their construction, unlike previous constructions, is purely group-theoretic, i.e. it does not rely on any particular representation of $G$. They build their local models inside degenerations of affine Grassmanians extending constructions of Beilinson, Drinfeld, Gaitsgory, and Zhu to mixed characteristic.  The geometric fact we will use is that $M(\mu)$ is normal with special fiber reduced (\cite[Theorem 0.1]{PZ}).

The significance of local models in this paper is that the singularities of $X^{\cris, \mu}_{\overline{\eta}}$ are smoothly equivalent to those of a local model $M(\mu)$ for the Weil-restricted group $\Res_{(K \otimes_{\Qp} F)/F} G_F$. This equivalence comes from a diagram of formally smooth morphisms (\ref{musmoothmod}):
\begin{equation} \label{b7}
\xymatrix{
& \widetilde{D}^{(\infty), \mu}_{\fP_{\F}} \ar[dl] \ar[dr] & \\
D^{\mu}_{\fP_{\F}} & & \overline{D}^{\mu}_{Q_{\F}}, \\
}
\end{equation}
which generalizes constructions from \cite[2.2.11]{MFFGS} and \cite[\S 3]{PRcoeff}. The deformation functor $\overline{D}^{\mu}_{Q_{\F}}$ is represented by the completed local ring at an $\F$-point of $M(\mu)$. Intuitively, the above modification corresponds to adding a trivialization to the $G$-Kisin module and then taking the ``image of Frobenius.'' We construct the diagram (\ref{b7}) in \S 3 with no assumptions on the cocharacter $\mu$ (to be precise $D^{\mu}_{\fP_{\F}}$ is deformations of type $\leq \mu$ in general). It is intriguing to wonder whether $D^{\mu}_{\fP_{\F}}$ and diagram (\ref{b7}) has any relevance to studying higher weight Galois deformation rings, i.e., when $\mu$ is not minuscule.    

As a remark, we usually cannot apply \cite{PZ} directly since the group $\Res_{(K \otimes_{\Qp} F)/F} G$ will generally not split over a tame extension.  In \cite{LevinThesis}, we develop a theory of local models following Pappas and Zhu's approach but adapted to these Weil-restricted groups (for maximal special parahoric level).  These results are reviewed in \S 3.2 and are studied in more generality in \cite{LevinLM}. 

We now a give brief outline of the article.  In \S 2, we define and develop the theory of $G$-Kisin modules and construct resolutions of semi-stable and crystalline $G$-valued deformation rings (\ref{resolution}, \ref{fhimage}).  This closely follows the approach of \cite{PST}.  The proof that ``semi-stable implies finite height'' (Proposition \ref{Gssimpliesfh}) requires an extra argument not present in the $\GL_n$-case (Lemma \ref{extensionlemma}).  In \S 3, we study the relationship between deformations of $G$-Kisin modules and local models.  We construct the big diagram (Theorem \ref{Modification}) and then impose the $\mu$-type condition to arrive at the diagram (\ref{musmoothmod}).  We also give an initial description of the local structure of $X^{\cris, \mu}_{\overline{\eta}}$ in Corollary \ref{cristypes}.  \S 4.1 develops the theory of $(\phz, \widehat{\Gamma})$-modules with $G$-structure, and  \S 4.2 is devoted to the proof of our key result (Theorem \ref{keythm}) in integral $p$-adic Hodge theory.  In the last section \S 4.3, we prove Theorems \ref{main} and \ref{connectedunramified} which follow relatively formally from the results of \S 3.3 and \S 4.2.    

\subsection{Acknowledgments}  
The enormous influence of the work of Mark Kisin and Tong Liu in this paper will be evident to the reader.   This paper is based on the author's Stanford Ph.D. thesis advised by Brian Conrad to whom the author owes a debt of gratitude for his generous guidance and his feedback on multiple drafts of the thesis.  It is a pleasure to thank Tong Liu, Xinwen Zhu, George Pappas, Bhargav Bhatt, and Mark Kisin for many helpful discussions and exchanges related to this project.  The author is additionally grateful for the support of the National Science Foundation and the Department of Defense in the form of NSF and NDSEG fellowships.  Part of this work was completed while the author was a visitor at the Institute for Advanced Study supported by the National Science Foundation grant DMS-1128155.  The author is grateful to the IAS for its support and hospitality. Finally, we would like to thank the referee for a number of valuable suggestions. 
\subsection{Notations and conventions}
We take $F$ to be our coefficient field, a finite extension of $\Qp$.  Let $\La$ be the ring of integers of $F$ with residue field $\F$.   Let $G$ be reductive group scheme over $\La$ with connected fibers and $\sideset{^f}{_{\La}}\Rep (G)$ the category of representations of $G$ on finite free $\La$-modules. We will use $V$ to denote a fixed faithful representation of $G$, i.e., $V \in \sideset{^f}{_{\La}}\Rep (G)$ such that $G \ra \GL(V)$ is a closed immersion.  The derived subgroup of $G$ will be denoted by $G^{\der}$ and its adjoint quotient by $G^{\ad}$. 

All $G$-bundles will be with respect to the fppf topology. If $X$ is a $\La$-scheme, then $\GBun(X)$ will denote the category of $G$-bundles on $X$. We will denote  the trivial $G$-bundle by $\cE^0$. For any $G$-bundle $P$ on a $\La$-scheme $X$ and any $W \in \sideset{^f}{_{\La}}\Rep (G)$, $P(W)$ will denote the pushout of $P$ with respect to $W$ (see discussion before Theorem \ref{catbundle}).  Let $\overline{F}$ be an algebraic closure of $F$.  For a linear algebraic $F$-group $H$, $X_*(H)$ will denote the group of geometric cocharacters, i.e., $\Hom(\Gm, H_{\overline{F}})$.  For $\mu \in X_*(H)$, $[\mu]$ will denote its conjugacy class . The reflex field $F_{[\mu]}$ of $[\mu]$ is the smallest subfield of $\overline{F}$ over which the conjugacy class $[\mu]$ is defined.

If $\Gamma$ is a pro-finite group and $B$ is a finite $\La$-algebra, then $\sideset{^f}{_{B}}\Rep (\Gamma)$ will be the category of continuous representations of $\Gamma$ on finite projective $B$-modules where $B$ is given the $p$-adic topology.  More generally, $\GRep_{B}(\Gamma)$ will denote the category of pairs $(P, \eta)$ where $P$ is a $G$-bundle over $\Spec B$ and $\eta:\Gamma \ra \Aut_G(P)$ is a continuous homomorphism.

Let $K$ be a $p$-adic field with rings of integers $\cO_K$ and residue field $k$. Denote its absolute Galois group by $\Gamma_K$.  We furthermore take $W := W(k)$ and $K_0 := W[1/p]$.  We fix a uniformizer $\pi$ of $K$ and let $E(u)$ the minimal polynomial of $\pi$ over $K_0$. Our convention will be to work with covariant $p$-adic Hodge theory functors so we take the $p$-adic cyclotomic character to have Hodge-Tate weight $-1$.  

For any local ring $R$, we let $m_R$ denote the maximal ideal.  We will denote the completion of $B$ with respect to a specified topology by $\widehat{B}$.

\section{Kisin modules with $G$-structure}

In this section, we construct resolutions of Galois deformation rings by moduli spaces of Kisin modules (i.e. $\fS$-modules) with $G$-structure. For $\GL_n$, this technique was introduced in \cite{MFFGS} to study flat deformation rings. In \cite{PST}, the same technique is used to construct potentially semi-stable deformation rings for $\GL_n$. Here we develop a theory of $G$-Kisin modules (Definition \ref{GKisinmodule}). In particular, in \S 2.4, we show the existence of a universal $G$-Kisin module over these deformation rings (Theorem \ref{universalKisin}) and relate the filtration defined by a $G$-Kisin module to $p$-adic Hodge type.  One can construct $G$-valued semi-stable and crystalline deformation rings with fixed $p$-adic Hodge type without  $G$-Kisin modules \cite{Balaji}.   However, the existence of a resolution by a moduli space of Kisin modules allows for finer analysis of the deformation rings as is carried out in \S 4.        

\subsection{Background on $G$-bundles}

All bundles will be for the fppf topology. For any $G$-bundle $P$ on a $\La$-scheme $X$ and any $W \in \sideset{^f}{_{\La}}\Rep (G)$, define
$$
P(W) := P \times^G W = (P \times W)/\sim
$$
to be the pushout of $P$ with respect to $W$. This is a vector bundle on $X$.  This defines a functor from $\sideset{^f}{_{\La}}\Rep (G)$ to the category  $\Vect_X$ of vector bundles on $X$.

\begin{thm} \label{catbundle} Let $G$ be a flat affine group scheme of finite type over $\Spec \La$ with connected fibers. Let $X$ be an $\La$-scheme. The functor $P \mapsto \{P(W)\}$ from the category of $G$-bundles on $X$ to the category of fiber functors $($i.e., faithful exact tensor functors$)$ from $\sideset{^f}{_{\La}}\Rep (G)$ to $\Vect_X$ is an equivalence of categories.  
\end{thm}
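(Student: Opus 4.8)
The plan is to construct a quasi-inverse to the pushout functor $P \mapsto \{P(W)\}_{W}$ via a Tannakian reconstruction argument, adapted to the relative setting over $\La$. Given a fiber functor $\omega : \sideset{^f}{_{\La}}\Rep(G) \to \Vect_X$, one wants to recover a $G$-bundle $P_\omega$ on $X$ together with natural isomorphisms $P_\omega(W) \cong \omega(W)$ compatible with the tensor structure. The first step is to reduce to the affine case: both $G$-bundles and fiber functors satisfy fppf descent, so we may assume $X = \Spec R$ for an $\La$-algebra $R$, and the claim becomes that $\omega$ is represented by a $G$-torsor over $R$. The second step is the key construction: for each $R$-algebra $R'$, define $P_\omega(R')$ to be the set of tensor-isomorphisms $\omega \otimes_R R' \xrightarrow{\sim} \omega^0 \otimes_\La R'$, where $\omega^0(W) = W \otimes_\La \Oh_X$ is the standard fiber functor attached to the trivial bundle $\cE^0$. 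This is visibly a functor on $R$-algebras, and $\Aut^{\otimes}(\omega^0 \otimes_\La R') = G(R')$, so $P_\omega$ is a pseudo-torsor under $G$; the content is that it is representable by a scheme and that it is fppf-locally trivial (hence an actual torsor). The third step is to check the two compositions are naturally isomorphic to the identity: $P \mapsto \{P(W)\} \mapsto P_\omega$ recovers $P$ because trivializing $P$ fppf-locally trivializes all the $P(W)$ compatibly, and conversely $\omega \mapsto P_\omega \mapsto \{P_\omega(W)\}$ recovers $\omega$ by unwinding the associated-bundle construction.

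For representability and local triviality, I would follow the standard Tannakian toolkit (e.g. Deligne–Milne, or Broshi's paper in the relative setting): pick the fixed faithful representation $V \in \sideset{^f}{_{\La}}\Rep(G)$ realizing $G \hookrightarrow \GL(V)$ as a closed immersion. Since $G$ is flat affine of finite type over $\La$, by a theorem on closed subgroup schemes $G$ is the stabilizer of a line (or a tensor) in some representation built functorially from $V$, say $G = \Stab_{\GL(V)}(L)$ for a line $L \subset W_0$ with $W_0$ a sum of tensor constructions on $V, V^\vee$. The fiber functor $\omega$ then produces a vector bundle $\omega(V)$ on $X$ of rank $\dim V$, hence reduces the structure group of $\GL(V)$ to a $\GL(V)$-bundle $Q$ on $X$; the tensor compatibility of $\omega$ forces the line $L$ to transport to a line subbundle of $Q(W_0)$, which cuts out a closed subscheme $P_\omega \subset Q$ that is a $G$-torsor. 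Flatness of $G$ over $\La$ and the closed-immersion hypothesis are exactly what guarantee $P_\omega$ is fppf-locally trivial, because fppf-locally on $X$ the bundle $Q$ trivializes and the reduction to $G$ is then governed by a single point of $\GL(V)/G$, which is covered for the fppf topology.

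The main obstacle is the representability-and-local-triviality step rather than the formal bookkeeping: one must know that $G$, being flat affine of finite type with connected fibers over the possibly non-field base $\La$, is the scheme-theoretic stabilizer of a suitable line in a representation, and that $\GL(V)/G$ is fppf-covered by $\GL(V)$. Over a field this is classical (Chevalley); over $\La$ one needs the relative version, which holds because $\La$ is a Dedekind (indeed a DVR) and $G$ is flat — so I would invoke the relative form of Chevalley's theorem on subgroup schemes together with fppf descent, exactly as in \cite{Broshi} and \cite{Balaji}. Everything else — exactness of $\omega$ giving flatness/vector-bundle output, the tensor isomorphism being natural in both directions, fppf descent to reduce to affine $X$ — is routine diagram-chasing, and the connectedness of the fibers of $G$ is not actually needed for this particular statement (it is harmless to include). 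I expect the proof to be short precisely because it is a citation-plus-verification of the relative Tannakian formalism.
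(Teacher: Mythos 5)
Your proposal is essentially the paper's proof: the paper simply cites Deligne--Milne for the field case and Broshi (Theorem 4.8 of \cite{BroshiGtorsors}) for the Dedekind-domain case, and your sketch reconstructs the argument underlying those references. One caveat worth flagging: you file ``exactness of $\omega$ gives flatness'' under routine bookkeeping, but in passing from a field to the Dedekind base $\La$ that step is the real content --- over a field any nonempty pseudo-torsor under a flat affine group scheme is automatically flat, whereas over $\La$ one must use exactness of $\omega$ in a genuinely non-formal way to prove that the $\Isom^{\otimes}$-scheme is $\La$-flat (this is precisely what Broshi's theorem supplies), so that step belongs with the relative Chevalley input among your ``main obstacles'' rather than with the diagram-chasing.
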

\begin{proof}
When the base is a field, this is a well-known result (\cite[Theorem 3.2]{DM}) in Tannakian theory.   When the base is a Dedekind domain, see \cite[Theorem 4.8]{BroshiGtorsors} or \cite[Theorem 2.5.2]{LevinThesis}.
\end{proof}

We will also need the following gluing lemma for $G$-bundles:
\begin{lemma} \label{descent} Let $B$ be any $\La$-algebra. Let $f \in B$ be a non-zero divisor and $G$ be a flat affine group scheme of finite type over $\La$. The category of triples $(P_f, \widehat{P}, \alpha)$, where $P_f \in \GBun(\Spec B_f)$, $\widehat{P} \in \GBun(\Spec \widehat{B})$, and $\alpha$ is an isomorphism between $P_f$ and $\widehat{P}$ over $\Spec \widehat{B}_f$, is equivalent to the category of $G$-bundles on $B$. 
\end{lemma}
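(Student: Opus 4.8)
The plan is to reduce the statement to the classical Beauville--Laszlo formal glueing theorem for modules by way of the Tannakian description of $G$-bundles. By Theorem \ref{catbundle}, a $G$-bundle on $\Spec B$ is the same datum as a faithful exact tensor functor from $\sideset{^f}{_{\La}}\Rep (G)$ to $\Vect_B$ (finite projective $B$-modules), and likewise over $B_f$, $\widehat{B}$, and $\widehat{B}_f$; under this dictionary, restriction of $G$-bundles along $B \to B_f$, $B \to \widehat{B}$, $\widehat{B} \to \widehat{B}_f$ corresponds to post-composition with the respective base change. Thus it suffices to glue vector bundles, compatibly with their tensor and exactness structures, along the square
$$
\xymatrix{
B \ar[r] \ar[d] & \widehat{B} \ar[d] \\
B_f \ar[r] & \widehat{B}_f
}
$$
and then to apply this representation by representation.

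Concretely, since $f$ is a non-zero divisor the square above is Cartesian (and $f$ remains a non-zero divisor on $\widehat{B}$), so the Beauville--Laszlo theorem gives an equivalence between $\Vect_B$ and the category of triples $(M_f, \widehat{M}, \beta)$ with $M_f \in \Vect_{B_f}$, $\widehat{M} \in \Vect_{\widehat{B}}$, and $\beta \colon M_f \otimes_{B_f} \widehat{B}_f \xrightarrow{\sim} \widehat{M} \otimes_{\widehat{B}} \widehat{B}_f$. I would then record three refinements of this equivalence, all flowing from the fact that a finite projective module is a direct summand of a finite free one (so that localization and $f$-adic completion commute with the tensor products in play and detect exactness of the sequences that arise): it is symmetric monoidal; it reflects short exact sequences; and on morphisms it identifies $\Hom_B(M,N)$ with the fibre product $\Hom_{B_f}(M_f,N_f) \times_{\Hom_{\widehat{B}_f}} \Hom_{\widehat{B}}(\widehat{M},\widehat{N})$. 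Granting these, the lemma follows formally: from a triple $(P_f, \widehat{P}, \alpha)$ of $G$-bundles with glueing isomorphism, for each $W \in \sideset{^f}{_{\La}}\Rep (G)$ the isomorphism $\alpha$ induces a glueing datum between the finite projective modules $P_f(W)$ and $\widehat{P}(W)$, and the Beauville--Laszlo equivalence produces a finite projective $B$-module $\omega(W)$, functorially in $W$ and compatibly with tensor products; monoidality, exactness, and faithfulness as recorded above show that $W \mapsto \omega(W)$ is a faithful exact tensor functor, hence by Theorem \ref{catbundle} arises from a $G$-bundle $P$ on $\Spec B$ restricting to the given triple. Conversely every $G$-bundle on $\Spec B$ restricts to such a triple, and the identification of $\Hom$-modules (applied to morphisms of the associated fiber functors, which are compatible families of module isomorphisms) shows that restriction is fully faithful. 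This would give the asserted equivalence of categories.

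The only genuine work lies in the middle step: checking that the hypotheses of Beauville--Laszlo are met here (Cartesianness of the ring square and that $f$ is a non-zero divisor on $\widehat{B}$), and upgrading the module-level equivalence so that it is monoidal and reflects exact sequences on the subcategories of finite projective modules rather than on all modules. Once these purely commutative-algebra facts are in hand, the passage from vector bundles to $G$-bundles is a formal consequence of the Tannakian equivalence of Theorem \ref{catbundle} and uses nothing special about $f$ or $B$.
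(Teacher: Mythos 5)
Your strategy—reduce to the module-level Beauville--Laszlo equivalence via the Tannakian description—is coherent and genuinely different from what the paper actually does: the proof in the cited references (Pappas--Zhu, Lemma~5.1; Levin, Thesis~3.1.8) works directly with the affine total space of the $G$-bundle, gluing the pushforward of the structure sheaf as a sheaf of algebras and then checking that the $G$-action and the fppf-local-triviality condition descend. That route avoids any appeal to a Tannakian reconstruction theorem, and this matters here.

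The substantive gap in your argument is the mismatch in hypotheses: Theorem~\ref{catbundle} requires $G$ to have \emph{connected fibers}, whereas Lemma~\ref{descent} is stated for an arbitrary flat affine group scheme of finite type over $\La$. Your reduction therefore only establishes the lemma for connected $G$, and the lemma is invoked later (Lemma~\ref{extensionlemma}(1)) in the more general smooth affine setting, so you should not silently drop the hypothesis. A secondary point: the Beauville--Laszlo input needs $f$ to be a non-zero divisor in $\widehat{B}$ as well, and for a non-Noetherian $\La$-algebra $B$ this is not automatic from $f$ being a non-zero divisor in $B$; you flag it as a "check" but it should be addressed (e.g., observe that the rings to which the lemma is applied are Noetherian, or add the hypothesis). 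Finally, the emphasis on the square being Cartesian is somewhat misleading: Cartesianness of the ring square is a formal consequence of the setup but is not what makes the gluing work—$\widehat{B}$ is not flat over $B$ in general, so Beauville--Laszlo is a strictly stronger statement than fpqc descent along a Cartesian diagram, and that deeper content is what your proof is leaning on. With those caveats, the "three refinements" you propose (monoidality, reflection of exactness, and the $\Hom$-fiber-product) are correct for finite projective modules, and the Tannakian passage would go through in the connected case.
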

\begin{proof} This is a generalization of the Beauville-Laszlo formal gluing lemma for vector bundles.  See \cite[Lemma 5.1]{PZ} or \cite[Theorem 3.1.8]{LevinThesis}.
\end{proof}

Let $i:H \subset G$ be a flat closed $\La$-subgroup.  We are interested in the ``fibers'' of the pushout map
$$
i_*:\HBun \ra \GBun
$$  
carrying an $H$-bundle $Y$ to the $G$-bundle $Y \times^H G$. Let $Q$ be a $G$-bundle on a $\La$-scheme $S$. For any $S$-scheme $X$, define $\mathrm{Fib}_Q(X)$ to be the category of pairs $(P, \alpha)$, where $P \in \HBun(X)$ and $\alpha:i_*(P) \cong Q_X$ is an isomorphism in $\GBun(X)$. A morphism $(P, \alpha) \ra (P', \alpha')$ is a map $f:P \ra P'$ of $H$-bundles such that $\alpha' \circ i_*(f) \circ \alpha^{-1}$ is the identity.

\begin{prop} \label{changeofgroup} The category of $\mathrm{Fib}_Q(X)$ has no non-trivial automorphisms for any $S$-schemes $X$. Furthermore, the underlying functor $|\mathrm{Fib}_Q|$ is represented by the pushout $Q \times^G (G / H)$.
In particular, if $G / H$ is affine $($resp. quasi-affine$)$ over $S$ then $|\mathrm{Fib}_Q|$ is affine $($resp. quasi-affine$)$ over $X$.
\end{prop}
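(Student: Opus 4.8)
The plan is to dispatch the two assertions in turn: first \emph{rigidity} (no non-trivial automorphisms), and then the representability statement, which comes down to the classical dictionary between reductions of structure group of $Q$ to $H$ and sections of the associated $(G/H)$-bundle; the final sentence about (quasi-)affineness is then an fppf descent argument.

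For rigidity, an automorphism of an object $(P,\alpha)$ of $\mathrm{Fib}_Q(X)$ is by definition an $f\in\Aut_H(P)$ with $i_*(f)=\Id_{i_*(P)}$, and I must show $f=\Id_P$. This may be checked after an fppf covering of $X$; over such a covering I may assume $P$ is the trivial $H$-bundle, so that $i_*(P)$ is trivial and the homomorphism $\Aut_H(P)\to\Aut_G(i_*(P))$ becomes the map $H(X)\to G(X)$ induced by $i$. Since $i$ is a closed immersion, hence a monomorphism of schemes, this map is injective, so $i_*(f)=\Id$ forces $f=\Id$ (equivalently, $\Aut_H(P)\to\Aut_G(i_*P)$ is a monomorphism of fppf sheaves). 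In particular $\mathrm{Fib}_Q$ is rigid, and since $H$-bundles and isomorphisms of $G$-bundles satisfy fppf descent, the presheaf $|\mathrm{Fib}_Q|$ of isomorphism classes is an fppf sheaf on $S$-schemes.

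Next, put $\cQ:=Q\times^G(G/H)$; this is canonically the fppf quotient $Q/H$ of $Q$ by the right action of $H$ through $i$, so the projection $Q\to\cQ$ is an fppf $H$-torsor. I claim $|\mathrm{Fib}_Q|$ is isomorphic to $X\mapsto\Hom_S(X,\cQ)$. Given $(P,\alpha)$ over $X$, the canonical map $P\to i_*(P)=P\times^H G$, $p\mapsto[p,1]$, is $H$-equivariant, so composing it with $\alpha$ and then with $Q_X\to\cQ_X$ gives an $H$-invariant morphism $P\to\cQ_X$ over $X$; since $P\to X$ is an fppf $H$-torsor this descends uniquely to a section $s_{(P,\alpha)}\colon X\to\cQ_X$, i.e. an element of $\Hom_S(X,\cQ)$. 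Conversely, a section $s\colon X\to\cQ_X$ yields, by pulling back the $H$-torsor $Q_X\to\cQ_X$, an $H$-bundle $P_s:=X\times_{\cQ_X}Q_X$ on $X$ together with an $H$-equivariant map $P_s\to Q_X$; the induced map $i_*(P_s)\to Q_X$ is a $G$-equivariant morphism of $G$-bundles over $X$, hence an isomorphism, and we take $\alpha_s$ to be its inverse. That $(P,\alpha)\mapsto s_{(P,\alpha)}$ and $s\mapsto(P_s,\alpha_s)$ are mutually inverse and natural in $X$ is a direct check with balanced products and torsor quotients, commuting with base change; I expect this bookkeeping — keeping the left and right actions and the two associated-bundle constructions straight — to be the only real point in the proof, although it is entirely standard. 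This identifies $|\mathrm{Fib}_Q|$ with the functor represented by $\cQ=Q\times^G(G/H)$.

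Finally, for the last assertion: since the $G$-bundle $Q$ is fppf-locally trivial on $S$, after an fppf covering $S'\to S$ one has $\cQ_{S'}\cong S'\times_\La(G/H)$. Hence if $G/H$ is affine (resp. quasi-affine) over $\Spec \La$, then $\cQ\to S$ is affine (resp. quasi-affine) by fppf descent of (quasi-)affineness — in particular $\cQ$ is a scheme — and base-changing along $X\to S$ shows that $|\mathrm{Fib}_{Q_X}|$, represented by $\cQ_X$, is affine (resp. quasi-affine) over $X$.
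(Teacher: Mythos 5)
Your proof is correct, and it is the standard argument (the dictionary between reductions of structure group and sections of the associated $G/H$-bundle) that the paper simply outsources to Serre and the author's thesis rather than reproducing. One immaterial slip: since $\alpha$ is required to go $i_*(P)\to Q_X$, you should take $\alpha_s$ to be the $G$-equivariant isomorphism $i_*(P_s)\to Q_X$ itself rather than its inverse; this does not affect the argument.
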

\begin{proof} See \cite[Proposition 9]{SerreBundle} or \cite[Lemma 2.2.3]{LevinThesis}. 
\end{proof}

\begin{prop} \label{Gtriv} Let $G$ be a smooth affine group scheme of finite type over $\Spec \La$ with connected fibers. 
\begin{enumerate} 
\item Let $R$ any $\La$-algebra and $I$ a nilpotent ideal of $R$.  For any $G$-bundle $P$ on $\Spec R$, $P$ is trivial if and only if $P \otimes_R R/I$ is trivial.  
\item Let $R$ be any complete local $\La$-algebra with finite residue field.   Any $G$-bundle on $\Spec R$ is trivial.  
\end{enumerate}
\end{prop}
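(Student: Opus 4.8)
The plan is to deduce both parts from the infinitesimal lifting criterion for smooth morphisms together with Lang's theorem over the residue field. Recall first that, since $G$ is smooth and affine over $\La$, any $G$-bundle $P$ on $\Spec R$ is representable by a scheme which is affine, faithfully flat and smooth over $\Spec R$ (these properties descend along an fppf cover trivializing $P$), and that $P$ is trivial if and only if $P(R)\ne\emptyset$.

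For (1), the ``only if'' direction is immediate. Conversely, suppose $P\otimes_R R/I$ is trivial and pick a section $\bar s\in P(R/I)$; writing $I^n=0$, consider the chain of square-zero extensions of $R$-algebras
$$
R = R/I^n \twoheadrightarrow R/I^{n-1} \twoheadrightarrow \cdots \twoheadrightarrow R/I^2 \twoheadrightarrow R/I .
$$
Since $P\to\Spec R$ is smooth, hence formally smooth, each map $P(R/I^{k+1})\to P(R/I^k)$ is surjective, so $\bar s$ lifts successively along the chain to a section $s\in P(R)$; thus $P$ is trivial.

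For (2), let $\mathfrak m\subset R$ be the maximal ideal, so that $R=\varprojlim_n R/\mathfrak m^n$ by completeness, and set $k_R=R/\mathfrak m$. As $k_R$ is a finite field it is a finite extension of $\F$, so the reduction $P\otimes_R k_R$ is a torsor under $G_{k_R}=G_\F\otimes_\F k_R$, which is smooth, affine and connected (a connected group scheme over a field is geometrically connected, since it has a rational point, so $G_{k_R}$ is connected because $G_\F$ is). By Lang's theorem — using smoothness of $G$ to identify fppf and étale $G$-torsors, so that they are classified by $H^1(k_R,G)$ — this torsor is trivial. Applying (1) to the nilpotent ideal $\mathfrak m/\mathfrak m^n\subset R/\mathfrak m^n$, every $P\otimes_R R/\mathfrak m^n$ is trivial, i.e. $P(R/\mathfrak m^n)\ne\emptyset$; moreover the transition maps $P(R/\mathfrak m^{n+1})\to P(R/\mathfrak m^n)$ are surjective, again by formal smoothness of $P$ (the kernel $\mathfrak m^n/\mathfrak m^{n+1}$ is square-zero in $R/\mathfrak m^{n+1}$ for $n\ge 1$). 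Hence $\{P(R/\mathfrak m^n)\}_n$ is an inverse system of nonempty sets with surjective transition maps, so its limit is nonempty; since $P$ is affine over $R$, $P(R)=P(\varprojlim_n R/\mathfrak m^n)=\varprojlim_n P(R/\mathfrak m^n)\ne\emptyset$, and therefore $P$ is trivial.

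The only non-formal input is the vanishing $H^1(k_R,G)=0$, i.e. Lang's theorem, and this is exactly where the hypothesis that $G$ has connected fibers is indispensable (it already fails for $G=\Z/2\Z$). The remaining ingredients — representability of $P$ by an affine scheme, descent of smoothness and flatness along the trivializing cover, and the fact that the affine scheme $P$ converts the completeness isomorphism $R\cong\varprojlim R/\mathfrak m^n$ into $P(R)\cong\varprojlim P(R/\mathfrak m^n)$ — are routine, but should be recorded explicitly so that the limit argument in (2) is justified. I anticipate no genuine difficulty beyond assembling these standard facts in the right order.
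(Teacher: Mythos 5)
Your proof is correct and follows the same approach as the paper: use smoothness of the $G$-torsor $P$ (inherited from $G$ by descent) for the infinitesimal lifting in part (1), and for part (2) reduce modulo powers of the maximal ideal, apply Lang's theorem over the finite residue field, and pass to the limit. The paper's proof of (2) is considerably terser — it simply says the problem "reduces to the case $R=\F$ using part (1)" and invokes Lang's theorem — whereas you explicitly supply the inverse-limit step (surjective transition maps, affineness giving $P(R)=\varprojlim P(R/\mathfrak m^n)$) that this reduction actually requires, since $\mathfrak m$ need not be nilpotent. You also correctly note that the residue field $k_R$ may be a finite extension of $\F$ rather than $\F$ itself; the paper's phrasing is slightly loose on this point. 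These are refinements of the same argument, not a different route.
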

\begin{proof} For (1), because $G$ is smooth, $P$ is also smooth.   Thus, $P(R) \ra P(R/I)$ is surjective.  A $G$-bundle is trivial if and only if it admits a section.  

Part (2) reduces to the case of $R = \F$ using part (1).   Lang's Theorem says that $H^1_{\et} (\F, G)$ is trivial for any smooth connected algebraic group over $\F$ (see Theorem 4.4.17 \cite{Springer})
\end{proof}  

\subsection{Definitions and first properties}
  
Let $K$ be a $p$-adic field with rings of integers $\cO_K$ and residue field $k$. Set $W := W(k)$ and $K_0 := W[1/p]$. Recall Breuil/Kisin's ring $\fS := W[\![u]\!]$ and let $E(u) \in W[u]$ be the Eisenstein polynomial associated to a choice of uniformizer $\pi$ of $K$ which generates $K$ over $K_0$. Fix a compatible system $\{\pi^{1/p}, \pi^{1/p^2}, \ldots \}$ of $p$-power roots of $\pi$ and let $K_{\infty} = K(\pi^{1/p}, \pi^{1/p^2}, \ldots )$.  Set $\Gamma_{\infty} := \Gal(\overline{K}/K_{\infty})$.  

Let $\cO_{\cE}$ denote the $p$-adic completion of $\mathfrak{S}[1/u]$. We equip both $\cO_{\cE}$ and $\fS$ with a Frobenius endomorphism $\varphi$ defined by taking the ordinary Frobenius lift on $W$ and $u \mapsto u^p$. For any $\Zp$-algebra $B$, let $\cO_{\cE, B} := \cO_{\cE} \otimes_{\Zp} B$ and $\fS_B := \fS \otimes_{\Zp} B$. We equip both of these rings with Frobenii having trivial action on $B$.  Note that all tensor products are over $\Zp$ even though the group $G$ may only be defined over the $\La$. 

\begin{defn} \label{GOE} Let $B$ be any $\La$-algebra. For any $G$-bundle on $\Spec \cO_{\cE, B}$, we take $\phz^*(P) := P \otimes_{\cO_{\cE, B}, \phz} \cO_{\cE, B}$ to be the pullback under Frobenius. An \emph{$(\cO_{\cE, B}, \varphi)$-module with $G$-structure} is a pair $(P, \phi_{P})$ where $P$ is a $G$-bundle on $\Spec \cO_{\cE, B}$ and $\phi_{P}:\phz^*(P) \cong P$ is an isomorphism. Denote the category of such pairs by $\GMod^{\varphi}_{\cO_{\cE, B}}$.
\end{defn}

\begin{rmk} When $G = \GL_d$, $\GMod^{\varphi}_{\cO_{\cE, B}}$ is equivalent to the category of rank $d$ \'etale $(\cO_{\cE, B}, \phz)$-modules via the usual equivalence between $\GL_d$-bundles and rank $d$ vector bundles.
\end{rmk}    

When $B$ is $\Zp$-finite and Artinian, the functor $T_B$ defined by $$T_B(M, \phi) = (M \otimes_{\cO_{\cE}} \cO_{\widehat{\cE}^{\un}})^{\phi = 1}$$ induces an equivalence of categories between \'etale $(\cO_{\cE, B}, \phz)$-modules (which are $\cO_{\cE, B}$-projective) and the category of representations of $\Gamma_{\infty}$ on  finite projective $B$-modules (see \cite[Lemma 1.2.7]{MFFGS}). A quasi-inverse is given by
$$
\underline{M}_B(V) := (V \otimes_{\Zp} \cO_{\widehat{\cE}^{\un}})^{\Gamma_{\infty}}.
$$
This equivalence extends to algebras which are finite flat over $\Zp$.

\begin{defn} \label{GREP} For any profinite group $\Gamma$ and any $\La$-algebra $B$, define $\GRep_B(\Gamma)$ to be the category of pairs $(P, \eta)$ where $P$ is a $G$-bundle over $\Spec B$ and $\eta:\Gamma \ra \Aut_G(P)$ is a continuous homomorphism (where $B$ is given the $p$-adic topology).
\end{defn} 

In the $G$-setting, $\GRep_B(\Gamma)$ will play the role of representation of $\Gamma$ on finite projective $B$-modules.  We have the following generalization of $T_B$:

\begin{prop} \label{normGequiv} Let $B$ be any $\La$-algebra which is $\Zp$-finite and either Artinian or $\Zp$-flat. There exists an equivalence of categories
$$
T_{G, B}:\GMod^{\phz}_{\cO_{\cE, B}} \ra \GRep_B(\Gamma_{\infty})
$$
with a quasi-inverse $\underline{M}_{G, B}$. Furthermore, for any finite map $B \ra B'$ and any $(P, \phi_P) \in \GMod^{\phz}_{\cO_{\cE, B}}$, there is a natural isomorphism
$$
T_{G, B'}(P \otimes_B B') \cong T_{G, B}(P) \otimes_B B'.
$$ 
\end{prop}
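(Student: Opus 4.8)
The plan is to obtain the equivalence formally from two inputs: the Tannakian dictionary of Theorem~\ref{catbundle}, which identifies $G$-bundles on a $\La$-scheme with faithful exact $\La$-linear tensor functors out of $\sideset{^f}{_{\La}}\Rep(G)$, and the classical ``$\GL$-version'' of the functor $T_B$ recalled above, which I would exploit as a \emph{tensor} equivalence. Write $\Mod^{\et,\phz}_{\cO_{\cE,B}}$ for the category of \'etale $(\cO_{\cE,B},\phz)$-modules that are finite projective over $\cO_{\cE,B}$; it is an exact $\La$-linear tensor category (tensor product over $\cO_{\cE,B}$, diagonal Frobenius, unit object $(\cO_{\cE,B},\phz)$) whose forgetful functor to $\Vect_{\cO_{\cE,B}}$ is faithful and exact. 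The first step is to reformulate both sides: I claim $\GMod^{\phz}_{\cO_{\cE,B}}$ is equivalent to the category of faithful exact $\La$-linear tensor functors $\sideset{^f}{_{\La}}\Rep(G)\to\Mod^{\et,\phz}_{\cO_{\cE,B}}$, and $\GRep_B(\Gamma_\infty)$ is equivalent to the category of faithful exact $\La$-linear tensor functors $\sideset{^f}{_{\La}}\Rep(G)\to\sideset{^f}{_{B}}\Rep(\Gamma_\infty)$.

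For the first reformulation, given $(P,\phi_P)$ the underlying $G$-bundle $P$ corresponds by Theorem~\ref{catbundle} to the fiber functor $W\mapsto P(W)$; since the pushout construction commutes with base change, $\phz^*(P(W))=(\phz^*P)(W)$, so Theorem~\ref{catbundle}, being an equivalence, turns $\phi_P\colon\phz^*P\xrightarrow{\sim}P$ into a tensor isomorphism $(\phz^*P)(\bullet)\cong P(\bullet)$, i.e.\ exactly a lift of $W\mapsto P(W)$ to a tensor functor valued in $\Mod^{\et,\phz}_{\cO_{\cE,B}}$; morphisms match on both sides. For the second reformulation, $(Q,\eta)$ gives the fiber functor $W\mapsto Q(W)$ together with the tensor automorphisms $\eta(\gamma)\in\Aut_G(Q)=\Aut_{\otimes}(Q(\bullet))$; the one non-formal point is continuity. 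Since $G\hookrightarrow\GL(V)$ is a closed immersion, $\Aut_G(Q)$ is (fppf-locally, hence) a closed $B$-subgroup scheme of $\GL(Q(V))$, so $\eta\colon\Gamma_\infty\to\Aut_G(Q)(B)$ is continuous for the $p$-adic topology as soon as the induced action on $Q(V)$ is continuous; conversely each $\Aut_G(Q)\to\GL(Q(W))$ is a morphism of finite type $B$-schemes and hence continuous on $B$-points, so continuity of $\eta$ propagates to a continuous $\Gamma_\infty$-action on every $Q(W)$. Thus a faithful exact tensor functor to $\sideset{^f}{_{B}}\Rep(\Gamma_\infty)$ is precisely the datum of $(Q,\eta)$.

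Now recall that $T_B\colon \Mod^{\et,\phz}_{\cO_{\cE,B}}\to\sideset{^f}{_{B}}\Rep(\Gamma_\infty)$ (from \cite[Lemma~1.2.7]{MFFGS}, valid for $B$ finite over $\Zp$ and Artinian or flat) is exact, faithful, and a tensor equivalence: it carries the unit $(\cO_{\cE,B},\phz)$ to the trivial representation $B$ and is compatible with tensor products --- most cleanly because its quasi-inverse $\underline{M}_B$ is visibly a tensor functor, so the tensor structure transports to $T_B$. Composition with $T_B$ (resp.\ with $\underline{M}_B$) therefore preserves faithfulness, exactness and the tensor structure, and gives mutually quasi-inverse equivalences between the two functor categories of the first step. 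Composing these with the reformulations of the first step yields the asserted equivalence $T_{G,B}\colon\GMod^{\phz}_{\cO_{\cE,B}}\to\GRep_B(\Gamma_\infty)$, with quasi-inverse $\underline{M}_{G,B}$; unwinding, $T_{G,B}(P,\phi_P)$ is the object of $\GRep_B(\Gamma_\infty)$ whose pushout along each $W\in\sideset{^f}{_{\La}}\Rep(G)$ is canonically $T_B\big(P(W),\phi_{P(W)}\big)$. For the base-change statement, the reformulations of the first step are manifestly compatible with a finite map $B\to B'$ (pushout of $G$-bundles commutes with base change, and $M\mapsto M\otimes_{\cO_{\cE,B}}\cO_{\cE,B'}$ is a tensor functor $\Mod^{\et,\phz}_{\cO_{\cE,B}}\to\Mod^{\et,\phz}_{\cO_{\cE,B'}}$), so the claim reduces to the analogous identity $T_{B'}(M\otimes_{\cO_{\cE,B}}\cO_{\cE,B'})\cong T_B(M)\otimes_B B'$ for the $\GL$-version, which is part of the cited equivalence (immediate when $B\to B'$ is flat, and by d\'evissage in general).

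The step I expect to be the main obstacle is the bookkeeping in the first two paragraphs: checking that the two pieces of extra structure --- the $\phz$-semilinear isomorphism $\phi_P$ and the continuous $\Gamma_\infty$-action $\eta$ --- pass faithfully and functorially through Theorem~\ref{catbundle}, with the continuity reduction to the single faithful representation $V$ being the one genuinely analytic ingredient. Everything after that is formal once one knows that $T_B$ is a tensor equivalence compatible with base change, which is classical.
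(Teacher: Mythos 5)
Your proposal is correct and follows essentially the same route as the paper: Tannakian reformulation of both sides via Theorem~\ref{catbundle}, followed by composition with the classical $T_B$ viewed as an exact tensor equivalence, with base change reduced to the $\GL$-case of \cite[Lemma~1.2.7]{MFFGS}. The paper's proof is more terse (it cites \cite[3.4.1.6]{Broshi} and \cite[4.1.3]{LevinThesis} for the tensor-exactness of $T_B$ and leaves the continuity bookkeeping implicit), but the substance matches your two-step plan exactly.
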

\begin{proof} Using Theorem \ref{catbundle}, we can give Tannakian interpretations of both $\GMod^{\phz}_{\cO_{\cE, B}}$ and $\GRep_B(\Gamma_{\infty})$. The former is equivalent to the category $[\sideset{^f}{_{\La}}\Rep (G),\Mod^{\varphi, \et}_{\cO_{\cE, B}}]^{\otimes}$ of faithful exact tensor functors. The latter is equivalent to the category of faithful exact tensor functors from $\sideset{^f}{_{\La}}\Rep (G)$ to $\sideset{^f}{_{B}}\Rep (\Gamma_{\infty})$.  We define $T_{G, B}(P, \phi_P)$ to be the functor which assigns to any $W \in \sideset{^f}{_{\La}}\Rep (G)$ the $\Gamma_{\infty}$-representation $T_B(P(W), \phi_{P(W)})$.  This is an object of $\GRep_B(\Gamma_{\infty})$ because $T_B$ is a tensor exact functor (see \cite[3.4.1.6]{Broshi} or \cite[4.1.3]{LevinThesis}).  Similarly, one can define $\underline{M}_{G, B}$ which is quasi-inverse to $T_{G,B}$. Compatibility with extending the coefficients follows from \cite[Lemma 1.2.7(3)]{MFFGS}. 
\end{proof}    

\begin{defn} \label{defnheight} Let $B$ be any $\Zp$-algebra.  A \emph{Kisin module with bounded height} over $B$ is a finitely generated projective $\fS_B$-module $\fM_B$ together with an isomorphism $\phi_{\fM_B}:\phz^*(\fM_B)[1/E(u)] \cong \fM_B[1/E(u)]$. We say that $(\fM_B, \phi_{\fM_B})$ has \emph{height in $[a, b]$} if
$$
E(u)^{a} \fM_B \supset \phi_{\fM_B}(\phz^*(\fM_B)) \supset E(u)^{b} \fM_B
$$
as submodules of $\fM_B[1/E(u)]$.  
\end{defn}   

Let $\Mod^{\phz, \bh}_{\fS_B}$ (resp. $\Mod^{\phz, [a,b]}_{\fS_B}$) denote the category of Kisin modules with bounded height (resp. height in $[a,b]$) with morphisms being $\fS_B$-module maps respecting Frobenii. $\Mod^{\phz, [0,h]}_{\fS_B}$ is the usual category of Kisin modules with height $\leq h$ as in \cite{PHT, MFFGS, Fcrystals}.

\begin{exam} Let $\fS(1)$ be the Kisin module whose underlying module is $\fS$ and whose Frobenius is given by $c_0^{-1} E(u) \varphi_{\fS}$ where $E(0) = c_0 p$. For any $\Zp$-algebra, we define $\fS_B(1)$ by base change from $\Zp$ and define $\cO_{\cE, B}(1) := \fS_B(1) \otimes_{\fS_B} \cO_{\cE, B}$, an \'etale $(\cO_{\cE, B}, \phz)$-module.     
\end{exam}

In order to reduce to the effective case (height in $[0,h]$), it is often useful to ``twist'' by tensoring with $\fS_B(1)$.  For any $\fM_B \in \Mod^{\phz, \mathrm{bh}}_{\fS_B}$ and any $n \in \Z$, define $\fM_B(n)$ by $n$-fold tensor product with $\fS_B(1)$ (negative $n$ being tensoring with the dual). It is not hard to see that if $\fM_B \in \Mod^{\phz, [a,b]}_{\fS_B}$ then $\fM_B(n) \in \Mod^{\phz, [a + n,b + n]}_{\fS_B}$.

\begin{defn} \label{GKisinmodule} Let $B$ be any $\La$-algebra. A \emph{$G$-Kisin module} over $B$ is a pair $(\fP_B, \phi_{\fP_B})$ where $\fP_B$ is a $G$-bundle on $\fS_{B}$ and $\phi_{\fP_B}: \varphi^*(\fP_B)[1/E(u)] \cong \fP_B[1/E(u)]$ is an isomorphism of $G$-bundles. Denote the category of such objects by $\GMod^{\phz, \mathrm{bh}}_{\fS_B}$. 
\end{defn}

\begin{rmk} Unlike Kisin module for $\GL_n$, $G$-bundles do not have endomorphisms. Additionally, there is no reasonable notion of effective $G$-Kisin module. The Frobenius on a $G$-Kisin module is only ever defined after inverting $E(u)$. Later, we use auxiliary representations of $G$ to impose height conditions. 
\end{rmk}   

The category $\Mod^{\phz, \mathrm{bh}}_{\fS_B}$ is a tensor exact category where a sequence of Kisin modules 
$$
0 \ra \fM'_B \ra \fM_B \ra \fM''_B \ra 0
$$
is \emph{exact} if the underlying sequence of $\fS_B$-modules is exact. For any $W \in \sideset{^f}{_{\La}}\Rep (G)$, the pushout $(\fP_B(W), \phi_{\fP_B}(W))$ is a Kisin module with bounded height. Using Theorem \ref{catbundle}, one can interpret $\GMod^{\phz, \mathrm{bh}}_{\fS_B}$ as the category of faithful exact tensor functors from $\sideset{^f}{_{\La}}\Rep (G)$ to $\Mod^{\phz, \bh}_{\fS_B}$.

Since $E(u)$ is invertible in $\cO_{\cE}$, there is natural map $\fS_B[1/E(u)] \ra \cO_{\cE, B}$ for any $\Zp$-algebra $B$.  This induces a functor
$$
\Upsilon_G:\GMod^{\phz, \mathrm{bh}}_{\fS_B} \ra \GMod^{\phz}_{\cO_{\cE, B}}
$$
for any $\La$-algebra $B$.

\begin{defn} \label{GKisinlattice} Let $B$ be any $\La$-algebra and let $P_B \in \GMod^{\phz}_{\cO_{\cE, B}}$. A \emph{$G$-Kisin lattice of $P_B$} is a pair $(\fP_B, \alpha)$ where $\fP_B \in \GMod^{\phz, \mathrm{bh}}_{\fS_B}$ and $\alpha:\Upsilon_G(\fP_B) \cong P_B$ is an isomorphism. 
\end{defn}

From the Tannakian perspective, a $G$-Kisin lattice of $P$ is equivalent to Kisin lattices $\fM_W$ in $P(W)$ for each $W \in \sideset{^f}{_{\La}}\Rep (G)$ functorial in $W$ and compatible with tensor products. Furthermore, we have the following which says that the bounded height condition can be checked on a single faithful representation.

\begin{prop} \label{onerepheight} Let $P_B \in \GMod^{\phz}_{\cO_{\cE, B}}$. A $G$-Kisin lattice of $P_B$ is equivalent to an extension $\fP_B$ of the bundle $P_B$ to $\Spec \fS_B$ such that for a single faithful representation $V \in \sideset{^f}{_{\La}}\Rep (G)$,
$$
\fP_B(V) \subset P_B(V)
$$
is a Kisin lattice of bounded height.
\end{prop}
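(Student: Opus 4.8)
The plan is to show the two descriptions of a $G$-Kisin lattice are equivalent by producing, from a bounded-height extension of $P_B(V)$ along a single faithful $V$, a full tensor-compatible family of Kisin lattices $\fM_W \subset P_B(W)$ for all $W \in \sideset{^f}{_{\La}}\Rep(G)$, and conversely. One direction is trivial: given a $G$-Kisin lattice $\fP_B$ (equivalently a fiber functor $W \mapsto \fM_W$ lifting $W \mapsto P_B(W)$), the pushout $\fP_B(V)$ is a Kisin module of bounded height by the discussion following Definition \ref{GKisinmodule}, and $\fP_B$ is an extension of $P_B$ to $\Spec \fS_B$ via Lemma \ref{descent}. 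So the content is the reverse direction.

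For the reverse direction, suppose we are given a $G$-bundle $\fP_B$ on $\Spec \fS_B$ with $\Upsilon_G$-restriction isomorphic to $P_B$ after inverting $E(u)$ — wait, more precisely an extension of the bundle $P_B$ (which lives on $\Spec \cO_{\cE,B}$) to $\Spec \fS_B$ — such that $\fP_B(V) \subset P_B(V)$ is a Kisin lattice of bounded height. First I would extend the Frobenius: the isomorphism $\phi_{P_B}\colon \phz^*(P_B) \cong P_B$ over $\cO_{\cE,B}$ restricts, after inverting $E(u)$, to a map $\phz^*(\fP_B)[1/E(u)] \to \fP_B[1/E(u)]$ of $G$-bundles on $\Spec \fS_B[1/E(u)]$, because by hypothesis both sides glue to the corresponding $\cO_{\cE,B}$-bundles; so $(\fP_B,\phi_{\fP_B})$ is automatically an object of $\GMod^{\phz,\mathrm{bh}}_{\fS_B}$ once we know $\fP_B$ is a $G$-bundle on all of $\Spec\fS_B$ — but that is part of the given data. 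The real work is to check that for \emph{every} $W$, not just $V$, the induced lattice $\fP_B(W) \subset P_B(W)$ has bounded height. Since $G \hookrightarrow \GL(V)$ is a closed immersion and $G$ is reductive (hence all $W$ are subquotients of $V^{\otimes m}\otimes (V^\vee)^{\otimes n}$, at least up to the usual Tannakian generation statement — and $\Mod^{\phz,\mathrm{bh}}_{\fS_B}$ is a tensor exact category closed under duals, tensor products, subobjects and quotients in the appropriate sense), bounded height for $\fP_B(V)$ propagates: $\fP_B(V^\vee)$, $\fP_B(V^{\otimes m})$, and sub/quotient Kisin modules thereof all have bounded height because the property "height in $[a,b]$ for some $a\le b$" is stable under these operations (tensor adds the bounds, dual negates them, and a subobject or quotient of a bounded-height module is bounded-height since $E(u)$ is a non-zero-divisor). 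Hence $\fP_B(W)$ has bounded height for all $W$, which is exactly the condition that the fiber functor $W \mapsto \fP_B(W)$ lands in $\Mod^{\phz,\mathrm{bh}}_{\fS_B}$; by Theorem \ref{catbundle} this fiber functor corresponds to a $G$-bundle, namely $\fP_B$ itself, making $(\fP_B,\phi_{\fP_B})$ a $G$-Kisin module, and the chosen isomorphism over $\cO_{\cE,B}$ gives the structure of a $G$-Kisin lattice of $P_B$.

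The main obstacle I expect is the Tannakian bookkeeping: one must check that $W \mapsto \fP_B(W)$ really is a \emph{faithful exact tensor} functor into $\Mod^{\phz,\mathrm{bh}}_{\fS_B}$ and agrees with the fiber functor attached to the bundle $\fP_B$ — faithfulness and exactness are inherited from $\fP_B$ being a $G$-bundle (so $W\mapsto \fP_B(W)$ is already a fiber functor into $\Vect_{\fS_B}$ by Theorem \ref{catbundle}), and the point is only that its image consists of bounded-height modules, which is the propagation argument above applied to the single faithful $V$. A secondary technical point is the interplay with inverting $E(u)$: one uses Lemma \ref{descent} (Beauville–Laszlo-style gluing along the non-zero-divisor $E(u)$, or rather the statement that a bundle on $\Spec\fS_B$ is determined by its restrictions to $\Spec\fS_B[1/E(u)]$ and $\Spec\cO_{\cE,B}$) to ensure that the extension $\fP_B$ on $\Spec\fS_B$ is uniquely pinned down by the lattice $\fP_B(V)$ together with the generic identification, so that the equivalence of the two descriptions is not merely essentially surjective but an equivalence of categories. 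I would spell this last compatibility out explicitly, as it is where the hypothesis that $V$ is faithful (closed immersion $G \hookrightarrow \GL(V)$) is genuinely used.
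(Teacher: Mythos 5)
Your overall strategy matches the paper's: reduce to showing bounded height is preserved under tensor products, duals, and subquotients, and then appeal to the fact (cited in the paper as \cite[C.1.7]{LevinThesis}) that a faithful $V$ tensor-generates $\sideset{^f}{_{\La}}\Rep(G)$.

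However, the step where the two proofs part ways is the one you compress into a single clause: ``a subobject or quotient of a bounded-height module is bounded-height since $E(u)$ is a non-zero-divisor.'' This is not an argument, and it is exactly the non-trivial content of the proposition. The paper's proof isolates two points you skip. First, one must pass to \emph{saturated} subrepresentations $W'' \subset W$ so that the induced sequence of $\fS_B$-lattices $0 \ra \fM_B \ra \fN_B \ra \fL_B \ra 0$ is exact, and then observe that $\fM_B = M_B \cap \fN_B$ is automatically $\phi$-stable (similarly for the quotient $\fL_B$) — this is not a consequence of $E(u)$ being a non-zero divisor but of the intersection description of the sub-lattice together with the fact that $\phi$ is defined integrally on $\fN_B$ after twisting to the effective case $[0,h]$. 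Second, one then applies the snake lemma to the commutative ladder comparing $\phz^*$ of the short exact sequence with the sequence itself: since the linearized Frobenii are injective (they are isomorphisms over $\cO_{\cE,B}$), the cokernels form a short exact sequence, so $E(u)^h$ killing $\Coker(\phi_{\fN_B})$ forces it to kill $\Coker(\phi_{\fM_B})$ and $\Coker(\phi_{\fL_B})$ as well. Without this diagram chase your claim that heights are bounded on subquotients is unjustified, so there is a genuine gap at the crux of the argument even though the scaffolding around it is correct.

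A minor secondary point: your justification for extending the Frobenius from $\cO_{\cE,B}$ to $\fS_B[1/E(u)]$ (``because by hypothesis both sides glue to the corresponding $\cO_{\cE,B}$-bundles'') is circular as phrased. The paper treats this as part of ``unwinding definitions,'' and indeed it is cleanest to observe that bounded height for $\fP_B(V)$ directly produces the Frobenius over $\fS_B[1/E(u)]$ on $\fP_B(V)$, and then that this isomorphism lands in the closed subscheme $\Isom_G(\phz^*(\fP_B),\fP_B) \subset \Isom_{\GL(V)}(\phz^*(\fP_B(V)),\fP_B(V))$ because it does so after base change to $\cO_{\cE,B}$; but this is not what you wrote.
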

\begin{proof} The only claim which is does not follow from unwinding definitions is that if we have an extension $\fP_{B}$ such that $\fP_{B}(V) \subset P_B(V)$ is a Kisin lattice for a single faithful representation $V$, then $\fP_{B}(W) \subset P_B(W)$ is a Kisin lattice for all representations $W$ of $G$.

By \cite[C.1.7]{LevinThesis}, any $W \in \sideset{^f}{_{\La}}\Rep (G)$ can be written as a subquotient of direct sums of tensor products of $V$ and the dual of $V$.  It suffices then to prove that bounded height is stable under duals, tensor products, quotients, and saturated subrepresentations.

Duals and tensor products are easy to check.  For subquotients, let $0 \ra M_B \ra N_B \ra L_B \ra 0$ be an exact sequence of \'etale $(\cO_{\cE, B}, \phz)$-modules.  Suppose that the sequence is induced by an exact sequence
$$
0 \ra \fM_B \ra \fN_B \ra \fL_B \ra 0
$$
of projective $\fS_B$-lattices. Assume $\fN_B$ has bounded height with respect to $\phi_{N_B}$. By twisting, we can assume $\fN_B$ has height in $[0,h]$. 

Since $\fM_B = M_B \cap \fN_B$, $\fM_B$ is $\phi_{M_B}$-stable. Similarly, $\fL_B$ is $\phi_{L_B}$-stable. Consider the diagram
$$
\xymatrix{
0 \ar[r] & \phz^*(\fM_B) \ar[r] \ar[d]^{\phi_{M_B}} & \phz^*(\fN_B) \ar[r] \ar[d]^{\phi_{N_B}} & \phz^*(\fL_B) \ar[r] \ar[d]^{\phi_{L_B}} & 0 \\
0 \ar[r] & \fM_B \ar[r] & \fN_B \ar[r] & \fL_B \ar[r] & 0. \\
}
$$
All the linearizations are injective because they are isomorphisms at the level of $\cO_{\cE,B}$-modules. By the snake lemma, the sequence of cokernels is exact. If $E(u)^h$ kills $\Coker(\phi_{N_B})$, then it kills $\Coker(\phi_{M_B})$ and $\Coker(\phi_{P_B})$ as well.  Thus, $\fM_B$ and $\fP_B$ both have height in $[0,h]$ whenever $\fN_B$ does. 
\end{proof}

\begin{defn} For any $B$ as in Proposition \ref{normGequiv}, define
$$
T_{G, \fS_B}:\GMod^{\phz, \mathrm{bh}}_{\fS_B} \ra \GRep_B(\Gamma_{\infty})
$$
to be the composition $T_{G, \fS_B} := T_{G, B} \circ \Upsilon_G$.
\end{defn}


We end this section with an important full faithfulness result:
\begin{prop} \label{uniqueness} Assume $B$ is finite flat over $\La$. Then the natural extension map 
$$
\Upsilon_G:\GMod^{\phz, \mathrm{bh}}_{\fS_B} \rightarrow \GMod^{\phz}_{\cO_{\cE, B}}
$$
is fully faithful.
\end{prop}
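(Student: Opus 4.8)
The plan is to reduce the assertion to the corresponding full faithfulness for $\GL_n$ by means of the Tannakian formalism of Theorem \ref{catbundle}, and then to import the $\GL_n$ statement. By Theorem \ref{catbundle} and the discussion following Definition \ref{GKisinmodule}, the category $\GMod^{\phz,\bh}_{\fS_B}$ is equivalent to the category of faithful exact tensor functors $\sideset{^f}{_{\La}}\Rep(G) \to \Mod^{\phz,\bh}_{\fS_B}$, and similarly $\GMod^{\phz}_{\cO_{\cE,B}}$ is equivalent to the category of faithful exact tensor functors into $\Mod^{\phz,\et}_{\cO_{\cE,B}}$; since pushing out a $G$-bundle along a ring map commutes with base change, under these equivalences $\Upsilon_G$ is identified with post-composition by the extension-of-scalars functor $\Upsilon\colon \Mod^{\phz,\bh}_{\fS_B} \to \Mod^{\phz,\et}_{\cO_{\cE,B}}$, $\fM_B \mapsto \fM_B \otimes_{\fS_B} \cO_{\cE,B}$. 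It is then a formal fact that post-composition by a fully faithful functor is again fully faithful on categories of (monoidal) functors: lift each component of a given natural transformation uniquely, and observe that naturality and compatibility with $\otimes$ and with the unit object are equalities of morphisms which already hold after applying the faithful functor $\Upsilon$. Hence it suffices to prove that $\Upsilon$ itself is fully faithful.

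Faithfulness of $\Upsilon$ is immediate and uses no hypothesis on $B$: the inclusion $\fS \hookrightarrow \cO_{\cE}$ has $\Zp$-flat cokernel, so $\fS_B \hookrightarrow \cO_{\cE,B}$ for every $\La$-algebra $B$, and since any object of $\Mod^{\phz,\bh}_{\fS_B}$ is $\fS_B$-projective the unit map $\fM_B \hookrightarrow \Upsilon(\fM_B)$ is injective; thus a morphism of Kisin modules with bounded height is determined by its image in $\Hom(\Upsilon\fM_B,\Upsilon\fN_B)$. The substantive point is fullness of $\Upsilon$ for $B$ finite flat over $\La$. This is the $\GL_n$-case of the proposition: it is Kisin's theorem for $B$ finite flat over $\Zp$ (\cite{Fcrystals}, see also \cite{MFFGS}), and it is available for general $B$ finite flat over $\La$ from \cite{PST} and \cite{Balaji}. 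The flatness hypothesis is essential here, since $\Upsilon$ is not full when $B=\F$.

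I would recall the shape of the fullness argument as follows. After twisting both modules by a power of $\fS_B(1)$ one may assume $\fM_B$ and $\fN_B$ are effective of height in $[0,h]$; given a morphism $g\colon \Upsilon\fM_B \to \Upsilon\fN_B$ one must show $g(\fM_B)\subseteq\fN_B$. The elementary input is the identity $\fS_B = \cO_{\cE,B}\cap\fS_B[1/p]$ inside $\cO_{\cE,B}[1/p]$, which holds for $B=\Zp$ by inspection of Laurent series and is inherited by any $\Zp$-flat $B$; it implies $\fN_B = \Upsilon\fN_B\cap\fN_B[1/p]$, and since $g(\fM_B)\subseteq\Upsilon\fN_B$ automatically, the problem reduces to showing $g(\fM_B)\subseteq\fN_B[1/p]$, i.e.\ full faithfulness of $\Upsilon$ after inverting $p$. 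For this one considers the finitely generated $\fS_B$-submodule $\fL := g(\fM_B)+\fN_B$ of $\Upsilon\fN_B$, which is stable under Frobenius with $E(u)^h\fL \subseteq \fS_B\cdot\varphi(\fL)\subseteq\fL$, has $\fL/\fN_B$ $p$-torsion-free (as $\fN_B$ is $p$-saturated in $\Upsilon\fN_B$), and generates $\Upsilon\fN_B$ over $\cO_{\cE,B}$; the goal is to conclude $\fL = \fN_B$, i.e.\ $\fL/\fN_B = 0$. \textbf{I expect this last step to be the main obstacle}: it is the genuinely $p$-adic part of the argument — it does not follow from formal support considerations, it is where $\Zp$-flatness of $B$ is used decisively, and it is precisely the content of the cited integral $p$-adic Hodge theory results, which I would invoke rather than reprove here.
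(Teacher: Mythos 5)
Your proof is correct and takes essentially the same route as the paper: reduce to full faithfulness of the $\GL_n$ extension-of-scalars functor $\Upsilon$ and invoke the known integral $p$-adic Hodge theory result. The only cosmetic differences are that the paper phrases the reduction via a single faithful representation of $G$ rather than the full Tannakian formalism, and handles general finite flat $B$ by forgetting coefficients down to $\Zp$ (any finitely generated projective $\fS_B$-module is finite free over $\fS$) rather than citing a statement with coefficients directly.
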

\begin{proof} This follows from full faithfulness of $\Upsilon_{\GL_n}$ for all $n \geq 1$ by considering a faithful representation of $G$.  When $B = \Zp$, this is \cite[11.2.7]{PHT}.  One can reduce to this case by forgetting coefficients since any finitely generated projective $\fS_B$-module is finite free over $\fS$.
\end{proof}   

\subsection{Resolutions of $G$-valued deformations rings}

Fix a faithful representation $V$ of $G$ over $\La$ and integers $a, b$ with $a \leq b$.  We will use $V$ and $a, b$ to impose finiteness conditions on our moduli space.

\begin{defn}  Let $B$ be any $\La$-algebra. We say a $G$-Kisin lattice $\fP_{B}$ in $(P_{B}, \phi_{P_B}) \in \GMod^{\phz}_{\cO_{\cE, B}}$ has \emph{height in $[a,b]$} if $\fP_{B}(V)$ in $P_B(V)$ has height in $[a,b]$.
\end{defn}

For any finite local Artinian $\La$-algebra $A$ and any $(P_A, \phi_{P_A}) \in \GMod^{\phz}_{\cO_{\cE, A}}$, consider the following moduli problem over $\Spec A$: 
$$
X^{[a,b]}_{P_A}(B):= \{G \text{-Kisin lattices in }P_A \otimes_{\cO_{\cE, A}} \cO_{\cE, B} \text{ with height in }[a,b] \}/ \cong
$$
for any $A$-algebra $B$. 
 
\begin{thm} \label{GKisinproj} Assume that $P_A$ is a trivial bundle over $\Spec \cO_{\cE, A}$. The functor $X^{[a,b]}_{P_A}$ is represented by a closed finite type subscheme of the affine Grassmanian $\Gr_{G'}$ over $\Spec A$ where $G'$ is the Weil restriction $\Res_{(W \otimes_{\Zp} \La)/\La} G$.
\end{thm}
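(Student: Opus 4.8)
The plan is to realize $X^{[a,b]}_{P_A}$ as a bounded closed subfunctor of (the base change to $\Spec A$ of) the affine Grassmannian $\Gr_{G'}$ for the variable $u$, and then to invoke its representability by an ind-projective ind-scheme together with the projectivity of its Schubert varieties, as developed for Weil-restricted groups in \cite{LevinThesis} following \cite{PZ}. To unwind the moduli problem, note first that since $A$ is Artinian $p$ is nilpotent, so $\cO_{\cE, B} = \fS_B[1/u]$ for every $A$-algebra $B$; thus $P_{A, B} := P_A \otimes_{\cO_{\cE, A}} \cO_{\cE, B}$ is a $G$-bundle on $\Spec \fS_B[1/u]$. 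By Proposition \ref{onerepheight} a $G$-Kisin lattice of $P_{A, B}$ is precisely an extension of the underlying $G$-bundle $|P_{A, B}|$ to a $G$-bundle $\fP_B$ on $\Spec \fS_B$ such that $\fP_B(V) \subset P_{A, B}(V)$ is a Kisin lattice of bounded height for the Frobenius inherited from $\phi_{P_A}$ (the passage between the algebraic and formal descriptions of such an extension is the content of Lemma \ref{descent}). Because $P_A$ is a trivial bundle, a choice of trivialization converts this extension datum into a $G$-bundle on $\Spec \fS_B$ equipped with a trivialization over $\Spec \fS_B[1/u] = \Spec \cO_{\cE, B}$; since $W/\Zp$ is unramified, $W \otimes_{\Zp} \La$ is finite \'etale over $\La$, and Weil restriction then identifies such data with a $B$-point of $\Gr_{G'}$, $G' = \Res_{(W \otimes_{\Zp} \La)/\La} G$. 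Morphisms of $G$-Kisin lattices are isomorphisms respecting the fixed trivialization, hence unique when they exist, so this construction is an injection of functors $X^{[a,b]}_{P_A} \hookrightarrow \Gr_{G'} \times_{\Spec \La} \Spec A$.

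Next I would show that the condition ``height in $[a,b]$'' defines a closed subfunctor. By Proposition \ref{onerepheight} again, this condition may be tested after pushing forward along the single faithful representation $V$, where it reads $E(u)^a \fP_B(V) \supseteq \phi_{\fP_B}(V)\bigl(\phz^*(\fP_B(V))\bigr) \supseteq E(u)^b \fP_B(V)$, with $\phi_{\fP_B}(V)$ the restriction of $\phi_{P_A}(V)$ to the lattice. Evaluated on the universal point over $\Gr_{G'} \times_{\Spec \La} \Spec A$, each of the two containments is the vanishing of a morphism of coherent sheaves over $\Spec \fS_B$, functorial in the $A$-algebra $B$, hence a closed condition; conversely any $B$-point of $\Gr_{G'}$ satisfying both vanishings yields, via Proposition \ref{onerepheight}, a $G$-Kisin lattice of $P_{A, B}$ of height in $[a,b]$. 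Therefore $X^{[a,b]}_{P_A}$ is represented by a closed ind-subscheme of $\Gr_{G'} \times_{\Spec \La} \Spec A$.

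It remains to see this closed subscheme is of finite type over $A$, equivalently that it is contained in a single Schubert variety. After twisting by a power of $\fS(1)$ we reduce to the effective case $0 \le a \le b$, so that $\fP_B(V)$ is stable under $\phi_{\fP_B}(V) \circ \phz^*$ and $E(u)^b \fP_B(V) \subseteq \phi_{\fP_B}(V)\bigl(\phz^*(\fP_B(V))\bigr) \subseteq \fP_B(V)$. Since $P_A(V)$ is an \emph{\'etale} $(\cO_{\cE, A}, \phz)$-module, $\phi_{P_A}(V)$ is an isomorphism over $\cO_{\cE, A} = \fS_A[1/u]$, so the matrices of $\phi_{P_A}(V)$ and of $\phi_{P_A}(V)^{-1}$ relative to the reference lattice $\fS_A \otimes_{\La} V$ (supplied by the triviality of $P_A$) have $u$-poles of some bounded order --- this is where one uses that $A$ is Artinian, so that all the rings in play are Noetherian and these pole orders are finite. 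Comparing $\fP_B(V)$ with $\fS_B \otimes_{\La} V$ through the Frobenius relation then confines the relative position of $\fP_B(V)$ to a single dominant coweight $\nu$ of $G'$ depending only on $a$, $b$, $E(u)$ and $\phi_{P_A}$, exactly as in the $\GL_n$-case of \cite{Fcrystals, PST}. As $G'$ is a reductive group scheme over $\La$ (being the Weil restriction of the reductive $G$ along a finite \'etale extension), the Schubert variety $\Gr_{G', \le \nu}$ is a projective $\La$-scheme, and $X^{[a,b]}_{P_A}$ is a closed subscheme of the projective $A$-scheme $\Gr_{G', \le \nu} \times_{\Spec \La} \Spec A$; in particular it is a closed subscheme of $\Gr_{G'} \times_{\Spec \La} \Spec A$ of finite type over $A$, as claimed.

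The boundedness argument of the last paragraph is the step I expect to be the main obstacle: one must make precise that the possible ``wildness'' of the fixed Frobenius $\phi_{P_A}$ contributes only a bounded amount to the position of a $G$-Kisin lattice, and one must package the resulting bound as a coweight of $G'$ rather than merely of $\GL(V)$. It is also the one point where it is essential that the ad hoc affine Grassmannian attached to the possibly non-tamely-split group $G'$ shares the relevant properties --- ind-projectivity, and projectivity of its Schubert varieties --- of the classical affine Grassmannian; everything else follows formally from Proposition \ref{onerepheight}, Lemma \ref{descent}, Theorem \ref{catbundle} and the standard theory of \cite{LevinThesis, PZ}.
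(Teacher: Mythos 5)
Your overall strategy matches the paper's — identify the moduli problem with a subfunctor of $\Gr_{G'}$ via Beauville--Laszlo gluing, then show that subfunctor is closed and bounded — but the paper organizes the second half much more economically than you do. After identifying $X^{[a,b]}_{P_A}(B)$ with a subset of $\Gr_{G'}(B)$, the paper observes that evaluation at the faithful representation $V$ induces a map of functors $X^{[a,b]}_{P_A} \to X^{[a,b]}_{M_A}$, where $M_A := P_A(V)$, and that by Proposition \ref{onerepheight} the subset $X^{[a,b]}_{P_A}(B) \subset \Gr_{G'}(B)$ is \emph{precisely the preimage} of $X^{[a,b]}_{M_A}(B)$ under the natural map $\Gr_{G'} \to \Gr_{\Res_{(W\otimes_{\Zp}\La)/\La}\GL(V)}$. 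Since \cite[Proposition 1.3]{PST} already says $X^{[a,b]}_{M_A}$ is a closed finite-type subscheme of that larger affine Grassmannian, both closedness and finite type are inherited at once by taking the preimage. In other words, the paper delegates the entire boundedness analysis to the known $\GL_n$ statement, and never needs to produce a coweight bound for $G'$ directly.

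You instead attempt to re-derive both closedness (``vanishing of a morphism of coherent sheaves'') and boundedness from scratch, and you yourself flag the boundedness step as the main obstacle. That worry is justified: your argument gives a bound on the relative position of $\fP_B(V)$ inside $\Gr_{\Res\GL(V)}$, but you still need to know that this pulls back to a finite-type locus in $\Gr_{G'}$ — i.e., that the monomorphism $\Gr_{G'} \to \Gr_{\Res\GL(V)}$ has the boundedness property you are implicitly using when you ``package the resulting bound as a coweight of $G'$.'' The paper's preimage formulation sidesteps this entirely. So while your approach is not wrong in spirit, it leaves exactly the gap you identify, and the paper's maneuver closes it in one line. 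If you want to complete your version, you should either invoke the fact that $G \hookrightarrow \GL(V)$ closed with $G$ reductive induces a closed immersion of affine Grassmannians (so finite-type pulls back to finite-type), or simply switch to the paper's preimage argument, which makes the extra input unnecessary.
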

\begin{proof} By Proposition \ref{onerepheight}, $X^{[a,b]}_{P_A}(B)$ is the set of bundles over $\fS_B$ extending $P_B := P_A \otimes_{\cO_{\cE, A}} \cO_{\cE, B}$ with height in $[a,b]$ with respect to $V$. We want to identify this set with a subset of $\Gr_{G'}(B)$.  

Consider the following diagram
$$
\xymatrix{
\fS \otimes_{\Zp} B \ar[r] \ar[d] & (W \otimes_{\Zp} B)[\![u]\!] \ar[d] \\
\cO_{\cE, B} \ar[r] & (W \otimes_{\Zp} B)((u)),
}
$$
where the vertical arrows are localization at $u$ and the top horizontal arrow is $u$-adic completion. The Beauville-Laszlo gluing lemma (\ref{descent}) says that the set of extensions of $P_B$ to $\fS_B$ is in bijection with the set of extensions of $\widehat{P}_B$ to $W_B[\![u]\!]$, where $\widehat{P}_B$ is the $u$-adic completion.  This second set is in bijection with the $B$-points of the Weil restriction $\Res_{(W \otimes_{\Zp} \La)/\La} \Gr_G$ which is isomorphic to $\Gr_{G'}$ by \cite[Lemma 1.16]{AGRicharz} or \cite[3.4.2]{LevinThesis}. 

Set $M_A := P_A(V)$. By \cite[Proposition 1.3]{PST}, the functor $X_{M_A}^{[a,b]}$ of Kisin lattices in $M_A$ with height in $[a,b]$ is represented by a closed subscheme of $\Gr_{\Res_{(W \otimes_{\Zp} \La)/\La} \GL(V)}$. Evaluation at $V$ induces a map of functors,
\begin{equation} \label{m1}
X^{[a,b]}_{P_A} \ra X_{M_A}^{[a,b]}.
\end{equation}
The subset $X^{[a,b]}_{P_A}(B) \subset \Gr_{G'}(B)$ is exactly the preimage of $X^{[a,b]}_{M_A}(B)$ by Proposition \ref{onerepheight}.
\end{proof}

We now extend the construction beyond the Artinian setting by passing to the limit. Let $R$ be a complete local Noetherian $\La$-algebra with residue field $\F$.  Let $\eta:\Gamma_{\infty} \ra G(R)$ be a continuous representation. 

\begin{prop} \label{resolution} For any $n \geq 1$, let $\eta_n:\Gamma_{\infty} \ra G(R/m_R^n)$ denote the reduction mod $m_R^n$.  From $\{\eta_n\}$, we construct a system $\underline{M}_{G, R/m_R^n}(\eta_n) =: (P_{\eta_n}, \phi_n) \in \GMod^{\phz}_{\cO_{\cE, R/m_R^n}}$. Assume that $P_{\eta_1}$ is a trivial $G$-bundle. There exists a projective $R$-scheme 
$$
\Theta:X^{[a,b]}_{\eta} \ra \Spec R,
$$
whose reduction modulo $m_R^n$ is $X^{[a,b]}_{P_{\eta_n}}$ for any $n \geq 1$.
\end{prop}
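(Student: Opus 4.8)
The plan is to glue the Artinian representability result of Theorem \ref{GKisinproj} over the tower $\{R/m_R^n\}$ and then invoke formal GAGA / Grothendieck's existence theorem to descend from the formal scheme to an honest projective $R$-scheme. First I would recall that $R/m_R^n$ is a finite local Artinian $\La$-algebra, so Theorem \ref{GKisinproj} applies to each $(P_{\eta_n}, \phi_n) = \underline{M}_{G, R/m_R^n}(\eta_n)$ provided the relevant bundle is trivial. The hypothesis is only that $P_{\eta_1}$ is a trivial $G$-bundle over $\Spec \cO_{\cE, \F}$; I would upgrade this to triviality of each $P_{\eta_n}$ over $\Spec \cO_{\cE, R/m_R^n}$ by Proposition \ref{Gtriv}(1), since $m_R^n/m_R$ is nilpotent and $\cO_{\cE, R/m_R^n}$ is a $(\cO_{\cE, \F})$-algebra with nilpotent kernel (more precisely, reduce mod the nilpotent ideal $m_R^{\,} \cO_{\cE, R/m_R^n}$ and use that $G$ is smooth). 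Hence for every $n$ the functor $X^{[a,b]}_{P_{\eta_n}}$ is represented by a closed finite-type subscheme $X_n$ of the affine Grassmannian $\Gr_{G'}$ over $\Spec R/m_R^n$, where $G' = \Res_{(W \otimes_{\Zp} \La)/\La} G$.

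Next I would check the compatibility of this system under the transition maps $R/m_R^{n+1} \twoheadrightarrow R/m_R^n$. Since the formation of $\underline{M}_{G, B}$ commutes with finite base change (Proposition \ref{normGequiv}), we get $(P_{\eta_{n+1}}, \phi_{n+1}) \otimes_{R/m_R^{n+1}} R/m_R^n \cong (P_{\eta_n}, \phi_n)$, and unwinding the moduli description (a $G$-Kisin lattice in $P_B$ is a choice of extension to $\fS_B$, equivalently a $B$-point of $\Gr_{G'}$ satisfying the closed height-$[a,b]$ condition imposed via $V$, by Proposition \ref{onerepheight}) one sees $X_{n+1} \times_{\Spec R/m_R^{n+1}} \Spec R/m_R^n \cong X_n$ as closed subschemes of $\Gr_{G'}$. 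Thus $\{X_n\}$ forms a compatible system of closed subschemes of $\Gr_{G'} \otimes_\La R/m_R^n$, i.e. a closed formal subscheme $\widehat{X}$ of the formal completion of $\Gr_{G'} \otimes_\La R$ along its special fiber. Because each height-$[a,b]$ locus is cut out inside $\Gr_{G'}$ by the pullback of the closed condition on $X^{[a,b]}_{M_A}$ (bounded by the fixed representation $V$ and integers $a,b$), the $X_n$ are uniformly bounded: they all lie in a fixed quasi-compact (in fact projective over $\La$) closed subscheme $Y \subset \Gr_{G'}$ — a Schubert-type variety determined by $a,b$ and $\dim V$. This boundedness is what makes the next step work.

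Then I would apply Grothendieck's formal existence theorem (\cite[EGA III, 5.1.4]{}; or the "formal GAGA" for a projective morphism $Y \otimes_\La R \to \Spec R$). Concretely: $Y \otimes_\La R \to \Spec R$ is projective and $R$ is complete local Noetherian, so the category of closed subschemes of $Y \otimes_\La R$ is equivalent to the category of closed formal subschemes of its $m_R$-adic completion; hence the formal object $\widehat{X} = \varinjlim X_n$ algebraizes to a unique closed subscheme $X^{[a,b]}_\eta \subset Y \otimes_\La R$, which is therefore projective over $\Spec R$, with $X^{[a,b]}_\eta \otimes_R R/m_R^n \cong X_n = X^{[a,b]}_{P_{\eta_n}}$ for all $n$. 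Uniqueness of the algebraization gives the required property $\Theta \bmod m_R^n = X^{[a,b]}_{P_{\eta_n}}$, and defines $\Theta: X^{[a,b]}_\eta \to \Spec R$.

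The main obstacle is the boundedness/properness input needed to legitimately apply formal GAGA: a priori each $X_n$ is only a \emph{closed finite-type} subscheme of the non-quasi-compact ind-scheme $\Gr_{G'}$, and one must produce a single projective-over-$\La$ subscheme $Y$ of $\Gr_{G'}$ containing all of them. I expect this to follow from the fact that the height-$[a,b]$ condition, imposed via the fixed faithful $V$ and fixed $a \le b$, lands every $X^{[a,b]}_{M_A}$ inside a fixed bounded region of the $\GL(V)$-affine Grassmannian (a union of Schubert cells with coweights bounded in terms of $a$, $b$, $\dim V$, and $[K:\Qp]$), which is proper over $\La$; pulling back along the (locally closed, finite-type) map $\Gr_{G'} \to \Gr_{\Res \GL(V)}$ from the proof of Theorem \ref{GKisinproj} and taking the scheme-theoretic image gives the desired $Y$. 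Once boundedness is in hand, the rest is a routine application of completeness of $R$ and Grothendieck existence, together with the base-change compatibility of $\underline{M}_{G,-}$ already recorded in Proposition \ref{normGequiv}.
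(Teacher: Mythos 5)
Your proposal is correct and follows essentially the same route as the paper: pass to the Artinian quotients $R/m_R^n$, trivialize via Proposition \ref{Gtriv}(1), invoke Theorem \ref{GKisinproj} for each $n$, observe base-change compatibility via Proposition \ref{normGequiv}, and algebraize by formal GAGA. The one place where the two arguments diverge slightly is in how the formal GAGA step is packaged. You explicitly construct a uniform bound $Y \subset \Gr_{G'}$ (a single projective-over-$\La$ closed subscheme containing all the $X_n$) and then algebraize the closed formal subscheme of $(Y \otimes_\La R)^{\wedge}$. The paper instead uses the ind-projectivity of $\Gr_{G'}$ (since $G'$ is reductive) to produce a compatible system of ample line bundles on $\{X^{[a,b]}_{P_{\eta_n}}\}$ by restriction, then quotes EGA $\mathrm{III}_1$, 5.4.5 directly — properness of the formal scheme over $\Spf R$ reduces to properness of the bottom layer $X^{[a,b]}_{P_{\eta_1}}$ over $\F$, which is automatic from being closed of finite type in the ind-projective Grassmannian, so no explicit bound $Y$ needs to be exhibited. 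Your "main obstacle" paragraph correctly identifies why such a bound exists (the fixed height condition with respect to $V$ keeps the lattices in a bounded Schubert region), so your argument is complete; the paper's packaging just sidesteps making that explicit.
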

\begin{proof} By Proposition \ref{normGequiv}, there are natural isomorphisms $P_{\eta_{n+1}} \otimes_{\cO_{\cE, R/m_R^{n+1}}} \cO_{\cE, R/m_R^{n}} \cong P_{\eta_n}$ for all $n \geq 1$.  Since $P_{\eta_1}$ is a trivial $G$-bundle, all $P_{\eta_n}$ are trivial by Proposition \ref{Gtriv} (1) so we can apply Theorem \ref{GKisinproj}. Consider then the system $\{ X^{[a,b]}_{P_{\eta_n}} \}$ of schemes over $\{ R/m_R^n \}$. Since $G'$ is reductive, the affine Grassmanian $\Gr_{G'}$ is ind-projective (\cite[Theorem 3.3.11]{LevinThesis}). In particular, any ample line bundle on $\Gr_{G'}$ will restrict to a compatible system of ample line bundles on
$$
\{ X^{[a,b]}_{P_{\eta_n}} \}.
$$
By formal GAGA (EGA $\mathrm{III}_1$ 5.4.5), there exists a projective $R$-scheme $X^{[a,b]}_{\eta}$ whose reductions modulo $m_R^n$ are $X^{[a,b]}_{P_{\eta_n}}$.
\end{proof} 

\begin{rmk} Unlike for $\GL_n$, there are non-trivial $G$-bundles over $\Spec \F(\!(u)\!)$ which is why we need the assumption in Proposition \ref{resolution}.  If $P_{\eta_1}$ admits any $G$-Kisin lattice $\fP_{\eta_1}$, by Proposition \ref{Gtriv} (2), the $G$-bundle $\fP_{\eta_1}$ is trivial since $\fS_{\F}$ is a semi-local ring with finite residue fields.  Thus, the assumption in Proposition \ref{resolution} is natural if you are interested in studying $\Gamma_{\infty}$-representations of finite height.  By Steinberg's Theorem, one can always make $P_{\eta_1}$ trivial by passing to finite extension $\F'$ of $\F$. 
\end{rmk}     

We record for reference the following compatibility with base change:
\begin{prop} \label{fhfunctoriality}  Let $f:R \ra S$ be a local map of complete local Noetherian $\La$-algebras with finite residue fields of characteristic $p$. Let $\eta_S$ be the induced map $\Gamma_{\infty} \ra G(S)$.  Then, there is a natural map $f':X^{[a,b]}_{\eta_S} \ra X^{[a,b]}_{\eta}$ which makes the following diagram Cartesian:
$$
\xymatrix{
X^{[a,b]}_{\eta_S} \ar[r]^{f'} \ar[d] &  X^{[a,b]}_{\eta} \ar[d] \\
\Spec S \ar[r]^{f} & \Spec R. \\
}
$$
In particular, if $R \ra S$ is surjective, then $f'$ is a closed immersion. 
\end{prop}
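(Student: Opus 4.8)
The plan is to reduce everything to the Artinian level — where the spaces in question are honest moduli functors handled by Theorem \ref{GKisinproj} — establish the base-change statement there, and then algebraize back up via formal GAGA exactly as in the proof of Proposition \ref{resolution}. First I would record the elementary fact that, since $f$ is a local homomorphism, $m_R^n S \subseteq m_S^n$ for all $n \geq 1$, so the composite $R \to S \to S/m_S^n$ factors through $R/m_R^n$. Both $R/m_R^n$ and $S/m_S^n$ are Artinian local $\La$-algebras of finite length with finite residue fields (hence $\Zp$-finite), and $R/m_R^n \to S/m_S^n$ is a finite ring map. The reduction $\eta_{S,n}:\Gamma_\infty \to G(S/m_S^n)$ of $\eta_S$ is the composite of $\eta_n:\Gamma_\infty \to G(R/m_R^n)$ with $G(R/m_R^n) \to G(S/m_S^n)$, so Proposition \ref{normGequiv} furnishes a canonical isomorphism $P_{\eta_{S,n}} := \underline{M}_{G, S/m_S^n}(\eta_{S,n}) \cong P_{\eta_n} \otimes_{R/m_R^n} S/m_S^n$ in $\GMod^{\phz}_{\cO_{\cE, S/m_S^n}}$. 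In particular $P_{\eta_{S,n}}$ is a trivial $G$-bundle, being a base change of the trivial bundle $P_{\eta_n}$; the case $n=1$ shows the hypothesis of Proposition \ref{resolution} holds for $\eta_S$, so $X^{[a,b]}_{\eta_S}$ is defined and Theorem \ref{GKisinproj} applies to each $P_{\eta_{S,n}}$.

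Next I would unwind the moduli functor of Theorem \ref{GKisinproj} to see that it commutes with base change. Writing $A := R/m_R^n$ and $A' := S/m_S^n$, the isomorphism above gives $P_{\eta_{S,n}} \otimes_{\cO_{\cE, A'}} \cO_{\cE, B} \cong P_{\eta_n} \otimes_{\cO_{\cE, A}} \cO_{\cE, B}$ for every $A'$-algebra $B$, so the two moduli problems of $G$-Kisin lattices with height in $[a,b]$ literally coincide on $A'$-algebras. Since a fibre product of schemes represents the restriction of the functor of points, this yields an isomorphism $X^{[a,b]}_{P_{\eta_{S,n}}} \cong X^{[a,b]}_{P_{\eta_n}} \times_{\Spec R/m_R^n} \Spec S/m_S^n$ over $\Spec S/m_S^n$, compatibly with the transition maps in $n$.

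Finally I would pass to the limit. The scheme $X^{[a,b]}_{\eta} \times_{\Spec R} \Spec S$ is projective over $S$, and since $R \to S/m_S^n$ factors through $R/m_R^n$ its reduction modulo $m_S^n$ is $X^{[a,b]}_{P_{\eta_n}} \times_{\Spec R/m_R^n} \Spec S/m_S^n$, which by the previous paragraph equals $X^{[a,b]}_{P_{\eta_{S,n}}}$. On the other hand, $X^{[a,b]}_{\eta_S}$ is by construction the projective $S$-scheme whose reduction modulo $m_S^n$ is $X^{[a,b]}_{P_{\eta_{S,n}}}$ for every $n$. By the uniqueness part of formal GAGA (EGA $\mathrm{III}_1$ 5.4.5) these two projective $S$-schemes are canonically isomorphic, and I take $f'$ to be the composite $X^{[a,b]}_{\eta_S} \xrightarrow{\sim} X^{[a,b]}_{\eta} \times_{\Spec R} \Spec S \to X^{[a,b]}_{\eta}$; the displayed square is then Cartesian by construction. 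The last assertion follows immediately: if $f$ is surjective then $\Spec S \to \Spec R$ is a closed immersion, and closed immersions are stable under base change, so $f'$ is a closed immersion.

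The only point needing genuine care — rather than a real obstacle — is the bookkeeping between the two adic topologies (checking that the $m_S$-adic reductions of the $R$-scheme $X^{[a,b]}_{\eta} \times_{\Spec R} \Spec S$ are exactly the $X^{[a,b]}_{P_{\eta_{S,n}}}$) together with the application of Proposition \ref{normGequiv} to the finite, not necessarily flat, transition maps $R/m_R^n \to S/m_S^n$; everything else is formal manipulation of the constructions of \S 2.3.
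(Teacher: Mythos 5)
Your proof is correct. The paper states Proposition \ref{fhfunctoriality} without proof (it is "recorded for reference"), and your argument is precisely the one implicit in the construction of Proposition \ref{resolution}: reduce modulo $m_S^n$ using $m_R^nS \subseteq m_S^n$, apply the base-change compatibility of $\underline{M}_{G,\cdot}$ from Proposition \ref{normGequiv} to identify $X^{[a,b]}_{P_{\eta_{S,n}}}$ with $X^{[a,b]}_{P_{\eta_n}} \times_{\Spec R/m_R^n} \Spec S/m_S^n$ at the level of moduli functors, and then algebraize via the full faithfulness part of formal GAGA. The two points you flag as needing care — that $R/m_R^n \to S/m_S^n$ is a finite map of $\Zp$-finite Artinian rings so that Proposition \ref{normGequiv} applies, and that the $m_S$-adic reductions of $X^{[a,b]}_{\eta}\times_{\Spec R}\Spec S$ match those of $X^{[a,b]}_{\eta_S}$ — are exactly the right ones, and you handle both correctly.
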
 

We will now study the projective $F$-morphism
$$
\Theta[1/p]: X^{[a,b]}_{\eta}[1/p] \ra \Spec R[1/p].
$$
We show it is a \emph{closed immersion} (this is essentially a consequence of \ref{uniqueness}) and that the closed points of the image are $G$-valued representations with height in $[a,b]$ in a suitable sense (\ref{fhimage}). Next, we show that if $\eta$ is the restriction of $\eta':\Gamma_K \ra G(R)$, then the image of $\Theta[1/p]$ contains all semi-stable representations with $\eta'(V)$ having Hodge-Tate weights in $[a,b]$. These are generalizations of results from \cite{PST}.

The following lemma will be useful at several key points:
\begin{lemma}[Extension Lemma] \label{extensionlemma}  Let $G$ be a smooth affine group scheme over $\La$. Let $C$ be a finite flat $\La$-algebra and let $U$ be the open complement of the finite set of closed points of $\Spec \fS_C$.
\begin{enumerate} [$(1)$]
\item There is an equivalence of categories between $G$-bundles $Q$ on $U$ and the category of triples $(\fP^*, P, \gamma)$ where $\fP^*$ is a $G$-bundle on $\Spec \fS_C[1/p]$, $P$ is $G$-bundle $\Spec \cO_{\cE, C}$, and $\gamma$ is an isomorphism of their restrictions to $\Spec \cO_{\cE, C}[1/p]$.
\item Assume $G$ is a reductive group scheme with connected fibers. Let $V$ be a faithful representation of $G$ over $\La$. If $Q$ be a $G$-bundle on $U$ such that the locally free coherent sheaf $Q(V)$ on $U$ extends to a projective $\fS_C$-module $\fM_C$, then there exists a unique $($up to unique isomorphism$)$ $G$-bundle $\widetilde{Q}$ over $\Spec \fS_C$ such that $\widetilde{Q}|_U \cong Q$ and $\widetilde{Q}(V) = \fM_C$. 
\end{enumerate}
\end{lemma}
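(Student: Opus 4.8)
The plan is to treat the two parts as variations on the same theme: removing a codimension-$\geq 2$ locus from $\Spec \fS_C$, and showing that $G$-bundle data glue and extend across it because $\fS_C$ is a regular (two-dimensional, after the finitely many closed points of characteristic $p$ are understood) scheme. For part (1), I would use the Tannakian dictionary of Theorem~\ref{catbundle}: a $G$-bundle on $U$ is the same as a faithful exact tensor functor from $\sideset{^f}{_{\La}}\Rep(G)$ to $\Vect_U$, and likewise a triple $(\fP^*,P,\gamma)$ amounts to compatible gluing data of vector bundles for each representation. So it suffices to establish the statement for vector bundles and check that the equivalence is tensor-exact and faithful (hence passes through the Tannakian formalism). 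The geometric input is the open cover: $\Spec \fS_C[1/p]$ together with $\Spec \cO_{\cE,C}$ cover $U$, with overlap $\Spec \cO_{\cE,C}[1/p]$. Indeed $\fS_C = W[\![u]\!]\otimes_{\Zp} C$ has maximal ideals only over $(p,u)$, and away from those $u$ or $p$ is invertible; localizing at $u$ gives $\cO_{\cE,C}$ (after $p$-adic completion — here one invokes Beauville--Laszlo, Lemma~\ref{descent}, exactly as in the proof of Theorem~\ref{GKisinproj}) and inverting $p$ gives $\fS_C[1/p]$. Thus $U$ is covered by these two opens and part (1) is the statement that a bundle on $U$ is equivalent to bundles on the two opens plus a gluing isomorphism on the overlap, which is fppf (or even Zariski) descent; smoothness of $G$ guarantees $G$-bundles are fppf-locally trivial so descent applies.

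For part (2), the idea is that once $Q(V)$ is known to extend to the projective $\fS_C$-module $\fM_C$, the whole tensor functor extends. Concretely: by \cite[C.1.7]{LevinThesis} every $W \in \sideset{^f}{_{\La}}\Rep(G)$ is a subquotient of direct sums of tensor powers of $V$ and $V^\vee$; the coherent sheaf $Q(W)$ on $U$ is built from $Q(V)$ by these operations, and since $\fM_C$ is a finite projective $\fS_C$-module, the corresponding tensor/dual/subquotient constructions produce a finite projective (in the subquotient case, one uses that the cokernel is $u,p$-torsion-free on $U$, hence the saturation is reflexive, hence — over the regular ring $\fS_C$ — projective after checking at height-one primes and using that a reflexive module over a two-dimensional regular local ring is free) $\fS_C$-module $\fM_{C,W}$ extending $Q(W)$. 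Functoriality in $W$ and compatibility with $\otimes$ follow from uniqueness: two extensions of a coherent sheaf from $U$ to $\Spec \fS_C$ that are both reflexive agree, because $\fS_C$ is normal and $U$ contains all codimension-$\leq 1$ points, so $j_* j^* \fM = \fM$ for reflexive $\fM$ (with $j\colon U \hookrightarrow \Spec \fS_C$). Hence $W \mapsto \fM_{C,W}$ is a faithful exact tensor functor to $\Vect_{\fS_C}$, and Theorem~\ref{catbundle} converts it back into a $G$-bundle $\widetilde Q$ over $\Spec \fS_C$ with $\widetilde Q|_U \cong Q$ and $\widetilde Q(V) = \fM_C$. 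Uniqueness of $\widetilde Q$ up to unique isomorphism follows from the same $j_* = \mathrm{id}$ principle applied to $\Isom$-schemes (or simply: the equivalence of Theorem~\ref{catbundle} is faithful, and the extended tensor functor is uniquely determined).

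The main obstacle, and the step that needs genuine care rather than formal nonsense, is the subquotient case in part (2): showing that a saturated subsheaf or quotient of an extended bundle is again a \emph{projective} (not merely reflexive or torsion-free) $\fS_C$-module. The ring $\fS_C$ is not a domain in general (if $C$ is not a domain) and has dimension two over its closed points, so one must argue that reflexive $=$ projective here — this uses that $\fS_C$ is a regular ring (it is, being formally smooth over $W$ componentwise, or visibly regular as $W[\![u]\!]\otimes_{\Zp} C$ with $C$ finite flat regular over $\La$; if $C$ is not regular one localizes and works component by component, or uses that the relevant modules are projective iff locally free and checks at the finitely many closed points where the module, being the double dual of something already locally free in codimension one, is automatically free by Auslander--Buchsbaum or the Serre-type criterion). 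One should also double-check that the hypothesis "$Q(V)$ extends to a projective $\fS_C$-module" is genuinely used to seed the induction and is not circular — it is what pins down which extension to take at the level of $V$, and uniqueness propagates it to all $W$. Modulo this regularity bookkeeping, both parts reduce to the Tannakian formalism of Theorem~\ref{catbundle} plus the elementary geometry of $\Spec \fS_C$ minus its closed points.
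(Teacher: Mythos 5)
For part (1) your route is essentially the paper's: cover $U$ by the two Zariski opens $\Spec \fS_C[1/p]$ and $\Spec \fS_C[1/u]$, and identify $G$-bundles on $\Spec \fS_C[1/u]$ with gluing data over the $p$-adic completion $\cO_{\cE,C}$ and the ``generic fiber'' $\fS_C[1/u,1/p]$ via Beauville--Laszlo (Lemma~\ref{descent}). Your phrasing initially suggests that $\Spec\cO_{\cE,C}$ and $\Spec\fS_C[1/p]$ themselves form an open cover of $U$, which is not literally true since $\Spec\cO_{\cE,C}$ is not a Zariski open of $\Spec\fS_C$; you partially fix this by invoking BL, and the substance is the same as the paper's, so this part is fine modulo presentation.

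For part (2), however, you take a genuinely different route, and there the gaps are real. The paper's proof avoids the Tannakian ``extend $Q(W)$ for every $W$'' construction entirely. Instead it uses that the functor $\mathrm{Fib}_{\fM_C}$ of pairs $(P,\alpha)$ with $P$ a $G$-bundle and $\alpha\colon P(V)\cong\fM_C$ is representable by an \emph{affine} $\fS_C$-scheme (Proposition~\ref{changeofgroup} together with the Matsushima-type statement that $\GL(V)/G$ is affine for $G$ reductive), and then observes that $\Gamma(U,\cO_U)=\fS_C$, so a $U$-point of any affine $\fS_C$-scheme extends uniquely to an $\fS_C$-point. The bundle $Q$ together with the identification $Q(V)\cong\fM_C|_U$ is exactly such a $U$-point, and the $\fS_C$-point it extends to is the desired $\widetilde Q$; uniqueness follows by the same device applied to the affine $\Isom$-scheme, and that part needs no reductivity. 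No extension of $Q(W)$ for general $W$, no reflexivity, no projectivity of double duals, and no exactness verification are required.

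Your Tannakian approach runs into two problems the paper's argument is specifically designed to sidestep. First, in the subquotient step you need ``reflexive implies projective'' over $\fS_C$. That holds over a two-dimensional \emph{regular} local ring, but $\fS_C=(W\otimes_{\Zp}C)[[u]]$ is regular only when $C$ is regular, and $C$ is merely assumed finite flat over $\La$; non-regular such $C$ exist (e.g.\ $\La[x]/(x^2-p^3)$). The workarounds you float (Serre's criterion for local freeness, Auslander--Buchsbaum) all feed back into regularity, so they do not close the gap. Second, even if each $Q(W)$ extended to a projective $\fS_C$-module, you still owe an argument that the resulting assignment $W\mapsto\fM_{C,W}$ is \emph{exact} as a tensor functor before Theorem~\ref{catbundle} applies; you assert this but do not prove it, and it is not automatic---the paper flags exactly this exactness worry for an analogous candidate functor in the proof of Proposition~\ref{Gssimpliesfh}. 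The upshot is that your part (2) argument would need the extra hypothesis that $C$ be regular plus a separate exactness argument, whereas the paper's $\mathrm{Fib}$-scheme proof works for any finite flat $C$ with a two-line Hartogs argument, since $\fS_C$ is Cohen--Macaulay of dimension two (hence $S_2$) for $C$ finite flat over $\La$ even without regularity.
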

\begin{proof}
First note that we can write $U$ as the union of $\Spec \fS_C[1/u]$ and $\Spec \fS_C[1/p]$. Recall also that $\cO_{\cE, C}$ is the $p$-adic completion of $\fS_C[1/u]$. Since $p$ is non-zero divisor in $\fS_C[1/u]$, we can apply the gluing lemma (\ref{descent}) to $P$ and $\fP^*[1/u]$ to construct a $G$-bundle $Q'$ on $\Spec \fS_C[1/u]$ which by construction is isomorphic to $\fP^*$ along $\Spec \fS_C[1/u, 1/p]$. The $G$-bundles $\fP^*$ and $Q'$ glue to give bundle $Q$ over $U$. Each step in the construction is a categorical equivalence. 

For part (2), consider the functor $|\text{Fib}_{\fM_C}|$ which by Lemma \ref{changeofgroup} and \cite[Theorem C.2.5]{LevinThesis} is represented by an affine scheme $Y$. $\fM_C$ defines a $U$-point of $\text{Fib}_{\fM_C}$.  Since $\Gamma(U, \cO_U) = \fS_C$, we deduce that 
$$
\Hom_{\fS_C}(\Spec \fS_C, \text{Fib}_{\fM_C}) = \Hom_{\fS_C}(U, \text{Fib}_{\fM_C}).
$$ 
A $\fS_C$-point of $\text{Fib}_{\fM_C}$ is exactly a bundle $\widetilde{Q}$ extending $Q$ and mapping to $\fM_C$. 

A similar argument, using that the Isom-scheme between $G$-bundles is representable by an affine scheme, shows that if an extension exists it is unique up to unique isomorphism (without any reductivity hypotheses).  
\end{proof} 

Let $B$ be any finite local $F$-algebra with residue field $F'$. Define $B^0$ to be the subring of elements which map to $\cO_{F'}$ modulo the maximal ideal of $B$. Let $\text{Int}_B$ denote the set of finitely generated $\cO_{F'}$-subalgebras $C$ of $B^0$ such that $C[1/p] = B$. 

\begin{defn} \label{defnbh} A continuous homomorphism $\eta:\Gamma_{\infty} \ra G(B)$ has \emph{bounded height} if there exists a $C \in \text{Int}_B$ and $g \in G(B)$ such that
\begin{enumerate} [$(1)$]
\item $\eta'_C := g \eta g^{-1}$ factors through $G(C)$; 
\item $\underline{M}_{G,C}(\eta'_C) \in \GMod^{\phz}_{\cO_{\cE, C}}$ admits a $G$-Kisin lattice of bounded height.
\end{enumerate}  
We define \emph{height in $[a,b]$} with respect to the chosen faithful representation $V$ by replacing bounded height in (2) with height in $[a,b]$.
\end{defn}

\begin{lemma} \label{fhwithcoeff} Let $B$ be a finite local $\Qp$-algebra and choose $C \in \Int_B$ and $M_C \in \Mod^{\varphi, \et}_{\cO_{\cE, C}}$.  If $M_C$ considered as an $\cO_{\cE}$-module has bounded height $($resp. height in $[a,b])$, then there exists some $C' \supset C$ in $\Int_B$, such that $M_C \otimes_C C'$ has bounded height $($resp. height in $[a,b])$.  
\end{lemma}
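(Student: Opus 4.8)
The plan is to exhibit a canonical $\fS_C$-lattice in $M_C$ and then to repair, by enlarging $C$, the only way in which it can fail to be $\fS_C$-projective.

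\emph{Step 1: a canonical $\fS_C$-lattice.} By hypothesis the $\cO_{\cE}$-module underlying $M_C$ admits an $\fS$-lattice $\fN$ with $\varphi(\varphi^*\fN)$ sandwiched between powers of $E(u)$ (cf.\ Definition \ref{defnheight}); since such an $\fN$ is finite projective, hence free, over $\fS=W[\![u]\!]$, the module $M_C\cong\fN\otimes_{\fS}\cO_{\cE}$ is in particular finite over $\cO_{\cE}$. Now I would invoke full faithfulness of the \'etale realization $\Upsilon_{\GL_n}$ over $\Zp$ (\cite[11.2.7]{PHT}, the basic input of Proposition \ref{uniqueness}) applied to $\fN$: on the one hand, any two such lattices coincide (the identity of $M_C$ lifts to mutually inverse $\fS$-linear maps), so $\fN$ is canonical; on the other hand the bijection $\End^{\varphi}_{\fS}(\fN)\xrightarrow{\sim}\End^{\varphi}_{\cO_{\cE}}(M_C)$ shows that every $\varphi$-equivariant $\cO_{\cE}$-endomorphism of $M_C$ preserves $\fN$, and since $\varphi$ acts trivially on $C$ this applies to multiplication by each $c\in C$. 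Hence $\fN$ is a $\varphi$-stable $\fS_C$-submodule of $M_C$, of the given bounded height (resp.\ height in $[a,b]$), with $\fN\otimes_{\fS_C}\cO_{\cE,C}\cong M_C$; and the resulting inclusion $\cO_{\cE,C}\hookrightarrow\End_{\cO_{\cE}}(M_C)$ forces $\cO_{\cE,C}$, hence $C$, to be module-finite (and so free) over $\Zp$. Thus $\fN$ is a Kisin module over $\fS_C$ of the required height, \emph{provided} it is $\fS_C$-projective.

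\emph{Step 2: locating the obstruction.} As $\fN$ is $\fS$-free it is $\fS$-flat, so by the fibrewise flatness criterion for $\fS\to\fS_C$ (a finite-type map of Noetherian rings, $\fN$ finite over $\fS_C$) the module $\fN$ is $\fS_C$-flat — hence, being finite over $\fS_C$, projective — if and only if $\fN\otimes_{\fS}\kappa(\mathfrak{q})$ is flat over $\fS_C\otimes_{\fS}\kappa(\mathfrak{q})$ for every $\mathfrak{q}\in\Spec\fS$. The fibre at $\mathfrak{q}=(p)$ is controlled by $M_C/pM_C$ being projective over $\cO_{\cE,C}/p$, and the fibres not containing $p$ reduce to $\fN[1/p]$ being projective over $\fS_B=\fS_C[1/p]$, which follows from $M_B$ being $\cO_{\cE,B}$-projective together with the good behaviour of $\fS_B$ over the Dedekind domain $\fS[1/p]$. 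So the only possible obstruction sits at the closed point $\mathfrak{m}_{\fS}=(p,u)$: the failure of the finite-dimensional $k$-vector space $\fN/\mathfrak{m}_{\fS}\fN$ to be free over the Artinian $\F_p$-algebra $\fS_C\otimes_{\fS}k=k\otimes_{\Zp}C$, i.e.\ a degeneration of the $C$-action on $\fN$ modulo $\mathfrak{m}_{\fS}$.

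\emph{Step 3: removing the obstruction by enlarging $C$.} The point is that the $C$-action on $\fN$ commutes with the bounded-height Frobenius: writing $\varphi_{\fN}=\Phi\varphi$ and $N_c\in\End_{\fS}(\fN)$ for the matrix of $c\in C$, one has $\varphi(N_c)=\Phi^{-1}N_c\Phi$, where $\Phi$ and $E(u)^{-1}\Phi^{-1}$ are controlled up to a power of $E(u)$. Iterating this relation modulo $p$ shows that any ``$u$-adic'' part of the degeneration of $N_c$ is forced to be infinitely $u$-divisible, hence zero modulo $p$; thus the degeneration modulo $\mathfrak{m}_{\fS}$ is purely $p$-adic, carried by the nilradical $\mathfrak{q}_0$ of $C$ (which equals $\mathfrak{m}_B\cap C$). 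Now for $x\in\mathfrak{q}_0$ and $N\gg 0$ the element $x/p^{N}$ still lies in $B^0$ — because $\mathfrak{m}_B$ is a $\Qp$-vector space, hence $p$-divisible — and $C':=C[x/p^{N}]$ still has $C'[1/p]=B$, so $C'\in\Int_B$. After adjoining finitely many such $p$-divided generators of $\mathfrak{q}_0$ the resulting $C'$-action on $\fN\otimes_C C'$ is non-degenerate modulo $\mathfrak{m}_{\fS}$, so the fibre $\fN\otimes_C C'\otimes_{\fS}k$ is free over $\fS_{C'}\otimes_{\fS}k$ and therefore, by Step 2, $\fN\otimes_C C'$ is projective over $\fS_{C'}$. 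Then $\fN\otimes_C C'$ is a Kisin lattice of bounded height (resp.\ height in $[a,b]$) in $M_C\otimes_C C'=M_{C'}$, which is exactly the assertion.

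I expect the heart of the argument to be Step 3: making precise, for an arbitrary module-finite $\Zp$-algebra $C$ (not just $\Zp[\epsilon]/\epsilon^2$), that bounded height forbids ``$u$-adic'' degeneration, and organizing the successive ``divisions by $p$'' so that the process terminates and genuinely achieves freeness of the closed fibre while remaining inside $\Int_B$. A secondary point needing care is the claim in Step 2 that $\fN[1/p]$ is $\fS_B$-projective when $B$ is non-reduced.
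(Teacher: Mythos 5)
Your approach is genuinely different from the paper's. The paper (following the proof of \cite[Proposition 1.6.4(2)]{PST}) first passes to the residue field $F'$ to get the unique Kisin lattice $\fM_{\cO_{F'}}$ in $M_C\otimes_C\cO_{F'}$ --- automatically $\fS_{\cO_{F'}}$-projective because $\cO_{F'}$ is a DVR --- and then \emph{lifts} it through the nilpotent filtration $C\supset\mathfrak q\supset\mathfrak q^2\supset\cdots$; the enlargement $C\rightsquigarrow C'$ is forced by the need to make each successive lift $\phz$-stable, and one never touches the coefficient-free lattice directly. You instead take the unique $\fS$-lattice $\fN$ of $M_C$ over $\Zp$, endow it with a $C$-action via full faithfulness, and try to repair the resulting $\fS_C$-module where it fails to be projective. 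Steps 1 and 2 are sound (and the worry you raise at the end of Step 2 is handled by \cite[Corollary 1.6.3]{PST}, which the paper cites elsewhere).

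The gap is in Step 3, specifically in the claim that iterating $\bar N_c=\bar\Phi\,\bar N_c(u^p)\,\bar\Phi^{-1}$ modulo $p$ forces any $u$-adic part of the degeneration to vanish. Tracking $u$-adic valuations (using that $\bar\Phi$ and $u^{e(b-a)}\bar\Phi^{-1}$ have entries in $k[\![u]\!]$, with $e=[K:K_0]$) gives, after $k$ iterations, $m\ge p^k m-e(b-a)(1+p+\cdots+p^{k-1})$, which stabilizes to the bound $m\le e(b-a)/(p-1)$; it does not give $m=0$, and further iterations do not improve it. So as soon as $e(b-a)\ge p-1$ the argument fails to rule out a $u$-adic obstruction at the closed point, and such an obstruction is not ``carried by the nilradical of $C$'': it cannot be removed by adjoining $p$-divided elements of $\mathfrak q_0$, which is the only repair your Step 3 offers. (Your own Lemma~\ref{overF} and the Raynaud-type uniqueness invoked in Theorem~\ref{connectedunramified} are precisely phenomena confined to small height or small ramification, which should be a warning sign that the general $[a,b]$ case with ramified $K$ genuinely behaves differently.) There is also a secondary circularity to watch: the object that must become a Kisin lattice in $M_{C'}$ is the unique $\fS$-lattice of $M_{C'}$, and identifying it with $\fN\otimes_C C'$ already uses the projectivity you are trying to establish, since $\fN\otimes_{\fS_C}\fS_{C'}\to M_{C'}$ need not be injective when $\fN$ is not $\fS_C$-flat. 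The paper's lift-from-the-residue-field strategy sidesteps both problems, because the $u$-adic shape is frozen once and for all at the $\cO_{F'}$ level and only $\phz$-stability (a $p$-adic issue) needs to be bought by enlarging $C$.
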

\begin{proof}  This is the main content in the proof of part (2) of Proposition 1.6.4 in \cite{PST}. If $F'$ is the residue field of $B$, then one first constructs a Kisin lattice $\fM_{\cO_{F'}}$ in $M_C \otimes_C \cO_{F'}$. The Kisin lattice in $M_C \otimes_C {C'}$ is constructed by lifting $\fM_{\cO_{F'}}$ (the extension to $C'$ is required to insure that the lift is $\phi$-stable).  
\end{proof}

\begin{prop} \label{fhimage} The morphism $\Theta$ becomes a closed immersion after inverting $p$.  Furthermore, if $\Spec R_{\eta}^{[a,b]} \subset \Spec R$ is the scheme-theoretic image of $\Theta$, then for any finite $F$-algebra $B$, a $\La$-algebra map $x:R \ra B$ factors through $R_{\eta}^{[a,b]}$ if and only if $\eta \otimes_{R, x} B$ has height in $[a,b]$. 
\end{prop}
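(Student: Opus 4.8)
The plan is to adapt the $\GL_n$ argument of \cite[Proposition 1.6.4]{PST}, the new group-theoretic ingredients being the full faithfulness of $\Upsilon_G$ over $\La$-finite flat bases (Proposition \ref{uniqueness}), the gluing and extension statements for $G$-bundles on $\Spec \fS_C$ (Lemma \ref{extensionlemma}), and the reduction of height conditions to the single faithful representation $V$ (Proposition \ref{onerepheight}).

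\textbf{The closed immersion.} By Proposition \ref{resolution}, $\Theta$ is projective, hence proper, so $\Theta[1/p]$ is proper; since everything is of finite type over the Noetherian ring $R[1/p]$, it suffices to show $\Theta[1/p]$ is a monomorphism, and for this one checks that each fibre has at most one $B$-point for $B$ a finite local $F$-algebra. Given $x : R \ra B$, standard spreading-out (using that $R$ is complete local Noetherian and $B$ is a finite $F$-algebra) reduces us to the case where $x$ factors as $R \ra C \iarrow B$ with $C$ finite flat over $\La$ and $C[1/p] = B$, so that $C \in \Int_B$; by Proposition \ref{fhfunctoriality} the fibre in question is then the $B$-base change of $X^{[a,b]}_{\eta_C} \ra \Spec C$, where $\eta_C := \eta \otimes_{R,x} C$. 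Using Proposition \ref{onerepheight} together with Lemma \ref{fhwithcoeff} for the pushout along $V$ and the gluing/extension statements of Lemma \ref{extensionlemma}, any $B$-point of this fibre is a $G$-Kisin lattice of height in $[a,b]$ that is already defined over some $C' \in \Int_B$ containing $C$, i.e. an object of $\GMod^{\phz,\mathrm{bh}}_{\fS_{C'}}$ whose image under $\Upsilon_G$ is the prescribed \'etale $(\cO_{\cE,C'},\phz)$-module. As $C'$ is $\La$-finite flat, $\Upsilon_G$ is fully faithful (Proposition \ref{uniqueness}), so any two such lattices are identified by a unique isomorphism; hence the fibre has at most one $B$-point, and $\Theta[1/p]$ is a proper monomorphism, i.e. a closed immersion. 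Since $\Theta$ is proper over the Noetherian $R$, $\Gamma(X^{[a,b]}_{\eta},\cO)$ is $R$-finite and its formation commutes with the localization $R \ra R[1/p]$; writing $\Spec R^{[a,b]}_{\eta} = \Spec R/I$ for the scheme-theoretic image of $\Theta$, the scheme-theoretic image of $\Theta[1/p]$ is $\Spec R^{[a,b]}_{\eta}[1/p]$, and by the above $\Theta[1/p]$ is an isomorphism onto it.

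\textbf{The characterization of the image.} Let $x : R \ra B$ with $B$ a finite $F$-algebra. Since $p$ is a unit in $B$, $x(I) = 0$ iff $x(I)[1/p] = 0$, so $x$ factors through $R^{[a,b]}_{\eta}$ iff $x[1/p]$ factors through $R^{[a,b]}_{\eta}[1/p]$, iff (by the previous step) $x$ lifts to the necessarily unique $B$-point $y$ of $X^{[a,b]}_{\eta}$. It remains to prove that such a $y$ exists exactly when $\eta_x := \eta \otimes_{R,x} B$ has height in $[a,b]$. If it does, Definition \ref{defnbh} supplies $C \in \Int_B$ and $g \in G(B)$ with $g\eta_x g^{-1}$ valued in $G(C)$ and $\underline{M}_{G,C}(g\eta_x g^{-1})$ carrying a $G$-Kisin lattice of height in $[a,b]$; transporting it by $g^{-1}$ after inverting $p$ --- which does not affect the $E(u)$-power bounds defining the height on the $V$-pushout --- produces a $G$-Kisin lattice of height in $[a,b]$ extending $\underline{M}_{G,B}(\eta_x)$, hence a $B$-point $y$ over $x$. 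Conversely, given $y$, the reduction of the previous step presents $y$ as a $B$-point of $X^{[a,b]}_{\eta_C}$ for some $C \in \Int_B$ finite flat over $\La$; spreading this point out by Lemma \ref{fhwithcoeff} and reconstructing its $G$-structure from the $V$-pushout by Lemma \ref{extensionlemma}(2) realizes it, over a suitable $C' \in \Int_B$, as a $G$-Kisin lattice of height in $[a,b]$ for $\eta \otimes_{R,x} C'$, so $\eta_x$ has height in $[a,b]$ by Definition \ref{defnbh}.

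\textbf{Expected main difficulty.} The delicate part is matching the moduli scheme $X^{[a,b]}_{\eta}$ --- built by formal GAGA, with a transparent moduli description only modulo powers of $m_R$ --- against the definition of ``height in $[a,b]$'', which lives over the integral models $C \in \Int_B$: one must show a $B$-point descends to a $C'$-point for a suitable $C'$ with the full $G$-structure (not just the $\GL(V)$-structure handed over by \cite[Proposition 1.6.4]{PST}) surviving, and this is exactly what forces us to invoke Lemma \ref{extensionlemma}(2) and the smoothness/reductivity hypotheses on $G$. The uniqueness half, which yields the monomorphism, is by contrast immediate from Proposition \ref{uniqueness}.
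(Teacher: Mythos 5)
Your closed-immersion paragraph is essentially the argument of \cite[Prop.\ 1.6.4]{PST} run through Proposition \ref{uniqueness}, which matches the paper. But in the characterization of the image you have put the hard work in the wrong direction, and this leaves a genuine gap.

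In your ``height $\Rightarrow$ image'' step, after transporting the $G$-Kisin lattice for $\eta' = g\eta_xg^{-1}$ back by $g^{-1}\in G(B)$, you obtain a $G$-Kisin lattice over $\fS_B = \fS_C[1/p]$ and assert ``hence a $B$-point $y$ over $x$.'' This is exactly where the argument must not stop. The scheme $X^{[a,b]}_\eta$ is built by formal GAGA; its moduli interpretation in terms of $G$-Kisin lattices is available over Artinian quotients of $R$ and, via properness and Proposition \ref{fhfunctoriality}, over rings $C' \in \Int_B$. Since $B = C'[1/p]$ for each $C' \in \Int_B$ but is \emph{not} a filtered colimit of the $C'$, a lattice over $\fS_B$ need not descend to any $\fS_{C'}$, and producing such a descent is the entire content of this half of the proposition. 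The paper does it by gluing the $\fS_C[1/p]$-bundle $\fQ_C$ (the $g^{-1}$-transport) with $P_{\eta_C}$ over $\cO_{\cE,C}[1/p]$ via Lemma \ref{extensionlemma}(1) to get a $G$-bundle on the punctured spectrum, using \cite[Lemma 2.1.15]{Fcrystals} together with Lemma \ref{fhwithcoeff} applied to $V$ to arrange (after enlarging $C$) that the $V$-pushout extends to a projective $\fS_C$-module, and then invoking Lemma \ref{extensionlemma}(2) to extend the full $G$-bundle across the closed points. You omit this chain in the direction where it is actually needed.

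Symmetrically, your ``image $\Rightarrow$ height'' step invokes Lemma \ref{fhwithcoeff} and Lemma \ref{extensionlemma}(2) to ``spread out'' a $B$-point, but those lemmas do not serve that purpose and are not needed there: factoring $x$ through $x_C:R\ra C$ and using properness of $\Theta$ to lift $x_C$ to a $C$-point $y_C$ already supplies a $G$-Kisin lattice over $\fS_C$ witnessing Definition \ref{defnbh}. Your concluding ``expected main difficulty'' paragraph correctly identifies the descent of a $B$-point to a $C'$-point as the crux, but the body of the argument has mislocated it in the wrong half of the equivalence.
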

\begin{proof} The map $\Theta$ is injective on $C$-points for any finite flat $\La$-algebra $C$ by Proposition \ref{uniqueness}.  The proof of the first assertion is then the same as in \cite[Proposition 1.6.4]{PST}. 

For the second assertion, say $x:R \ra B$ factors through the $R_{\eta}^{[a,b]}$. Because $\Theta[1/p]$ is a closed immersion, $x:R \ra B$ comes from a $B$-point $y$ of $X^{[a,b]}_{\eta}$.  Any such $x$ is induced by $x_C:R \ra C$ for some $C \in \Int_B$.   By properness of $\Theta$, there exists $y_C \in X^{[a,b]}_{\eta}(C)$ such that $\Theta(y_C) = x_C$.  This implies that $\eta \otimes_{R, x_C} C$ has height in $[a,b]$ as a $G$-valued representation and hence $\eta \otimes_{R, x} B$ also has height in $[a,b]$ (see Definition \ref{defnbh}).

Now, let $x:R \ra B$ be a homomorphism such that $\eta_B := \eta \otimes_{R, x} B$ has height in $[a,b]$ as a $G$-valued representations. Any homomorphism $R \ra B$ factors through some $C \in \Int_B$ so that $\eta_B$ has image in $G(C)$; call this map $\eta_C$. We claim that there exist some $C' \supset C$ in $\Int_B$ such that $\eta_{C'} = \eta_C \otimes_C C'$ has height in $[a,b]$ and hence $x$ is in the image of $X^{[a,b]}_{\eta}(B)$. Essentially, we have to show that if one Galois stable ``lattice'' in $\eta_B$ has finite height then all ``lattices'' do.  For $\GL_n$, this is Lemma 2.1.15  in \cite{Fcrystals}.  We invoke the $\GL_n$ result below.     

Since $\eta_B$ has height in $[a,b]$, there exists $C' \in \Int_B$ and $g \in G(B)$ such that $\eta' = g \eta_B g^{-1}$ factors through $G(C')$ and has height in $[a,b]$. Enlarging $C$ if necessary, we assume both $\eta_C$ and $\eta'$ are valued in $G(C)$. Let $P_{\eta} := \underline{M}_{G, C} (\eta)$ and $P_{\eta'} := \underline{M}_{G, C}(\eta')$. Then $g$ induces an isomorphism
$$
P_{\eta'}[1/p] \cong P_{\eta_C}[1/p].
$$
Since $P_{\eta'}$ has a $G$-Kisin lattice with height in $[a,b]$, we get a bundle $\fQ_C$ over $\fS_{C}[1/p]$ extending $P_{\eta_C}[1/p]$. By Lemma \ref{extensionlemma}(1), $P_{\eta'}$ and $\fQ_C$ glue to give a bundle $Q_C$ over the complement of the closed points of $\Spec \fS_{C}$.

We would like to apply Lemma \ref{extensionlemma} (2). $P_{\eta_C}(V)$ has height in $[a,b]$ as an $\cO_{\cE}$-module by \cite[Lemma 2.1.15]{Fcrystals} since it corresponds to a lattice in $\eta_C(V)[1/p] \cong \eta'(V)[1/p]$.  By Lemma \ref{fhwithcoeff}, there exists $\widetilde{C} \supset C$ in $\Int_B$ such that $P_{\eta_C}(V) \otimes_C \widetilde{C}$ has height in $[a,b]$ as a $\cO_{\cE, \widetilde{C}}$-module.  Replace $C$ by $\widetilde{C}$. Then, if $\fM_C$ is the unique Kisin lattice in $P_{\eta_C}(V)$, we have
$$
\fM'_{C}[1/p] \cap P_{\eta_C}(V)   = \fM_C
$$
where $\fM'_C$ is the unique Kisin lattice in $P_{\eta'}(V)$.  This shows that $Q_C(V)$ extends across the closed points so we can apply Lemma \ref{extensionlemma} (2) to construct a $G$-Kisin lattice of $P_{\eta_C}$.
\end{proof}

Now, assume that $\eta$ is the restriction to $\Gamma_{\infty}$ of a continuous representation of $\Gamma_K$ which we continue to call $\eta$. Recall the definition of semi-stable for a $G$-valued representation:
\begin{defn} If $B$ is a finite $F$-algebra, a continuous representation $\eta_B:\Gamma_K \ra G_F(B)$ is \emph{semi-stable} (respectively \emph{crystalline}) if for all representations $W \in \Rep_F (G_F)$ the induced representation $\eta_B(W)$ on $W \otimes_{F} B$ is semi-stable (respectively crystalline).
\end{defn}

Note that because the semi-stable and crystalline conditions are stable under tensor products and subquotients, it suffices to check these conditions on a single faithful representation of $G_F$. 

\begin{rmk} \label{HTconv} Since we are working with covariant functors, our convention will be that the cyclotomic character has Hodge-Tate weight $-1$.  This is unfortunately opposite the convention in \cite{PST}.
\end{rmk}

The following Theorem generalizes \cite[Theorem 2.5.5]{PST}:
\begin{thm} \label{stlocus} Let $R$ be a complete local Noetherian $\La$-algebra with finite residue field and $\eta:\Gamma_K \ra G(R)$ a continuous representation. Given any $a, b$ integers with $a < b$, there exists a quotient $R^{[a,b], \st}_{\eta}$ $($resp. $R^{[a,b], \cris}_{\eta})$ of $R^{[a,b]}_{\eta}$ with the property that if $B$ is any finite $F$-algebra and $x:R \ra B$ a map of $\La$-algebras, then $x$ factors through $R^{[a,b],\st}_{\eta}$ $($resp. $R^{[a,b], \cris}_{\eta})$ if and only if $\eta_x:\Gamma_K \ra G(B)$ is semi-stable $($resp. crystalline$)$ and $\eta_x(V)$ has Hodge-Tate weights in $[a,b]$. 
\end{thm}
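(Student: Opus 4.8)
\noindent\emph{Proof proposal.}
The plan is to reduce the entire statement to the case $G=\GL(V)$, where it is \cite[Theorem 2.5.5]{PST}, and to transport the construction along the faithful representation $V$. The elementary input is Tannakian: semi-stability and crystallinity are stable under tensor products, duals and subquotients, and by \cite[C.1.7]{LevinThesis} every $W\in\sideset{^f}{_{\La}}\Rep (G)$ is a subquotient of a direct sum of tensor products of $V$ and its dual, so for a finite $F$-algebra $B$ and $x\colon R\to B$ the representation $\eta_x\colon\Gamma_K\to G_F(B)$ is semi-stable (resp.\ crystalline) if and only if $\eta_x(V)$ is. Moreover, the $\GL_n$-comparison between the height of a Kisin module and Hodge--Tate weights (\cite[Lemma 2.1.15]{Fcrystals}, \cite{PST}) shows that, for semi-stable $\eta_x$, having height in $[a,b]$ in the sense of Definition~\ref{defnbh} is equivalent to $\eta_x(V)$ having Hodge--Tate weights in $[a,b]$. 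So it suffices to produce a quotient of $R^{[a,b]}_{\eta}$ whose finite $F$-algebra points are exactly those $x$ for which $\eta_x(V)$ is semi-stable (resp.\ crystalline).

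Next I would set up the universal object. By Proposition~\ref{fhimage}, $\Theta$ becomes a closed immersion after inverting $p$ with scheme-theoretic image $\Spec R^{[a,b]}_{\eta}$; since formation of the scheme-theoretic image commutes with the flat localization $R\to R[1/p]$, this yields an isomorphism $X^{[a,b]}_{\eta}[1/p]\cong\Spec R^{[a,b]}_{\eta}[1/p]$, over which the moduli interpretation of $X^{[a,b]}_{\eta}$ provides a universal $G$-Kisin lattice. Pushing it out by $V$ gives a Kisin module $\fM$ of height in $[a,b]$ over $\fS_{R^{[a,b]}_{\eta}[1/p]}$ whose specialization at a point $x$ is the Kisin lattice of $\eta_x(V)|_{\Gamma_{\infty}}$. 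Now I invoke Kisin's theory of connections on Kisin modules as in the proof of \cite[Theorem 2.5.5]{PST}: because $\fM$ globally arises from a representation of the full group $\Gamma_K$, one attaches to it the associated module over Kisin's analytic ring $\cO$ together with its operator $N_{\nabla}$, and the locus $Z^{\st}\subseteq\Spec R^{[a,b]}_{\eta}[1/p]$ over which this module carries the (necessarily unique) $\varphi$-compatible monodromy operator lifting $N_{\nabla}$ is closed and coincides with the locus where $\eta_x(V)$ is semi-stable; adding the vanishing of this operator cuts out a closed subscheme $Z^{\cris}\subseteq Z^{\st}$, the crystalline locus.

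Finally, define $R^{[a,b],\st}_{\eta}$ (resp.\ $R^{[a,b],\cris}_{\eta}$) to be the quotient of $R^{[a,b]}_{\eta}$ cutting out the scheme-theoretic closure of $Z^{\st}$ (resp.\ $Z^{\cris}$) inside $\Spec R^{[a,b]}_{\eta}$; taking the closure inside $\Spec R^{[a,b]}_{\eta}$ makes this a genuine quotient of $R^{[a,b]}_{\eta}$. For a finite $F$-algebra $B$ and $x\colon R\to B$, the ring $B$ is a $\Qp$-algebra, so $x$ factors through $R^{[a,b],\st}_{\eta}$ if and only if it factors through $R^{[a,b]}_{\eta}$ and the induced map $R^{[a,b]}_{\eta}[1/p]\to B$ factors through $Z^{\st}$ (one uses again that scheme-theoretic closure is compatible with the localization $R^{[a,b]}_{\eta}\to R^{[a,b]}_{\eta}[1/p]$, so $Z^{\st}$ is recovered as the generic fibre of its closure); by Proposition~\ref{fhimage} and the first paragraph this happens exactly when $\eta_x$ has height in $[a,b]$ and $\eta_x(V)$ is semi-stable, equivalently when $\eta_x$ is semi-stable and $\eta_x(V)$ has Hodge--Tate weights in $[a,b]$. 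The crystalline case is verbatim with $Z^{\cris}$ in place of $Z^{\st}$.

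The real content, and the main obstacle, is the closedness of the semi-stable and crystalline loci and the construction of the canonical monodromy operator over the parameter ring; this is imported wholesale from \cite{PST}, applied to $\fM$. The only point that is genuinely new for a general reductive $G$ is the Tannakian reduction to a faithful representation $V$ carried out in the first step, and it is routine.
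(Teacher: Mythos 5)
There is a genuine gap. The paper's own proof is short on the surface: it applies \cite[Theorem~2.5.5]{PST} to the $\GL(V)$-valued representation $\eta(V)$ to get quotients $R^{[a,b],\st}_{\eta(V)}$, $R^{[a,b],\cris}_{\eta(V)}$ of $R$, notes that the desired universal property is then automatic because semi-stability and crystallinity for $G$-valued representations can be checked on the single faithful representation $V$, and then observes that the \emph{only} remaining issue is to show that $R^{[a,b],\st}_{\eta(V)}$ is a quotient of $R^{[a,b]}_{\eta}$, i.e., that semi-stable (with $\eta_x(V)$ of Hodge--Tate weights in $[a,b]$) implies finite height as a $G$-valued representation. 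This last step is Proposition~\ref{Gssimpliesfh}, which the introduction flags explicitly as requiring ``an extra argument not present in the $\GL_n$-case'' via the Extension Lemma~\ref{extensionlemma}.

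Your proposal papers over precisely this step. You write that the $\GL_n$-comparison from \cite[Lemma~2.1.15]{Fcrystals} and \cite{PST} shows that, for semi-stable $\eta_x$, having height in $[a,b]$ in the sense of Definition~\ref{defnbh} is equivalent to $\eta_x(V)$ having Hodge--Tate weights in $[a,b]$, and in your closing paragraph you call the reduction to a faithful representation ``routine.'' But Definition~\ref{defnbh} requires the existence of a $G$-Kisin \emph{lattice}, i.e.\ a $G$-bundle over $\fS_C$ whose pushout by every $W\in\sideset{^f}{_\La}\Rep(G)$ is the bounded-height Kisin lattice; the $\GL_n$ result only produces, for each $W$ separately, a Kisin lattice $\fM_W$ in $P_{\eta_x}(W)$. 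Passing from the collection $\{\fM_W\}$ to an actual $G$-bundle is the hard point: one must show (i) the candidate fiber functor $W\mapsto\fM_W$ is \emph{exact} (which can fail for $\fS$-modules even when the image under $T_{\fS}$ is exact, as the paper notes via the example from \cite{LiuLattices}), and (ii) the resulting $G$-bundle on the punctured spectrum extends across the closed points of $\Spec\fS_C$. The paper's Proposition~\ref{Gssimpliesfh} handles (i) by inverting $p$ and using projectivity of $\fM_W[1/p]$ together with a delicate exactness argument, and handles (ii) by the Extension Lemma~\ref{extensionlemma}(2), which relies on $G$ being reductive so that $\GL(V)/G$ is affine. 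None of this is addressed in your write-up.

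A secondary remark: your construction of $R^{[a,b],\st}_{\eta}$ as the closure of a semi-stable locus $Z^{\st}\subset\Spec R^{[a,b]}_{\eta}[1/p]$, re-running Kisin's $N_\nabla$-machinery over the universal Kisin module, is more labor than needed. The paper simply takes the ready-made quotient $R^{[a,b],\st}_{\eta(V)}$ of $R$ produced by \cite[Theorem~2.5.5]{PST} and checks it factors through $R^{[a,b]}_{\eta}$; this avoids re-deriving closedness of the semi-stable locus, which is already packaged in the $\GL_n$ result. But even granting your construction, its universal property says that $x$ factors through the quotient iff $\eta_x$ has height in $[a,b]$ \emph{as a $G$-valued representation} and $\eta_x(V)$ is semi-stable, and the equivalence with ``$\eta_x$ semi-stable and $\eta_x(V)$ of Hodge--Tate weights in $[a,b]$'' requires precisely the implication in Proposition~\ref{Gssimpliesfh} that you have not proved.
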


Since the semi-stable and crystalline properties can be checked on a single faithful representation, the quotients $R^{[a,b], \text{st}}_{\eta(V)}$ and $R^{[a,b], \cris}_{\eta(V)}$ of $R$ constructed by applying \cite[Theorem 2.5.5]{PST} to $\eta(V)$ satisfy the universal property in Theorem \ref{stlocus} with respect to maps $x:R \ra B$ where $B$ is a finite $F$-algebra. What remains is to show that $R^{[a,b], \text{st}}_{\eta} : = R^{[a,b], \text{st}}_{\eta(V)}$ is a quotient of $R^{[a,b]}_{\eta}$, i.e., that ``semi-stable implies finite height.'' 

\begin{prop} \label{Gssimpliesfh} Let $R$ and $\eta$ be as in $\ref{stlocus}$. For any map $x:R \ra B$ with $B$ a finite local $F$-algebra, if the representation $\eta_x$ is semi-stable and if $\eta_x(V)$ has Hodge-Tate weights in $[a,b]$ then $x$ factors through $R_{\eta}^{[a,b]}$.
\end{prop}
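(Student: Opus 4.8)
The plan is to produce, after a harmless conjugation and a finite enlargement of the coefficients, a $G$-Kisin lattice of height in $[a,b]$ for the $\Gamma_\infty$-representation underlying $\eta_x$, and then to conclude by Proposition \ref{fhimage}. As in Definition \ref{defnbh}, I would first pick $C \in \Int_B$ and $g \in G(B)$ so that $g\eta_x g^{-1}$ takes values in $G(C)$; replacing $\eta_x$ by this conjugate --- which alters neither the hypotheses nor the conclusion --- I may assume $\eta_x$ factors through $G(C)$, with restriction $\eta_C : \Gamma_\infty \ra G(C)$. Recall that any $C \in \Int_B$ is finite flat over $\La$, so Propositions \ref{normGequiv} and \ref{uniqueness} apply; set $P := \underline{M}_{G,C}(\eta_C) \in \GMod^{\phz}_{\cO_{\cE,C}}$, so that $P(W) = \underline{M}_C(\eta_x(W)|_{\Gamma_\infty})$ for every $W \in \sideset{^f}{_{\La}}\Rep (G)$. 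Each $\eta_x(W)$ is a subquotient of sums of tensor powers of $V$ and $V^\vee$, hence semi-stable, and $\eta_x(V)$ has Hodge-Tate weights in $[a,b]$. Finally, since $\eta_x(V)$ is semi-stable with Hodge-Tate weights in $[a,b]$, Kisin's theorem that semi-stable representations have finite height (\cite[Theorem 2.5.5]{PST}, \cite{Fcrystals}) shows that $P(V)$, considered as an $\cO_{\cE}$-module, has height in $[a,b]$; so, enlarging $C$ within $\Int_B$ by Lemma \ref{fhwithcoeff}, I may assume in addition that $P(V)$ admits a Kisin lattice $\fM_C$ of height in $[a,b]$ over $\fS_C$.

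Next I would construct the $G$-structure rationally, that is over $\fS_C[1/p] = \fS_B$. Kisin's theory of semi-stable representations with coefficients (see \cite{PST}, building on \cite{Fcrystals}) attaches to each semi-stable $\eta_x(W)$ a finite projective $\fS_B$-module $\fN_W$ carrying a Frobenius of bounded height, functorially in $W$ and compatibly with tensor products, duals and subquotients, together with a Frobenius-compatible isomorphism $\fN_W \otimes_{\fS_B} \cO_{\cE,B} \cong P(W)[1/p]$. The tensor functor $W \mapsto \fN_W$ is a fiber functor from $\sideset{^f}{_{\La}}\Rep (G)$ to vector bundles on $\Spec \fS_B$ (cf.\ \cite{Broshi}), so Theorem \ref{catbundle} turns it into a $G$-bundle $\fP^*$ on $\Spec \fS_B$ carrying an isomorphism $\phi_{\fP^*} : \phz^*(\fP^*)[1/E(u)] \cong \fP^*[1/E(u)]$, and the comparison isomorphisms above assemble into a Frobenius-compatible isomorphism $\gamma$ from $\fP^*|_{\Spec \cO_{\cE,B}}$ to $P[1/p]$. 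By the Extension Lemma \ref{extensionlemma}(1) the triple $(\fP^*, P, \gamma)$ glues to a $G$-bundle $Q$ on the complement $U$ of the closed points of $\Spec \fS_C$; since $\Spec \fS_C[1/E(u)] \subseteq U$, the two Frobenii glue to an isomorphism $\phi_Q : \phz^*(Q)[1/E(u)] \cong Q[1/E(u)]$.

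Now I would descend the $G$-structure along $V$. By uniqueness of Kisin lattices, $\fM_C[1/p]$ coincides, as a submodule of $P(V)[1/p]$, with Kisin's module $\fN_V$, compatibly with the identifications over $\cO_{\cE,B}$; hence $Q(V)$, which is glued from $\fN_V$ and $P(V)$, equals the restriction $\fM_C|_U$ and therefore extends to the finite projective $\fS_C$-module $\fM_C$. The Extension Lemma \ref{extensionlemma}(2) then provides a $G$-bundle $\widetilde{Q}$ on $\Spec \fS_C$ with $\widetilde{Q}|_U \cong Q$ and $\widetilde{Q}(V) = \fM_C$; restricting $\phi_Q$ over $\Spec \fS_C[1/E(u)]$ equips $\widetilde{Q}$ with $\phi_{\widetilde{Q}} : \phz^*(\widetilde{Q})[1/E(u)] \cong \widetilde{Q}[1/E(u)]$, and unwinding the gluing of Lemma \ref{descent} shows $\Upsilon_G(\widetilde{Q}) \cong P$ in $\GMod^{\phz}_{\cO_{\cE,C}}$. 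Thus $(\widetilde{Q}, \phi_{\widetilde{Q}}) \in \GMod^{\phz, \mathrm{bh}}_{\fS_C}$ is a $G$-Kisin lattice of $P$ whose $V$-pushout has height in $[a,b]$, so by Proposition \ref{onerepheight} it has height in $[a,b]$. Hence $\eta_x$ has height in $[a,b]$ as a $G$-valued representation of $\Gamma_\infty$ (Definition \ref{defnbh}), and Proposition \ref{fhimage} shows $x$ factors through $R^{[a,b]}_{\eta}$.

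I expect the main obstacle to be this passage from the rational to the integral $G$-structure. Kisin's semi-stable functor controls the $G$-bundle only over $\fS_C[1/p]$, and one cannot descend it to $\fS_C$ by enlarging $C$ one representation at a time, because $G$ has infinitely many isomorphism classes of representations --- for $\GL_n$ there is only the standard one, which is why no analogue of the Extension Lemma is needed in \cite{MFFGS}. Lemma \ref{extensionlemma}(2), whose proof uses that $G$ is reductive with connected fibers and exploits the fixed faithful $V$, is precisely what upgrades ``integral on $V$, rational on all representations'' into ``integral on all representations.'' A secondary point requiring care --- the identification $\fM_C[1/p] = \fN_V$, and the bookkeeping with $E(u)$ and the closed points of $\Spec \fS_C$ --- relies on the full faithfulness of $\Upsilon_{\GL_n}$.
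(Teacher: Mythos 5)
The proposal follows essentially the same route as the paper: construct the $G$-bundle rationally over $\fS_B = \fS_C[1/p]$ from Kisin's semi-stable theory, glue with $P$ over the punctured spectrum $U$ via Lemma \ref{extensionlemma}(1), and then extend across the closed points via Lemma \ref{extensionlemma}(2) using the integral Kisin lattice $\fM_C$ in the $V$-piece. The place where you are too quick is the claim that $W \mapsto \fN_W$ is a fiber functor over $\fS_B$, i.e.\ is \emph{exact}. This is precisely the subtlety the paper singles out: a non-exact sequence of Kisin lattices can map under $T_{\fS}$ to an exact sequence of $\Gamma_\infty$-representations (the paper points to \cite[Example 2.5.6]{LiuLattices}), so exactness of the rational functor does not come for free from Kisin's theory. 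The paper first sets up the candidate integral assignment $W \mapsto \fM_W$ over $\fS_C$ (with no exactness or projectivity), then separately establishes projectivity of $\fM_W[1/p]$ via \cite[Corollary 1.6.3]{PST} and exactness of the resulting sequences of $\fS_B$-modules via a dedicated lemma, after which Theorem \ref{catbundle} produces the bundle $\fP_B$. Your appeal to ``Kisin's theory \ldots compatibly with tensor products, duals and subquotients (cf.\ \cite{Broshi})'' asserts exactly the thing that needs to be proved and does not actually supply it. Apart from this, the strategy — and in particular the observation that Lemma \ref{extensionlemma}(2) is what propagates the integral structure on the single faithful representation $V$ to the whole $G$-bundle — matches the paper's proof.
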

\begin{proof}  By Lemma \ref{fhwithcoeff}, there exists $C \in \Int_B$ such that $\eta_x$ factors through $\GL(V_{C})$, hence $G(C)$, and that $M_{C} := P_{\eta_x}(V)$ admits a Kisin lattice $\fM_C$ with height in $[a,b]$. It suffices by \ref{onerepheight} to extend the bundle $P_{\eta_x}$ to $\Spec \fS_C$ such that $\fP_{\eta_x}(V) = \fM_{C}$.  

We will apply Lemma \ref{extensionlemma}.  Consider a candidate fiber functor $\fF_{\eta_x}$ for $\fP_{\eta_x}$ which assigns to any $W \in \sideset{^f}{_{\La}}\Rep (G)$ the unique Kisin lattice of bounded height in $\fM_W \subset P_{\eta_x}(W) = M_W$ (as an $\cO_{\cE}$-module not as an $\cO_{\cE, C'}$-module). Such a lattice exists since $\eta_x(W)$ is semi-stable. The difficulties are that $\fM_W$ may not be $\cO_{\cE, C'}$-projective and that it is not obvious whether $\fF_{\eta_x}$ is exact. It can happen that a non-exact sequence of $\fS$-module can map under $T_{\fS}$ to an exact sequence of $\Gamma_{\infty}$-representations (see \cite[Example 2.5.6]{LiuLattices}).

Let $B = C[1/p]$. By \cite[Corollary 1.6.3]{PST}, $\fM_W[1/p]$ is finite projective over $\fS_{C}[1/p] = \fS_B$ for all $W$. We claim furthermore that $\fF_{\eta_x} \otimes_{\fS_{C}} \fS_B$ is exact. For any exact sequence $0 \ra W'' \ra W \ra W' \ra 0$ in $\sideset{^f}{_{\La}}\Rep (G)$, we have a left-exact sequence
$$
0 \ra \fM_{W''}[1/p] \ra \fM_W[1/p] \ra \fM_{W'}[1/p].
$$
Exactness on the right follows from \cite[Lemma 4.2.22]{LevinThesis} on the behavior of exactness for sequences of $\fS$-modules.  Thus, $\fF_{\eta_x} \otimes_{\fS_{C}} \fS_B$ defines a bundle $\fP_B$ over $\fS_B$. Clearly, $\fP_B \otimes_{\fS_B} \cO_{\cE, B} \cong P_{\eta_x} \otimes_{\cO_{\cE, C}} \cO_{\cE, B}$. By Lemma \ref{extensionlemma}(1), we get a bundle $Q$ over $U$ such that $Q(W) = \fM_W|_U$. Since $\fM_V$ is a projective $\fS_{C}$-module by our choice of $C$, $Q$ extends to a bundle $\widetilde{Q}$ over $\fS_{C}$ by Lemma \ref{extensionlemma}(2).
\end{proof}

\subsection{Universal $G$-Kisin module and filtrations}

For this section, we make a small change in notation. Let $R_0$ be a complete local Noetherian $\La$-algebra with finite residue field and let $R = R_0[1/p]$. 

Define $\widehat{\fS}_{R_0}$ to be the $m_{R_0}$-adic completion of $\fS \otimes_{\Zp} R_0$. The Frobenius on $\fS \otimes_{\Zp} R_0$ extends to a Frobenius on $\widehat{\fS}_{R_0}$.

\begin{defn} A $(\widehat{\fS}_{R_0}[1/p], \phz)$-module of \emph{bounded height} is a finitely generated projective $\widehat{\fS}_{R_0}[1/p]$-module $\fM_R$ together with an isomorphism $$\phi_{R}:\phz^*(\fM_R)[1/E(u)] \cong \fM_R[1/E(u)].$$
\end{defn}

Let $\eta:\Gamma_{\infty} \ra G(R_0)$ be continuous representation. If $\widehat{\cO}_{\cE, R_0}$ is the $m_{R_0}$-adic completion of $\cO_{\cE, R_0}$, then the inverse limit of $\varprojlim \underline{M}_{G, R_0/m_{R_0}^n} (\eta_n)$ defines a pair $(P_{\eta}, \phi_{\eta})$ over $\widehat{\cO}_{\cE, R_0}$ (\cite[Corollary 2.3.5]{LevinThesis}).  Assume $R_0 = R_{0, \eta}^{[a,b]}$.  For any finite $F$-algebra $B$ and any homomorphism $x:R_0 \ra B$, there is a unique $G$-Kisin lattice in $P_{\eta} \otimes_{\widehat{\cO}_{\cE, R_0}, x} \cO_{\cE, B}$ (\ref{uniqueness}), call it $(\fP_x, \phi_x)$.  In the following theorem, we construct a universal $G$-bundle over $\widehat{\fS}_{R_0}[1/p]$ with a Frobenius which specializes to $(\fP_x, \phi_x)$ at every $x$.

\begin{thm} \label{universalKisin} Assume that $R_0 = R_{0, \eta}^{[a,b]}$. Let $B$ be a finite $F$-algebra. The pair $(P_{\eta}[1/p], \phi_{\eta}[1/p])$ extends to a $G$-bundle $\widetilde{\fP}_{\eta}$ over $\widehat{\fS}_{R_0}[1/p]$ together with a Frobenius $\phi_{\widetilde{\fP}_{\eta}}:\phz^*(\widetilde{\fP}_{\eta})[1/E(u)] \cong \widetilde{\fP}_{\eta}[1/E(u)]$ such that for any $x:R_0[1/p] \ra B$, the base change 
$$
\left(\widetilde{\fP}_{\eta} \otimes_{\widehat{\fS}_{R_0}[1/p]} \fS_B, \phi_{\widetilde{\fP}_{\eta}} \otimes_{\widehat{\fS}_{R_0}[1/p, 1/E(u)]} \fS_B[1/E(u)]\right)
$$
is $(\fP_x, \phi_x)$.  
\end{thm}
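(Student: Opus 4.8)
The plan is to construct $\widetilde{\fP}_\eta$ Tannakianly, assembling it from Kisin lattices of bounded height attached to each $W \in \sideset{^f}{_\La}\Rep(G)$, and then to identify the specializations at $x$ using Proposition \ref{uniqueness}. First I would recall that $R_0 = R_{0,\eta}^{[a,b]}$ means, by Proposition \ref{fhimage}, that for every finite $F$-algebra $B$ and every $x:R_0 \ra B$ the pair $P_\eta \otimes_{\widehat{\cO}_{\cE,R_0},x} \cO_{\cE,B}$ admits a unique $G$-Kisin lattice $(\fP_x,\phi_x)$; pushing forward along a faithful representation $V$, the $(\cO_{\cE,B},\phz)$-module $P_\eta(V)\otimes_x \cO_{\cE,B}$ has a unique Kisin lattice with height in $[a,b]$. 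The point is to produce, before specializing, a single projective $\widehat{\fS}_{R_0}[1/p]$-module $\widetilde{\fM}_V$ extending $P_\eta(V)[1/p]$ with $\phi$ having height in $[a,b]$. For this I would invoke the $\GL_n$ version of the universal Kisin module over $R_{0}^{[a,b]}[1/p]$, i.e. the analogue of this statement already proved in \cite{PST} (or \cite[Proposition 1.6.4, Corollary 1.6.3]{PST} together with the construction of the scheme-theoretic image): over $R = R_0[1/p]$ the closed immersion $\Theta[1/p]$ is an isomorphism onto $\Spec R$, and $X^{[a,b]}_\eta[1/p]$ carries a universal Kisin lattice for the $V$-pushout, which after the identification $X^{[a,b]}_\eta[1/p] = \Spec R$ gives $\widetilde{\fM}_V$ over $\widehat{\fS}_{R_0}[1/p]$, finite projective by \cite[Corollary 1.6.3]{PST}, with $\phi_V$ of height in $[a,b]$ and the correct specialization at each $x$.

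Next I would upgrade this from $\GL(V)$ to $G$ by the Tannakian recipe of Section 2.2. For each $W \in \sideset{^f}{_\La}\Rep(G)$ one writes $W$ as a subquotient of direct sums of tensor powers of $V$ and $V^\vee$ (as in \cite[C.1.7]{LevinThesis}), and defines $\widetilde{\fM}_W \subset P_\eta(W)[1/p]$ to be the Kisin lattice of bounded height obtained from $\widetilde{\fM}_V$ by the operations of tensor product, dual, and saturated sub/quotient — exactly the operations shown to preserve bounded height in Proposition \ref{onerepheight}. The resulting assignment $W \mapsto (\widetilde{\fM}_W, \phi_W)$ is a fiber functor from $\sideset{^f}{_\La}\Rep(G)$ to $(\widehat{\fS}_{R_0}[1/p],\phz)$-modules of bounded height: functoriality and compatibility with tensor products follow since these are the unique bounded-height lattices inside the $\cO_{\cE}$-modules $P_\eta(W)$ (after extending scalars through each $x$, hence already over $\widehat{\fS}_{R_0}[1/p]$ by density of the specializations, or more directly because the lattice is characterized by the inclusion $E(u)^a \widetilde{\fM}_W \supset \phi(\phz^*\widetilde{\fM}_W) \supset E(u)^b \widetilde{\fM}_W$, which is preserved under all tensor operations). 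Exactness of the fiber functor — the one genuinely delicate point — is handled by the same argument as in Proposition \ref{Gssimpliesfh}: for $0 \ra W'' \ra W \ra W' \ra 0$ one gets a left-exact sequence of the $\widetilde{\fM}$'s by flatness, and right-exactness over $\widehat{\fS}_{R_0}[1/p]$ follows from \cite[Lemma 4.2.22]{LevinThesis} on exactness of sequences of $\fS$-modules. Then Theorem \ref{catbundle} converts this fiber functor into a $G$-bundle $\widetilde{\fP}_\eta$ over $\widehat{\fS}_{R_0}[1/p]$, and the collection of isomorphisms $\phi_W$ assembles (again by Theorem \ref{catbundle} applied over $\widehat{\fS}_{R_0}[1/p,1/E(u)]$) into a single Frobenius isomorphism $\phi_{\widetilde{\fP}_\eta}:\phz^*(\widetilde{\fP}_\eta)[1/E(u)] \cong \widetilde{\fP}_\eta[1/E(u)]$.

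Finally I would check the specialization. Given $x:R_0[1/p] \ra B$, base change of the fiber functor $\{\widetilde{\fM}_W\}$ along $\widehat{\fS}_{R_0}[1/p] \ra \fS_B$ is still a fiber functor to bounded-height $(\fS_B,\phz)$-modules, and for each $W$ the pushout $\widetilde{\fM}_W \otimes \fS_B$ is a $G$-Kisin lattice inside $P_\eta(W)\otimes_x \cO_{\cE,B}$ of bounded height; by the uniqueness statement (Proposition \ref{uniqueness}, applied after choosing a $C \in \Int_B$ as in Definition \ref{defnbh}), it coincides with $\fP_x(W)$, functorially in $W$ and compatibly with tensor products. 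Hence $\widetilde{\fP}_\eta \otimes_{\widehat{\fS}_{R_0}[1/p]} \fS_B \cong \fP_x$ as $G$-bundles, and the Frobenius matches as well since both restrict to $\phi_\eta[1/p]$ on the $\cO_{\cE,B}$-locus where $\Upsilon_G$ is fully faithful. I expect the main obstacle to be the exactness of the Tannakian fiber functor over $\widehat{\fS}_{R_0}[1/p]$ — precisely because, as noted in the text, a non-exact sequence of $\fS$-modules can become exact after applying $T_\fS$, so one cannot reduce exactness to the Galois side and must argue directly on $\fS$-modules via \cite[Lemma 4.2.22]{LevinThesis}; the construction of $\widetilde{\fM}_V$ itself is essentially a citation to \cite{PST}, and the passage $\GL_n \rightsquigarrow G$ is then formal modulo this exactness check.
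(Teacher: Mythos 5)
Your proposal takes a genuinely different route from the paper, and as written it has a gap. The paper works directly on the moduli scheme: it base-changes $X^{[a,b]}_\eta$ to $Y$ over $\widehat{\fS}_{R_0}$, algebraizes the compatible system of universal $G$-Kisin lattices $(\fP_n,\phi_n)$ on the $Y_n$ to a $G$-bundle $\fP_\eta$ on $Y$, and then confronts the real technical obstacle head-on: the Frobenius is a priori only defined after $m_{R_0}$-adic completion, over $\widehat{S}$, and one must descend it to $Y[1/E(u)]$. The paper does this by faithfully flat descent through the Isom-schemes, using the twist-to-effective trick on the $\GL(V)$-side together with the closed immersion $\Isom_G \hookrightarrow \Isom_{\GL(V)}$, and finally restricts along $\Theta[1/p]^{-1}$. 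Your Tannakian approach sidesteps the moduli scheme, but in doing so it both hides the Frobenius-algebraization problem (absorbed into a citation to the ``$\GL_n$ version'' in \cite{PST}, where I don't believe it appears in this form — the effective-case algebraization and twisting argument is essentially the same work the paper does) and introduces a new problem.

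The new problem is the well-definedness and functoriality of $W \mapsto \widetilde{\fM}_W$. Over a finite flat $\La$-algebra $C$ the bounded-height Kisin lattice inside an \'etale $(\cO_{\cE,C},\phz)$-module is unique if it exists — this is Kisin's theorem over $\Zp$, extended by forgetting coefficients, and it is precisely what makes the candidate fiber functor in the proof of Proposition \ref{Gssimpliesfh} automatically functorial and tensor-compatible. Over $\widehat{\fS}_{R_0}[1/p]$ no such uniqueness is available: the coefficient ring $R_0[1/p]$ is not finite over $\Zp$, so you cannot forget coefficients and appeal to \cite[11.2.7]{PHT}, and the defining inequalities $E(u)^a\widetilde{\fM}_W \supset \phi(\phz^*\widetilde{\fM}_W) \supset E(u)^b\widetilde{\fM}_W$ do not pin down a lattice. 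Your fallback — that two candidates agree at every specialization $x:R_0[1/p]\to F'$ and therefore agree by ``density'' — is not substantiated: those are closed points of $\Spec R_0[1/p]$, not of $\Spec \widehat{\fS}_{R_0}[1/p]$, and more seriously $R_0^{[a,b]}[1/p]$ has no reducedness built in at this stage (that kind of control only comes later, for the semi-stable/crystalline quotients), so agreement on $\overline{F}$-points need not force equality of finite projective submodules. Without this uniqueness you cannot verify that the tensor/dual/subquotient recipe produces a single well-defined fiber functor out of $\sideset{^f}{_\La}\Rep(G)$, and the subsequent appeal to Theorem \ref{catbundle} does not get off the ground. This is exactly the difficulty the paper's construction avoids by never trying to characterize $\fP_\eta$ by a universal property inside $P_\eta[1/p]$, instead building it as a limit object on the moduli scheme where the universal $G$-Kisin lattice exists by definition.
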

\begin{proof}

Let $X_n := X_{\eta_n}^{[a,b]}$ be the projective $R_0/m_{R_0}^n$-scheme as in \S 4.3.  Take $Y_n := X_n \times_{\Spec R_0/m_{R_0}^n} \Spec \fS_{R_0/m_{R_0}^n}$, a projective $\fS_{R_0/m_{R_0}^n}$-scheme. Let $X_{\eta}^{[a,b]} \ra \Spec R_0$ be the algebraization of $\varprojlim X_n$ as before. The base change $Y$ of $X_{\eta}^{[a,b]}$ along the map $R_0 \ra \widehat{\fS}_{R_0}$ has the property that
$$
Y \mod m_{R_0}^n \cong Y_n.
$$      
Furthermore, $Y$ is a proper $\widehat{\fS}_{R_0}$-scheme.  

Over each $Y_n$, we have a universal $G$-Kisin lattice $(\fP_n, \phi_n)$ with height in $[a,b]$.  By \cite[Corollary 2.3.5]{LevinThesis}, there exists a $G$-bundle $\fP_{\eta}$ on $Y$ such that $\fP_{\eta} \mod m_{R_0}^n = \fP_n$. We would like to construct a Frobenius $\phi$ over $Y[1/E(u)]$ which reduces to $\phi_n$ modulo $m_R^n$ for each $n \geq 1$. A priori, the Frobenius is only defined over the $m_{R_0}$-adic completion of $\widehat{\fS}_{R_0}[1/E(u)]$ which we denote by $\widehat{S}$.  

We have a projective morphism
$$
Y_{\widehat{S}} \ra \Spec \widehat{S},
$$
where $Y_{\widehat{S}}$ is the base change of $Y[1/E(u)]$ along $\Spec \widehat{S} \ra \Spec \widehat{\fS}_{R_0}[1/E(u)]$. $Y_{\widehat{S}}$ is faithfully flat over $Y[1/E(u)]$ since $\widehat{\fS}_{R_0}[1/E(u)]$ is Noetherian. Let $\Isom_G := \Isom_G(\phz^*(\fP_{\eta}), \fP_{\eta})$ be the affine finite type $Y$-scheme of $G$-bundle isomorphisms. The compatible system $\{ \phi_n \}$ lifts to an element 
$$
\widehat{\phi} \in \Isom_G(Y_{\widehat{S}}).
$$
We would like to descend $\widehat{\phi}$ to a $Y[1/E(u)]$-point of $\Isom_G$.  Let $i:G \iarrow \GL(V)$ be our chosen faithful representation. Consider the closed immersion
$$
i_*:\Isom_G \iarrow \Isom_{\GL(V)}(\phz^*(\fP_{\eta})(V), \fP_{\eta}(V)).
$$
The image $i_*(\widehat{\phi})$ descends to a $Y[1/E(u)]$-point of $\Isom_{\GL(V)}(\phz^*(\fP_{\eta})(V), \fP_{\eta}(V))$ (twist to reduce to the effective case).  Since $Y_{\widehat{S}}$ is faithfully flat over $Y[1/E(u)]$, for any closed immersion $Z \subset Z'$ of $Y$-schemes, we have 
$$
Z(Y[1/E(u)]) = Z(Y_{\widehat{S}}) \cap Z'(Y[1/E(u)]).
$$ 
Applying this with $Z' = \Isom_G$ and $Z = \Isom_{\GL(V)}(\phz^*(\fP_{\eta})(V), \fP_{\eta}(V))$, we get a universal pair $(\fP_{\eta}, \phi_{\eta})$ over $Y$ respectively $Y[1/E(u)]$. Since $R_0 = R_{0, \eta}^{[a,b]}$, $\Theta[1/p]: X^{[a,b]}_{\eta}[1/p] \ra R_0[1/p]$ is an isomorphism and the pair $\widetilde{\fP}_{\eta} := \fP_{\eta}[1/p]$ and $\phi_{\fP_{\eta}}[1/p]$ over $\widehat{\fS}_{R_0}[1/p]$ has the desired properties.
\end{proof}

We now discuss the notion of $p$-adic Hodge type for $G$-valued representation and relate this to a filtration associated to a $G$-Kisin module.

Let $B$ be any finite $F$-algebra. For any representation of $\Gamma_K$ on a finite free $B$-module $V_B$, set 
$$
D_{\dR}(V_B) := (V_B \otimes_{\Qp} B_{\dR})^{\Gamma_K},
$$
a filtered $(K \otimes_{\Qp} B)$-module whose associated graded is projective (see \cite[3.1.6, 3.2.2]{Balaji}).  Furthermore, $D_{\dR}$ defines a tensor exact functor from the category of de Rham representations on projective $B$-modules to the category $\Fil_{K \otimes_{\Qp} B}$  of filtered $(K \otimes_{\Qp} B)$-modules (see \cite[3.2.2]{Balaji}). For any field $\kappa$, $\Fil_{\kappa}$ will be the tensor category of $\Z$-filtered vectors spaces $(V, \{ \Fil^{i} V \})$ where $\Fil^i(V) \supset \Fil^{i+1} (V)$.  

We recall a few facts from the Tannakian theory of filtrations:
\begin{defn} Let $H$ be any reductive group over a field $\kappa$.  For any extension $\kappa' \supset \kappa$,  an \emph{$H$-filtration over $\kappa'$} is a tensor exact functor from $\Rep_{\kappa}(H)$ to $\Fil_{\kappa'}$.  
\end{defn}

Associated to any cocharacter $\nu:\Gm \ra H_{\kappa'}$ is a tensor exact functor from $\Rep_\kappa(H)$ to graded $\kappa'$-vector spaces which assigns to each representation $W$ the vector space $W_{\kappa'}$ with its weight grading defined by the $\Gm$-action through $\nu$ which we denote $\omega_{\nu}$ (see \cite[Example 2.30]{DM}).

\begin{defn} For any $H$-filtration $\cF$ over $\kappa'$, a \emph{splitting} of $\cF$ is an isomorphism between the $\gr(\cF)$ and $\omega_{\nu}$ for some $\nu:\Gm \ra H_{\kappa'}$. 
\end{defn} 

By \cite[Proposition IV.2.2.5]{Saavedra}, all $H$-filtrations over $\kappa'$ are splittable.  For any given $\cF$, the cocharacters $\nu$ for which there exists an isomorphism $\gr(\cF) \cong \omega_{\nu}$ lie in the common $H(\kappa')$-conjugacy class.  If $\kappa'$ is a finite extension of $\kappa$ contained in $\overline{\kappa}$, then the \emph{type} $[\nu_{\cF}]$ of the filtration $\cF$ is the geometric conjugacy class of $\nu$ for any splitting $\omega_{\nu}$ over $\kappa'$. For any conjugacy class $[\nu]$ of geometric cocharacters of $H$, there is a smallest field of definition contained in a chosen separable closure of $\kappa$ called the \emph{reflex field} of $[\nu]$. We denote this by $\kappa_{[\nu]}$ .

Let $G$ be as before so that $G_F$ is a (connected) reductive group over $F$, and  let $\eta:\Gamma_K \ra G(B)$ be a continuous representation which is de Rham. Then, $D_{\dR}$ defines a tensor exact functor from $\Rep_F(G_F)$ to $\Fil_{K \otimes_{\Qp} B}$ (see Proposition 3.2.2 in \cite{Balaji}) which we denote by $\cF^{\dR}_{\eta}$.

Fix a geometric cocharacter $\mu \in X_*((\Res_{(K \otimes_{\Qp} F)/F} G)_{\overline{F}})$ and denote its conjugacy class by $[\mu]$. The cocharacter $\mu$ is equivalent to a set $(\mu_{\psi})_{\psi:K \ra \overline{F}}$ of cocharacters $\mu_{\psi}$ of $G_{\overline{F}}$ indexed by $\Qp$-embeddings of $K$ into $\overline{F}.$

\begin{defn} \label{defnHtype} Let $F_{[\mu]}$ be the reflex field of $[\mu]$. For any embedding $\psi:K \ra \overline{F}$ over $\Qp$, let $\pr_{\psi}:K \otimes_{\Qp} \overline{F} \ra \overline{F}$ denote the projection. If $F'$ is a finite extension of $F_{[\mu]}$, a $G$-filtration $\cF$ over $K \otimes_{\Qp} F'$ has \emph{type} $[\mu]$ if $\pr_{\psi}^*(\cF \otimes_{F', i} \overline{F})$ has type $[\mu_{\psi}]$ for any $F_{[\mu]}$-embedding $i:F' \iarrow \overline{F}$. A de Rham representation $\eta:\Gamma_K \ra G(F')$ has \emph{$p$-adic Hodge type} $\mu$ if $\cF^{\dR}_{\eta}$ has type $[\mu]$.
\end{defn}  

Let $\La_{[\mu]}$ denote the ring of integers of $F_{[\mu]}$. For any $\mu$ in the conjugacy class $[\mu]$, $\Gm$ acts on $V \otimes_{\La} \overline{F}$ through $\mu_{\psi}$ for each $\psi:K \ra \overline{F}$.  We take $a$ and $b$ be the minimal and maximal weights taken over all $\mu_{\psi}$.       
    
\begin{thm} \label{ssmutype} Let $R_0$ be a complete local Noetherian $\La_{[\mu]}$-algebra with finite residue field and $\eta:\Gamma_K \ra G(R_0)$ a continuous homomorphism. Let $R^{[a,b], \st}_{0, \eta}$ be as in $\ref{stlocus}.$  There exists a quotient $R^{\st, \mu}_{0, \eta}$ of $R^{[a,b], \st}_{0, \eta}$ such that for any finite extension $F'$ of $F_{[\mu]}$, a homomorphism $\zeta:R_0 \ra F'$ factors through $R^{\st, \mu}_{0, \eta}$ if and only if the $G(F')$-valued representation corresponding to $\zeta$ is semi-stable with $p$-adic Hodge type $[\mu]$.
\end{thm}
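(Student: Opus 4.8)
The plan is to build $R^{\st,\mu}_{0,\eta}$ by cutting out, inside $\Spec R^{[a,b],\st}_{0,\eta}[1/p]$, the locus where the de Rham filtration has type $[\mu]$; this follows the strategy of \cite[\S 2.7]{PST} and \cite{Balaji}, phrased Tannakianly. Write $R' := R^{[a,b],\st}_{0,\eta}$, which by Theorem \ref{stlocus} is a quotient of $R_0$ over which the universal representation $\eta^{\mathrm{univ}}$ is semi-stable with $\eta^{\mathrm{univ}}(V)$ having Hodge--Tate weights in $[a,b]$. Since $a,b$ were chosen as the minimal and maximal weights of the $\mu_\psi$ on $V\otimes_\La\overline F$, any $\zeta:R_0\to F'$ whose representation is semi-stable of $p$-adic Hodge type $[\mu]$ already has $\eta_\zeta(V)$ with Hodge--Tate weights in $[a,b]$, hence factors through $R'$. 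So it suffices to produce a quotient of $R'$ whose $F'$-points, for $F'/F_{[\mu]}$ finite, are exactly the $\zeta$ for which $\cF^{\dR}_{\eta_\zeta}$ has type $[\mu]$.

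Next I would put $D_{\dR}$ into families over $R'[1/p]$. The universal semi-stable representation over $R'$ is de Rham, and, extending \cite[3.1.6, 3.2.2]{Balaji} from finite $F$-algebras to the base $R'[1/p]$ exactly as in \cite[\S 2.7]{PST} for $\GL_n$, the assignment $W\mapsto D_{\dR}(\eta^{\mathrm{univ}}(W))$ is a tensor exact functor from $\Rep_F(G_F)$ to filtered $(K\otimes_{\Qp}R'[1/p])$-modules with $\gr^\bullet$ finite projective; call this $G$-filtration $\cF$. Its specialization at a closed point $x\in\Spec R'[1/p]$ with residue field $F'_x$ is $\cF^{\dR}_{\eta_x}$, whose type $[\mu_x]$ is a geometric conjugacy class of cocharacters of $\Res_{(K\otimes_{\Qp}F)/F}G$ with $V$-weights in $[a,b]$, of which there are only finitely many.

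The key step is that $x\mapsto[\mu_x]$ is locally constant on $\Spec R'[1/p]$. After base change to a finite extension of $F_{[\mu]}$ splitting $K$ one has $K\otimes_{\Qp}(-)\cong\prod_{\psi:K\to\overline F}(-)$, and for each $W\in\Rep_F(G_F)$, each $\psi$ and each integer $i$ the rank of the $\psi$-component of $\gr^i(\cF(W))$ equals the multiplicity of the weight $i$ in $(\mu_x)_\psi$ acting on $W$; projectivity of $\gr^\bullet(\cF(W))$ makes each such rank a locally constant function of $x$. Since the multiset of $\Gm$-weights on all representations of $G$ determines a cocharacter up to $G$-conjugacy (compare a dominant representative against highest weights of irreducibles) and only finitely many types occur, finitely many $W$ separate the possible types; hence $[\mu_x]$ is locally constant and $Z:=\{x : [\mu_x]=[\mu]\}$ is open and closed. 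The class $[\mu]$ is $\Gal(\overline F/F_{[\mu]})$-stable, so $Z$ descends to a clopen subscheme of $\Spec R'[1/p]$ over $F_{[\mu]}$. Such a clopen $Z$ is cut out by an idempotent $e\in R'[1/p]$, and $\Spec R''[1/p]=Z$ for the quotient $R'':=\mathrm{image}\big(R'\to R'[1/p]\to R'[1/p]e\big)$; set $R^{\st,\mu}_{0,\eta}:=R''$. Then a $\La_{[\mu]}$-algebra map $\zeta:R_0\to F'$ factors through $R''$ iff it factors through $R'$ (i.e.\ $\eta_\zeta$ is semi-stable with $\eta_\zeta(V)$ of weights in $[a,b]$, by \ref{stlocus}) and the induced point lies in $Z$ (i.e.\ $\cF^{\dR}_{\eta_\zeta}$ has type $[\mu]$), which is precisely the asserted universal property.

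I expect the main obstacle to be the family statement: making $D_{\dR}$ of the universal semi-stable representation a projective filtered module over the non-finite base $R'[1/p]$, and checking that its locally constant rank data genuinely pins down the $G$-conjugacy class of $\mu_x$ rather than merely the $\GL(V)$-type. Both are routine given \cite{Balaji} and \cite{PST}, but this is where the content lies. An alternative that stays inside the framework of this section is to use instead the $G$-filtration attached to the universal $G$-Kisin module of Theorem \ref{universalKisin} (obtained from $\phz^\ast$ of the $G$-Kisin module modulo $E(u)$ after $u\mapsto\pi$), which is already a $G$-filtration over $\widehat{\fS}_{R_0}[1/p]$ with projective graded pieces and which agrees with $\cF^{\dR}$ by Kisin's comparison; the local-constancy argument is then identical.
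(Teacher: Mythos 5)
The paper's own ``proof'' of Theorem~\ref{ssmutype} is simply a citation to \cite[4.0.9]{Balaji}, so there is no in-paper argument to compare against directly. That said, the remark immediately following the theorem records precisely the fact you are using --- that the $p$-adic Hodge type is locally constant on $\Spec R^{[a,b],\st}_{0,\eta}[1/p]$, making $\Spec R^{\st,\mu}_{0,\eta}[1/p]$ a union of connected components --- so your route is the one the paper implicitly has in mind. Your reduction to $R'=R^{[a,b],\st}_{0,\eta}$, the observation that projectivity of the graded pieces of the family $G$-filtration makes the weight-multiplicity data locally constant, that a cocharacter is determined up to $G$-conjugacy by its weight multiplicities, that only finitely many types occur given the $[a,b]$ bound on $V$, and the Galois descent from $\overline F$ to $F_{[\mu]}$ (using that Definition~\ref{defnHtype} quantifies over all embeddings) are all sound, and the construction of $R^{\st,\mu}_{0,\eta}$ as the image of $R'$ in $R'[1/p]e$ does satisfy the universal property.

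You correctly flag the one place where real work is being deferred: producing the family version of $D_{\dR}$ (a tensor exact functor to filtered $K\otimes_{\Qp}R'[1/p]$-modules with finite projective $\gr^\bullet$) over the non-finite base $R'[1/p]$. This is exactly the content of \cite{Balaji} building on \cite[\S2.7]{PST}, and the paper is leaning on that citation rather than re-proving it. Your alternative --- passing to the universal $G$-Kisin module of Theorem~\ref{universalKisin} and forming the $G$-filtration from $\phz^*(\fP)/E(u)\phz^*(\fP)$ with the filtration of Definition~\ref{filtrations} --- is attractive because the universal object lives over $\widehat{\fS}_{R_0}[1/p]$ by construction and the comparison with $\cF^{\dR}$ at closed points is Proposition~\ref{DRfiltcomparison}; but you would still owe the reader a family version of Lemma~\ref{Dx} (exactness and projectivity of $\gr^\bullet$ over the whole base, not just at closed points), so it is a genuine alternative rather than a free lunch. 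In sum: the argument is correct and fills in what the paper leaves to a citation; the local-constancy step you identify is indeed the pivot, and the family $D_{\dR}$/$\fD$ input is where the nontrivial technical content sits.
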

\begin{proof} See \cite[4.0.9]{Balaji}.
\end{proof}   

\begin{rmk} One can deduce from the construction in \cite[4.0.9]{Balaji} or by other arguments (\cite[Theorem 6.1.19]{LevinThesis}) that the $p$-adic Hodge type on  
the generic fiber of the semi-stable deformation ring $R^{[a,b], \st}_{0, \eta}$ is locally constant so that $\Spec R^{\st, \mu}_{0, \eta}[1/p]$ is a union of connected components of $\Spec R^{[a,b], \st}_{0, \eta}[1/p].$ 
\end{rmk}

Finally, we recall how the de Rham filtration is obtained from the Kisin module.

\begin{defn} \label{filtrations} Let $B$ be a finite $\Qp$-algebra. Let $(\fM_B, \phi_B)$ be a Kisin module over $B$ with bounded height. Define 
$$
\Fil^i(\phz^*(\fM_B)) := \phi_B^{-1}(E(u)^i \fM_B) \cap \phz^*(\fM_B).
$$
Set $\fD_B := \phz^*(\fM_B)/E(u) \phz^*(\fM_B)$, a finite projective $(K \otimes_{\Qp} B)$-module. Define $\Fil^i(\fD_B)$ to be the image of $\Fil^i(\phz^*(\fM_B))$ in $\fD_B$.  
\end{defn}  

\begin{prop} \label{DRfiltcomparison} Let $B$ be a finite $\Qp$-algebra and let $V_B$ be a finite-free $B$-module with an action of $\Gamma_K$ which is semi-stable with Hodge-Tate weights in $[a,b]$.  Any $\Zp$-stable lattice in $V_B$ has finite height. If $\fM_B$ is the $(\fS_B, \phz)$-module of bounded height attached to $V_B$, then there is a natural isomorphism $\fD_B \cong D_{\dR}(V_B)$ of filtered $(K \otimes_{\Qp} B)$-modules.
\end{prop}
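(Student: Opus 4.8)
The plan is to reduce everything to the case $G=\GL_n$ with $\Qp$-coefficients, where both assertions are essentially due to Kisin \cite{Fcrystals} (and carried out over coefficient rings in \cite{PST}), and then propagate along $\Zp\to C\to B$ for $C\in\Int_B$. For the first assertion, I would fix $C\in\Int_B$ through which the $\Gamma_K$-action on a chosen lattice factors, so that $V_B$ acquires a $\Gamma_K$-stable finite projective $C$-lattice $T_C$ with $T_C[1/p]=V_B$. Any $\Gamma_K$-stable $\Zp$-lattice $T\subset V_B$ is then a semi-stable $\Zp[\Gamma_K]$-module with Hodge--Tate weights in $[a,b]$, so $T|_{\Gamma_\infty}$ has finite $\fS$-height with height in $[a,b]$ by Kisin's theorem that semi-stable representations are of finite height (see \cite{Fcrystals}, and \cite[Lemma 2.1.15]{Fcrystals} for independence of the lattice). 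Applying this to $T_C$ and the equivalence $T_B$ (after Definition \ref{GOE}; see \cite[Lemma 1.2.7]{MFFGS}), the \'etale $(\cO_{\cE,C},\phz)$-module $M_C=\underline{M}_C(T_C)$ has finite $\cO_\cE$-height, and after enlarging $C$ using Lemma \ref{fhwithcoeff} it has height in $[a,b]$ over $\cO_{\cE,C}$; let $\fM_C\subset M_C$ be its Kisin lattice and set $\fM_B:=\fM_C[1/p]$, which is finite projective over $\fS_B$ by \cite[Corollary 1.6.3]{PST} and independent of choices by Proposition \ref{uniqueness} (equivalently \cite[11.2.7]{PHT}).

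The substance is the comparison $\fD_B\cong D_{\dR}(V_B)$, which I would obtain from the period-ring realization of $\fM$. Embed $\fS\hookrightarrow A_{\cris}\hookrightarrow B_{\dR}^+$ by sending $u$ to the Teichm\"uller lift $[\underline{\pi}]$ of the chosen compatible system of $p$-power roots of $\pi$; since $E(\pi)=0$ while $E'(\pi)\in\cO_K^\times$, the element $E(u)$ maps to a generator of $\mathrm{Fil}^1 B_{\dR}^+$, so $E(u)$ becomes invertible in $B_{\dR}$ and modulo $\mathrm{Fil}^1$ the map $\fS/E(u)=\cO_K\to\mathbf{C}_p$ is the tautological inclusion. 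Combining the \'etale realization $\fM_C\otimes_{\fS_C}\widehat{\cO_{\cE,C}^{\un}}\subset T_C\otimes_{\Zp}\widehat{\cO_{\cE}^{\un}}$ with the $\Gamma_K$-equivariant map $\widehat{\cO_{\cE}^{\un}}\to B_{\dR}^+$ induced above, one gets a map
$$\iota:\ \phz^*(\fM_B)\otimes_{\fS_B}\bigl(B_{\dR}^+\otimes_{\Qp}B\bigr)\longrightarrow V_B\otimes_{\Qp}B_{\dR},$$
which is injective (as $\phi_B$ is an isomorphism after inverting $E(u)$) and whose image is a $\Gamma_K$-stable $(B_{\dR}^+\otimes_{\Qp}B)$-lattice spanning $V_B\otimes_{\Qp}B_{\dR}$; the height-in-$[a,b]$ condition bounds this lattice between $\mathrm{Fil}^{-b}$ and $\mathrm{Fil}^{-a}$ times the standard one, and identifies it (as in \cite[\S1.2]{Fcrystals}) with the filtration lattice attached to $D_{\dR}(V_B)$. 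Taking $\Gamma_K$-invariants and reducing modulo $\mathrm{Fil}^1$ gives a map $\fD_B=\phz^*(\fM_B)/E(u)\phz^*(\fM_B)\to D_{\dR}(V_B)=(V_B\otimes_{\Qp}B_{\dR})^{\Gamma_K}$; both are finite projective of rank $\dim_B V_B$ over the Artinian ring $K\otimes_{\Qp}B$ (using \cite[3.1.6, 3.2.2]{Balaji} for the target), so an injection between them is an isomorphism. Finally, unwinding Definition \ref{filtrations}, $\mathrm{Fil}^i(\fD_B)$ is the image of $\{x:\phi_B(x)\in E(u)^i\fM_B\}$, which under $\iota$ is $\{x:\iota(x)\in V_B\otimes_{\Qp}\mathrm{Fil}^iB_{\dR}\}=\mathrm{Fil}^iD_{\dR}(V_B)$ because $E(u)$ generates $\mathrm{Fil}^1B_{\dR}^+$; hence the isomorphism is filtered.

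The main obstacle, and the reason to lean on \cite{Fcrystals, PST} rather than reprove from scratch, is twofold. First is the coefficient bookkeeping: $\fM_B$ is only finite projective over $\fS_B$ after inverting $p$, so constructing $\fM_C$ and controlling its height require passing through suitable $C\in\Int_B$, which is exactly where Lemma \ref{fhwithcoeff} and \cite[Corollary 1.6.3]{PST} are used. Second, and more essential, is the $\Gamma_K$-equivariance of the period map: the Kisin module only records the $\Gamma_\infty$-action, and the embeddings $\fS\to B_{\dR}^+$ attached to the $\Gamma_K$-translates of $[\underline{\pi}]$ differ by factors $[\underline{\epsilon}]^{c(g)}$. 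The resolution is that all of these agree modulo $\mathrm{Fil}^1 B_{\dR}^+$ (since $[\underline{\epsilon}]\equiv 1$), so that $\iota$ is well defined $\Gamma_K$-equivariantly after passage to $B_{\dR}^+$, and the de Rham filtration — which only remembers $B_{\dR}$-valued data up to this $\mathrm{Fil}$-ambiguity — is recovered unambiguously.
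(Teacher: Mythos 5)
The first half of your proposal (finite height of every $\Zp$-stable lattice, reduction through $C\in\Int_B$, $\fS_B$-projectivity via \cite[Cor.~1.6.3]{PST}) is fine and matches the standard reduction. The paper's own proof of the filtered comparison is a direct citation of \cite[Cor.~2.6.2, Thm.~2.5.5(2)]{PST}, combined with the contravariant-to-covariant translation via the duality compatibility of $D_{\dR}$ \cite[Prop.~3.1.6]{Balaji} and a twist by $\fS(n)$ to drop the effectivity hypothesis. What you propose instead is to reconstruct the comparison map from a $B_{\dR}^+$-period morphism, and there the argument has a genuine gap.

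The map $\iota$ you build from the \'etale realization $\fM_C\otimes_{\fS_C}\widehat{\cO_{\cE,C}^{\un}}\cong T_C\otimes_{\Zp}\widehat{\cO_{\cE,C}^{\un}}$, composed with $\widehat{\cO_{\cE}^{\un}}\to B_{\dR}^+$, is only $\Gamma_\infty$-equivariant: the image of $\fS$ in $B_{\dR}^+$ under $u\mapsto[\underline\pi]$ is fixed by $\Gamma_\infty$ but not by $\Gamma_K$, since $\gamma([\underline\pi])=[\underline\epsilon]^{c(\gamma)}[\underline\pi]$. Your claim that the image lattice $\iota\bigl(\phz^*(\fM_B)\otimes_{\fS_B}(B_{\dR}^+\otimes B)\bigr)$ is $\Gamma_K$-stable is exactly what needs to be proved, and the observation that $[\underline\epsilon]\equiv 1\bmod\Fil^1 B_{\dR}^+$ does not deliver it: reducing modulo $\Fil^1$ lands you in $V_B\otimes\C_p$, whose $\Gamma_K$-invariants compute only the weight-zero Sen piece, not $D_{\dR}(V_B)$ as a filtered module. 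In particular the phrase ``taking $\Gamma_K$-invariants and reducing modulo $\Fil^1$ gives a map $\fD_B\to D_{\dR}(V_B)$'' does not parse: the source has no visible $\Gamma_K$-action to take invariants of, and the target is the invariants in $B_{\dR}$, not in $\C_p$. The actual mechanism in \cite[\S1.2--1.3]{Fcrystals} (and hence in \cite[2.5.5, 2.6.2]{PST}) is different: one passes to the rigid-analytic module $\cO\otimes_{\fS}\fM$ over the open unit disc, equips it with the differential operator $N_\nabla$, identifies it with $D_{\st}(V)$, and then reads off the de Rham filtration from $\phz^*\fM/E(u)\phz^*\fM$ via the canonical $B_{\st}\subset B_{\dR}$ comparison that is $\Gamma_K$-equivariant by construction. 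The $\Gamma_K$-stability of the $B_{\dR}^+$-lattice you use is a consequence of that analysis, not an input obtainable from the mod-$\Fil^1$ congruence. So if you intend a self-contained construction, the crucial step is missing; if you intend to lean on \cite{Fcrystals, PST} (as your closing paragraph suggests), the right move is what the paper does—cite \cite[2.5.5(2), 2.6.2]{PST} directly, dualize, and twist—rather than rebuild the period map.
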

\begin{proof}
The relevant results are in the proof of Corollary 2.6.2 and Theorem 2.5.5(2) in \cite{PST}.  Since \cite{PST} works with contravariant functors, one has to do a small translation. Under the conventions of \cite{PST}, $\fM_B$ would be associated to the $B$-dual $V_B^*$ and it is shown there that $D_B \cong D_{\dR}^*(V_B^*)$ as filtered $K \otimes_{\Qp} B$-modules in the case where $[a,b] = [0,h]$. By compatibility with duality (\cite[Proposition 3.1.6]{Balaji}), $D_{\dR}^*(V_B^*) \cong D_{\dR}(V_B)$. The general case follows by twisting.   
\end{proof}

\section{Deformations of $G$-Kisin modules}

In this section, we study the local structure of the ``moduli space" of $G$-Kisin modules.  This generalizes results of \cite{MFFGS} and \cite{PRcoeff}.  $G$-Kisin modules may have non-trivial automorphisms and so it is more natural as was done in \cite[\S 2.2]{MFFGS} to work with \emph{groupoids}.  The goal of the section is to smoothly relate the deformation theory of a $G$-Kisin module to the local structure of a local model for the group $\Res_{(K \otimes_{\Qp} F)/F} G_F$.  

 Intuitively, the smooth modication (chain of formally smooth morphisms) corresponds to adding a trivialization to the $G$-Kisin module and then taking the ``image of Frobenius'' similar to Proposition 2.2.11 of \cite{MFFGS}. The target of the modification is a deformation functor for the moduli space $\Gr_G^{E(u), W}$ discussed in \S 3.3 which is a version of the affine Grassmanian which appears in the work of \cite{PZ} on local models. Finally, we show that the condition of having $p$-adic Hodge type $\mu$ is related to a (generalized) local model $M(\mu) \subset \Gr_G^{E(u), W}$.  In this section, there are no conditions on the cocharacter $\mu$.  We will impose conditions on $\mu$ only in the next section when we study the analogue of flat deformations. 

\subsection{Definitions and representability results}

Let $\F$ be the residue field of $\La$.  Define the categories
$$
\cC_{\La} = \{\text{Artin local } \La \text{-algebras with residue field } \F \}
$$
and
$$
\widehat{\cC}_{\La} = \{\text{complete local Noetherian } \La \text{-algebras with residue field } \F \}.
$$
Morphisms are local $\La$-algebra maps. Recall that fiber products in the category $\widehat{\cC}_{\La}$ exist and are represented by completed tensor products. A \emph{groupoid} over $\cC_{\La}$ (or $\widehat{\cC}_{\La}$) will be in the sense of Definition A.2.2 of \cite{MFFGS}; this is also known as a category cofibered in groupoids over $\cC_{\La}$ (or $\widehat{\cC}_{\La}$). Recall also the notion of a 2-fiber product of groupoids from (A.4) in \cite{MFFGS}. See Appendix \S 10 of \cite{Kim} for more details related to groupoids.

Choose a bounded height $G$-Kisin module $(\fP_{\F}, \phi_{\F}) \in \GMod^{\phz, \mathrm{bh}}_{\fS_{\F}}$. Define $D_{\fP_{\F}} = \cup_{a < b} D_{\fP_{\F}}^{[a,b]}$ to be the deformation groupoid of $\fP_{\F}$ as a $G$-Kisin module of bounded height over $\widehat{\cC}_{\La}$.  The morphisms $D_{\fP_{\F}}^{[a,b]} \subset D_{\fP_{\F}}$ are relatively representable closed immersions so intuitively $D_{\fP_{\F}}$ is an ind-object built out of the finite height pieces.  

Let $\cE^0$ denote the trivial $G$-bundle over $\La$. Throughout we will be choosing various trivializations of the $G$-bundle $\fP_{\F}$ and other related bundles.  This is always possible because $\fS_{\F}$ is a complete semi-local ring with all residue fields finite (see Proposition \ref{Gtriv} (2)).  

\begin{prop} \label{hullrep} For any $\fP_{\F}$ with height in $[a,b]$, the deformation groupoid $D_{\fP_{\F}}^{[a,b]}$ admits a formally smooth morphism $\pi:\Spf R \ra D_{\fP_{\F}}^{[a,b]}$ for some $R \in \widehat{\cC}_{\La}$ $($i.e., has a versal formal object in the sense of \cite{Rim}$)$.
\end{prop}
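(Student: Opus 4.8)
The plan is to verify the criterion of Schlessinger, in the form for cofibered groupoids over $\widehat{\cC}_\La$ due to Rim (cf. \cite{Rim} and the appendix to \cite{MFFGS}), for $D^{[a,b]}_{\fP_\F}$. Two things have to be checked: the Schlessinger-type exactness conditions (H1) and (H2), governing the behaviour of $D^{[a,b]}_{\fP_\F}$ on fibre products $A'\times_A A''$ in $\cC_\La$ with $A'\to A$ surjective; and finite-dimensionality over $\F$ of the tangent space $t_{\fP_\F}$ (the set of isomorphism classes of objects of $D^{[a,b]}_{\fP_\F}(\F[\eps])$, which is an $\F$-vector space) together with that of the automorphism space $\Aut(\fP_\F)$. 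Granting these, the criterion produces $R\in\widehat{\cC}_\La$ carrying a versal formal object, equivalently a formally smooth morphism $\pi:\Spf R\to D^{[a,b]}_{\fP_\F}$.

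For (H1) and (H2) I would argue as follows. Since $\fS$ is $\Zp$-flat, $\fS_{A'\times_A A''}=\fS_{A'}\times_{\fS_A}\fS_{A''}$, and $\Spec$ of this ring is the pushout of $\Spec\fS_{A'}\leftarrow\Spec\fS_A\to\Spec\fS_{A''}$ along the closed immersion $\Spec\fS_A\hookrightarrow\Spec\fS_{A'}$. The fppf-local data making up a bounded-height $G$-Kisin module (a $G$-bundle on $\fS_B$ and a Frobenius isomorphism over $\fS_B[1/E(u)]$) glue along such a pushout, and the height-in-$[a,b]$ condition on the distinguished faithful $V$ descends along the faithfully flat map $\fS_{A'\times_A A''}\to\fS_{A'}\times\fS_{A''}$ by Proposition \ref{onerepheight}. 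Hence $D^{[a,b]}_{\fP_\F}(A'\times_A A'')\to D^{[a,b]}_{\fP_\F}(A')\times_{D^{[a,b]}_{\fP_\F}(A)}D^{[a,b]}_{\fP_\F}(A'')$ is essentially surjective, and an equivalence when $A=\F$ and $A''=\F[\eps]$; alternatively these properties can be transported from the corresponding statements for $\GL(V)$-Kisin modules of bounded height established in \cite{MFFGS}, via the faithful functor $\fP\mapsto\fP(V)$ and Theorem \ref{catbundle}.

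The heart of the matter is finiteness of $t_{\fP_\F}$. By Proposition \ref{Gtriv}(2) choose a trivialisation $\fP_\F\cong\cE^0_{\fS_\F}$; then $\phi_\F$ becomes an element $g_\F\in G(\fS_\F[1/E(u)])$. A deformation $\fP_\eps$ over $\F[\eps]$ with height in $[a,b]$ has trivial underlying bundle by Proposition \ref{Gtriv}(1), so after a further trivialisation it is given by $g_\F\exp(\eps Y)$ with $Y$ ranging over the $\fS_\F$-submodule $\cB\subset\Lie G\otimes_\La\fS_\F[1/E(u)]$ cut out by requiring $\fP_\eps(V)$ to have height in $[a,b]$; faithfulness of $V$ forces $\cB\subset E(u)^{-N}(\Lie G\otimes_\La\fS_\F)$ for some $N$, so $\cB$ is finitely generated over $\fS_\F$. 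Two such deformations are identified exactly when $Y$ is altered by an element of the image of $1-\phi_{\ad(\fP_\F)}$ on $\Lie G\otimes_\La\fS_\F$ (transported by $\Ad(g_\F)$), where $\ad(\fP_\F)$ is the adjoint $G$-Kisin module; thus $t_{\fP_\F}$ is the quotient of $\cB$ by its intersection with that image. Now $\ad(\fP_\F)$ again has bounded height (bounded height is stable under the operations relating $\ad(\fP_\F)$ to $V$, by Proposition \ref{onerepheight}), and for any Kisin module $\fN$ of bounded height over the semi-local ring $\fS_\F$ with finite residue fields, $1-\phi_\fN$ maps $\fN$ onto $\fN$ with cokernel of finite length — the finiteness that for $\GL_n$ underlies the finiteness of $T_{\fS_\F}$ and is contained in \cite{MFFGS, Fcrystals}, applied to a faithful pushout of $\ad(\fP_\F)$. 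Hence $\cB$ exceeds the gauge-orbit direction by only a finite-dimensional space, so $t_{\fP_\F}$ is finite-dimensional; the same finiteness, applied to the kernel of the operator, bounds $\Aut(\fP_\F)$.

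I expect the genuine obstacle to be exactly this finiteness. The $G$-Kisin module has nontrivial automorphisms, and the naive rigidification obtained by fixing a trivialisation of the underlying bundle is \emph{not} of finite type over $\La$ — it is (the completion at $g_\F$ of) a bounded locus in the loop group of $\Res_{(W\otimes_{\Zp}\La)/\La}G$, on which an infinite-dimensional gauge group still acts. What makes the quotient, hence the hull $R$, finite-dimensional is the interaction between the boundedness forced by the height condition and the finiteness of $1-\phi$ on bounded-height $\phi$-modules; once that is in hand, the exactness conditions and the passage from a finite tangent space to an honest versal object are formal.
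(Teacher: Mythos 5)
Your route is genuinely different from the paper's. You verify Rim's criterion for the groupoid $D^{[a,b]}_{\fP_\F}$ head-on: checking the Schlessinger exactness conditions via Milnor-square gluing of $G$-bundles plus Frobenii, and bounding the tangent and automorphism spaces by trivialising the underlying bundle and analysing the operator $Z\mapsto\Ad(g_\F^{-1})(Z)-\phz(Z)$ on $\Lie G\otimes\fS_\F$ modulo the height constraint. The paper instead sidesteps the gauge quotient entirely: it introduces a \emph{partial} rigidification $\widetilde D^{[a,b],(N)}_{\fP_\F}$ by a trivialisation modulo $E(u)^N$ only, shows the forgetful map $\pi^{(N)}$ is formally smooth (by smoothness of $G$), and proves $\widetilde D^{[a,b],(N)}_{\fP_\F}$ is honestly pro-representable once $N>(b-a)/(p-1)$ — this threshold kills all nontrivial automorphisms by a Frobenius-iteration argument, while the finiteness of the tangent space is proved by a successive-approximation argument (\cite[Prop.\ 8.1.6, 8.1.8]{LevinThesis}). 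The payoff of the paper's route is that one never has to take a gauge quotient: one gets a genuine functor on the nose, which is also what is needed later for the smooth modification with the local model. Your route is more economical in that it produces only what the proposition asks for (a versal hull). One caution on your write-up: the clause ``$1-\phi_\fN$ maps $\fN$ onto $\fN$ with cokernel of finite length'' is self-contradictory as stated, and more importantly the gauge operator here is not of the form $1-\phi$ on a single bounded-height module — after conjugating by $\Ad(g_\F)$ it is $1-\phi_{\ad(\fP_\F)}$, which sends $\Lie G\otimes\fS_\F$ into a \emph{different} lattice in $\Lie G\otimes\fS_\F[1/E(u)]$. The correct statement is that the cokernel of this map, viewed inside the finite $\fS_\F$-module $\cB\subset E(u)^{a-b}(\Lie G\otimes\fS_\F)$, is finite over $\F$; this follows from the same successive-approximation estimate the paper uses (for $Y$ of $u$-order beyond roughly $ep/(p-1)$ the recursion $Z\mapsto\Ad(g_\F)(Y)+\phi_{\ad}(Z)$ converges $u$-adically, because bounded height bounds the pole order of $\phi_{\ad}$ while $\phz$ multiplies $u$-orders by $p$). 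With that correction spelled out, your argument goes through.
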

\begin{proof} One can check the abstract Schlessinger's criterion in \cite[Theorem 1.11]{Rim}.  However, it will be useful to have an explicit versal formal object. Fix a trivialization $\beta_{\F}$ of $\fP_{\F} \mod E(u)^N$ for any $N \geq 1$, and  define
$$
\widetilde{D}_{\fP_{\F}}^{[a,b], (N)}(A) := \lbrace (\fP_A, \beta_A) \mid \fP_A \in D^{[a,b]}_{\fP_{\F}}(A), \beta_A:\fP_A \cong \cE^0_{\fS_A} \! \! \mod E(u)^N \rbrace,
$$
where $\beta_A$ lifts $\beta_{\F}$. Since $G$ is smooth, the forgetful morphism $\pi^{(N)}:\widetilde{D}_{\fP_{\F}}^{[a,b], (N)} \ra D_{\fP_{\F}}^{[a,b]}$ is formally smooth for any $N$. 

If $N > \frac{b-a}{p-1}$, then $\widetilde{D}_{\fP_{\F}}^{[a,b], (N)}$ is pro-representable by a complete local Noetherian $\La$-algebra.  The proof uses Schlessinger's criterion.  The two key points are that objects in $\widetilde{D}_{\fP_{\F}}^{[a,b], (N)}(A)$ have no non-trivial automorphisms for which one inducts on the power of $p$ which kills $A$ (see \cite[Proposition 8.1.6]{LevinThesis}) and that the tangent space of the underlying functor is finite dimensional which uses a successive approximation argument (see \cite[Proposition 8.1.8]{LevinThesis}).
\end{proof}

It will also be useful to have an infinite version of $\widetilde{D}_{\fP_{\F}}^{[a,b], (N)}$.  Fix a trivialization $\beta_{\F}:\fP_{\F} \cong \cE^0_{\fS_{\F}}$. Define a groupoid on $\cC_{\La}$ by
$$
\widetilde{D}^{[a,b], (\infty)}_{\fP_{\F}}(A) := \lbrace (\fP_A, \beta_A) \mid \fP_A \in D^{[a,b]}_{\fP_{\F}}(A), \beta_A:\fP_A \cong \cE^0_{\fS_A} \rbrace,
$$
where $\beta_A$ lifts $\beta_{\F}$.  Define $\widetilde{D}^{(\infty)}_{\fP_{\F}} := \cup_{a < b} \widetilde{D}^{[a,b], (\infty)}_{\fP_{\F}}$. 

\subsection{Local models for Weil-restricted groups}

In this section, we associate to any geometric conjugacy class $[\mu]$ of cocharacters of $\Res_{(K \otimes_{\Qp} F)/F} G_F$ a local model $M(\mu)$ (Definition \ref{defnlocmodel}) over the ring of integers $\La_{[\mu]}$ of the reflex field $F_{[\mu]}$ of $[\mu]$ (the relevant parahoric here is $\Res_{(\cO_K \otimes_{\Zp} \La)/\La} G$). By construction, $M(\mu)$ is a flat projective $\La_{[\mu]}$-scheme. The principal result (Theorem \ref{locmodels}) says that $M(\mu)$ is normal and its special fiber is reduced. 

The details of the proof of Theorem \ref{locmodels} are in Chapter \S 10 of \cite{LevinThesis} where we follow the strategy introduced in \cite{PZ}. We cannot apply Pappas and Zhu's result directly because the group $\Res_{(K \otimes_{\Qp} F)/F} G_F$ usually does not split over a tame extension of $F$. In \cite{LevinLM}, we generalize \cite[\S 10]{LevinThesis} and \cite{PZ} to groups of the form $\Res_{L/F} H$ where $H$ is reductive group over $L$ which splits over a tame extension of $L$ and allow arbitrary parahoric level structure.  Here we recall the relevant definitions and results leaving the details to \cite{LevinThesis, LevinLM}. 

For any $\La$-algebra $R$, set $R_W := R \otimes_{\Zp} W$. Our local models are constructed inside the following moduli space:
\begin{defn} \label{BigAG} For any $\La$-algebra $R$, let $\widehat{R_W[u]}_{(E(u))}$ denote the $E(u)$-adic completion of $R_W[u]$.  Define
$$
\Gr^{E(u), W}_G(R) := \{ \text{isomorphism classes of pairs } (\cE, \alpha) \},
$$
where $\cE$ is a $G$-bundle on $\widehat{R_W[u]}_{(E(u))}$ and $\alpha:\cE|_{\widehat{R_W[u]}_{(E(u))}[E(u)^{-1}]} \cong \cE^0_{\widehat{R_W[u]}_{(E(u))}[E(u)^{-1}]}$.
\end{defn}

\begin{prop}  \label{AGfibers} The functor $\Gr^{E(u), W}_{G}$ is an ind-scheme which is ind-projective over $\La$.  Furthermore,
\begin{enumerate}
\item the generic fiber $\Gr^{E(u), W}_{G}[1/p]$ is naturally isomorphic to the affine Grassmanian of ${\Res_{(K \otimes_{\Qp} F)/F} G_F}$ over the field $F$; 
\item  if $k_0$ is the residue field of $W$, then the special fiber $\Gr^{E(u), W}_{G} \otimes_{\La} \F$ is naturally isomorphic to the affine Grassmanian of $\Res_{(k_0 \otimes_{\Fp} \F)/\F} (G_{\F})$.
\end{enumerate}
\end{prop}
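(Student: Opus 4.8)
The plan is to prove that $\Gr^{E(u),W}_G$ is ind-projective over $\La$ and then identify its generic and special fibers. For ind-projectivity, I would mimic the standard proof for affine Grassmannians: first handle $G = \GL_n$ (or rather $\GL(V)$ for the fixed faithful representation), where a point of $\Gr^{E(u),W}_{\GL_n}$ amounts to an $\widehat{R_W[u]}_{(E(u))}$-lattice in the trivialized generic fiber, and one bounds the lattice by powers of $E(u)$ to exhibit $\Gr^{E(u),W}_{\GL_n}$ as an increasing union of closed subschemes, each of which is proper (a twisted Grassmannian / union of classical Grassmannian bundles over $\Spec \La$). Here the key local ring $\widehat{R_W[u]}_{(E(u))}$ plays the role that $R[\![t]\!]$ plays in the usual affine Grassmannian story; note $E(u)$ is a nonzerodivisor and $\widehat{R_W[u]}_{(E(u))}$ is $E(u)$-adically complete, so Beauville--Laszlo-type gluing (Lemma \ref{descent}) applies. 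Then for general reductive $G$, use the closed immersion $G \ia \GL(V)$: the induced map $\Gr^{E(u),W}_G \to \Gr^{E(u),W}_{\GL(V)}$ is a closed immersion (by Proposition \ref{changeofgroup}, since $\GL(V)/G$ is affine as $G$ is reductive, together with the fact that a $\GL(V)$-bundle with $G$-structure on the generic fiber extending compatibly is a closed condition), and a closed ind-subscheme of an ind-projective ind-scheme is ind-projective. This is essentially \cite[3.4.2, Theorem 3.3.11]{LevinThesis}, so I would cite those and only sketch the reduction.

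For the identification of fibers, the point is base change. After inverting $p$, the ring $\widehat{R_W[u]}_{(E(u))}[1/p]$ for $R$ an $F$-algebra decomposes according to the factorization of $E(u)$ over $W[1/p] \otimes$ (coefficient field), i.e. according to the $\Qp$-embeddings $\psi: K \to \overline{F}$; more precisely one gets a product of complete local rings each isomorphic to a power series ring over $R$ in one variable, and a $G$-bundle on this with a trivialization of its generic fiber is exactly a point of the affine Grassmannian $\Gr_{\Res_{(K\otimes_{\Qp}F)/F}G_F}$. I would make this precise by noting $K \otimes_{\Qp} F = \prod (\text{finite extensions of } F)$ and $E(u)$ becomes a product of the corresponding Eisenstein/linear factors, so the $E(u)$-adic completion becomes a finite product of $t$-adic completions; then unwind the Weil restriction. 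For the special fiber, reduce mod $p$: $W_R[u]/(p) = (k_0 \otimes_{\Fp} R)[u]$ and $E(u) \equiv u^e \bmod p$ (up to unit), so $\widehat{R_W[u]}_{(E(u))} \otimes \F$ is the $u$-adic completion $(k_0 \otimes_{\Fp}\F)[\![u]\!] \otimes_{\F} R$, whose associated bundle-with-trivialization data is exactly a point of the affine Grassmannian of $\Res_{(k_0 \otimes_{\Fp}\F)/\F}(G_\F)$. In both cases the naturality of the isomorphism follows from naturality of Beauville--Laszlo gluing and of Weil restriction.

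I expect the main obstacle to be the careful bookkeeping in the fiber computations rather than ind-projectivity: one must check that the $E(u)$-adic completion genuinely commutes with the relevant base changes (to $F$-algebras $R$, and to $\F$) and decomposes as claimed, and that the resulting $G$-bundle data matches the Weil-restricted affine Grassmannian on the nose, including the trivialization of the generic fiber. The subtlety is that $\Gr^{E(u),W}_G$ is defined via bundles on a single complete local-type ring $\widehat{R_W[u]}_{(E(u))}$, whereas the target affine Grassmannians are defined via bundles on a disjoint union of formal discs (one per place/embedding); reconciling these requires knowing that $\widehat{R_W[u]}_{(E(u))}$ splits as the corresponding product after the base change, which is where the tame-splitting hypothesis and the structure of $E(u)$ enter. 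Since the full argument is carried out in \cite[\S 3.4]{LevinThesis} (and in the tame case goes back to \cite{PZ}), I would present this as a reduction-plus-citation: set up the $\GL(V)$ case in a few lines, invoke Proposition \ref{changeofgroup} for the closed immersion, and then compute the two fibers by base change, referring to \cite{LevinThesis, LevinLM} for details.
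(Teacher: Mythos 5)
Your sketch reconstructs what is in the paper's cited reference (\cite[\S 10.1]{LevinThesis} and, in the tame case, \cite{PZ}); the paper's own ``proof'' of this proposition is literally the one-line citation. Both the ind-projectivity reduction to $\GL(V)$ via \ref{changeofgroup} and the fiber identifications via base change of the $E(u)$-adic completion (Eisenstein $\Rightarrow$ $E(u)\equiv u^e$ mod $p$ for the special fiber; Hensel/Cohen splitting of $\widehat{K_0[u]}_{(E(u))}$ over $K$ for the generic fiber) are the standard route and match the reference. One small aside: the tame-splitting hypothesis you invoke near the end is not actually needed for this statement — it enters only in the proof of Theorem \ref{locmodels} on normality of $M(\mu)$, not in identifying the fibers of $\Gr^{E(u),W}_G$; what matters here is just that $E(u)$ is Eisenstein and that $W$ is unramified over $\Zp$.
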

\begin{proof} See \S 10.1 in \cite{LevinThesis}.
\end{proof}

Let $H$ be any reductive group over $F$ and $\Gr_H$ be the affine Grassmanian of $H$.  Associated to any geometric conjugacy class $[\mu]$ of cocharacters there is an affine Schubert variety $S(\mu)$ in $(\Gr_H)_{F_{[\mu]}}$ where $F_{[\mu]}$ is the reflex field of $[\mu]$.  These are the closures of orbits for the positive loop group $L^+H$. 

The geometric conjugacy classes of cocharacters of $H$ can be identified with the set of dominant cocharacters for a choice of maximal torus and Borel over $\overline{F}$.  The dominant cocharacters have partial ordering defined by $\mu \geq \la$ if and only if $\mu - \la$ is a non-negative sum of positive coroots.  Then, $S(\mu)_{\overline{F}}$ is then the union of the locally closed affine Schubert cells for all $\mu' \leq \mu$ (\cite[Proposition 2.8]{RicharzSV}).   

\begin{defn} \label{defnlocmodel} Let $F_{[\mu]}/F$ be the reflex field of $[\mu]$ with ring of integers $\La_{[\mu]}$. If $S(\mu) \subset \Gr_{\Res_{(K \otimes_{\Qp} F)/F} G_F} \otimes_F F_{[\mu]}$ is the closed affine Schubert variety associated to $\mu$, then the \emph{local model} $M(\mu)$ associated to $\mu$ is the flat closure of $S(\mu)$ in $\Gr^{E(u), W}_{G} \otimes_{\La} \La_{[\mu]}$. It is a flat projective scheme over $\Spec \La_{[\mu]}$.
\end{defn}

The main theorem on the geometry of local models is:
\begin{thm} \label{locmodels} Suppose that $p \nmid |\pi_1(G^{\mathrm{der}})|$ where $G^{\mathrm{der}}$ is the derived subgroup of $G$. Then $M(\mu)$ is normal.  The special fiber $M(\mu) \otimes_{\La_{\mu}} \overline{\F}$ is reduced, irreducible, normal, Cohen-Macaulay and Frobenius-split. 
\end{thm}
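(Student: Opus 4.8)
The plan is to follow the strategy of Pappas and Zhu \cite{PZ} for local models, adapted to the Weil-restricted group $\Res_{(K \otimes_{\Qp} F)/F} G_F$, which in general does not split over a tame extension; the technical heart of this adaptation is carried out in \cite[\S 10]{LevinThesis} and, in greater generality, in \cite{LevinLM}. By construction $M(\mu)$ is a flat projective $\La_{[\mu]}$-scheme, namely the scheme-theoretic closure in $\Gr^{E(u),W}_G \otimes_\La \La_{[\mu]}$ of the affine Schubert variety $S(\mu)$ in the generic fibre. Its generic fibre $M(\mu)[1/p] = S(\mu)$ becomes, after base change to $\overline F$, a product (over the $\Qp$-embeddings of $K$) of affine Schubert varieties in $\Gr_{G_{\overline F}}$, hence is normal, Cohen--Macaulay, reduced and irreducible by the classical theory of affine Schubert varieties over a field of characteristic zero. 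So everything reduces to analysing the special fibre and then descending.

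\textbf{Identifying the special fibre.} By Proposition \ref{AGfibers}(2), $\Gr^{E(u),W}_G \otimes_\La \F$ is the affine Grassmannian of the reductive $\F$-group $\underline G := \Res_{(k_0 \otimes_{\Fp}\F)/\F}(G_\F)$; since $E(u)$ is Eisenstein, the $E(u)$-adic variable degenerates to the $u$-adic variable, merging the $e$ embeddings above each residue embedding. A specialization/boundedness argument then shows that $M(\mu)\otimes_{\La_{[\mu]}}\overline\F$ is supported on the closed affine Schubert variety $S(\overline\mu)_{\overline\F}$ of $\Gr_{\underline G}$ attached to the explicit dominant coweight $\overline\mu$ obtained by ``summing'' the $\mu_\psi$ over each residue place (that this is a single Schubert variety, rather than a larger admissible union, uses that the parahoric $\Res_{(\cO_K \otimes_{\Zp}\La)/\La} G$ is special maximal). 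The real content is that $M(\mu)\otimes_{\La_{[\mu]}}\overline\F = S(\overline\mu)_{\overline\F}$ \emph{with reduced structure}, and this is where the coherence conjecture of Pappas--Rapoport, proved by Zhu (see \cite{PRcoeff,PZ}), enters: one fixes a relatively ample line bundle $\mathcal L$ on $\Gr^{E(u),W}_G$, uses that $h^0$ of powers of $\mathcal L$ is constant in the flat proper family $M(\mu)/\La_{[\mu]}$, computes the generic-fibre value via the characteristic-zero Schubert variety, and matches it — via the coherence conjecture — against $h^0$ of $S(\overline\mu)_{\overline\F}$ with its reduced structure; since $\mathcal L$ is ample this equality for all twists forces the nilradical of the special fibre to vanish.

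\textbf{Geometry of the special fibre and descent.} Now $M(\mu)\otimes_{\La_{[\mu]}}\overline\F$ is a closed affine Schubert variety in the affine Grassmannian of a reductive group over $\overline\F$: irreducibility is immediate (closure of a single Schubert cell), and normality, Cohen--Macaulayness and Frobenius-splitness follow from the theory of affine Schubert varieties — Frobenius splitting in the style of Mathieu--Littelmann--Faltings, and the normality and Cohen--Macaulayness results of Faltings and Pappas--Zhu — where the hypothesis $p \nmid |\pi_1(G^{\der})|$ (equivalently for $\underline G^{\der}$) is precisely what rules out the small-characteristic pathologies. To conclude for $M(\mu)$ itself: it is flat over the DVR $\La_{[\mu]}$ with reduced special fibre, hence reduced; Cohen--Macaulay special fibre plus $\La_{[\mu]}$ regular plus flatness gives $M(\mu)$ Cohen--Macaulay, in particular $S_2$; and it is $R_1$, since a codimension-one point of the generic fibre has regular local ring ($S(\mu)$ being normal) while the generic point of the special fibre has one-dimensional local ring, flat over $\La_{[\mu]}$ with residue ring a field (the special fibre being generically reduced), hence a DVR. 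Serre's criterion then gives that $M(\mu)$ is normal.

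\textbf{Main obstacle.} I expect the hard part to be the identification of the special fibre in the middle step — in particular the reducedness statement — which rests on the coherence conjecture together with the preliminary work of constructing $\Gr^{E(u),W}_G$, controlling its fibres, and developing the needed Schubert-variety theory in the absence of a tame splitting of $\Res_{(K \otimes_{\Qp} F)/F}G_F$; this is exactly the content developed in \cite[\S 10]{LevinThesis} and \cite{LevinLM}, and the role of $p \nmid |\pi_1(G^{\der})|$ throughout is to keep the relevant Bruhat--Tits group scheme and its loop group well-behaved enough for those arguments (reduced loop groups, normal Schubert varieties, etc.).
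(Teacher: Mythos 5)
The paper itself does not give a proof of this theorem; it explicitly defers the details to \cite[\S 10]{LevinThesis} and to \cite{LevinLM}, since a self-contained argument would require rebuilding the Pappas--Zhu local-model machinery for Weil-restricted groups. Your sketch is a faithful reconstruction of that strategy and matches the intended argument: (i) by construction $M(\mu)$ is flat with generic fibre the characteristic-zero Schubert variety $S(\mu)$; (ii) the special fibre of $\Gr_G^{E(u),W}$ is the affine Grassmannian of $\Res_{(k_0\otimes_{\F_p}\F)/\F}G_{\F}$ (Prop.\ \ref{AGfibers}(2)), and since the parahoric $\Res_{(\cO_K\otimes_{\Zp}\La)/\La}G$ is \emph{maximal special}, the $\mu$-admissible locus in this affine Grassmannian is a single closed Schubert variety $S(\overline\mu)$ for the ``summed'' coweight, which is where irreducibility comes from; (iii) reducedness of $M(\mu)\otimes\overline\F$ follows from the coherence conjecture by the ample-line-bundle/dimension-count argument you describe; (iv) normality, Cohen--Macaulayness and Frobenius-splitness of $S(\overline\mu)_{\overline\F}$ use $p\nmid|\pi_1(G^{\der})|$ and the theory of affine Schubert varieties in positive characteristic; and (v) Serre's criterion lifts normality to $M(\mu)$.

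The one place you could sharpen the discussion is in the ``main obstacle'' paragraph: the real difficulty handled in \cite{LevinLM} is not so much keeping the loop group well-behaved as it is constructing the degeneration $\Gr_G^{E(u),W}$ and establishing the coherence conjecture when $\Res_{(K\otimes_{\Qp}F)/F}G_F$ is \emph{wildly} ramified over $F$, which takes one outside the hypotheses of \cite{PZ}. The workaround is to exploit that $G$ is reductive over $\cO_K$ (hence tame over $K$), build the Beilinson--Drinfeld-type degeneration as a Weil restriction along $\cO_K[u]/\Zp[u]$ of the tame object for $G$, and reduce the line-bundle cohomology computations needed for reducedness to the tame case. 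But as a blind reconstruction of the proof outline that the paper is citing, your proposal has the right skeleton and no gaps I can see.
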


For the next subsection, it will useful to recall a group which acts on $\Gr^{E(u), W}_{G}$ and $M(\mu)$.  Define
$$
L^{+, E(u)} G (R)  := G (\widehat{R_W[u]}_{(E(u))}) = \varprojlim_{i \geq 1} G (R_W[u]/(E(u)^i))
$$
for all $\La$-algebras $R$.   $L^{+, E(u)} G$ is represented by a group scheme which is the projective limit of the affine flat finite type group schemes $\Res_{((\La \otimes_{\Zp} W)[u]/E(u)^i)/\La} G$.  

The group  $L^{+, E(u)} G$ acts on  $\Gr^{E(u), W}_{G}$ by changing the trivialization.  This action is \emph{nice} in the sense of \cite[A.3]{Gaitsgory}, i.e., $\Gr^{E(u), W}_{G} \cong \varinjlim_i Z_i$  where $Z_i$ are $L^{+, E(u)} G$-stable closed subschemes on which $L^{+, E(u)} G$ acts through the quotient $\Res_{((\La \otimes_{\Zp} W)[u]/E(u)^i)/\La} G$. 

\begin{cor} \label{stabilityofM} For any $\mu$, the local model $M(\mu)$ is stable under the action of $L^{+, E(u)} G$.
\end{cor}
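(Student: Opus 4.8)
The plan is to reduce the statement to the known $L^+$-stability of the affine Schubert variety $S(\mu)$ on the generic fiber, together with the fact that $M(\mu)$ is \emph{defined} as the flat closure of $S(\mu)$. First I would recall that $M(\mu)$ sits inside $\Gr^{E(u),W}_G \otimes_\La \La_{[\mu]}$, and that the group scheme $L^{+,E(u)}G$ acts on $\Gr^{E(u),W}_G$ by changing the trivialization $\alpha$; base-changing along $\La \to \La_{[\mu]}$ gives an action of $L^{+,E(u)}G \otimes_\La \La_{[\mu]}$ on $\Gr^{E(u),W}_G \otimes_\La \La_{[\mu]}$. Because the action is \emph{nice} in the sense of \cite[A.3]{Gaitsgory}, i.e.\ $\Gr^{E(u),W}_G \cong \varinjlim_i Z_i$ with each $Z_i$ an $L^{+,E(u)}G$-stable closed subscheme on which the action factors through the finite-type quotient $\Res_{((\La\otimes_{\Zp}W)[u]/E(u)^i)/\La} G$, the subscheme $M(\mu)$ — being closed and quasi-compact — lies inside some $Z_i$, so it suffices to show that the action of the affine flat finite-type group scheme $\cG_i := \Res_{((\La\otimes_{\Zp}W)[u]/E(u)^i)/\La}G$ on $Z_i \otimes_\La \La_{[\mu]}$ preserves $M(\mu)$.

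Next I would set up the action morphism $a\colon \cG_i \times_{\La_{[\mu]}} (Z_i \otimes \La_{[\mu]}) \to Z_i \otimes \La_{[\mu]}$ and consider the scheme-theoretic preimage $a^{-1}(M(\mu))$ inside $\cG_i \times M(\mu)$ — more precisely the intersection $(\mathrm{id}\times \iota)^{-1}(a^{-1}(M(\mu)))$ where $\iota\colon M(\mu)\hookrightarrow Z_i\otimes\La_{[\mu]}$; one wants to show this closed subscheme equals all of $\cG_i \times_{\La_{[\mu]}} M(\mu)$. On the generic fiber this holds: $M(\mu)[1/p] = S(\mu)$ is an affine Schubert variety, hence $L^+(\Res_{(K\otimes_{\Qp}F)/F}G_F)$-stable by construction (it is a closure of loop-group orbits), and the positive loop group is exactly the generic fiber of $L^{+,E(u)}G$ by Proposition \ref{AGfibers}(1). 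So $\cG_i \times M(\mu)$ and $a^{-1}(M(\mu))$ agree after inverting $p$, i.e.\ their difference is supported on the special fiber.

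The key point — and the step I expect to be the main (though mild) obstacle — is to promote this generic-fiber statement to an integral one using flatness. Since $M(\mu)$ is flat over $\La_{[\mu]}$ and $\cG_i$ is flat over $\La_{[\mu]}$, the product $\cG_i \times_{\La_{[\mu]}} M(\mu)$ is flat over $\La_{[\mu]}$, hence $\La_{[\mu]}$-torsion free; in particular it is the flat closure of its generic fiber. The composite $\cG_i \times M(\mu) \xrightarrow{\mathrm{id}\times\iota} \cG_i \times (Z_i\otimes\La_{[\mu]}) \xrightarrow{a} Z_i \otimes \La_{[\mu]}$ is a morphism from a $\La_{[\mu]}$-flat scheme which maps the generic fiber into $M(\mu)$; since $M(\mu)$ is closed in $Z_i\otimes\La_{[\mu]}$ and is itself $\La_{[\mu]}$-flat (so closed under flat closure/specialization of the source), the whole morphism factors through $M(\mu)$. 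Concretely: the scheme-theoretic image of a reduced-to-flat source lies in the flat closure of the image of the generic fiber, which is contained in $M(\mu)$ by definition. This shows $a\circ(\mathrm{id}\times\iota)$ lands in $M(\mu)$, i.e.\ $M(\mu)$ is $\cG_i$-stable, hence $L^{+,E(u)}G$-stable. Finally, unwinding the $\varinjlim$ over $i$ gives the claim for the full ind-group $L^{+,E(u)}G$ acting on $\Gr^{E(u),W}_G$, completing the proof.
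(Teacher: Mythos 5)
Your proposal is correct and follows essentially the same route as the paper: reduce to the generic fiber by flatness, then invoke the $L^+$-stability of the affine Schubert variety $S(\mu)$ together with the identification of $L^{+,E(u)}G[1/p]$ with the positive loop group of $\Res_{(K\otimes_{\Qp}F)/F}G_F$. The only difference is one of exposition: the paper compresses the flatness reduction into a single sentence (``Since everything is flat, it suffices to show that $M(\mu)[1/p]$ is stable under $L^{+,E(u)}G[1/p]$''), whereas you spell it out — passing to a finite-type quotient $\cG_i$ via the ``nice'' action, and observing that the generic fiber is schematically dense in the $\La_{[\mu]}$-flat product $\cG_i\times_{\La_{[\mu]}}M(\mu)$, so a closed subscheme containing it is everything. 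That elaboration is accurate and is exactly what is being implicitly invoked; no gap.
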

\begin{proof} Since everything is flat, it suffices to show that $M(\mu)[1/p]$ is stable under $L^{+, E(u)} G[1/p]$. The functor $L^{+, E(u)} G [1/p]$ on $F$-algebras is naturally isomorphic to the positive loop group $L^+ \Res_{(K \otimes_{\Qp} F)/F}(G)$ such that the isomorphism in Proposition \ref{AGfibers}(1) is equivariant.  $M(\mu)[1/p]$ is the closed affine Schubert variety $S(\mu)$ which is stable under the action of this group. 
\end{proof}

\subsection{Smooth modification}

We begin by defining the deformation functor which will be the target of our modification. 

\begin{defn} \label{bardefn} Choose a $G$-bundle $Q_{\F}$ over $\fS_{\F}$ together with a trivialization $\delta_0$ of $Q_{\F}$ over $\fS_{\F}[1/E(u)]$. Define a deformation functor on $\cC_{\La}$ by
$$
\overline{D}_{Q_{\F}}(A) := \{ \text{isomorphism classes of triples } (\cE, \delta, \psi) \},
$$
where $\cE$ is a $G$-bundle on $\fS_A$, $\delta:\cE|_{\fS_A[E(u)^{-1}]} \cong \cE^0_{\fS_A[E(u)^{-1}]}$, and $\psi:\cE \otimes_{\fS_A} \fS_{\F} \cong Q_{\F}$ compatible with $\delta$ and $\delta_0$.
\end{defn}

\begin{exam} \label{barGLV} Let $G = \GL(V)$. For any $(Q_{A}, \delta_A) \in \overline{D}_{Q_{\F}}(A)$, $\delta_A$ identifies $Q_{A}$ with a ``lattice'' in $(V \otimes_{\La} \fS_A)[1/E(u)]$, i.e., a finitely generated projective $\fS_A$-module $L_{A}$ such that $L_{A}[1/E(u)] = (V \otimes_{\La} \fS_A)[1/E(u)]$.
\end{exam}

The main result of this section is the following:
\begin{thm} \label{Modification} Let $\La$ be a $\Zp$-finite flat local domain with residue field $\F$. Let $G$ be a connected reductive group over $\La$ and $\fP_{\F}$ a $G$-Kisin module with coefficients in $\F$. Fix a trivialization $\beta_{\F}$ of $\fP_{\F}$ as a $G$-bundle. There exists a diagram of groupoids over $\cC_{\La}$,
$$
\xymatrix{
& \widetilde{D}^{(\infty)}_{\fP_{\F}} \ar[dl]_{\pi^{(\infty)}} \ar[dr]^{\Psi} & \\
D_{\fP_{\F}} & & \overline{D}_{Q_{\F}}, \\
}
$$ 
where $Q_{\F} := (\phz^*(\fP_{\F}), \beta_{\F}[1/E(u)] \circ \phi_{\fP_{\F}})$. Both $\pi^{(\infty)}$ and $\Psi$ are formally smooth.
\end{thm}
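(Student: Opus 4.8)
The plan is to construct the two morphisms $\pi^{(\infty)}$ and $\Psi$ explicitly and then verify formal smoothness of each by a lifting criterion. The morphism $\pi^{(\infty)}\colon \widetilde{D}^{(\infty)}_{\fP_{\F}} \to D_{\fP_{\F}}$ is simply the forgetful functor that discards the trivialization $\beta_A$. Since $G$ is smooth over $\La$ and $\fS_A$ is $I$-adically complete for the ideal defining the closed point (more precisely, since $\fS_{A'} \to \fS_A$ has nilpotent kernel when $A' \to A$ is a small surjection in $\cC_{\La}$, and any $G$-bundle on $\fS_A$ lifts by Proposition \ref{Gtriv}(1) applied after noting the kernel is nilpotent), lifting a trivialization from $\fS_A$ to $\fS_{A'}$ amounts to lifting a section of a smooth $G$-torsor, which is unobstructed; this gives formal smoothness of $\pi^{(\infty)}$. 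Concretely, for a small extension $A' \twoheadrightarrow A$ with kernel $\mathfrak{a}$, and a compatible pair $(\fP_{A'}, \beta_A)$, one must lift $\beta_A\colon \fP_A \cong \cE^0_{\fS_A}$ to $\beta_{A'}\colon \fP_{A'} \cong \cE^0_{\fS_{A'}}$; the obstruction lies in an $H^1$ of a coherent sheaf on $\Spec \fS_{A'}$ twisted by $\mathfrak{a}$, which vanishes because $\fS_{A'}$ is (the appropriate completion of) an affine scheme and $G$ is smooth, so $\beta_{A'}$ exists.

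Next I would define $\Psi\colon \widetilde{D}^{(\infty)}_{\fP_{\F}} \to \overline{D}_{Q_{\F}}$. Given $(\fP_A, \beta_A) \in \widetilde{D}^{(\infty)}_{\fP_{\F}}(A)$, set $\cE := \varphi^*(\fP_A)$, a $G$-bundle on $\fS_A$; the trivialization of $\cE$ over $\fS_A[1/E(u)]$ is $\delta := \beta_A[1/E(u)] \circ \phi_{\fP_A}\colon \varphi^*(\fP_A)[1/E(u)] \xrightarrow{\phi_{\fP_A}} \fP_A[1/E(u)] \xrightarrow{\beta_A[1/E(u)]} \cE^0_{\fS_A[1/E(u)]}$; and $\psi$ is the reduction mod $m_A$ together with the identification $\varphi^*(\fP_{\F}) = Q_{\F}$ built into the definition of $Q_{\F}$ — one checks $\delta$ reduces to $\delta_0 = \beta_{\F}[1/E(u)] \circ \phi_{\fP_{\F}}$ by construction. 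This defines $\Psi$ on objects, and it is clearly functorial. The content is that $\Psi$ is formally smooth: given a small extension $A' \twoheadrightarrow A$, an object $(\fP_{A'}, \beta_{A'})$ over $A'$ need not be specified, but rather we are given $(\fP_A, \beta_A)$ over $A$ together with a lift $(\cE', \delta', \psi')$ of $\Psi(\fP_A, \beta_A)$ to $A'$, and we must produce a compatible lift $(\fP_{A'}, \beta_{A'})$.

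The heart of the argument is therefore the following reconstruction: from $(\cE', \delta')$ over $\fS_{A'}$ one recovers $\fP_{A'}$ by "untwisting the Frobenius." Since $\varphi\colon \fS \to \fS$ is faithfully flat (it is finite free of rank $p$ on $W[\![u]\!]$, being the ordinary Frobenius on $W$ and $u \mapsto u^p$), and since we are free to choose the trivialization, one can recover a $G$-bundle $\fP_{A'}$ on $\fS_{A'}$ together with an isomorphism $\varphi^*(\fP_{A'}) \cong \cE'$: indeed $\beta_A$ already trivializes $\fP_A$, and lifting this trivialization to $A'$ (possible by the smoothness argument above, exactly as for $\pi^{(\infty)}$) lets us transport the problem to trivial bundles, where "$\varphi^*$ of a trivial bundle with a chosen trivialization" is canonical; the Frobenius-semilinear structure $\phi_{\fP_{A'}}$ is then read off from $\delta'$ as $\phi_{\fP_{A'}} := (\beta_{A'}[1/E(u)])^{-1} \circ \delta'$. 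One must check this is an isomorphism over $\fS_{A'}[1/E(u)]$ (immediate) and that $\fP_{A'}$ has bounded height, i.e. lands in $D_{\fP_{\F}}^{[a,b]}$ for the appropriate $[a,b]$ — here one uses that the $\mu$-type / height condition is checked on a single faithful representation $V$ (Proposition \ref{onerepheight}) and that the $\GL_n$-version of this modification in \cite[2.2.11]{MFFGS} and \cite[\S 3]{PRcoeff} already handles the height bookkeeping via the affine Grassmannian $\Gr^{E(u),W}_G$. The main obstacle I anticipate is bookkeeping the compatibility of all three data $(\cE, \delta, \psi)$ simultaneously with the trivialization $\beta_A$ under lifting — that is, organizing the deformation-theoretic diagram so that "lift the bundle, lift the trivialization, then the Frobenius comes for free" is genuinely a chain of unobstructed steps rather than a single obstructed one. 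The smoothness of $G$ and the fact that $\fS_A$ is (pro-)affine are what make each step unobstructed; once that is set up cleanly, both $\pi^{(\infty)}$ and $\Psi$ are formally smooth by the infinitesimal lifting criterion, and the diagram commutes by construction.
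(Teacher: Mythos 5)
Your proposal is correct and matches the paper's approach: both reduce to the trivial bundle via the given trivialization, lift the identification $\gamma_{A/I}\colon \phz^*(\cE^0_{\fS_{A/I}})\cong Q_{A/I}$ using smoothness of $G$, and then read off the Frobenius as $\phi_A = \delta_A\circ\gamma_A[1/E(u)]$. Your digressions about faithful flatness of $\phz$ on $\fS$ and about checking bounded height of $\fP_{A'}$ are unnecessary — the bounded-height condition over an Artinian coefficient ring is automatic once $\phi$ is an isomorphism over $\fS_A[1/E(u)]$, and no ``untwisting'' of $\phz^*$ is ever performed.
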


Later in the section, we will refine this modification by imposing appropriate conditions on both sides. Intuitively, the above modification corresponds to adding a trivialization to the $G$-Kisin module and then taking the ``image of Frobenius.'' The groupoid $\widetilde{D}^{(\infty)}_{\fP_{\F}}$ is defined at the end of \S 3.1 and $\pi^{(\infty)}$ is formally smooth since $G$ is smooth.  Next, we construct the morphism $\Psi$ and show that it is formally smooth. To avoid excess notation, we sometimes omit the data of the residual isomorphisms modulo $m_A$.  One can check that the everything is compatible with such isomorphisms. 

\begin{defn} \label{defnPsi} For any $(\fP_A, \phi_{\fP_A}, \beta_A) \in \widetilde{D}^{(\infty)}_{\fP_{\F}}(A)$, we set 
$$
\Psi((\fP_A, \phi_{\fP_A}, \beta_A)) = (\phz^*(\fP_A), \delta_A),
$$
where $\delta_A$ is the composite
$$
\phz^*(\fP_A)[1/E(u)] \xrightarrow{\phi_{\fP_A}} \fP_A[1/E(u)] \xrightarrow{\beta_A[1/(E(u))]} \cE^0_{\fS_A} [1/E(u)].   
$$
\end{defn} 

\begin{prop}  The morphism $\Psi$ of groupoids is formally smooth.
\end{prop}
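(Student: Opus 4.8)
To show $\Psi$ is formally smooth, I would verify the infinitesimal lifting criterion: given a small surjection $A' \twoheadrightarrow A$ in $\cC_\La$, an object $(\fP_A, \phi_{\fP_A}, \beta_A) \in \widetilde{D}^{(\infty)}_{\fP_\F}(A)$, and a lift $(\cE', \delta') \in \overline{D}_{Q_\F}(A')$ of $\Psi(\fP_A, \phi_{\fP_A}, \beta_A) = (\phz^*(\fP_A), \delta_A)$, one must produce a lift $(\fP_{A'}, \phi_{\fP_{A'}}, \beta_{A'}) \in \widetilde{D}^{(\infty)}_{\fP_\F}(A')$ mapping to $(\cE', \delta')$ and to $(\fP_A, \phi_{\fP_A}, \beta_A)$. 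The key observation is that $\Psi$ essentially reverses the operations ``pull back by Frobenius'' and ``compose with the trivialization,'' so constructing the lift amounts to descending $\cE'$ along $\phz$ and re-inserting the trivialization.

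\textbf{Key steps.} First, I would use the trivialization data on the target: an object of $\overline{D}_{Q_\F}(A')$ is a $G$-bundle $\cE'$ on $\fS_{A'}$ together with an isomorphism $\delta': \cE'|_{\fS_{A'}[1/E(u)]} \cong \cE^0_{\fS_{A'}[1/E(u)]}$. By Proposition \ref{Gtriv}(1), since $\widehat{\fS}_{A'}$ is complete local (more precisely $\fS_{A'}$ is a product of complete local rings with finite residue field, cf. Proposition \ref{Gtriv}(2)) and $A' \to A$ has nilpotent kernel, the trivialization $\beta_A$ of $\fP_A$ lifts to a trivialization $\beta_{A'}$ of any lift of $\fP_A$; so the real content is to lift $\fP_A$ itself together with its Frobenius $\phi_{\fP_A}$. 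Second, I would define $\fP_{A'}$ via the Frobenius pullback structure: since $\phz: \fS_{A'} \to \fS_{A'}$ is a flat (indeed faithfully flat) map, and $\cE'$ is a $G$-bundle on $\fS_{A'}$ whose Frobenius pullback $\phz^*(\cE')$ should be $\fP_{A'}[1/E(u)]$-related to $\cE'$ via $\delta'$, I would set $\fP_{A'}$ to be the $G$-bundle on $\fS_{A'}$ obtained by gluing: away from $E(u)$ use $\cE^0_{\fS_{A'}}[1/E(u)]$ trivialized so that it matches $\beta_A$ mod the small ideal, and near $E(u)$ use the data of $\cE'$ transported through the (inverse of the) Frobenius; the gluing lemma (Lemma \ref{descent}, applied with $f$ a power of $E(u)$) assembles these into a bundle on $\fS_{A'}$. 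The Frobenius $\phi_{\fP_{A'}}: \phz^*(\fP_{A'})[1/E(u)] \cong \fP_{A'}[1/E(u)]$ is then forced: we need $\phz^*(\fP_{A'}) \cong \cE'$ (compatibly with $\delta'$ and the trivialization $\beta_{A'}$), i.e. $\phi_{\fP_{A'}} := \beta_{A'}^{-1}[1/E(u)] \circ \delta'$ after identifying $\phz^*(\fP_{A'})$ with $\cE'$. Third, I would check that $\phz^*(\fP_{A'}) \cong \cE'$ canonically: this is where I use that Frobenius $u \mapsto u^p$ on $\fS_{A'} = (W \otimes A')[\![u]\!]$ induces an isomorphism on completions along appropriate ideals and is compatible with the $E(u)$-adic structure — essentially $\phz^*$ is an equivalence on the relevant categories of bundles, so $\cE'$ uniquely determines $\fP_{A'}$ up to the gluing choice. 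Finally, I would verify all the residual compatibilities modulo $m_{A'}$ and modulo the small ideal, and check that the resulting lift maps to $(\fP_A, \phi_{\fP_A}, \beta_A)$ under reduction.

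\textbf{Main obstacle.} The technical heart is establishing that the Frobenius pullback functor $\phz^*$ genuinely allows one to recover $\fP_{A'}$ from $\cE' = \phz^*(\fP_{A'})$ — i.e. that no information is lost and the descent is canonical — combined with checking that the gluing along $E(u)$ is unobstructed for $G$-bundles (not just vector bundles). Because $G$ is smooth (so lifting sections and trivializations is automatic by Proposition \ref{Gtriv}(1)) and the gluing lemma \ref{descent} works for arbitrary flat affine $G$ of finite type, there is no cohomological obstruction; the lift exists. The only care needed is bookkeeping: ensuring the constructed $(\fP_{A'}, \phi_{\fP_{A'}}, \beta_{A'})$ is compatible with both the given reduction to $A$ and the given lift $(\cE', \delta')$ on the nose (not just up to isomorphism), which is handled by the rigidity coming from the trivialization $\beta$ (an object of $\widetilde{D}^{(\infty)}_{\fP_\F}$ has trivial automorphisms respecting $\beta_A$, by the argument in the proof of Proposition \ref{hullrep}). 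I expect this rigidity plus the gluing lemma to make the verification essentially formal once the setup is fixed.
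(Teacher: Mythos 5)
There is a genuine gap in your third step. You claim that $\phz^*$ ``is an equivalence on the relevant categories of bundles'' and hence that $\cE'$ ``uniquely determines $\fP_{A'}$'' — but this is false. The Frobenius $\phz\colon \fS_{A'}\to\fS_{A'}$, $u\mapsto u^p$, is a finite map of degree $p$, not an isomorphism, and there is no descent of $G$-bundles along it. So you cannot recover $\fP_{A'}$ from its Frobenius pullback $\cE'=\phz^*(\fP_{A'})$, and the attempt to glue a candidate $\fP_{A'}$ out of the trivial bundle on $\fS_{A'}[1/E(u)]$ and a Frobenius-transported piece near $E(u)$ via Lemma~\ref{descent} does not get off the ground. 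The Beauville--Laszlo gluing step is also unnecessary: an object of $\widetilde{D}^{(\infty)}_{\fP_{\F}}(A)$ already comes with a \emph{global} trivialization $\beta_A$ of $\fP_A$ over $\fS_A$, so there is no need to patch anything.

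The correct route exploits the trivialization more decisively and sidesteps Frobenius descent entirely. Given $(Q_A,\delta_A)\in\overline{D}_{Q_\F}(A)$ over $(Q_{A/I},\delta_{A/I})$ and a preimage $(\fP_{A/I},\phi_{A/I},\beta_{A/I})$, use $\beta_{A/I}$ to replace this triple by $(\cE^0_{\fS_{A/I}},\phi'_{A/I},\mathrm{Id})$, and let $\gamma_{A/I}\colon\phz^*(\cE^0_{\fS_{A/I}})\cong Q_{A/I}$ be the identification with $Q_{A/I}=\phz^*(\fP_{A/I})$. Now set $\fP_A=\cE^0_{\fS_A}$ and $\beta_A=\mathrm{Id}$ (no descent needed — the bundle is trivially lifted), and lift $\gamma_{A/I}$ to an isomorphism $\gamma_A\colon\phz^*(\cE^0_{\fS_A})\cong Q_A$; this exists because the relevant $\mathrm{Isom}$-scheme is a $G$-torsor and $G$ is smooth (your instinct about Proposition~\ref{Gtriv}(1) / smoothness of $G$ was the right one, but it should be applied to lifting this isomorphism, not to lifting a bundle obtained by descent). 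Finally set $\phi_A:=\delta_A\circ\gamma_A[1/E(u)]$; then $\Psi(\fP_A,\phi_A,\beta_A)\cong(Q_A,\delta_A)$ via $\gamma_A$, and the reduction mod $I$ recovers the original triple. In short: make the source a trivial bundle via $\beta$, and reduce the lifting problem to lifting a single $G$-bundle isomorphism across a nilpotent extension.
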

\begin{proof}
Choose $A \in \cC_{\La}$ and $I$ an ideal of $A$. Consider a pair $(Q_A, \delta_A) \in \overline{D}_{Q_{\F}}(A)$ over a pair $(Q_{A/I}, \delta_{A/I})$. Let $(\fP_{A/I}, \phi_{A/I}, \beta_{A/I})$ be an element in the fiber over $(Q_{A/I}, \delta_{A/I})$.  The triple $(\fP_{A/I}, \phi_{A/I}, \beta_{A/I})$ is isomorphic to a triple of the form $(\cE^0_{\fS_{A/I}}, \phi'_{A/I}, \text{Id}_{A/I})$.  Let $\gamma_{A/I}$ be the isomorphism between $\phz^*(\cE^0_{\fS_{A/I}})$ and $Q_{A/I}$. We want to construct a lift $(\fP_A, \phi_A, \beta_A)$ such that $\Psi(\fP_A, \phi_A, \beta_A) = (Q_A, \delta_A)$. Take $\fP_A = \cE^0_{\fS_A}$ to be the trivial bundle and $\beta_A$ to be the identity.  

Now, pick any lift $\gamma_A:\phz^*(\cE^0_{\fS_{A}}) \cong Q_{A}$ of $\gamma_{A/I}$ which exists since $G$ is smooth. We can define the Frobenius by
$$
\phi_{A} = \delta_A \circ \gamma_A[1/E(u)].
$$
It is easy to check that $\Psi(\fP_A, \phi_A, \beta_A) \cong (Q_A, \delta_A)$.      
\end{proof}

We would now like to relate $\overline{D}_{Q_{\F}}$ to $\Gr^{E(u), W}_G$ from the previous section.

\begin{prop} \label{idDbar} A pair $(Q_{\F}, \delta_0)$ as in Definition $\ref{bardefn}$ defines a point $x_{\F} \in \Gr_G^{E(u), W}(\F)$.  Furthermore, for any $A \in \cC_{\La}$, there is a natural functorial bijection between $\overline{D}_{Q_{\F}}(A)$ and the set of $x_A \in \Gr_G^{E(u), W}(A)$ such that $x_A \mod m_A = x_{\F}$.
\end{prop}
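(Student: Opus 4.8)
The plan is to exhibit the bijection by unwinding the Beauville--Laszlo gluing lemma (Lemma~\ref{descent}) in two slightly different geometric settings and observing that the two are equivalent once one keeps track of the reference point $x_{\F}$. First I would recall that $\fS_A = \fS \otimes_{\Zp} A$ and that, since $E(u)$ is a non-zero divisor in $\fS_A$, the ring $\fS_A$ is recovered by gluing $\fS_A[1/E(u)]$ and the $E(u)$-adic completion $\widehat{(\fS_A)}_{(E(u))}$ along $\fS_A[1/E(u)]$ completed at $E(u)$; here one uses that $\fS_A = A\otimes_{\Zp}W[\![u]\!]$ and hence $\widehat{(\fS_A)}_{(E(u))} = \widehat{A_W[u]}_{(E(u))}$ in the notation of Definition~\ref{BigAG} (the natural map $A_W[u] \to \fS_A$ becomes an isomorphism after $E(u)$-adic completion because both sides are $E(u)$-adically separated and complete with the same reduction mod $E(u)$). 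So a $G$-bundle on $\fS_A$ together with a trivialization over $\fS_A[1/E(u)]$ is, by Lemma~\ref{descent} applied to the non-zero divisor $E(u)$, the same data as a $G$-bundle $\cE$ on $\widehat{A_W[u]}_{(E(u))}$ together with a trivialization of $\cE$ over $\widehat{A_W[u]}_{(E(u))}[1/E(u)]$ — which is exactly a point of $\Gr_G^{E(u),W}(A)$.

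Next I would address the rigidification by $Q_{\F}$. An object of $\overline{D}_{Q_{\F}}(A)$ is a triple $(\cE,\delta,\psi)$ where $\psi:\cE\otimes_{\fS_A}\fS_{\F}\cong Q_{\F}$ is compatible with $\delta$ and $\delta_0$. Under the dictionary of the previous paragraph, reducing mod $m_A$ and applying the gluing identification sends $(Q_{\F},\delta_0)$ to a well-defined point $x_{\F}\in\Gr_G^{E(u),W}(\F)$; this is the first assertion. The compatibility of $\psi$ with the trivializations says precisely that the point of $\Gr_G^{E(u),W}(A)$ attached to $(\cE,\delta)$ reduces mod $m_A$ to $x_{\F}$, and conversely any $x_A$ reducing to $x_{\F}$ produces a triple: one takes the bundle and trivialization supplied by $x_A$, translates back to $\fS_A$ via gluing, and the isomorphism $\psi$ is forced (up to unique isomorphism) by the condition that its reduction be the identification underlying $x_{\F}$. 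The key point making $\psi$ unique is that the $\Isom$ functor between two $G$-bundles over $\fS_{\F}$ compatible with a common trivialization over $\fS_{\F}[1/E(u)]$ is a torsor under something with no sections other than the identity in this rigidified situation — equivalently, the forgetful map from the rigidified groupoid to the underlying set-valued functor is an equivalence, so ``isomorphism classes of triples'' is literally a set in bijection with $\{x_A : x_A \equiv x_{\F}\}$.

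Finally I would check functoriality in $A$: both constructions — sending $(\cE,\delta,\psi)$ to its class in $\Gr_G^{E(u),W}$ and sending a compatible $x_A$ back to a triple — commute with base change along maps $A\to A'$ in $\cC_{\La}$, since $E(u)$-adic completion, localization at $E(u)$, and the gluing of Lemma~\ref{descent} are all compatible with (flat, or here even arbitrary) base change in this setting, and the reference point $x_{\F}$ is unchanged. This gives the asserted natural functorial bijection.

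I expect the main obstacle to be purely bookkeeping: checking that the completion $\widehat{A_W[u]}_{(E(u))}$ used in Definition~\ref{BigAG} genuinely agrees with the $E(u)$-adic completion of $\fS_A$ and that Lemma~\ref{descent} applies verbatim with $f = E(u)$ (rather than $f = u$ as in its typical use), and then verifying that the rigidification by $\psi$ cuts out exactly the fiber over $x_{\F}$ with no residual automorphisms. None of these steps is deep, but one must be careful that the trivialization $\delta_0$ is being matched correctly on the two sides so that ``$x_A \bmod m_A = x_{\F}$'' is the right condition and not something twisted by an automorphism of $Q_{\F}$.
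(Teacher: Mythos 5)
Your overall strategy reaches the right conclusion, and you correctly see that everything hinges on identifying $\fS_A$ with $\widehat{A_W[u]}_{(E(u))}$, but the justification you give for that identification is not correct as stated, and the Beauville--Laszlo step is an unnecessary detour. The parenthetical ``the natural map $A_W[u] \to \fS_A$ becomes an isomorphism after $E(u)$-adic completion because both sides are $E(u)$-adically separated and complete with the same reduction mod $E(u)$'' does not hold up: $A_W[u]$ is certainly not $E(u)$-adically complete, and two rings agreeing modulo $E(u)$ need not have isomorphic $E(u)$-adic completions. The correct reason, and the one the paper uses, is that $p$ is nilpotent in $A$ (since $A\in\cC_\La$ is Artinian), so the $(u)$-adic and $(E(u))$-adic topologies on $A_W[u]$ coincide (because $E(u)\equiv u^e\pmod p$ and a power of $p$ vanishes). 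Hence $\widehat{A_W[u]}_{(E(u))}=\widehat{A_W[u]}_{(u)}=A_W[\![u]\!]=\fS_A$, and in particular $\fS_A$ is already $E(u)$-adically complete. Once this is observed, there is nothing left to glue: an object of $\overline{D}_{Q_\F}(A)$ (forgetting $\psi$) is literally a pair $(\cE,\delta)$ as in Definition \ref{BigAG}, so invoking Lemma \ref{descent} with $f=E(u)$ over $\fS_A$ is a tautology rather than a genuine step. Your second paragraph, pinning down that the rigidification $\psi$ is unique (because a $G$-bundle isomorphism over $\fS_\F$ respecting a trivialization over $\fS_\F[1/E(u)]$ is uniquely determined on the dense open and hence everywhere), is a worthwhile point that the paper leaves implicit, and is indeed what makes ``isomorphism classes of triples'' a genuine set in bijection with $\{x_A : x_A\equiv x_\F \bmod m_A\}$.
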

\begin{proof} Recall that $\fS_A = (W \otimes_{\Zp} A)[\![u]\!]$ because $A$ is finite over $\Zp$. $\Gr_G^{E(u), W}(A)$ is the set of isomorphism classes of bundles on the $E(u)$-adic completion of $(W \otimes_{\Zp} A)[u]$ together with a trivialization after inverting $E(u)$.  Since $p$ is nilpotent in $A$, we can identify $(W \otimes_{\Zp} A)[\![u]\!]$ and the $E(u)$-adic completion $\widehat{(W \otimes_{\Zp} A)[u]}_{(E(u))}$.  This identifies $\overline{D}_{Q_{\F}}(A)$ with the desired subset of $\Gr_G^{E(u), W}(A)$.  
\end{proof}

For any $\Zp$-algebra $A$, let $\widehat{S}_{A}$ denote the $E(u)$-adic completion of $(W \otimes_{\Zp} A)[u]$.

\begin{lemma} \label{switch}  For any finite flat $\Zp$-algebra $\La'$, there is a $(W \otimes_{\Zp} \La')[u]$-algebra isomorphism
$$
\fS_{\La'} \ra \widehat{S}_{\La'}. 
$$
\end{lemma}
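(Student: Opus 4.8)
The statement asserts that for a finite flat $\Zp$-algebra $\La'$, the two rings $\fS_{\La'} = (W \otimes_{\Zp} \La')[\![u]\!]$ and $\widehat{S}_{\La'}$, the $E(u)$-adic completion of $(W \otimes_{\Zp} \La')[u]$, are isomorphic as $(W \otimes_{\Zp} \La')[u]$-algebras. The key point is that, although $E(u)$ and $u$ are different elements, the ring $R_W[u] := (W \otimes_{\Zp} \La')[u]$ has the property that its completion along $(u)$ and its completion along $(E(u))$ are naturally isomorphic, because $E(u)$ is an Eisenstein (hence ``distinguished'') polynomial with respect to the ideal generated by $u$ and $p$, and $p$ is not a unit in $W \otimes_{\Zp} \La'$.

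\textbf{Step 1: Reduce to a statement about completions.} Write $R := W \otimes_{\Zp} \La'$, a finite flat $\Zp$-algebra, and note $\fS_{\La'} = R[\![u]\!]$ is the $(u)$-adic completion of $R[u]$, while $\widehat{S}_{\La'}$ is the $(E(u))$-adic completion of $R[u]$. So I must show the natural completion maps $R[u] \to R[\![u]\!]$ and $R[u] \to \widehat{S}_{\La'}$ induce an isomorphism between target rings compatible with the structure map from $R[u]$. It suffices to show that the $(u)$-adic and $(E(u))$-adic topologies on $R[u]$ agree, i.e. that each power of $(u)$ contains a power of $(E(u))$ and vice versa; then the universal property of completion gives the desired $R[u]$-algebra isomorphism.

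\textbf{Step 2: Compare the two ideals.} Recall $E(u) = u^e + a_{e-1} u^{e-1} + \cdots + a_1 u + a_0$ with $a_i \in W$, $p \mid a_i$ for all $i$, and $a_0 = c_0 p$ with $c_0 \in W^{\times}$ (Eisenstein over $K_0$, coefficients in $W$). In $R[u]$ we have $E(u) \equiv a_0 \pmod{u}$, and $a_0 = c_0 p$ lies in $p R$. Conversely, $E(u) = u(u^{e-1} + \cdots + a_1) + a_0$ shows $E(u) \in (u) + (p)$. The crucial observation: in $R[u]$, the ideal $(E(u), p)$ equals $(u^e, p)$, since modulo $p$ all the lower coefficients $a_i$ vanish and $E(u) \equiv u^e$. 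Hence $(E(u)) + (p) = (u^e) + (p)$ as ideals of $R[u]$. Now $p$ is topologically nilpotent in both the $(u)$-adic and $(E(u))$-adic completions: in $R[\![u]\!]$ this is because $p \in W$ is not a unit and... actually $p$ need not be nilpotent, but $p \in m_W \subset (u, p)$, and $R[\![u]\!]$ is $(p, u)$-adically separated since $W$ is $p$-adically separated. A cleaner route: show directly that $(E(u))^n \subseteq (u)^n$ is false in general but $(E(u)) \subseteq (u, p)$ and, working in $R[\![u]\!]$, $E(u) = u^e \cdot v + p w$ where $v \equiv 1$; since $p w \in (p)$ and $R[\![u]\!]$ is $p$-adically separated with $p \in (u)^0$...

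\textbf{Step 3: The clean argument — factor $E(u)$ in $R[\![u]\!]$.} The right approach is the Weierstrass-type statement: in $R[\![u]\!] = \fS_{\La'}$ the element $E(u)$ generates the same ideal as would a "uniformizer-type" element, and more to the point, $E(u)$ is a non-zero-divisor with $R[\![u]\!]/(E(u))$ finite over $R$; this already appears implicitly via $\cO_K \otimes \La'$. For the completion comparison what I actually need is: $\bigcap_n (E(u))^n = 0$ in $R[\![u]\!]$ and the $(E(u))$-adic topology on $R[\![u]\!]$ coincides with the $(u)$-adic one. Since $E(u) \in (u,p)R[\![u]\!]$ and $u^e \in (E(u), p)R[\![u]\!]$, one gets $(E(u), p)^N = (u, p)$-adic-equivalent topologies, and because $R[\![u]\!]$ is already $p$-adically complete (as $R = W \otimes_{\Zp}\La'$ is $p$-adically complete and $R[\![u]\!]$ is $u$-adically complete over it), the $(E(u))$-adic and $(u)$-adic topologies agree on $R[\![u]\!]$. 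Therefore $R[\![u]\!]$ is its own $(E(u))$-adic completion, which is canonically $\widehat{S}_{\La'}$, and the isomorphism is the identity on the dense subring $R[u]$, hence a $(W \otimes_{\Zp}\La')[u]$-algebra isomorphism.

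\textbf{Main obstacle.} The only real subtlety is bookkeeping with the two topologies: one must carefully verify that $(u)$-adic completeness of $R[u]$ over the $p$-adically complete ring $R$ makes $R[\![u]\!]$ also $(p, u)$-adically complete, and then that since $(E(u), p) = (u^e, p)$ the $(E(u))$-adic and $(u^e)$-adic (hence $(u)$-adic) topologies coincide on this ring. There is no deep input — it is the standard fact that for a distinguished (Eisenstein) polynomial $E$ over a complete DVR, completing a power series (or polynomial) ring along $(E)$ gives nothing new — but it must be stated with the correct hypotheses on $\La'$ (finite flat over $\Zp$, so $p$ is a non-zero-divisor and $R$ is $p$-adically complete) to ensure separatedness.
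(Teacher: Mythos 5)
The key claim in your Step 3 — that ``the $(E(u))$-adic and $(u)$-adic topologies agree on $R[\![u]\!]$'' — is false, and this is exactly the step the paper's proof is designed to circumvent. Neither topology refines the other: $E(u)$ has constant term $c_0 p \neq 0$, so $E(u) \notin (u)$, and hence no power $(E(u))^n$ lies in $(u)$; conversely $R[\![u]\!]/(E(u))$ is $\cO_K \otimes_{\Zp}\La'$, in which the image of $u$ (a uniformizer-type element) is not nilpotent, so no power of $(u)$ lies in $(E(u))$. The observation that $(E(u),p)=(u^e,p)$ only shows that the $(E(u),p)$-adic and $(u,p)$-adic topologies coincide, which is not the same assertion. (One can still push through a direct argument from here, but it requires extra work: first show $R[\![u]\!]$ is $(E(u),p)$-adically complete and that $(E(u))^n$ is closed, to conclude $R[\![u]\!]$ is $(E(u))$-adically complete; then use Weierstrass division by the distinguished polynomial $E(u)$ to identify $R[\![u]\!]/(E(u))^n$ with $R[u]/(E(u))^n$. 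None of this is in your sketch.)

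The paper avoids the issue entirely by reducing modulo $p^n$ first. Over $R[u]/p^n$, the element $p$ is nilpotent, so $E(u)\equiv c_0 p \pmod u$ really is nilpotent mod $u$ (giving $(E(u))^n\subset (u)$ mod $p^n$), and $u^e \equiv E(u)\pmod p$ gives $(u)^{en}\subset(E(u))$ mod $p^n$; thus the two adic topologies genuinely coincide there, and the completions agree level by level. Passing to the inverse limit over $n$ then uses only that both $\fS_{\La'}$ and $\widehat{S}_{\La'}$ are $p$-adically complete and separated. Your ingredients (the congruence $E(u)\equiv u^e\pmod p$ and $p$-adic completeness of the coefficient ring) are the right ones, but the assembly as stated does not hold; you would need either the mod-$p^n$ reduction or the Weierstrass-division argument to make the completion comparison rigorous.
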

\begin{proof} For any $n \geq 1$, we have an isomorphism
$$
\fS_{\La'}/p^n \cong \widehat{S}_{\La'}/p^n
$$
since $(E(u))$ and $u$ define the same adic topologies mod $p^n$. Passing to the limit, we get an isomorphism of their $p$-adic completions.  Both $\fS_{\La'}$ and $\widehat{S}_{\La'}$ are already $p$-adically complete and separated.
\end{proof} 

Fix a geometric cocharacter $\mu$ of $\Res_{(K \otimes_{\Qp} F)/F} G_F$; we can write $\mu = (\mu_{\psi})_{\psi:K \ra \overline{F}}$ where the $\mu_{\psi}$ are cocharacters of $G_{\overline{F}}$. \emph{Assume} that $F = F_{[\mu]}$ so that the generalized local model $M(\mu)$ is then a closed subscheme of $\Gr_G^{E(u), W}$ over $\La$ (\ref{defnlocmodel}). Recall that $V$ is a fixed faithful representation of $G$. For each $\psi$, $\mu_{\psi}$ induces an action of $\Gm$ on $V_{\overline{F}}$.  Define $a$ (resp. $b$) to the smallest  (resp. largest) weight appearing in $V_{\overline{F}}$ over all $\mu_{\psi}$.  
 
\begin{defn} \label{muDbar} Define a closed subfunctor $\overline{D}^{\mu}_{Q_{\F}}$ of $\overline{D}_{Q_{\F}}$ by 
$$
\overline{D}^{\mu}_{Q_{\F}}(A) := \{ (Q_A, \delta_A) \in \overline{D}_{Q_{\F}}(A) \mid (Q_A, \delta_A) \in M(\mu)(A) \}
$$
under the identification in Proposition \ref{idDbar}. Define $\widetilde{D}^{(\infty), \mu}_{\fP_{\F}}$ to be the base change of $\overline{D}^{\mu}_{\fP_{\F}}$ along $\Psi$.  It is a closed subgroupoid of $\widetilde{D}^{(\infty)}_{\fP_{\F}}$.
\end{defn}

The following proposition says that $\widetilde{D}^{(\infty), \mu}_{\fP_{\F}}$ descends to a closed subgroupoid $D^{\mu}_{\fP_{\F}}$ of $D_{\fP_{\F}}$:
\begin{prop} \label{mudescent} Let $a$ and $b$ be as in the discussion before Definition \ref{muDbar}. There is a closed subgroupoid $D^{\mu}_{\fP_{\F}} \subset D^{[a,b]}_{\fP_{\F}} \subset D_{\fP_{\F}}$ such that $\pi^{(\infty)}|_{\widetilde{D}^{(\infty), \mu}_{\fP_{\F}}}$ factors through $D^{\mu}_{\fP_{\F}}$ and 
$$
\widetilde{D}^{(\infty), \mu}_{\fP_{\F}} \ra D^{\mu}_{\fP_{\F}} \times_{D_{\fP_{\F}}} \widetilde{D}^{(\infty)}_{\fP_{\F}} 
$$
is an equivalence of closed subgroupoids. Furthermore, the map $\pi^{\mu}:\widetilde{D}^{(\infty), \mu}_{\fP_{\F}} \ra D^{\mu}_{\fP_{\F}}$ is formally smooth.    
\end{prop}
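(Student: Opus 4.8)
The plan is to define $D^{\mu}_{\fP_{\F}}$ as the essential image of $\pi^{(\infty)}$ restricted to $\widetilde{D}^{(\infty),\mu}_{\fP_{\F}}$, and to check that this really is a (closed) subgroupoid using the $L^{+,E(u)}G$-invariance of $M(\mu)$ from Corollary \ref{stabilityofM}. Concretely, for $A \in \cC_{\La}$ I would let $D^{\mu}_{\fP_{\F}}(A)$ be the strictly full subgroupoid of $D^{[a,b]}_{\fP_{\F}}(A)$ spanned by those $(\fP_A, \phi_{\fP_A})$ admitting a trivialization $\beta_A \colon \fP_A \cong \cE^0_{\fS_A}$ lifting $\beta_{\F}$ with $\Psi(\fP_A, \phi_{\fP_A}, \beta_A) \in M(\mu)(A) \subseteq \Gr^{E(u),W}_G(A)$, under the identification of Proposition \ref{idDbar}. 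Such a $\beta_A$ exists for any $(\fP_A,\phi_{\fP_A})$: since $G$ is smooth, $\fP_A$ is smooth over $\fS_A$, and $\Spec \fS_A \to \Spec \fS_{\F}$ is a nilpotent thickening, so the section of $\fP_{\F}$ determined by $\beta_{\F}$ lifts. The crucial point is that membership of $\Psi(\fP_A,\phi_{\fP_A},\beta_A)$ in $M(\mu)(A)$ is independent of $\beta_A$: two such trivializations differ by an automorphism $g \in \Aut_G(\cE^0_{\fS_A}) = G(\fS_A)$, and since $p$ is nilpotent in $A$ the ring $\fS_A$ is canonically the $E(u)$-adic completion $\widehat{S}_A$ (as in the proof of Proposition \ref{idDbar}), so $g \in L^{+,E(u)}G(A)$; replacing $\beta_A$ by $g\beta_A$ replaces $\delta_A$ by $g\delta_A$, which is precisely the change-of-trivialization action of $L^{+,E(u)}G$ on $\Gr^{E(u),W}_G$, and $M(\mu)$ is stable under this action by Corollary \ref{stabilityofM}. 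The same invariance shows $D^{\mu}_{\fP_{\F}}(A)$ is closed under isomorphism in $D_{\fP_{\F}}(A)$ and under base change $A \to A'$ (here using that $M(\mu)$ is a scheme), so $D^{\mu}_{\fP_{\F}}$ is a subgroupoid; and it lands in $D^{[a,b]}_{\fP_{\F}}$ because pushing along $V$ shows the associated lattice $\phz^*(\fP_A(V))$ in $(V\otimes_{\La}\fS_A)[1/E(u)]$ satisfies the height-$[a,b]$ bound — this holds on the generic fibre since $M(\mu)[1/p]=S(\mu)$ maps into the affine Schubert variety attached to $\mu$ composed with $G \hookrightarrow \GL(V)$, and persists over $\La_{[\mu]}$ by flatness of $M(\mu)$.

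Granting this, the equivalence $\widetilde{D}^{(\infty),\mu}_{\fP_{\F}} \simeq D^{\mu}_{\fP_{\F}} \times_{D_{\fP_{\F}}} \widetilde{D}^{(\infty)}_{\fP_{\F}}$ is essentially formal. The functor sends $(\fP_A,\phi_{\fP_A},\beta_A)$ to the pair consisting of its image in $D^{\mu}_{\fP_{\F}}(A)$ and of $(\fP_A,\phi_{\fP_A},\beta_A) \in \widetilde{D}^{(\infty)}_{\fP_{\F}}(A)$; conversely, given $\fP_A \in D^{\mu}_{\fP_{\F}}(A)$ and any trivialization $\beta_A$ lifting $\beta_{\F}$, the $\beta_A$-independence proved above forces $\Psi(\fP_A,\phi_{\fP_A},\beta_A) \in M(\mu)(A)$, hence $(\fP_A,\phi_{\fP_A},\beta_A) \in \widetilde{D}^{(\infty),\mu}_{\fP_{\F}}(A)$, and the two functors are mutually inverse. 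Formal smoothness of $\pi^{\mu}$ is then immediate, since under this identification $\pi^{\mu}$ is the base change of the formally smooth morphism $\pi^{(\infty)}$ along $D^{\mu}_{\fP_{\F}} \hookrightarrow D_{\fP_{\F}}$.

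It remains to check that $D^{\mu}_{\fP_{\F}} \hookrightarrow D^{[a,b]}_{\fP_{\F}}$ is relatively representable by closed immersions. Given a morphism $\Spf R \to D^{[a,b]}_{\fP_{\F}}$ with $R \in \widehat{\cC}_{\La}$, i.e. a compatible system $\{\fP_{R/m_R^n}\}$, the inverse limit is a $G$-bundle on $\widehat{\fS}_R$, which is trivializable by Proposition \ref{Gtriv} since $\widehat{\fS}_R$ is $m_R$-adically complete with $\widehat{\fS}_R/m_R\widehat{\fS}_R \cong \fS_{\F}$ semi-local with finite residue fields; together with its Frobenius this yields a morphism $\Spf R \to \Gr^{E(u),W}_G$ factoring through one of the projective pieces $Z_i$, and well defined up to the $L^{+,E(u)}G$-action. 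By the invariance argument, $\Spf R \times_{D^{[a,b]}_{\fP_{\F}}} D^{\mu}_{\fP_{\F}}$ is then identified with $\Spf R \times_{\Gr^{E(u),W}_G} M(\mu)$, which is represented by $\Spf R/J$ for the ideal $J$ cutting out the closed subscheme $M(\mu) \subseteq Z_i$. I expect the main obstacle to be the first paragraph — matching "change of trivialization of $\fP_A$" precisely with "the $L^{+,E(u)}G$-action on $\Gr^{E(u),W}_G$" so that Corollary \ref{stabilityofM} applies — because once that $\beta_A$-independence is established, the equivalence, the formal smoothness of $\pi^{\mu}$, and the closedness all follow formally from it together with the representability of $M(\mu) \hookrightarrow \Gr^{E(u),W}_G$.
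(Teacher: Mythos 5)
Your proposal is correct and follows essentially the same strategy as the paper: define $D^{\mu}_{\fP_{\F}}(A)$ as the essential image of $\pi^{(\infty)}$ restricted to $\widetilde{D}^{(\infty),\mu}_{\fP_{\F}}(A)$, and use the $L^{+,E(u)}G$-stability of $M(\mu)$ from Corollary~\ref{stabilityofM} to show the condition is independent of the choice of trivialization, from which the fiber-product equivalence and formal smoothness of $\pi^{\mu}$ follow. The only real difference is presentational: for closedness the paper works directly with Artinian $A$ by fixing one trivialization $\beta_A$ and appealing to the already-known closedness of $\widetilde{D}^{(\infty),\mu}_{\fP_{\F}}$ in $\widetilde{D}^{(\infty)}_{\fP_{\F}}$, rather than passing to $\Spf R$ over $\widehat{\cC}_{\La}$ as you do; both routes are valid.
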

\begin{proof} For any $A \in \cC_{\La}$ define $D^{\mu}_{\fP_{\F}}(A)$ to be the full subcategory whose objects are $\pi^{(\infty)}(\widetilde{D}^{(\infty), \mu}_{\fP_{\F}}(A))$.  Observe that for any $x \in D^{\mu}_{\fP_{\F}}(A)$  the group $G(\fS_A)$ acts transitively on the fiber $(\pi^{(\infty)})^{-1}(x) \subset \widetilde{D}^{(\infty)}_{\fP_{\F}}(A)$ by changing the trivialization.  The key point is that by   Corollary \ref{stabilityofM}, $\widetilde{D}^{(\infty), \mu}_{\fP_{\F}}(A)$ is stable under $G(\fS_A)$.  Hence
\begin{equation} \label{z5}
(\pi^{(\infty)})^{-1}(x) \subset \widetilde{D}^{(\infty), \mu}_{\fP_{\F}}(A).
\end{equation}
It is not hard to see then  that the map to the fiber product is an isomorphism and that $\pi^{\mu}$ is formally smooth.

It remains to show that $D^{\mu}_{\fP_{\F}} \ra D_{\fP_{\F}}$ is closed. Let $\fP_A \in D_{\fP_{\F}}(A)$ and choose a trivialization $\beta_A$ of $\fP_A$, i.e., a lift to $\widetilde{D}^{(\infty)}_{\fP_{\F}}(A)$. We want a quotient $A \ra A'$ such that for any $f:A \ra B$, $\fP_A \otimes_{A,f} B \in D^{\mu}_{\fP_{\F}}(B)$ if and only if $f$ factors through $A'$.  Let $A \ra A'$ represent the closed condition $\widetilde{D}^{(\infty), \mu}_{\fP_{\F}} \subset \widetilde{D}^{(\infty)}_{\fP_{\F}}$.  Clearly, $\fP_A \otimes_A A' \in D^{\mu}_{\fP_{\F}}(A')$ and so any further base change is as well.  Now, let $f:A \ra B$ be such that $\fP_A \otimes_{A,f} B \in D^{\mu}_{\fP_{\F}}(B)$.  The trivialization $\beta_A$ induces a trivialization $\beta_B$ on $\fP_B$.  The pair $(\fP_B, \beta_B)$ lies in $\widetilde{D}^{(\infty), \mu}_{\fP_{\F}}(B)$  by (\ref{z5}).    
\end{proof}

We have constructed a diagram of formally smooth morphisms
\begin{equation} \label{musmoothmod}
\xymatrix{
& \widetilde{D}^{(\infty),\mu}_{\fP_{\F}} \ar[dl]_{\pi^{\mu}} \ar[dr]^{\Psi^{\mu}} & \\
D^{\mu}_{\fP_{\F}} & & \overline{D}^{\mu}_{Q_{\F}}, \\
}
\end{equation}
where $\overline{D}^{\mu}_{Q_{\F}}$ is represented by the completed local ring at the $\F$-point of $M(\mu)$ corresponding to $(Q_{\F}, \delta_{\F})$. Next, we would like to replace $\widetilde{D}^{(\infty), \mu}_{\fP_{\F}}$ by a ``smaller'' groupoid which is representable. 

Let $a, b$ be as in the discussion before Definition \ref{muDbar} and choose $N > b-a$.  Recall the representable groupoid $\widetilde{D}^{[a,b], (N)}_{\fP_{\F}}$ (Proposition \ref{hullrep}). Define a closed subgroupoid 
$$
\widetilde{D}^{(N), \mu}_{\fP_{\F}} := D^{\mu}_{\fP_{\F}} \times_{D_{\fP_{\F}}} \widetilde{D}^{[a,b], (N)}_{\fP_{\F}}
$$
of $\widetilde{D}^{[a,b], (N)}_{\fP_{\F}}$. By Proposition \ref{mudescent}, the morphism $\widetilde{D}^{(\infty), \mu}_{\fP_{\F}} \ra D^{(N), \mu}_{\fP_{\F}}$ is formally smooth. 

\begin{prop} \label{musmoothfinite} For any $N > b-a$, the morphism $\Psi^{\mu}:\widetilde{D}^{(\infty), \mu}_{\fP_{\F}} \ra \overline{D}^{\mu}_{\fP_{\F}}$ factors through $\widetilde{D}^{(N), \mu}_{\fP_{\F}}$.  Furthermore, $\widetilde{D}^{(N), \mu}_{\fP_{\F}}$ is formally smooth over $\overline{D}^{\mu}_{Q_{\F}}$. 
\end{prop}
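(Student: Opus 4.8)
The plan is to introduce the forgetful morphism $\pi^{(N),\mu}\colon\widetilde{D}^{(\infty),\mu}_{\fP_{\F}}\to\widetilde{D}^{(N),\mu}_{\fP_{\F}}$ that replaces a full trivialization $\beta_A$ by $\beta_A\bmod E(u)^N$ — this is the formally smooth morphism recorded just before the proposition as a consequence of Proposition~\ref{mudescent} — then to factor $\Psi^{\mu}$ through it, and finally to transport formal smoothness along the factorization.

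To obtain the factorization I would argue as follows. The fibre of $\pi^{(N),\mu}$ over an object $(\fP_A,\phi_{\fP_A},\bar\beta_A)$ is the collection of full trivializations of $\fP_A$ lifting both $\bar\beta_A$ and the fixed residual trivialization $\beta_{\F}$; any two of them differ by an element of the congruence subgroup $G(\fS_A)^{(N)}:=\ker\!\big(G(\fS_A)\to G(\fS_A/E(u)^N)\big)$ acting by change of trivialization, and by the definition of $\Psi$ (Definition~\ref{defnPsi}) this action is carried over to the action of $L^{+,E(u)}G$ on the target $\overline{D}^{\mu}_{Q_{\F}}$. Now $\overline{D}^{\mu}_{Q_{\F}}$ is a completed local ring of the local model $M(\mu)$, and by Corollary~\ref{stabilityofM} together with the niceness of the $L^{+,E(u)}G$-action, $M(\mu)$ sits inside one of the $L^{+,E(u)}G$-stable pieces $Z_i\subset\Gr^{E(u),W}_G$ on which $L^{+,E(u)}G$ acts through the finite-level quotient $\Res_{((\La\otimes_{\Zp}W)[u]/E(u)^i)/\La}G$; since $M(\mu)[1/p]=S(\mu)$ is bounded by $\mu$ and the faithful representation $V$ has all its $\mu_\psi$-weights in $[a,b]$, one sees that $M(\mu)\subset Z_i$ for some $i\le N$ whenever $N>b-a$. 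Hence $G(\fS_A)^{(N)}$ acts trivially on $\overline{D}^{\mu}_{Q_{\F}}(A)$, so $\Psi^{\mu}$ is constant on the fibres of $\pi^{(N),\mu}$ and descends to a morphism $\Psi^{(N),\mu}\colon\widetilde{D}^{(N),\mu}_{\fP_{\F}}\to\overline{D}^{\mu}_{Q_{\F}}$ with $\Psi^{\mu}=\Psi^{(N),\mu}\circ\pi^{(N),\mu}$; checking independence of the chosen lift of the trivialization and compatibility with the residual isomorphisms modulo $m_A$ (routine, as elsewhere in \S 3.3) shows $\Psi^{(N),\mu}$ is a well-defined morphism of groupoids, which is the asserted factorization.

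For the second statement I would deduce formal smoothness of $\Psi^{(N),\mu}$ from a cancellation principle: if $f$ is a formally smooth and essentially surjective morphism of cofibered groupoids over $\cC_{\La}$ and $g\circ f$ is formally smooth, then $g$ is formally smooth (given a surjection $A\twoheadrightarrow A_0$ in $\cC_{\La}$, an object over $A_0$ of the middle groupoid and a compatible object over $A$ of the target, lift the first through $f$, then through $g\circ f$, and apply $f$). Here $f=\pi^{(N),\mu}$ is formally smooth, $g\circ f=\Psi^{\mu}$ is formally smooth by diagram~(\ref{musmoothmod}), and $\pi^{(N),\mu}$ is essentially surjective: over $\F$ both groupoids reduce to the single fixed object (with trivialization $\beta_{\F}$, resp.\ $\beta_{\F}\bmod E(u)^N$), and essential surjectivity over an arbitrary $A\in\cC_{\La}$ then follows by climbing a chain of square-zero extensions, using at each step the formal smoothness of $\pi^{(N),\mu}$ and the fact that $\fP_A$ is a trivial $G$-bundle over the complete semilocal ring $\fS_A$ (Proposition~\ref{Gtriv}(2)) with $G$ smooth, so that reductions modulo $E(u)^N$ of trivializations do lift to full trivializations. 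Both $\widetilde{D}^{(N),\mu}_{\fP_{\F}}$ (by Proposition~\ref{hullrep}, as $N>b-a\ge(b-a)/(p-1)$) and $\overline{D}^{\mu}_{Q_{\F}}$ (a completed local ring of $M(\mu)$) are pro-representable, so the conclusion is the usual infinitesimal-lifting statement for complete local rings.

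The main obstacle is the geometric input in the middle of the first step: identifying the smallest stable piece $Z_i$ of $\Gr^{E(u),W}_G$ containing $M(\mu)$ in terms of the weights of $\mu$ on $V$, so that the level-$N$ congruence subgroup acts trivially on $M(\mu)$ precisely when $N>b-a$. Concretely this is the $\GL(V)$-level fact that the lattices parametrized by $M(\mu)$ occupy a window of width $b-a$ relative to the standard lattice, hence are stabilized by the subgroup of automorphisms congruent to the identity modulo $E(u)^N$; once this bookkeeping is settled the remainder is formal manipulation of groupoids and trivializations, carried out compatibly with residual isomorphisms exactly as in the earlier constructions of \S 3.3.
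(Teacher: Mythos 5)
Your proof is correct and follows the same route as the paper: define $\Psi^{(N),\mu}$ by lifting a partial trivialization, check the value is independent of the lift, and deduce formal smoothness from that of $\Psi^{\mu}$ by the cancellation argument you spell out. The only difference is that the paper dispatches the independence-of-lift step with a bare citation of Corollary~\ref{stabilityofM}, whereas you unpack the actual content needed — that the congruence subgroup $G(\fS_A)^{(N)}$ acts trivially on $M(\mu)(A)$ because $M(\mu)$ lies in a level-$\le (b-a)$ piece $Z_i$ of $\Gr^{E(u),W}_G$ by the window bound coming from the weights of $V$ — which is exactly the right (and necessary) justification and explains where the hypothesis $N>b-a$ is used.
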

\begin{proof} By our assumption on $N$, $\widetilde{D}^{(N), \mu}_{\fP_{\F}}$ is representable so it suffices to define the factorization $\Psi^{\mu}_N:\widetilde{D}^{(N), \mu}_{\fP_{\F}} \ra \overline{D}^{\mu}_{\fP_{\F}}$ on underlying functors. For any $x \in \widetilde{D}^{(N), \mu}_{\fP_{\F}}(A)$, set
$$
\Psi^{(N), \mu}(x) := \Psi^{\mu}(\widetilde{x})
$$
for any lift $\widetilde{x}$ of $x$ to $\widetilde{D}^{(\infty), \mu}_{\fP_{\F}}(A)$. The image is independent of the choice of lift by Corollary \ref{stabilityofM}. The map $\Psi^{(N),\mu}$ is formally smooth since $\Psi^{\mu}$ is.
\end{proof}

In the remainder of this section, we discuss the relationship between $D^{\mu}_{\fP_{\F}}$ and $p$-adic Hodge type $\mu$.  For this, it will useful to work in a larger category than $\widehat{\cC}_{\La}$.   All of our deformation problems can be extended to the category of complete local Noetherian $\La$-algebras $R$ with finite residue field.  For any such $R$, we define $D^{\star}_{\fP_{\F}}(R)$ (resp. $\widetilde{D}^{\star}_{\fP_{\F}}(R)$,  $\overline{D}^{\star}_{Q_{\F}}(R)$) to be the category of deformations to $R$ of $\fP_{\F} \otimes_{\F} R/m_R$ with condition $\star$, where $\star$ is any of our various conditions.  For any finite local $\La$-algebra $\La'$, the category $\widehat{\cC}_{\La'}$ is a subcategory of the category of complete local Noetherian $\La$-algebras with finite residue field. 

The functors $\widetilde{D}^{[a,b], (N)}_{\fP_{\F}}, \widetilde{D}^{(N), \mu}_{\fP_{\F}}$ and $\overline{D}^{\mu}_{Q_{\F}}$ are all representable on $\widehat{\cC}_{\La}$.  It is easy to check using the criterion in \cite[Proposition 1.4.3.6]{CMbook} that these functors commute with change in coefficients, i.e., if $\widetilde{R}^{[a,b], (N)}$ represents $\widetilde{D}^{[a,b], (N)}_{\fP_{\F}}$ over $\cC_{\La}$ then  $\widetilde{R}^{[a,b], (N)} \otimes_{\La} \La'$ represents the extension of $\widetilde{D}^{[a,b], (N)}_{\fP_{\F}}$ restricted to the category $\widehat{\cC}_{\La'}$ and similarly for $ \widetilde{D}^{(N), \mu}_{\fP_{\F}}$ and $\overline{D}^{\mu}_{Q_{\F}}$.    

An argument as in Theorem \ref{universalKisin} shows that an object of $D^{[a,b]}_{\fP_{\F}}(R)$ is the same as a $G$-bundle $\fP_R$ on $\widehat{\fS}_R$ together with a Frobenius $\phi_{\fP_R}: \phz^*(\fP_R)[1/E(u)] \cong \fP_R[1/E(u)]$ deforming $\fP_{\F} \otimes_{\F} R/m_R$ and having height in $[a,b]$. The height in $[a,b]$ condition is essential in order to define the Frobenius over $R$.  We would like to give a criterion for when $(\fP_R, \phi_{\fP_R})$ lies in $D^{\mu}_{\fP_{\F}}(R)$. 

Choose $(\fP_R, \phi_{\fP_R}) \in D^{[a,b]}_{\fP_{\F}}(R)$. For any finite extension $F'$ of $F$ and any homomorphism $x:R \ra F'$, denote the base change of $\fP_R$ to $\fS_{F'}$ by $(\fP_x, \phi_x)$.  Associated to $(\fP_x, \phi_x)$ is a functor $\fD_x$ from $\Rep_F(G_F)$ to filtered $(K \otimes_{\Qp} F')$-modules given by $\fD_x(W) = \phz^*(\fP_x)(W)/E(u) \phz^*(\fP_x)(W)$ with the filtration defined as in Definition \ref{filtrations}.

\begin{lemma} \label{Dx} For any finite extension $F'$ of $F$ and any $x:R \ra F'$, the functor $\fD_x$ is a tensor exact functor.  
\end{lemma}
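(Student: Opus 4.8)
The plan is to realize $\fD_x$ as the composite of the (tensor exact) fiber functor attached to the $G$-bundle $\fP_x$ with one tensor exact functor defined on \emph{all} Kisin modules of bounded height, thereby reducing to the corresponding facts for $\GL_n$-Kisin modules. Write $S:=\fS_{F'}=\fS\otimes_{\Zp}F'$. By Theorem \ref{catbundle} the pair $(\fP_x,\phi_x)$ is the same datum as a faithful exact tensor functor $\omega_{\fP_x}\colon\Rep_F(G_F)\to\Mod^{\phz,\bh}_{S}$, $W\mapsto(\fP_x(W),\phi_{\fP_x}(W))$, and pullback along $\phz$ commutes with push-out, so $\phz^*(\fP_x)(W)=\phz^*(\fP_x(W))$. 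Define $\cD\colon\Mod^{\phz,\bh}_{S}\to\Fil_{K\otimes_{\Qp}F'}$ by $\cD(\fM,\phi)=\bigl(\phz^*\fM/E(u)\phz^*\fM,\ \Fil^\bullet\bigr)$ with the filtration of Definition \ref{filtrations}; since morphisms of Kisin modules are $\phi$-equivariant this is a functor and $\fD_x=\cD\circ\omega_{\fP_x}$. As composites of tensor exact functors are tensor exact, it suffices to prove $\cD$ is tensor exact. One first checks $\cD$ really lands in $\Fil_{K\otimes_{\Qp}F'}$: the $\Fil^i(\fD)$ are decreasing $(K\otimes_{\Qp}F')$-submodules which equal $\fD$ for $i$ small and vanish for $i$ large, because $\phi(\phz^*\fM)$ is squeezed between $E(u)^a\fM$ and $E(u)^b\fM$.

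For exactness of $\cD$, the underlying-module functor $\fM\mapsto\phz^*\fM\otimes_{S}(K\otimes_{\Qp}F')$ is a composite of base changes, and every short exact sequence $0\to\fM'\to\fM\to\fM''\to0$ in $\Mod^{\phz,\bh}_{S}$ has projective underlying quotient (Definition \ref{defnheight}), hence splits over $S$ and stays exact after any base change. For the filtrations one must check strictness: a subobject carries the induced filtration and a quotient the quotient filtration. For $\fM'\subset\fM$, the quotient $\fM/\fM'\cong\fM''$ is torsion free, so $\fM'$ is saturated; combined with $\phi_\fM|_{\phz^*\fM'}=\phi_{\fM'}$ this gives $\Fil^i\phz^*\fM'=\Fil^i\phz^*\fM\cap\phz^*\fM'$, and I would propagate this identity to the reduction mod $E(u)$ by localizing $S$ at the finitely many height one primes containing $E(u)$ — these are discrete valuation rings with uniformizer $E(u)$, since $E$ is separable and so $E(u)$ has simple zeros in the regular ring $S$ — and choosing bases adapted simultaneously to the direct summand $\fM'$ and to the sublattice $\phi_\fM(\phz^*\fM)$ (possible here because only one subobject and one sublattice are in play). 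The quotient case then follows by applying the subobject case to $\fM^\vee$, using that $\cD$ commutes with duality (the dual Kisin module carries the dual filtration) while $(-)^\vee$ exchanges sub- and quotient objects.

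For the tensor structure, $\phz^*(\fM_1\otimes_{S}\fM_2)=\phz^*\fM_1\otimes_{S}\phz^*\fM_2$ with Frobenius $\phi_{\fM_1}\otimes\phi_{\fM_2}$, so the underlying modules of $\cD(\fM_1\otimes\fM_2)$ and $\cD(\fM_1)\otimes\cD(\fM_2)$ match canonically and $\cD$ preserves the unit object; it remains to see $\Fil^n(\fD_{\fM_1\otimes\fM_2})=\sum_{i+j=n}\Fil^i(\fD_{\fM_1})\otimes\Fil^j(\fD_{\fM_2})$. Again I would localize at the height one primes above $(E(u))$ and put each $\phi_{\fM_k}$ in elementary-divisor form with respect to $E(u)$, after which the identity reduces to the elementary fact that tensoring filtered pieces concentrated in single degrees yields a piece concentrated in the sum of those degrees. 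Alternatively, the filtration parts of the argument can be extracted from the $\GL_n$-case in \cite{PST} (see also \cite[\S4.2]{LevinThesis}) by evaluating $\omega_{\fP_x}$ on the fixed faithful representation $V$ and on its tensor constructions.

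I expect the main obstacle to be exactly this last point together with the mod-$E(u)$ strictness in the exactness step: multiplicativity and strictness of the Hodge filtration of a Kisin module are the only inputs that are not purely formal, and they come down to the elementary-divisor normal form of Frobenius at the primes over $(E(u))$. The rest is formal manipulation with Tannakian categories, base change, and saturation, and once $\cD$ is known to be tensor exact the lemma follows since $\fD_x=\cD\circ\omega_{\fP_x}$.
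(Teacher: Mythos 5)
The factorization $\fD_x=\cD\circ\omega_{\fP_x}$ is fine, but the pivotal claim that $\cD\colon\Mod^{\phz,\bh}_{S}\to\Fil_{K\otimes_{\Qp}F'}$ is \emph{exact} on all bounded-height Kisin modules is false, and this is where the argument breaks. Here is a counterexample. Localize $S=\fS_{F'}$ at a height-one prime over $(E(u))$, getting a DVR $R$ with uniformizer $t=E(u)$. Take $\fM=R^2$ with Frobenius matrix $\bigl(\begin{smallmatrix} t^2 & t^{-1}\\ 0 & 1\end{smallmatrix}\bigr)$ in the standard bases of $\phz^*\fM$ and $\fM$, the subobject $\fM'=Re_1$ with $\phi_{\fM'}=t^2$, and the quotient $\fM''=R\bar e_2$ with $\phi_{\fM''}=1$. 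All three have bounded height and the sequence of underlying modules is exact. A direct computation shows $\Fil^0(\fD(\fM))=\Fil^1=\Fil^2=F'\bar f_1$ and $\Fil^{-1}=\fD(\fM)$, so $\gr(\fD(\fM))$ is concentrated in degrees $-1$ and $2$; but $\gr(\fD(\fM'))$ sits in degree $2$ and $\gr(\fD(\fM''))$ in degree $0$. The map $\fD(\fM)\to\fD(\fM'')$ kills $\Fil^0(\fD(\fM))=F'\bar f_1$ while $\Fil^0(\fD(\fM''))$ is everything, so the epimorphism is not strict and $\cD$ is not exact. In particular the step ``choosing bases adapted simultaneously to the direct summand $\fM'$ and to the sublattice $\phi_\fM(\phz^*\fM)$'' is impossible in this example: the lattice $\phi_\fM(\phz^*\fM)=R\,t^2e_1+R(t^{-1}e_1+e_2)$ is not of the form $\bigoplus_i t^{a_i}Re_i'$ for any basis $(e_1',e_2')$ of $\fM$ with $e_1'\in R^\times\cdot e_1$, precisely because a single-column Smith normal form of $\phi_\fM$ cannot be made compatible with the flag $\fM'\subset\fM$ when the off-diagonal entry mixes the two graded pieces across the jump at $E(u)^0$.

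What the paper's proof actually buys, and what your argument lacks, is \emph{coherence across representations}. The exact sequences and tensor products to which $\fD_x$ is applied are not arbitrary: they all come from applying the single fiber functor $\omega_{\fP_x}$ to $\Rep_F(G_F)$. After passing, as the paper does, to $(Q_x,\delta_x)=(\phz^*\fP_x,\beta_0\circ\phi_x)$ over $\widehat{S}_{F'}\cong\prod_\psi F'[[u-\psi(\pi)]]$, each $\psi$-component is a single $F'$-point $z_\psi$ of $\Gr_{G_{F'}}$. The Cartan decomposition writes $z_\psi=[b\,\mu_\psi(t)]$ with $b\in L^+G_{F'}$ and $\mu_\psi$ a cocharacter of a maximal torus of $G_{F'}$. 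Translating by $b$ changes nothing about the filtration, and for $\mu_\psi(t)$ the filtration on $\fD_{Q^\psi_x}(W)$ is literally the weight-space grading of $T$ on $W\otimes F'$ for every $W$ simultaneously, which is manifestly a tensor exact functor. It is this single group-theoretic cocharacter that splits all the filtrations at once. Your Smith-normal-form reduction produces, at each representation or each term of an exact sequence, a possibly different basis, and these cannot in general be aligned — as the counterexample shows. There is no tensor-exact Hodge filtration functor on $\Mod^{\phz,\bh}_S$; the Lemma is a statement about $G$-bundles and the $G$-Cartan decomposition is essential, not an optional shortcut.
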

\begin{proof} Any such $x$ factors through the ring of integers $\La'$ of $F'$ so that $(\fP_x, \phi_x)$ comes from a pair $(\fP_{x_0}, \phi_{x_0})$ over $\fS_{\La'}$.  Let $\widehat{S}_{\La'}$ (resp. $\widehat{S}_{F'}$) to be the $E(u)$-adic completion of $(W \otimes_{\Zp} \La')[u]$ (resp. $(W \otimes_{\Zp} F')[u]$).  By Lemma \ref{switch}, we can think of $(\fP_{x_0}, \phi_{x_0})$ equivalently as a pair over $\widehat{S}_{\La'}$.  

Choose a trivialization $\beta_0$ of $\fP_{x_0}$ and set $Q_{x_0} := \phz^*(\fP_{x_0})$ with trivialization $\delta_{x_0} :=  \beta_0[1/E(u)] \circ \phi_{x_0}$.  Define $(Q_{x}, \delta_x)$ to be $(Q_{x_0}, \delta_{x_0}) \otimes_{\widehat{S}_{\La'}} \widehat{S}_{F'}$ and define a filtration on $\fD_{Q_x} := Q_{x} \mod E(u) $ by 
$$
\Fil^i(\fD_{Q_x}(W)) = (Q_{x}(W) \cap E(u)^i (W \otimes \widehat{S}_{F'}))/ (E(u)Q_{x}(W) \cap E(u)^i (W \otimes \widehat{S}_{F'}))
$$
for any $W \in \Rep_F(G_F)$. Since $\widehat{S}_{\La'}[1/p] /(E(u)) = \widehat{S}_{F'}/(E(u))$, there is an isomorphism
$$
\fD_x \cong \fD_{Q_x}
$$
of tensor exact functors to $\Mod_{K \otimes_{\Qp} F'}$ identifying the filtrations. 

It suffices then to show that $\fD_{Q_x}$ is a tensor exact functor to the category of filtered $(K \otimes_{\Qp} F')$-modules.  Without loss of generality, we assume that $F'$ contains a Galois closure of $K$. Then 
$$
\widehat{S}_{F'} \cong \prod_{\psi} F'[[u - \psi(\pi)]]
$$
over embeddings $\psi:K \ra F'$ (first decompose $W \otimes_{\Zp} F'$ and then decompose $E(u)$ in each factor). Thus, $(Q_x, \delta_x)$ decomposes as a product $\prod_{\psi} (Q^{\psi}_x, \delta^{\psi}_x)$ where each pair defines a point $z_{\psi}$ of the affine Grassmanian of $G_{F'}$.  The quotient $\fD_{Q_x}$ decomposes compatibly as $\prod_{\psi} \fD_{Q^{\psi}_x}$.  We are reduced then to a computation for a point $z_{\psi} \in \Gr_{G_{F'}}(F')$. Without loss of generality, we can assume $G_{F'}$ is split. Up to translation by the positive loop group (which induces an isomorphism on filtrations), $z_{\psi}$ is the image $[g]$ for some $g \in T(F'((t)))$ where $T$ is maximal split torus of $G_{F'}$. Using the weight space decomposition for $T$ on any representation $W$, one can compute directly that $\fD_{Q^{\psi}_x}$ is a tensor exact functor.  For more details, see \cite[Proposition 3.5.11, Lemma 8.2.15]{LevinThesis}.
\end{proof}    

\begin{defn} \label{defnHtype2} Let $F'$ be any finite extension of $F$ with ring of integers $\La'$. We say a $G$-Kisin module $(\fP_{\La'}, \phi_{\La'})$ over $\La'$ has \emph{$p$-adic Hodge type $\mu$} if the $G_F$-filtration associated to $\fP_{\La'} [1/p]$ as above has type $\mu$.
\end{defn}
 
\begin{thm} \label{equivoftype} Assume that $F = F_{[\mu]}$.  Let $R$ be any complete local Noetherian $\La$-algebra with finite residue field which is $\La$-flat and reduced.  Then $\fP_R \in  D^{[a,b]}_{\fP_{\F}}(R)$ lies in $D^{\mu}_{\fP_{\F}}(R)$ if and only if for all finite extensions $F'/F$ and all homomorphisms $x:R \ra F'$, the $G_{F}$-filtration $\fD_x$ has type less than or equal to $[\mu]$. 
\end{thm}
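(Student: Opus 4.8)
The plan is to reinterpret membership in $D^{\mu}_{\fP_{\F}}(R)$ as a statement about a point of the local model $M(\mu)$, and then to use the hypotheses that $R$ is $\La$-flat and reduced to push that statement down first to the generic fibre $\Spec R[1/p]$ and then to the closed points of $\Spec R[1/p]$, where the affine Schubert stratum can be read off from the filtration $\fD_x$. Concretely, I would first fix a trivialisation $\beta_R$ of the $G$-bundle $\fP_R$ over $\widehat{\fS}_R$ (possible by Proposition \ref{Gtriv}) and form $(Q_R,\delta_R):=\Psi(\fP_R,\phi_{\fP_R},\beta_R)$ as in the smooth modification of Theorem \ref{Modification}. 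Passing to the limit over $\widehat{\cC}_{\La}$ in Proposition \ref{idDbar} (together with Lemma \ref{switch}), the pair $(Q_R,\delta_R)$ is the same datum as a morphism $\Spec R\to\Gr^{E(u),W}_G$ supported at the $\F$-point $x_{\F}$ attached to $(Q_{\F},\delta_{\F})$. Unwinding Definition \ref{muDbar} and the descent statement Proposition \ref{mudescent}, one has $\fP_R\in D^{\mu}_{\fP_{\F}}(R)$ exactly when this morphism factors through the closed subscheme $M(\mu)\subset\Gr^{E(u),W}_G$ (here the assumption $F=F_{[\mu]}$ is what lets $M(\mu)$ be defined over $\La=\La_{[\mu]}$); and by the $L^{+,E(u)}G$-stability of $M(\mu)$ (Corollary \ref{stabilityofM}) this is independent of $\beta_R$.

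Next I would invoke that $M(\mu)$ is, by Definition \ref{defnlocmodel}, the flat closure in $\Gr^{E(u),W}_G\otimes_{\La}\La_{[\mu]}$ of its generic fibre $S(\mu)=M(\mu)[1/p]$. Since $R$ is $\La$-flat we have $R\hookrightarrow R[1/p]$, so the pullback to $R$ of the ideal of $M(\mu)$ injects into its pullback to $R[1/p]$ and hence vanishes iff the latter does; therefore $\fP_R\in D^{\mu}_{\fP_{\F}}(R)$ iff the induced morphism $\Spec R[1/p]\to\Gr^{E(u),W}_G[1/p]\cong\Gr_{\Res_{(K\otimes_{\Qp}F)/F}G_F}$ (Proposition \ref{AGfibers}(1)) factors through the affine Schubert variety $S(\mu)$. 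Now $R[1/p]$ is a reduced Jacobson ring whose closed points have residue field finite over $F$, and every $\La$-algebra map $R\to F'$ with $[F':F]<\infty$ is continuous and factors through $\cO_{F'}$ (the latter is already used in the proof of Lemma \ref{Dx}); since $S(\mu)$ is closed, the pullback of its ideal is contained in every maximal ideal and hence in $\mathrm{nil}(R[1/p])=0$. So the morphism factors through $S(\mu)$ iff, for every finite $F'/F$ and every $x\colon R\to F'$, the point $z_x\in\Gr_{\Res_{(K\otimes_{\Qp}F)/F}G_F}(F')$ determined by $(\fP_x,\phi_x)$ lies in $S(\mu)(F')$.

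To finish I would base change along $F'\hookrightarrow\overline{F}$ and use the decompositions $\Gr_{\Res_{(K\otimes_{\Qp}F)/F}G_F}\otimes_F\overline{F}\cong\prod_{\psi\colon K\to\overline{F}}\Gr_{G_{\overline{F}}}$ and, correspondingly, $S(\mu)\otimes_F\overline{F}=\prod_{\psi}S(\mu_{\psi})$, so that $z_x\otimes\overline{F}=(z_{x,\psi})_{\psi}$ with each $z_{x,\psi}$ lying in the Schubert cell of a unique dominant cocharacter $\lambda_{\psi}$ of $G_{\overline{F}}$. On one hand, $S(\mu_{\psi})_{\overline{F}}$ is the union of the cells for $\lambda\le\mu_{\psi}$ (\cite[Proposition 2.8]{RicharzSV}), so $z_{x,\psi}\in S(\mu_{\psi})(\overline{F})$ iff $\lambda_{\psi}\le\mu_{\psi}$. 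On the other hand, the weight-space computation in the proof of Lemma \ref{Dx} — which identifies $\fD_x$ with $\prod_{\psi}\fD_{Q_x^{\psi}}$ and each $z_{x,\psi}$, up to a positive-loop translation that does not change the filtration, with a split torus point $t^{\lambda_{\psi}}$ — shows that the type of $\pr_{\psi}^*(\fD_x\otimes_{F',i}\overline{F})$ is exactly $[\lambda_{\psi}]$ for every $F$-embedding $i\colon F'\hookrightarrow\overline{F}$. Comparing the two, $z_x\in S(\mu)(F')$ iff $\fD_x$ has type $\le[\mu]$ in the sense of Definition \ref{defnHtype}, which together with the previous two paragraphs is exactly the claimed equivalence.

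The step I expect to be the crux is the reduction in the second paragraph: the real content of the theorem is that, because $R$ is $\La$-flat and reduced, membership of $\fP_R$ in the closed condition cut out by $M(\mu)$ is detected on the generic fibre — in fact at the closed points of $\Spec R[1/p]$ — and this forces a careful use of the flat-closure definition of $M(\mu)$ together with the Jacobson property of $R[1/p]$. By contrast the identification of the Schubert stratum with the type of $\fD_x$ in the last paragraph is essentially a restatement of the computation already carried out for Lemma \ref{Dx}, and the passage from $D^{\mu}$ to a point of $M(\mu)$ in the first paragraph is a formal consequence of the constructions of \S 3.3.
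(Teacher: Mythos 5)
Your proposal is correct and takes essentially the same route as the paper: reinterpret $D^{\mu}_{\fP_{\F}}(R)$ as the condition that the map to $\Gr_G^{E(u),W}$ determined by a trivialized lift of $\fP_R$ factors through $M(\mu)$, use $\La$-flatness to pass to $\Spec R[1/p]$, use that $R[1/p]$ is reduced Jacobson to reduce to detection at closed points, and finally invoke the identification $\fD_x\cong\fD_{Q_x}$ from the proof of Lemma \ref{Dx} together with the fact that $F'$-points of $M(\mu)$ are precisely those of $S(\mu)$ whose associated filtration has type $\le[\mu]$. The only cosmetic difference is that the paper works through the finite-level trivialization $\widetilde{D}^{[a,b],(N)}_{\fP_{\F}}$ (for representability) and phrases the flatness/Jacobson reduction via the representing quotient $R^{\mu}$, whereas you argue directly with the pullback of the ideal sheaf of $M(\mu)$ and a full trivialization; both encode the same reduction, and your unpacking of the Schubert-cell/filtration-type comparison reproduces what the paper delegates to \cite[Proposition 3.5.11]{LevinThesis}.
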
  
\begin{proof}
Choose a lift $\widetilde{y}$ of $\fP_R$ to $\widetilde{D}^{[a,b], (N)}_{\fP_{\F}}(R)$. Clearly, $\fP_R \in D^{\mu}_{\fP_{\F}}(R)$ if and only if $\tilde{y} \in \widetilde{D}^{(N), \mu}_{\fP_{\F}}(R)$ which happens if and only $\Psi(\tilde{y}) \in \overline{D}^{\mu}_{Q_{\F}}(R)$.  Let $R^{\mu}$ be the quotient of $R$ representing the fiber product $\Spf R \times_{\overline{D}_{Q_{\F}}^{[a,b]}} \overline{D}^{\mu}_{Q_{\F}}$. To show that $R^{\mu} = R$, it suffices to show that $\Spec R^{\mu}[1/p]$ contains all closed points of $\Spec R[1/p]$ since $R$ is flat and $R[1/p]$ is reduced and Jacobson.

The groupoid $\overline{D}^{\mu}_{Q_{\F}}$ is represented by a completed stalk on the local model $M(\mu) \subset \Gr_{G}^{E(u), W}$ so that for any $x:R \ra F'$, $\Psi(\widetilde{y})[1/p]$ defines a $F'$-point $(Q_x, \delta_x)$ of $\Gr_{G}^{E(u), W}$.   Since $M(\mu)(F') = S(\mu)(F')$, $(Q_x, \delta_x) \in S(\mu)(F')$ if and only if the filtration $\fD_{Q_x}$ has type $\leq [\mu]$ (\cite[Proposition 3.5.11]{LevinThesis}). The proof of Lemma \ref{Dx} shows that the two filtrations agree, i.e.,
$$
\fD_x \cong \fD_{Q_x}.
$$  
Thus, $x$ factors through $R^{\mu}$ exactly when the type of the filtration $\fD_x$ is less than or equal to $[\mu]$. 
\end{proof} 

Fix a continuous representation $\overline{\eta}:\Gamma_K \ra G(\F)$. Let $R^{[a,b], \cris}_{\overline{\eta}}$ be the universal framed $G$-valued crystalline deformation ring with Hodge-Tate weights in $[a,b]$, and let $\Theta:X^{[a,b], \cris}_{\overline{\eta}} \ra \Spec R^{[a,b], \cris}_{\overline{\eta}}$ be as in \ref{resolution}.   

\begin{defn} \label{leqmu} Assume $F = F_{[\mu]}$. Define $R^{\cris, \leq \mu}_{\overline{\eta}}$ to be the flat closure of the connected components of $\Spec R^{[a,b], \cris}_{\overline{\eta}}[1/p]$ with type $\leq \mu$ (see \ref{ssmutype}). Define $X^{\cris, \leq \mu}_{\overline{\eta}}$ to be the flat closure in $X^{[a,b], \cris}_{\overline{\eta}}$ of the same connected components (since $\Theta[1/p]$ is an isomorphism).   
\end{defn}

\begin{cor} \label{cristypes} Let $X^{\cris, \leq \mu}_{\overline{\eta}}$ be as in Definition $\ref{leqmu}$. A point $\overline{x} \in X^{\cris, \leq \mu}_{\overline{\eta}}(\F')$ corresponds to a $G$-Kisin lattice $\fP_{\F'}$ over $\fS_{\F'}$. The deformation problem $D^{\cris, \mu}_{\overline{x}}$ which assigns to any $A \in \cC_{\La \otimes_{\Zp} W(\F')}$ the set of  isomorphisms classes of triples $$\{(y:R^{\cris, \leq \mu}_{\overline{\eta}} \ra A, \fP_A \in D^{\mu}_{\fP_{\F'}}(A), \delta_A:T_{G, \fS_A}(\fP_A) \cong \eta_y|_{\Gamma_{\infty}})\}$$ is representable. Furthermore, if $\widehat{\cO}^{\mu}_{\overline{x}}$ is the completed local ring of $X^{\cris, \leq \mu}_{\overline{\eta}}$ at $\overline{x}$, then the natural map $\Spf \widehat{\cO}^{\mu}_{\overline{x}} \ra D^{\cris, \mu}_{\overline{x}}$ is a closed immersion which is an isomorphism modulo $p$-power torsion.  
\end{cor}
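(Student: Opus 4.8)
The plan is to identify the formal neighbourhood of $\overline{x}$ in $X^{\cris,\leq\mu}_{\overline{\eta}}$ with an explicit subfunctor of the deformation functor of $\fP_{\F'}$, and then transport the generic-fibre description of $X^{\cris,\leq\mu}_{\overline{\eta}}$ and Theorem \ref{equivoftype} across that identification. Throughout one works over the enlarged coefficient ring $\La' := \La\otimes_{\Zp}W(\F')$; since representability of the auxiliary functors commutes with such a base change, this is harmless. Unwinding Proposition \ref{resolution}, the point $\overline{x}$ is a $G$-Kisin lattice $\fP_{\F'}$ of bounded height over $\fS_{\F'}$ together with a framing $\delta_{\F'}$ identifying $T_{G,\fS_{\F'}}(\fP_{\F'})$ with $\overline{\eta}|_{\Gamma_\infty}\otimes_{\F}\F'$.

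First I would compute the completed local ring $\widehat{\cO}_{\overline{x}}$ of $X^{[a,b],\cris}_{\overline{\eta}}$ at $\overline{x}$. Writing $R_1 := R^{[a,b],\cris}_{\overline{\eta}}$ and $\eta_1\colon\Gamma_\infty\to G(R_1)$ for the restriction of the universal crystalline framed deformation, $X^{[a,b],\cris}_{\overline{\eta}}=X^{[a,b]}_{\eta_1}$ is projective over $R_1$ by Proposition \ref{resolution}, so $\widehat{\cO}_{\overline{x}}$ pro-represents on $\widehat{\cC}_{\La'}$ the groupoid
$$
D_{\overline{x}}(A)=\{(y\colon R_1\to A,\ \fP_A\in D^{[a,b]}_{\fP_{\F'}}(A),\ \delta_A\colon T_{G,\fS_A}(\fP_A)\cong\eta_y|_{\Gamma_\infty})\},
$$
where $\eta_y$ is the crystalline $\Gamma_K$-representation over $A$ attached to $y$. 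Indeed, by the base change of Proposition \ref{fhfunctoriality} an $A$-point of $X^{[a,b],\cris}_{\overline{\eta}}$ over $\overline{x}$ consists of $y$ together with an $A$-point of $X^{[a,b]}_{\eta_y|_{\Gamma_\infty}}$ lifting $\overline{x}$; by Theorem \ref{GKisinproj} and Proposition \ref{resolution} this is a $G$-Kisin lattice of height $[a,b]$ in $\underline{M}_{G,A}(\eta_y|_{\Gamma_\infty})$ lifting $\fP_{\F'}$, and by the equivalence of Proposition \ref{normGequiv} and the full faithfulness of Proposition \ref{uniqueness} this is exactly the data $(\fP_A,\delta_A)$ above. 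That $D_{\overline{x}}$ is a functor rather than merely a groupoid follows by testing automorphisms against a faithful representation of $G$, as in the proof of Proposition \ref{hullrep}.

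Next I would establish representability of $D^{\cris,\mu}_{\overline{x}}$ and construct the map. By definition $D^{\cris,\mu}_{\overline{x}}$ is the subgroupoid of $D_{\overline{x}}$ cut out by the two closed conditions ``$y$ factors through the quotient $R^{\cris,\leq\mu}_{\overline{\eta}}$ of $R_1$'' and ``$\fP_A\in D^{\mu}_{\fP_{\F'}}(A)$''; the second is the pullback along the forgetful morphism $D_{\overline{x}}\to D^{[a,b]}_{\fP_{\F'}}$ of the relatively representable closed immersion $D^{\mu}_{\fP_{\F'}}\iarrow D^{[a,b]}_{\fP_{\F'}}$ of Proposition \ref{mudescent}. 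Hence $D^{\cris,\mu}_{\overline{x}}$ is a closed subgroupoid of $\Spf\widehat{\cO}_{\overline{x}}$, so it has no nontrivial automorphisms and is pro-represented by a quotient $\widehat{\cO}_{\overline{x}}\twoheadrightarrow\widehat{\cO}'_{\overline{x}}$. On the other hand $X^{\cris,\leq\mu}_{\overline{\eta}}\subset X^{[a,b],\cris}_{\overline{\eta}}$ is a closed subscheme, so $\widehat{\cO}^{\mu}_{\overline{x}}$ is also a quotient of $\widehat{\cO}_{\overline{x}}$, and it is $\La'$-flat and reduced, being the flat closure of the reduced generic fibre $X^{\cris,\leq\mu}_{\overline{\eta}}[1/p]\subset X^{[a,b],\cris}_{\overline{\eta}}[1/p]\cong\Spec R^{[a,b],\cris}_{\overline{\eta}}[1/p]$. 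The universal object over $\widehat{\cO}^{\mu}_{\overline{x}}$ lies in $D^{\cris,\mu}_{\overline{x}}$: its $y$-component factors through $R^{\cris,\leq\mu}_{\overline{\eta}}$ because $X^{\cris,\leq\mu}_{\overline{\eta}}$ maps to $\Spec R^{\cris,\leq\mu}_{\overline{\eta}}$, while its $G$-Kisin module lies in $D^{\mu}_{\fP_{\F'}}(\widehat{\cO}^{\mu}_{\overline{x}})$ by Theorem \ref{equivoftype}, using that every homomorphism $\widehat{\cO}^{\mu}_{\overline{x}}\to F''$ to a finite extension of $F$ is, via the isomorphism $\Theta[1/p]$, a crystalline $\Gamma_K$-representation of $p$-adic Hodge type $\leq\mu$ (Definition \ref{leqmu}), whose Kisin-module filtration agrees with its de Rham filtration by Proposition \ref{DRfiltcomparison} and Lemma \ref{Dx} and therefore has type $\leq\mu$. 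So the surjection $\widehat{\cO}_{\overline{x}}\twoheadrightarrow\widehat{\cO}^{\mu}_{\overline{x}}$ factors through $\widehat{\cO}'_{\overline{x}}$, giving a surjection $\widehat{\cO}'_{\overline{x}}\twoheadrightarrow\widehat{\cO}^{\mu}_{\overline{x}}$, which is the asserted closed immersion $\Spf\widehat{\cO}^{\mu}_{\overline{x}}\to D^{\cris,\mu}_{\overline{x}}$.

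Finally I would check this surjection is an isomorphism after inverting $p$. For a finite $F$-algebra $B$, a triple $(y,\fP_B,\delta_B)\in D^{\cris,\mu}_{\overline{x}}(B)$ defines a $B$-point of $X^{[a,b],\cris}_{\overline{\eta}}$ whose image in $\Spec R^{[a,b],\cris}_{\overline{\eta}}$ lies in $\Spec R^{\cris,\leq\mu}_{\overline{\eta}}$; since $\Theta[1/p]$ is an isomorphism and $\Spec R^{\cris,\leq\mu}_{\overline{\eta}}[1/p]$ is a union of connected components of $\Spec R^{[a,b],\cris}_{\overline{\eta}}[1/p]$ (Theorem \ref{ssmutype} and the remark following it), this is the same as a $B$-point of $X^{\cris,\leq\mu}_{\overline{\eta}}$; conversely any $B$-point of $X^{\cris,\leq\mu}_{\overline{\eta}}$ gives an element of $D^{\cris,\mu}_{\overline{x}}(B)$ by Proposition \ref{DRfiltcomparison} applied to $\eta_y$. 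Thus $\widehat{\cO}'_{\overline{x}}$ and $\widehat{\cO}^{\mu}_{\overline{x}}$ are two quotients of $\widehat{\cO}_{\overline{x}}$ representing the same subfunctor on finite $F$-algebras, so $\widehat{\cO}'_{\overline{x}}[1/p]=\widehat{\cO}^{\mu}_{\overline{x}}[1/p]$; as $\widehat{\cO}^{\mu}_{\overline{x}}$ is $\La'$-flat, the kernel of $\widehat{\cO}'_{\overline{x}}\twoheadrightarrow\widehat{\cO}^{\mu}_{\overline{x}}$ is annihilated by a power of $p$, which is the claim. I expect the principal obstacle to be the first computation, identifying $\widehat{\cO}_{\overline{x}}$ with $D_{\overline{x}}$: it requires chaining the base change of the resolution, the interpretation of $A$-points as $G$-Kisin lattices, and the passage to framed $G$-Kisin modules through Propositions \ref{normGequiv}--\ref{uniqueness}, while verifying rigidity; a secondary difficulty is the generic-fibre comparison in the last step, where one needs both local constancy of the $p$-adic Hodge type and the agreement of the Kisin and de Rham filtrations.
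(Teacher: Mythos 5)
Your proposal is correct and follows essentially the same route as the paper. The only substantive organizational difference is at the start: the paper works with the intermediate functor $D^{\cris,\mu,\mathrm{bc}}_{\overline{x}}$, defined as the completed stalk at a point of the fibre product $X^{[a,b],\cris}_{\overline{\eta}}\times_{\Spec R^{[a,b],\cris}_{\overline{\eta}}}\Spec R^{\cris,\leq\mu}_{\overline{\eta}}$ (so the constraint on $y$ is imposed first, and only the $\mu$-type condition on the Kisin module is then cut out), whereas you start from the full completed stalk $\widehat{\cO}_{\overline{x}}$ of $X^{[a,b],\cris}_{\overline{\eta}}$ at $\overline{x}$ and impose both closed conditions at once. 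The remaining steps coincide: representability of $D^{\cris,\mu}_{\overline{x}}$ because it is a closed subfunctor of a prorepresentable one (after killing automorphisms via a faithful representation), the factorization of $\Spf\widehat{\cO}^{\mu}_{\overline{x}}$ through the closed subfunctor by applying Theorem \ref{equivoftype} to the flat reduced ring $\widehat{\cO}^{\mu}_{\overline{x}}$ and then Proposition \ref{DRfiltcomparison} to identify the Kisin-module filtration with the de Rham one at $F'$-points, and the conclusion that the kernel is $p$-power torsion from the comparison of $F'$-points and flatness. One small slip: in identifying $\widehat{\cO}_{\overline{x}}$ with the deformation functor $D_{\overline{x}}$ you invoke the full faithfulness of Proposition \ref{uniqueness}, but that proposition concerns coefficients finite \emph{flat} over $\La$ and is not needed for Artinian $A$; the identification at the Artinian level already follows from Proposition \ref{normGequiv} together with the Tannakian description of $G$-Kisin lattices. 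This does not affect the validity of the argument, as uniqueness of lattices is only used later, over $\La$-flat coefficients, exactly as in the paper.
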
 
\begin{proof}
Without loss of generality, we can replace $\La$ by $\La \otimes_{W(\F)} W(\F')$. By construction and Proposition \ref{fhfunctoriality}, for any $A \in \cC_{\La}$, the deformation functor 
$$
D^{\cris, \mu, \mathrm{bc}}_{\overline{x}}(A) = \{y:R^{\cris, \leq \mu}_{\overline{\eta}} \ra A, \fP_A \in D^{[a,b]}_{\fP_{\F'}}(A), \delta_A:T_{G, \fS_A}(\fP_A) \cong \eta_y|_{\Gamma_{\infty}}\}/\cong
$$
is representable.  That is, $D^{\cris, \mu,  \mathrm{bc}}_{\overline{x}}$ represents the completed stalk at a point of the fiber product $X^{[a,b], \cris}_{\overline{\eta}} \times_{\Spec R^{[a,b], \cris} _{\overline{\eta}}} \Spec R^{\cris, \leq \mu}_{\overline{\eta}}$. Since $D^{\mu}_{\fP_{\F'}} \subset D^{[a,b]}_{\fP_{\F'}}$ is closed so is $D^{\cris, \mu}_{\overline{x}} \subset D^{\cris, \mu,  \mathrm{bc}}_{\overline{x}}$ and hence $D^{\cris, \mu}_{\overline{x}}$ is representable by $R^{\cris, \mu}_{\overline{x}}$. To see that the closed immersion $\Spf \widehat{\cO}^{\mu}_{\overline{x}} \ra D^{\cris, \mu,  \mathrm{bc}}_{\overline{x}}$ factors through $D^{\cris, \mu}_{\overline{x}}$ it suffices to show that the ``universal" lattice $\fP_{\widehat{\cO}^{\mu}_{\overline{x}}} \in D^{[a,b]}_{\fP_{\F'}}(\widehat{\cO}^{\mu}_{\overline{x}})$ lies in $D^{\mu}_{\fP_{\F'}}(\widehat{\cO}^{\mu}_{\overline{x}})$.

By Theorem \ref{fhimage} and \ref{stlocus}, $\Theta[1/p]$ is an isomorphism.  Furthermore, by \cite[Proposition 5.1.5]{Balaji}, $R^{[a,b], \cris}_{\overline{\eta}}[1/p]$ and $R^{\cris, \leq \mu}_{\overline{\eta}}[1/p]$ are formally smooth over $F$.   Hence,  $\widehat{\cO}^{\mu}_{\overline{x}}$ satisfies the hypotheses of Theorem \ref{equivoftype}.  

By Theorem \ref{equivoftype}, we are reducing to showing that for any finite $F'/F$ and any homomorphism $x:\widehat{\cO}^{\mu}_{\overline{x}} \ra F'$ the filtration $\fD_x$ corresponding to the base change $\fP_x := \fP_{\widehat{\cO}^{\mu}_{\overline{x}}} \otimes_x F'$ has type less than or equal to $\mu$.  The homomorphism $x$ corresponds to closed point of $\Spec R^{\cris, \leq \mu}_{\overline{\eta}}[1/p]$, i.e., a crystalline representation $\rho_x$ with $p$-adic Hodge type $\leq \mu$. Furthermore, $\fP_x$ is the unique $(\fS_{F'}, \phz)$-module of bounded height associated to $\rho_x$. By Proposition \ref{DRfiltcomparison}, the de Rham $\cF^{\mathrm{dR}}_{\rho_x}$ filtration associated to $\rho_x$ is isomorphic to the filtration $\fD_x$ associated to $(\fP_x, \phi_x)$. Thus $\fD_x$ has type $\leq \mu$ for all points $x$ and so $\fP_{\widehat{\cO}^{\mu}_{\overline{x}}} \in D^{\mu}_{\fP_{\F'}}(\widehat{\cO}^{\mu}_{\overline{x}})$ by Theorem \ref{equivoftype}.

By the argument above, $\Spec \widehat{\cO}^{\mu}_{\overline{x}}$ and $\Spec R^{\cris, \mu}_{\overline{x}}$ have the same $F'$-points for any finite extension of $F$.  Since $R^{\cris, \leq \mu}_{\overline{\eta}}[1/p]$ is formally smooth over $F$, the kernel of $R^{\cris, \mu}_{\overline{x}} \ra \widehat{\cO}^{\mu}_{\overline{x}}$ is $p$-power torsion.  
\end{proof}

\begin{rmk} In fact, Corollary \ref{cristypes} holds as well for semistable deformation rings with $p$-adic Hodge type $\leq \mu$.  To apply Theorem \ref{equivoftype} and make the final deduction, we  needed that the generic fiber of the crystalline deformation ring was reduced (to argue at closed points).  This is true for $G$-valued semistable deformation rings by the main result of \cite{Bellovin2}.  
\end{rmk}

\section{Local analysis}

In this section, we analyze finer properties of crystalline $G$-valued deformation rings with minuscule $p$-adic Hodge type.  The techniques in this section are inspired by \cite{MFFGS} and \cite{Liu2}.  We develop a theory of $(\phz, \widehat{\Gamma})$-modules with $G$-structure and our main result, Theorem \ref{keythm}, is stated in these terms.  However, the idea is the following: given a $G$-Kisin module $(\fP_A, \phi_A)$ over some finite $\La$-algebra $A$, we get a representation of $\Gamma_{\infty}$ via the functor $T_{G, \fS_A}$.  In general, this representation need not extend (and certainly not in a canonical way) to a representation of the full Galois group $\Gamma_K$.  When $G = \GL_n$ and $\fP_A$ has height in $[0, 1]$ then via the equivalence between Kisin modules with height in $[0,1]$ and finite flat group schemes \cite[Theorem 2.3.5]{Fcrystals}, one has a canonical extension to $\Gamma_K$ which is flat.  We show (at least when $A$ is a $\La$-flat domain) that the same holds for $G$-Kisin modules of minuscule type: there exists a canonical extension to $\Gamma_K$ which is crystalline. This is stated precisely in Corollary \ref{BTtype}.  We end by applying this result to identify the connected components of $G$-valued crystalline deformation rings with the connected components of a moduli space of $G$-Kisin modules (Corollary \ref{connectedcomponents}).

\subsection{Minuscule cocharacters}

We begin with some preliminaries on minuscule cocharacters and adjoint representations which we use in our finer analysis with $(\phz, \widehat{\Gamma})$-modules in the subsequent sections.   

Let $H$ be a reductive group over field $\kappa$.  The conjugation action of $H$ on itself gives a representation
\begin{equation} \label{AdRep}
\Ad:H \ra \GL(\Lie(H)).
\end{equation}
This is an algebraic representation so for any $\kappa$-algebra $R$, $H(R)$ acts on $\Lie(H_R) = \Lie H \otimes_{\kappa} R$.  We will use $\Ad$ to denote these actions as well.  We can define $\Ad$ for $G$ over $\Spec \La$ in the same way. 

\begin{defn} \label{liegrading} Any cocharacter $\la:\Gm \ra H$ gives a grading on $\Lie H$ defined by 
$$
\Lie H (i) := \{ Y \in \Lie H \mid \Ad(\la(a)) Y = a^i Y \}.
$$   
A cocharacter $\la$ is called \emph{minuscule} if $\Lie H(i) = 0$ for $i \not\in \{ -1, 0,  1 \}$. 
\end{defn}

Minuscule cocharacters were studied by Deligne \cite{Deligne} in connection with the theory of Shimura varieties.  A detailed exposition of their main properties can be found \S 1 of \cite{Gross}. 


Assume now that $H$ is split and fix a maximal split torus $T$ contained in Borel subgroup $B$.  This gives rise to a set of simple roots $\Delta$ and a set of simple coroots $\Delta^{\vee}$.  In each conjugacy class of cocharacters, there is a unique dominant cocharacter valued in $T$.  The set of dominant cocharacters is denoted by $X_*(T)^{+}$.  

Recall the Bruhat (partial) ordering on $X_*(T)^{+}$: given two dominant cocharacter $\mu, \mu':\Gm \ra T$, we say $\mu' \leq \mu$ if $\mu - \mu' = \sum_{\alpha \in \Delta^{\vee}} n_{\alpha} \alpha$  with $n_{\alpha} \geq 0$.  

\begin{prop} \label{smallest} Let $\mu$ be a dominant minuscule cocharacter.  Then there are no dominant $\mu'$ such that $\mu' < \mu$ in the Bruhat order. 
\end{prop}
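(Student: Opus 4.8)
The plan is to argue that if $\mu$ is dominant minuscule and $\mu' < \mu$ were dominant, then $\mu - \mu'$ is a nonzero nonnegative integer combination of simple coroots, and I will derive a contradiction by pairing against a fundamental weight. Write $\mu - \mu' = \sum_{\alpha \in \Delta^\vee} n_\alpha \alpha$ with all $n_\alpha \geq 0$ and not all zero; pick a simple root $\beta \in \Delta$ with $n_{\beta^\vee} > 0$. The key numerical input is the characterization of minuscule dominant cocharacters in terms of their pairings with roots: $\mu$ is minuscule if and only if $\langle \alpha, \mu \rangle \in \{-1, 0, 1\}$ for every root $\alpha$ (this is immediate from Definition \ref{liegrading}, since $\Lie H = \Lie T \oplus \bigoplus_\alpha \mathfrak{g}_\alpha$ and $\Ad(\mu(a))$ acts on $\mathfrak{g}_\alpha$ by $a^{\langle \alpha, \mu\rangle}$). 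Since $\mu$ is dominant, $\langle \alpha, \mu \rangle \in \{0,1\}$ for all \emph{positive} roots $\alpha$.

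The main step is then the following: I would pair $\mu = \mu' + \sum_\alpha n_\alpha \alpha^\vee$ against the fundamental weight $\varpi_\beta$ dual to $\beta$, i.e. $\langle \varpi_\beta, \alpha^\vee \rangle = \delta_{\alpha\beta}$ for simple $\alpha$. This gives $\langle \varpi_\beta, \mu \rangle = \langle \varpi_\beta, \mu' \rangle + n_{\beta^\vee} \geq \langle \varpi_\beta, \mu'\rangle + 1$. Since $\mu'$ is dominant and $\varpi_\beta$ is a dominant weight, $\langle \varpi_\beta, \mu' \rangle \geq 0$, so $\langle \varpi_\beta, \mu \rangle \geq 1$. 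On the other hand, I claim a dominant minuscule $\mu$ satisfies $\langle \varpi_\beta, \mu \rangle \leq 1$ for every simple $\beta$; combined with dominance ($\geq 0$) this forces $\langle \varpi_\beta, \mu\rangle = 1$ and $\langle \varpi_\beta, \mu'\rangle = 0$, and iterating over all simple roots in the support of $\mu - \mu'$ one concludes $\mu$ is (conjugate to) a fundamental coweight that is a sum of others with a remaining dominant piece — which is impossible since fundamental coweights are the vertices of the dominant cone. More cleanly: writing $\mu = \sum_\beta \langle \varpi_\beta, \mu\rangle\, \omega_\beta^\vee + (\text{central part})$ where $\omega_\beta^\vee$ are the fundamental coweights, minuscularity of a \emph{nonzero} $\mu$ in the semisimple direction forces exactly one coefficient to be $1$ and the rest $0$; then $\mu' = \mu - \sum n_\alpha\alpha^\vee$ would have a \emph{negative} $\varpi$-pairing somewhere or be non-dominant, the desired contradiction.

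The claim that $\langle \varpi_\beta, \mu \rangle \leq 1$ for dominant minuscule $\mu$ is the crux and is where I expect to spend the real effort. The clean way to see it: $\langle \varpi_\beta, \mu\rangle$ is the coefficient of the simple coroot $\beta^\vee$ when the highest coroot $\theta^\vee$ is expanded — no, rather one uses that $\langle \tilde\alpha, \mu \rangle \leq 1$ where $\tilde\alpha$ is the highest root, together with the fact that the coefficient of $\beta^\vee$ in $\mu$ (in the fundamental coweight basis) is bounded by the pairing with the highest root via the marks of the affine Dynkin diagram. Alternatively — and this is the route I would actually take — invoke the standard fact (see \S 1 of \cite{Gross}, or Bourbaki) that the minuscule dominant coweights are \emph{exactly} the nonzero minimal elements of $X_*(T)^+ \setminus \{0\}$ modulo the coroot lattice translates, equivalently the minuscule cocharacters are precisely those $\mu$ with $\langle \tilde\alpha, \mu\rangle \le 1$ for the highest root $\tilde\alpha$; from this the non-existence of a smaller dominant $\mu'$ is nearly definitional. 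So the proof reduces to quoting this characterization and then running the one-line pairing argument against $\varpi_\beta$ above. The main obstacle is purely bookkeeping: making sure the reduction to the split, semisimple, even simply-connected case (where the fundamental coweights live in $X_*(T)$) is legitimate — here one uses that the statement only concerns the conjugacy class and the root datum, and the Bruhat order and minuscularity are insensitive to central tori and isogenies, so one may replace $H$ by its simply-connected cover of $H^{\der}$ without loss of generality.
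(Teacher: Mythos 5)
The paper's proof is a one-sentence citation to a Bourbaki exercise, so any direct argument is already a different route; however, yours has a fatal error at exactly the step you flag as the crux. The claim that a dominant minuscule cocharacter $\mu$ satisfies $\langle \varpi_\beta, \mu\rangle \le 1$ for every simple root $\beta$ is false: minuscularity bounds the pairing of $\mu$ against \emph{roots}, not against \emph{fundamental weights}. Concretely, for $H$ of type $A_7$ (say $H = \PGL_8$) and $\mu = (1,1,1,1,0,0,0,0)$ modulo center, $\mu$ is dominant and minuscule, but $\langle \varpi_4, \mu\rangle = 4 - \tfrac{4\cdot 4}{8} = 2$. The related identity you use, $\mu = \sum_\beta \langle \varpi_\beta, \mu\rangle\,\omega_\beta^\vee + (\text{central})$, is also wrong: fundamental weights and fundamental coweights are not dual bases of one another. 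The basis dual to $\{\omega_\beta^\vee\}$ is the set of simple roots, so the correct expansion is $\mu = \sum_\beta \langle \beta, \mu\rangle\,\omega_\beta^\vee + (\text{central})$, and with the corrected coefficients your numerical argument has nothing to push against, since minuscularity gives no a priori upper bound on $\langle\varpi_\beta,\mu\rangle$.

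The fallbacks you mention do not repair this. Quoting that "minuscule dominant coweights are precisely the nonzero minimal dominant elements in their coset modulo the coroot lattice" is quoting the proposition itself (which is legitimate as a citation — that is exactly what the paper does — but it is not a proof). The highest-root characterization $\langle \tilde\alpha, \mu\rangle \le 1$ is correct, but minimality is not ``nearly definitional'' from it: pairing $\mu - \mu' = \sum n_\alpha \alpha^\vee$ against $\tilde\alpha$ gives $\sum n_\alpha \langle\tilde\alpha, \alpha^\vee\rangle$, and these pairings can vanish on the support of $\mu - \mu'$ (already in type $A_3$ one has $\langle\tilde\alpha, \alpha_2^\vee\rangle = 0$), so no contradiction drops out. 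A genuine argument is needed — for example, use a $W$-invariant inner product and the positivity of $(\mu-\mu',\mu-\mu') = (\mu-\mu',\mu) - (\mu-\mu',\mu')$ together with dominance of $\mu'$ to produce a simple $\beta$ in the support of $\mu-\mu'$ with $\langle\beta,\mu\rangle = 1$, and then argue inductively that $\mu'$ lies in the Weyl orbit of $\mu$, hence equals $\mu$; this is the content of the Bourbaki exercise cited in the paper.
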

\begin{proof} See Exercise 24 from Chapter IV.1 of \cite{BourbakiLie}.    
\end{proof}

\begin{prop} \label{closedschubert} If $\mu$ is a minuscule cocharacter, then the $($open$)$ affine Schubert variety $S^0(\mu)$ is equal to $S(\mu)$.  Furthermore, $S(\mu)$ is smooth and projective.  In fact, $S(\mu) \cong H/P(\mu)$ where $P(\mu)$ is a parabolic subgroup associated to the cocharacter $\mu$.  
\end{prop}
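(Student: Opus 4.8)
The plan is to treat the three assertions in turn. Since $S(\mu)$, $S^0(\mu)$ and $H/P(\mu)$ are unchanged up to canonical isomorphism when $\mu$ is replaced by a conjugate, I may assume $\mu$ is dominant. Then $S^0(\mu) = S(\mu)$ is immediate from the cell decomposition recalled above: $S(\mu)$ is the union of the locally closed cells $S^0(\mu')$ over dominant cocharacters $\mu' \leq \mu$ (\cite[Proposition 2.8]{RicharzSV}), and by Proposition \ref{smallest} the minuscule hypothesis forces $\mu' = \mu$. Projectivity of $S(\mu)$ is then clear, as it is a finite-type closed subscheme of the ind-projective $\Gr_H$.

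For the smoothness and the identification $S(\mu) \cong H/P(\mu)$ I would realize $S^0(\mu)$ as an orbit and compute its stabilizer. Write $\Gr_H = LH/L^+H$ with $L^+H$ the positive loop group, let $\mu(t) \in T(\kappa(\!(t)\!))$ be the image of $t$ under $\mu$ and $t^\mu \in \Gr_H(\kappa)$ its class, so that $S^0(\mu) = L^+H \cdot t^\mu$. The scheme-theoretic stabilizer of $t^\mu$ is $\mathcal{P}_\mu := L^+H \cap \mu(t)(L^+H)\mu(t)^{-1}$, and the crucial point, and the only place where minusculeness is used, is the claim that $\mathcal{P}_\mu$ contains the first principal congruence subgroup $L^{+,1}H := \ker(L^+H \to H)$, reduction modulo $t$. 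To prove this I would filter $L^{+,1}H$ by the congruence subgroups $L^{+,n}H := \ker(L^+H \to H(\kappa[t]/t^n))$, whose successive quotients are the vector groups $\Lie H \otimes_\kappa t^n\kappa$ with conjugation acting through $\Ad$; conjugation by $\mu(t)^{-1}$ then scales the weight-$j$ summand $\Lie H(j) \otimes_\kappa t^n\kappa$ by $t^{-j}$, sending it into $\Lie H(j) \otimes_\kappa t^{n-j}\kappa$. Since $\mu$ is minuscule, $j \in \{-1,0,1\}$, so for $n \geq 1$ the exponent $n-j$ is nonnegative; a convergent-product (successive approximation) argument over the complete separated congruence filtration then upgrades this to $\mu(t)^{-1}(L^{+,1}H)\mu(t) \subseteq L^+H$, that is, $L^{+,1}H \subseteq \mathcal{P}_\mu$.

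Granting the claim, $\mathcal{P}_\mu$ is the preimage under the surjection $L^+H \twoheadrightarrow H$ of a closed subgroup $P \subseteq H$, and computing with constant lifts shows that $g \in H(\kappa)$ lies in $\mathcal{P}_\mu$ precisely when $\mu(t)^{-1} g\, \mu(t) \in H(\kappa[\![t]\!])$, equivalently when $\lim_{s \to 0} \mu(s)^{-1} g\, \mu(s)$ exists in $H$; thus $P$ is the parabolic subgroup attached to $\mu$ (possibly its opposite, which gives the same flag variety). Hence
$$
S(\mu) = S^0(\mu) = L^+H \cdot t^\mu \cong L^+H/\mathcal{P}_\mu \cong H/P(\mu),
$$
a partial flag variety, in particular smooth and projective. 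The step I expect to be the main obstacle is not this computation but the verification that over an arbitrary base field, in particular in characteristic $p$, the orbit $S^0(\mu)$ with its reduced structure really is the fppf quotient $L^+H/\mathcal{P}_\mu$ and that $\mathcal{P}_\mu$ is flat; both follow from the claim above, which exhibits $\mathcal{P}_\mu$ as the preimage of the smooth parabolic $P(\mu)$ under the pro-smooth surjection $L^+H \to H$, so that the quotient is represented by the smooth projective variety $H/P(\mu)$. Alternatively one may invoke the standard identification of minuscule affine Schubert varieties with partial flag varieties (see \cite{Gross} and the references given there).
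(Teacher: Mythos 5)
Your treatment of $S^0(\mu)=S(\mu)$ is identical to the paper's: both invoke the Richarz cell decomposition together with Proposition \ref{smallest}. For the remaining assertions the paper simply cites the literature (the discussion after Definition 1.3.5 of Pappas--Rapoport--Smithling and Proposition 3.5.7 of the author's thesis), whereas you supply the underlying orbit--stabilizer argument: you compute the scheme-theoretic stabilizer $\mathcal{P}_\mu = L^+H \cap \mu(t)L^+H\mu(t)^{-1}$, use the weight decomposition of $\Lie H$ under $\Ad\circ\mu$ together with minusculeness (so the weights lie in $\{-1,0,1\}$) and a successive-approximation argument over the congruence filtration to see $L^{+,1}H \subseteq \mathcal{P}_\mu$, conclude $\mathcal{P}_\mu$ is the preimage of a parabolic $P\subseteq H$ under $L^+H\twoheadrightarrow H$, and hence $S(\mu)\cong H/P$. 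This is correct and is essentially what those references contain (including the harmless opposite-parabolic sign ambiguity, which you correctly flag), so the approach agrees with the paper's; you have merely made the citation explicit.
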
 
\begin{proof} Since the closure $S(\mu) = \cup_{\mu' \leq \mu} S^0(\mu')$  (\cite[Proposition 2.8]{RicharzSV}), the first part follows from Proposition \ref{smallest} .  For the remaining facts, we refer to discussion after \cite[Definition 1.3.5]{PRS} and \cite[Proposition 3.5.7]{LevinThesis}. 
\end{proof}

For any $\mu:\Gm \ra T$, we get an induced map $\Gm(\kappa(\!(t)\!)) \ra T(\kappa(\!(t)\!)) \subset H(\kappa(\!(t)\!))$ on loop groups.   We let $\mu(t)$ denote the image of $t \in \kappa(\!(t)\!)^{\times}$.  

\begin{prop} \label{Adpole} For any $X \in \Lie H \otimes_{\kappa} \kappa[\![t]\!]$,  we have 
$$
\Ad(\mu(t))(X) \in \frac{1}{t}(\Lie H \otimes_{\kappa} \kappa[\![t]\!]).
$$
\end{prop}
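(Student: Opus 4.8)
The plan is to diagonalize $\Ad(\mu(t))$ using the weight grading attached to $\mu$. Since $\mu$ is minuscule, Definition \ref{liegrading} decomposes $\Lie H = \Lie H(-1) \oplus \Lie H(0) \oplus \Lie H(1)$ into $\Ad\circ\mu$-weight spaces. The composite $\Ad \circ \mu : \Gm \to \GL(\Lie H)$ is a homomorphism of $\kappa$-group schemes which, with respect to this fixed decomposition, is the diagonal automorphism acting by $a^{-1}$, $1$, $a$ on the three graded pieces respectively; being algebraic it is compatible with arbitrary base change, so evaluating at the point $t \in \Gm(\kappa(\!(t)\!))$ shows that $\Ad(\mu(t))$ acts on $\Lie H(i) \otimes_\kappa \kappa(\!(t)\!)$ as multiplication by $t^i$.

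To conclude, I would write a given $X \in \Lie H \otimes_\kappa \kappa[\![t]\!]$ as $X = X_{-1} + X_0 + X_1$ with $X_i \in \Lie H(i) \otimes_\kappa \kappa[\![t]\!]$ (the grading descends to $\kappa[\![t]\!]$-coefficients since $\kappa[\![t]\!]$ is flat over $\kappa$). Then $\Ad(\mu(t))(X) = t^{-1}X_{-1} + X_0 + tX_1$, and each summand lies in $\tfrac{1}{t}(\Lie H \otimes_\kappa \kappa[\![t]\!])$: the first because $X_{-1}$ has coefficients in $\kappa[\![t]\!]$, and the other two because they already lie in $\Lie H \otimes_\kappa \kappa[\![t]\!]$.

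I do not anticipate a genuine obstacle here; the entire content is that minusculity bounds the pole order at $t^{-1}$. The only points needing a word of justification are that $\Ad$ is an algebraic representation, so its restriction along $\mu$ really is the diagonal operator described above and behaves well under the base change $\kappa \to \kappa(\!(t)\!)$, and that passing to $\kappa[\![t]\!]$-coefficients preserves the weight decomposition; both are routine. If one wished to drop the assumption that $T$ is split, one could first base change to $\overline{\kappa}$, where the argument above applies verbatim, and then descend, since membership in $\tfrac{1}{t}(\Lie H \otimes_\kappa \kappa[\![t]\!])$ can be tested after a faithfully flat extension of $\kappa$.
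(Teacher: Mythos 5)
Your proof is correct and follows the same route as the paper's: decompose $\Lie H$ into $\Ad\circ\mu$-weight spaces using minusculity, observe that $\Ad(\mu(t))$ acts by $t^i$ on the $i$-th graded piece, and conclude the pole order is at most $1$. The paper's proof is a terser version of exactly this argument; your extra remarks on base change and the non-split case are harmless padding rather than a different method.
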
 
\begin{proof} As in Definition \ref{liegrading}, we can decompose $\Lie H = \Lie H(-1) \oplus \Lie H \oplus \Lie H(1)$. Then $\Ad(\mu(t))$ acts on $\Lie H(i) \otimes \kappa(\!(t)\!)$ by multiplication by $t^{i}$.  The largest denominator is then $t^{-1}$.  
\end{proof}

\subsection{$(\phz, \widehat{\Gamma})$-modules with $G$-structure}
We review Liu's theory of $(\phz, \widehat{G})$ as in \cite{Liu1, CL}.  We will call them $(\phz, \widehat{\Gamma})$-modules to avoid confusion with the algebraic group $G$. The theory of  $(\phz, \widehat{\Gamma})$-modules is an adaptation of the theory of $(\phz, \Gamma)$-modules to the non-Galois extension $K_{\infty} = K(\pi^{1/p}, \pi^{1/p^2}, \ldots)$. The $\widehat{\Gamma}$ refers to an additional structure added to a Kisin module which captures the full action of $\Gamma_K$ as opposed to just the subgroup $\Gamma_{\infty} := \Gal(\overline{K}/K_{\infty})$. The main theorem in \cite{Liu1} is an equivalence of categories between (torsion-free) $(\phz, \widehat{\Gamma})$-modules and $\Gamma_K$-stable lattices in semi-stable $\Qp$-representations.      

Let $\widetilde{\bE}^+$ denote the perfection of $\cO_{\overline{K}}/(p)$. There is a unique surjective map
$$
\Theta:W(\widetilde{\bE}^+) \ra \widehat{\cO}_{\overline{K}}
$$
which lifts the projection $\widetilde{\bE}^+ \ra \cO_{\overline{K}}/(p)$. The compatible system $(\pi^{1/p^n})_{n \geq 0}$ of the $p^n$th roots of $\pi$ defines an element $\underline{\pi}$ of $\widetilde{\bE}^+$. Let $[\underline{\pi}]$ denote the Teichm\"uller representative in $W(\widetilde{\bE}^+)$.  There is an embedding 
$$
\fS \ia W(\widetilde{\bE}^+)
$$
defined by $u \mapsto [\underline{\pi}]$ which is compatible with the Frobenii. If $\widetilde{\bE}$ is the fraction field of $\widetilde{\bE}^+$, then $W(\widetilde{\bE}^+) \subset W(\widetilde{\bE})$. The embedding $\fS \iarrow W(\widetilde{\bE}^+)$ extends to an embedding
$$
\cO_{\cE} \iarrow W(\widetilde{\bE}).
$$

As before, let $K_{\infty} = \bigcup K(\pi^{1/p^n})$.  Set $K_{p^{\infty}} := \bigcup K(\zeta_{p^n})$ where $\zeta_{p^n}$ is a primitive $p^n$th root of unity. Denote the compositum of $K_{\infty}$ and $K_{p^{\infty}}$ by $K_{\infty, p^{\infty}}$; $K_{\infty, p^{\infty}}$ is Galois over $K$.

\begin{defn} Define $$\widehat{\Gamma} := \Gal(K_{\infty, p^{\infty}}/K) \text{ and } \widehat{\Gamma}_{\infty} := \Gal(K_{\infty, p^{\infty}}/K_{\infty}).$$
\end{defn}

There is a subring $\widehat{R} \subset W(\widetilde{\bE}^+)$ which plays a central role in the theory of $(\phz, \widehat{\Gamma})$-modules. The definition can be found on page 5 of \cite{Liu1}. The relevant properties of $\widehat{R}$ are:
\begin{enumerate} [$(1)$]
\item $\widehat{R}$ is stable by the Frobenius on $W(\widetilde{\bE}^+)$;
\item $\widehat{R}$ contains $\fS$;
\item $\widehat{R}$ is stable under the action of the Galois group $\Gamma_K$ and $\Gamma_K$ acts through the quotient $\widehat{\Gamma}$. 
\end{enumerate}

For any $\Zp$-algebra $A$, set $\widehat{R}_A := \widehat{R} \otimes_{\Zp} A$ with a Frobenius induced by the Frobenius on $\widehat{R}$.  Similarly, define $W(\widetilde{\bE}^+)_A := W(\widetilde{\bE}^+) \otimes_{\Zp}  A$ and $W(\widetilde{\bE})_A := W(\widetilde{\bE}) \otimes_{\Zp}  A$.  For any $\fS_A$-module $\fM_A$, define
$$
\widehat{\fM}_A := \widehat{R}_A \otimes_{\phz, \fS_A} \fM_A = \widehat{R}_A \otimes_{\fS_A} \phz^*(\fM_A)
$$
and
$$
\widetilde{\fM}_A := W(\widetilde{\bE}^+)_A \otimes_{\phz, \fS_A} \fM_A = W(\widetilde{\bE}^+)_A \otimes_{\widehat{R}_A} \widehat{\fM}_A.
$$
Recall that $\phz^*(\fM_A) := \fS_A \otimes_{\phz, \fS_A} \fM_A$ and that the linearized Frobenius is a map $\phi_{\fM_A}:\phz^*(\fM_A) \ra \fM_A$ (when $\fM_A$ has height in $[0, \infty)$).

If $\fM_A$ is a projective $\fS_A$-module then, by Lemma 3.1.1 in \cite{CL}, $\phz^*(\fM_A) \subset \widehat{\fM}_A \subset \widetilde{\fM}_A$.   Although the map $m \mapsto 1 \otimes m$ from $\fM_A$ to $\widehat{\fM}_A$ is not $\fS_A$-linear, it is injective when $\fM_A$ is $\fS_A$-projective.  The image is a $\phz(\fS_A)$-submodule of $\widehat{\fM}_A$.  We will think of $\fM_A$ inside of $\widehat{\fM}_A$ in this way.   Finally, for any \'etale $(\cO_{\cE, A}, \phz)$-module $\cM_A$, we define 
$$
\widetilde{\cM}_A := W(\widetilde{\bE})_A \otimes_{\phz, \cO_{\cE, A}} \cM_A = W(\widetilde{\bE})_A \otimes_{\cO_{\cE, A}} \phz^*(\cM_A) 
$$
with semi-linear Frobenius extending the Frobenius on $\cM_A$.  To summarize,  for any Kisin module $(\fM_A, \phi_A)$ , we have the following diagram
$$
\xymatrix{  
(\fM_A, \phi_A) \ar@{~>}[d] \ar@{~>}[r] & \widehat{\fM}_A \ar@{~>}[r] & \widetilde{\fM}_A \ar@{~>}[d] \\
(\cM_A, \phi_A) \ar@{~>}[rr] & & (\widetilde{\cM}_A, \widetilde{\phi}_A). \\ 
}
$$ 

Now, let $\gamma \in \widehat{\Gamma}$ and let $\widehat{\fM}_A$ be an $\widehat{R}_A$-module.  A map $g:\widehat{\fM}_A \ra \widehat{\fM}_A$ is a \emph{$\gamma$-semilinear} if
$$
g(a m) = \gamma(a) g(m)
$$
for any $a \in \widehat{R}_A, m \in \widehat{\fM}_A$. A (semilinear) $\widehat{\Gamma}$-action on $\widehat{\fM}_A$ is a $\gamma$-semilinear map $g_{\gamma}$ for each $\gamma \in \widehat{\Gamma}$ such that 
$$
g_{\gamma'} \circ g_{\gamma} = g_{\gamma' \gamma}
$$
as $(\gamma' \gamma)$-semilinear morphisms. A (semilinear) $\widehat{\Gamma}$-action on $\widehat{\fM}_A$ extends in the natural way to a (semilinear) $\Gamma_K$-action on $\widetilde{\fM}_A$ and on $\widetilde{\cM}_A$.

For any local Artinian $\Zp$-algebra $A$, choose a $\Zp$-module isomorphism $A \cong \bigoplus \Z/p^{n_i} \Z$ so that as a $W(\widetilde{\bE})$-module, $W(\widetilde{\bE})_A \cong \bigoplus W_{n_i} (\widetilde{\bE})$.  We equip $W(\widetilde{\bE})_A$ with the product topology where $W_{n_i}(\widetilde{\bE})$ has a topology induced by the isomorphism  $W_{n_i}(\widetilde{\bE}) \cong \widetilde{\bE}^{n_i}$ given by Witt components (see \S 4.3 of \cite{PHT} for more details on the topology of $\widetilde{\bE}$).  We can similarly define a topology on $W(\widetilde{\bE}^+)_A$ using the topology on $\widetilde{\bE}^+$, and it is clear that  this is the same as the subspace topology from the inclusion $W(\widetilde{\bE}^+)_A \subset W(\widetilde{\bE})_A$. Finally, we give $\widehat{R}_A$ the subspace topology from the inclusion $\widehat{R}_A \subset W(\widetilde{\bE}^+)_A$. The same procedure works for $A$ finite flat over $\Zp$. 


A $\widehat{\Gamma}$-action on $\widehat{\fM}_A$ is \emph{continuous} if for any basis (equivalently for all bases) of $\widehat{\fM}_A$ the induced map $\widehat{\Gamma} \ra \GL_r(\widehat{R}_A)$ is continuous where $r$ is the rank of $\widehat{\fM}_A$ (such a basis exists by \cite[Lemma 1.2.2(4)]{MFFGS}). 

\begin{defn} \label{hatmodule} Let $A$ be a finite $\Zp$-algebra. A \emph{$(\phz, \widehat{\Gamma})$-module with height in $[a,b]$} over $A$ is a triple $(\fM_A, \phi_{\fM_A}, \widehat{\Gamma})$, where
\begin{enumerate} [$(1)$]
\item $(\fM_A, \phi_{\fM_A}) \in \Mod^{\phz, [a,b]}_{\fS_A}$;
\item $\widehat{\Gamma}$ is a continuous (semilinear) $\widehat{\Gamma}$-action on $\widehat{\fM}_A$;
\item The $\Gamma_K$-action on $\widetilde{\fM}_A$ commutes with $\widetilde{\phi}_{\fM_A}$ (as endomorphisms of $\widetilde{\cM}_A$);
\item Regarding $\fM_A$ as a $\phz(\fS_A)$-submodule of $\widehat{\fM}_A$, we have $\fM_A \subset \widehat{\fM}^{\widehat{\Gamma}_{\infty}}_A$;
\item $\widehat{\Gamma}$ acts trivially on $\widehat{\fM}_A/I_+(\widehat{\fM}_A)$ (see \S 3.1 of \cite{CL} for the definition of $I_+(\widehat{\fM}_A)$).  
\end{enumerate}
We often refer to the additional data of a $(\phz, \widehat{\Gamma})$-module on a Kisin module as a \emph{ $\widehat{\Gamma}$-structure}. 
\end{defn}

\begin{rmk} Although we allow arbitrary height $[a,b]$ (in particular, negative height), the ring $\widehat{R}$ is still sufficient for defining the $\widehat{\Gamma}$-action.  This follows from the fact that the $\widehat{\Gamma}$-action on $\fS(1)$ is given by $\widehat{c}$ (see \cite[Example 3.2.3]{Liu1}) which is a unit in $\widehat{R}$.  See also \cite[Example 9.1.9]{LevinThesis}.  
\end{rmk} 
 
\begin{prop} \label{commconcrete} Choose $(\fM_A, \phi_{\fM_A}) \in \Mod^{\phz, [a,b]}_{\fS_A}$ of rank $r$. Fix a basis $\{ f_i \}$ for $\fM_A$.  Let $C'$ be the matrix for $\phi_{\fM_A}$ with respect to $\{1 \otimes_{\phz} f_i\}$. Then a $\widehat{\Gamma}$-structure on $\fM_A$ is the same as a continuous map 
$$
B_{\bullet}:\widehat{\Gamma} \ra \GL_r(\widehat{R}_A)
$$
such that 
\begin{enumerate} [$(a)$]
\item  $C' \cdot \phz(B_{\gamma}) = B_{\gamma} \cdot \gamma(C') $ in $\Mat(W(\widetilde{\bE})_A)$ for all $\gamma \in \widehat{\Gamma}$;
\item $B_{\gamma} = \Id$ for all $\gamma \in \widehat{\Gamma}_{\infty}$;
\item $B_{\gamma} \equiv \Id \mod I_+(\widehat R)_A$ for all $\gamma \in \widehat{\Gamma}$;
\item $B_{\gamma \gamma'} = B_{\gamma} \cdot \gamma(B_{\gamma'})$ for all $\gamma, \gamma' \in \widehat{\Gamma}$.
\end{enumerate}
\end{prop}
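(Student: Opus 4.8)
The plan is to unwind Definition \ref{hatmodule} condition by condition using the chosen basis $\{f_i\}$ of $\fM_A$ and the induced basis $\{1 \otimes_{\phz} f_i\}$ of $\phz^*(\fM_A)$, which (since $\phz^*(\fM_A) \hookrightarrow \widehat{\fM}_A$ is injective for $\fM_A$ projective, by \cite[Lemma 3.1.1]{CL}) also serves as a basis of $\widehat{\fM}_A$ over $\widehat{R}_A$ and of $\widetilde{\fM}_A$ over $W(\widetilde{\bE})_A$. First I would observe that a semilinear $\widehat{\Gamma}$-action $\{g_\gamma\}$ on $\widehat{\fM}_A$ is, in these coordinates, exactly the data of matrices $B_\gamma \in \GL_r(\widehat{R}_A)$ defined by $g_\gamma(1 \otimes_{\phz} f_j) = \sum_i (B_\gamma)_{ij}\,(1\otimes_{\phz} f_i)$; the cocycle condition $g_{\gamma'} \circ g_\gamma = g_{\gamma'\gamma}$ translates into (d), $B_{\gamma\gamma'} = B_\gamma \cdot \gamma(B_{\gamma'})$, because $g_{\gamma'}$ is $\gamma'$-semilinear and must be applied to the entries of $B_\gamma$. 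Continuity of the action is by definition continuity of $B_\bullet:\widehat{\Gamma} \to \GL_r(\widehat{R}_A)$, so that matches up with no further work.

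Next I would go through conditions (2)--(5) of Definition \ref{hatmodule}. Condition (2) is the existence and continuity of $B_\bullet$, already handled. Condition (3), that the $\Gamma_K$-action on $\widetilde{\fM}_A$ commutes with $\widetilde{\phi}_{\fM_A}$: here one writes the matrix of $\widetilde{\phi}_{\fM_A}$ with respect to $\{1\otimes_{\phz} f_i\}$. Since $\phi_{\fM_A}$ has matrix $C'$ sending $1\otimes_{\phz}f_i$ to $\fM_A$, and $\widetilde{\phi}_{\fM_A}$ is its $W(\widetilde{\bE})_A$-linearization followed by the canonical identification, a direct computation of $g_\gamma \circ \widetilde{\phi}_{\fM_A}$ versus $\widetilde{\phi}_{\fM_A}\circ g_\gamma$ (using that $\widetilde{\phi}$ is $\phz$-semilinear, so it applies $\phz$ to the entries of $B_\gamma$) yields precisely relation (a): $C'\cdot\phz(B_\gamma) = B_\gamma\cdot\gamma(C')$, an identity in $\Mat(W(\widetilde{\bE})_A)$. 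Condition (4), that $\fM_A \subset \widehat{\fM}_A^{\widehat{\Gamma}_\infty}$ as a $\phz(\fS_A)$-submodule: since $\fM_A$ is spanned over $\phz(\fS_A)$ by the $1\otimes_{\phz}f_i$ and $\widehat{\Gamma}_\infty$ acts trivially on $\phz(\fS_A) \subset \widehat{R}_A$ (it fixes $K_\infty$, hence $\fS$ and its Frobenius twist), this is equivalent to $g_\gamma$ fixing each $1\otimes_{\phz}f_i$ for $\gamma \in \widehat{\Gamma}_\infty$, i.e.\ (b), $B_\gamma = \Id$. Condition (5), triviality of $\widehat{\Gamma}$ on $\widehat{\fM}_A/I_+(\widehat{\fM}_A)$: since $I_+(\widehat{\fM}_A) = I_+(\widehat R)_A\cdot\widehat{\fM}_A$ and the $1\otimes_{\phz}f_i$ give a basis of the quotient, this says $g_\gamma$ acts as the identity on the quotient, i.e.\ $B_\gamma \equiv \Id \bmod I_+(\widehat R)_A$, which is (c). Conversely, given $B_\bullet$ satisfying (a)--(d), one \emph{defines} $g_\gamma$ on $\widehat{\fM}_A$ by the formula above; (d) makes it a $\widehat{\Gamma}$-action, (a) gives the commutation of Definition \ref{hatmodule}(3) after extending to $\widetilde{\fM}_A$, (b) and (c) give (4) and (5).

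The one point requiring genuine care—and the main obstacle—is the interplay between the two ambient rings: the $\widehat{\Gamma}$-action lives on $\widehat{\fM}_A$ over $\widehat{R}_A$, but the commutation relation (a) and Definition \ref{hatmodule}(3) are naturally stated over $W(\widetilde{\bE})_A$ in $\widetilde{\fM}_A$. I would be careful to check that the matrix relation (a) makes sense and is the correct translation: $\gamma(C')$ involves applying $\gamma$ to entries of $C'$, which lie in $\fS_A \subset \widehat R_A \subset W(\widetilde{\bE}^+)_A$, and $\phz(B_\gamma)$ applies Frobenius to entries of $B_\gamma \in \GL_r(\widehat R_A)$; both are legitimate operations landing in $\Mat(W(\widetilde{\bE})_A)$, and the relation is an identity there (not necessarily in $\Mat(\widehat R_A)$). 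I would also verify that conditions (b)--(d) are imposed over the correct ring ($\widehat R_A$, since the action is a priori only defined there), and that (a) over $W(\widetilde{\bE})_A$ together with (b) is enough to recover the full content of Definition \ref{hatmodule}(3) on $\widetilde{\fM}_A$ by $W(\widetilde{\bE})_A$-semilinear extension. None of this is deep, but it is where a careless translation would go wrong, so that is the step I would write out most carefully; the rest is bookkeeping with bases. A reference to \cite[\S 3]{CL} and \cite[Example 9.1.9]{LevinThesis} for the analogous $\GL_n$ computation would round out the argument.
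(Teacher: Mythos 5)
The paper does not give a proof of Proposition \ref{commconcrete}; it is stated as an immediate matrix translation of Definition \ref{hatmodule} (with a pointer elsewhere to \cite[\S 3.1]{CL} for $I_+$). Your proposal supplies exactly the verification the author left implicit, and it is correct: the basis $\{1\otimes_{\phz} f_i\}$ of $\phz^*(\fM_A)$ does extend to an $\widehat R_A$-basis of $\widehat\fM_A$ and a $W(\widetilde\bE)_A$-basis of $\widetilde\fM_A$, the cocycle condition $g_{\gamma'}\circ g_\gamma = g_{\gamma'\gamma}$ in semilinear form gives $(d)$, the semilinear Frobenius $\widetilde\phi$ has matrix $C'$ in this basis so commutation with $g_\gamma$ unwinds to $(a)$ over $W(\widetilde\bE)_A$, triviality of $\widehat\Gamma_\infty$ on $\phz(\fS_A)$ identifies Definition \ref{hatmodule}(4) with $(b)$, and reduction modulo $I_+(\widehat R)_A$ identifies (5) with $(c)$. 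Your flagged point of care — that $(a)$ is an identity in $\Mat(W(\widetilde\bE)_A)$ rather than $\Mat(\widehat R_A)$, because $C'$ involves $E(u)^{-1}$ and the commutation is posed on $\widetilde\cM_A$ — is the one place a careless reading would misplace the relation, and you handle it correctly.
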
 

Let $\Mod^{\phz, [a,b], \widehat{\Gamma}}_{\fS_A}$ denote the category of $(\phz, \widehat{\Gamma})$-modules with height in $[a,b]$ over $A$. A morphism between $(\phz, \widehat{\Gamma})$-modules is a morphism in $\Mod^{\phz, [a,b]}_{\fS_A}$ that is $\widehat{\Gamma}$-equivariant when extended to $\widehat{R}_A$.

Let $\Mod^{\phz, \bh, \widehat{\Gamma}}_{\fS_A} := \bigcup_{h > 0} \Mod^{\phz, [-h,h], \widehat{\Gamma}}_{\fS_A}$ so $\Mod^{\phz, \bh, \widehat{\Gamma}}_{\fS_A}$ has a natural tensor product operation which at the level of $\Mod^{\phz, \bh}_{\fS_A}$ is tensor product of bounded height Kisin modules.  The $\widehat{\Gamma}$-structure on the tensor product is defined via
\begin{equation*} \label{tensorhatequat}
\widehat{R}_A \otimes_{\phz, \fS_A} (\fM_A \otimes_{\fS_A} \fN_A) \cong (\widehat{R}_A \otimes_{\phz, \fS_A} \fM_A) \otimes_{\widehat{R}_A} (\widehat{R}_A \otimes_{\phz, \fS_A} \fN_A) = \widehat{\fM}_A \otimes_{\widehat{R}_A} \widehat{\fN}_A.
\end{equation*} 
One also defines a $\widehat{\Gamma}$-structure on the dual $\fM^{\vee}_A := \Hom_{\fS_A}(\fM_A, \fS_A)$ in the natural way (see discussion after Proposition 9.1.5 \cite{LevinThesis}). Note that, unlike in other references (for example \cite{Ozeki}), we do not include any Tate twist in our definition of duals.  

We will now relate these $(\phz, \widehat{\Gamma})$-modules to $\Gamma_K$-representations. For this, we require that $A$ be $\Zp$-finite and either $\Zp$-flat or Artinian.   Define a functor $\widehat{T}_A$ from $\Mod^{\phz, \bh, \widehat{\Gamma}}_{\fS_A}$ to Galois representations by
$$
\widehat{T}_A(\widehat{\fM}_A) := (W(\widetilde{\bE}) \otimes_{\widehat{R}} \widehat{\fM}_A)^{\widetilde{\phi}_A = 1} = (\widetilde{\cM}_A)^{\widetilde{\phi}_A = 1}. 
$$
The semilinear $\Gamma_K$-action on $\widetilde{\cM}_A$ commutes with $\widetilde{\phi}_A$ so $\widehat{T}_A(\widehat{\fM}_A)$ is a $\Gamma_K$-stable $A$-submodule of $W(\widetilde{\bE}) \otimes_{\widehat{R}} \widehat{\fM}_A$. 

We now recall the basic facts we will need about $\widehat{T}_A$:
\begin{prop} \label{hatT} Let $A$ be $\Zp$-finite and either $\Zp$-flat or Artinian.  
\begin{enumerate}
\item If $\widehat{\fM}_A \in \Mod^{\phz, \bh, \widehat{\Gamma}}_{\fS_A}$, then there is a natural $A[\Gamma_{\infty}]$-module isomorphism 
\begin{equation*}
\theta_A:T_{\fS_A}(\fM_A) \ra \widehat{T}_A(\widehat{\fM}_A).
\end{equation*}
Furthermore, $\theta_A$ is functorial with respect to morphisms in $\Mod^{\phz, \bh, \widehat{\Gamma}}_{\fS_A}$. 
\item  $\widehat{T}_A$ is an exact tensor functor from $\Mod^{\phz, \bh, \widehat{\Gamma}}_{\fS_A}$ to $\Rep_A(\Gamma_K)$ which is compatible with duals. 
\end{enumerate}
\end{prop}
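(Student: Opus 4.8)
\emph{Sketch of proof.} This proposition is the coefficient version of results of Liu \cite{Liu1} and Caruso--Liu \cite{CL}, so the plan is to bootstrap from the \'etale $(\cO_{\cE,A},\phz)$-module theory recalled earlier and from loc.\ cit. I would establish $(1)$ first and deduce $(2)$ almost formally.

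For $(1)$, the plan is to realize both $T_{\fS_A}(\fM_A)$ and $\widehat{T}_A(\widehat{\fM}_A)$ as the \emph{same} module of $\phi$-invariants over $W(\widetilde{\bE})_A$. By definition $T_{\fS_A}(\fM_A)=T_A(\cM_A)$, where $\cM_A:=\fM_A\otimes_{\fS_A}\cO_{\cE,A}$, and $T_A(\cM_A)=(\cO_{\widehat{\cE}^{\un}}\otimes_{\cO_{\cE}}\cM_A)^{\phi=1}$. First I would combine: the compatible Frobenius- and $\Gamma_{\infty}$-equivariant embeddings $\fS\ia\cO_{\cE}\ia\cO_{\widehat{\cE}^{\un}}\ia W(\widetilde{\bE})$; the \'etale Frobenius isomorphism $\cO_{\cE,A}\otimes_{\phz,\fS_A}\fM_A\cong\cM_A$; and the standard fact (Fontaine; cf.\ the proof of \cite[Lemma 1.2.7]{MFFGS}) that forming $\phi=1$-invariants is unchanged when one replaces $\cO_{\widehat{\cE}^{\un}}$ by $W(\widetilde{\bE})$ after base change to $A$. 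Together these give a natural $A[\Gamma_{\infty}]$-isomorphism
$$
T_A(\cM_A)\;\xrightarrow{\ \sim\ }\;\bigl(W(\widetilde{\bE})_A\otimes_{\phz,\cO_{\cE,A}}\cM_A\bigr)^{\widetilde{\phi}_A=1}.
$$
On the other side, unwinding $\widehat{\fM}_A=\widehat{R}_A\otimes_{\phz,\fS_A}\fM_A$ and the embeddings above yields the ring-level identity $W(\widetilde{\bE})_A\otimes_{\widehat{R}_A}\widehat{\fM}_A=W(\widetilde{\bE})_A\otimes_{\phz,\fS_A}\fM_A=W(\widetilde{\bE})_A\otimes_{\phz,\cO_{\cE,A}}\cM_A=\widetilde{\cM}_A$, so the right-hand side above is exactly $\widehat{T}_A(\widehat{\fM}_A)$; composing produces $\theta_A$. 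Functoriality in $\Mod^{\phz,\bh,\widehat{\Gamma}}_{\fS_A}$ is automatic since every step is functorial in $\fM_A$ and a morphism of $(\phz,\widehat{\Gamma})$-modules is in particular a morphism of the underlying Kisin modules. For $A$ only $\Zp$-flat I would reduce to the Artinian case by $p$-adic approximation, following \cite{CL}.

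For $(2)$, the plan is to deduce everything from $(1)$, the properties of $T_A$, and the conditions of Definition \ref{hatmodule}. The semilinear $\widehat{\Gamma}$-action on $\widehat{\fM}_A$ extends canonically to a semilinear $\Gamma_K$-action on $\widetilde{\cM}_A$; condition $(3)$ says it commutes with $\widetilde{\phi}_A$, so $\Gamma_K$ preserves $(\widetilde{\cM}_A)^{\widetilde{\phi}_A=1}=\widehat{T}_A(\widehat{\fM}_A)$, and continuity of the $\widehat{\Gamma}$-action (Proposition \ref{commconcrete}) forces continuity of the resulting $\Gamma_K$-action, checked on a basis of $\widehat{\fM}_A$. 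Conditions $(4)$--$(5)$ are what make $\theta_A$ equivariant for the restriction of this $\Gamma_K$-action to $\Gamma_{\infty}$ and the canonical $\Gamma_{\infty}$-action on $T_{\fS_A}(\fM_A)$; hence, as an $A$-module, $\widehat{T}_A(\widehat{\fM}_A)$ is $T_A(\cM_A)$, which is finite projective over $A$ of rank the $\fS_A$-rank of $\fM_A$ by \cite[Lemma 1.2.7]{MFFGS}. So $\widehat{T}_A(\widehat{\fM}_A)\in\Rep_A(\Gamma_K)$, and $\widehat{T}_A$ is exact because $T_A$ is. For the tensor structure, the tautological identifications $\widehat{R}_A\otimes_{\phz,\fS_A}(\fM_A\otimes_{\fS_A}\fN_A)\cong\widehat{\fM}_A\otimes_{\widehat{R}_A}\widehat{\fN}_A$ and $W(\widetilde{\bE})_A\otimes_{\phz,\fS_A}(\fM_A\otimes_{\fS_A}\fN_A)\cong\widetilde{\cM}_A\otimes_{W(\widetilde{\bE})_A}\widetilde{\cN}_A$ are $\Gamma_K$- and $\widetilde{\phi}$-equivariant for the tensor-product $\widehat{\Gamma}$-structure, so they induce a $\Gamma_K$-equivariant map $\widehat{T}_A(\widehat{\fM}_A)\otimes_A\widehat{T}_A(\widehat{\fN}_A)\to\widehat{T}_A(\widehat{\fM}_A\otimes\widehat{\fN}_A)$, an isomorphism because after forgetting $\Gamma_K$ it becomes the tensor-compatibility isomorphism for $T_A$. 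Running the same argument with $\fM_A^{\vee}=\Hom_{\fS_A}(\fM_A,\fS_A)$ (no Tate twist, as noted before the proposition) gives a $\Gamma_K$-equivariant isomorphism $\widehat{T}_A(\widehat{\fM}_A^{\vee})\cong\widehat{T}_A(\widehat{\fM}_A)^{\vee}$.

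The step I expect to be the main obstacle is the identification of invariants in $(1)$: one must carefully track the Frobenius twist $\phz^*$ built into $\widehat{\fM}_A$ and $\widetilde{\cM}_A$ and verify that enlarging $\cO_{\widehat{\cE}^{\un}}$ to $W(\widetilde{\bE})$ affects neither the $\phi=1$-invariants nor the $\Gamma_{\infty}$-action. This is precisely where the fine structure of $W(\widetilde{\bE})$ over $\cO_{\cE}$ and the containments $\phz^*(\fM_A)\subset\widehat{\fM}_A\subset\widetilde{\fM}_A$ of \cite[Lemma 3.1.1]{CL} enter. Once $\theta_A$ is available, part $(2)$ is essentially formal.
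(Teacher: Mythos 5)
Your proposal is correct and follows the same route the paper (via \cite[Prop.\ 9.1.6, 9.1.7]{LevinThesis}, which in turn bootstraps from \cite{Liu1,CL}) does: realize both sides as $\phz$-invariants over $W(\widetilde{\bE})_A$ using the chain $\cO_{\cE}\ia\cO_{\widehat{\cE}^{\un}}\ia W(\widetilde{\bE})$ together with the \'etale Frobenius isomorphism $\phz^*(\cM_A)\cong\cM_A$ to untwist, then read off the $\Gamma_K$-action on $(\widetilde{\cM}_A)^{\widetilde{\phi}_A=1}$ from condition (3) of Definition~\ref{hatmodule} and check $\Gamma_\infty$-compatibility from (4). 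The one thing you should make explicit when fleshing this out is that passing from $\cO_{\widehat{\cE}^{\un}}$ to $W(\widetilde{\bE})$ does not change $\phz$-invariants (e.g.\ because both have $\phz$-invariants $\Zp$ and one argues as in \cite[Lemma 1.2.7]{MFFGS}), and that the identification $\widetilde{\cM}_A = W(\widetilde{\bE})_A\otimes_{\phz,\cO_{\cE,A}}\cM_A \cong W(\widetilde{\bE})_A\otimes_{\cO_{\cE,A}}\cM_A$ induced by $\phi_{\cM_A}$ is Frobenius-equivariant — you flag both of these as the delicate points, which is exactly right.
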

\begin{proof} See Propositions 9.1.6 and 9.1.7 \cite{LevinThesis}. 
\end{proof} 

We are now ready to add $G$-structure to $(\phz, \widehat{\Gamma})$-modules.  Let $G$ be a connected reductive group over a $\Zp$-finite and flat local domain $\La$ as in previous sections. 

\begin{defn} Define $\GMod^{\phz, \widehat{\Gamma}}_{\fS_A}$ to be the category of faithful exact tensor functors $[\sideset{^f}{_{\La}}\Rep (G), \Mod^{\phz, \bh, \widehat{\Gamma}}_{\fS_A}]^{\otimes}$. We will refer to these as \emph{$(\phz, \widehat{\Gamma})$-modules with $G$-structure}.
\end{defn}



Recall the category $\GRep_A(\Gamma_K)$ from Definition \ref{GREP}.  By Proposition \ref{hatT}(2), $\widehat{T}_A$ induces a functor
$$
\widehat{T}_{G, A}:\GMod^{\phz, \widehat{\Gamma}}_{\fS_A} \ra \GRep_A(\Gamma_K).
$$
Furthermore, if $\omega_{\Gamma_{\infty}}:\GRep_A(\Gamma_K) \ra \GRep_A(\Gamma_{\infty})$ is the forgetful functor then there is an natural isomorphism
$$
T_{G, \fS_A} \cong \omega_{\Gamma_{\infty}} \circ \widehat{T}_{G, A}.
$$
The functor $\widehat{T}_{G, A}$ behaves well with respect to base change along finite maps $A \ra A'$ by the same argument as in Proposition \ref{normGequiv}.  

We end this section by adding $G$-structure to the main result of \cite{Liu1}.  For $A$ finite flat over $\La$, an element $(P_A, \rho_A)$ of $\GRep_A(\Gamma_{K})$ is \emph{semi-stable} (resp. \emph{crystalline}) if $\rho_A[1/p]:\Gamma_K \ra \Aut_G(P_A) (A[1/p])$ is semi-stable (resp. crystalline). For $A$ a local domain, and $\rho_A$ semi-stable, we say $\rho_A$ has \emph{$p$-adic Hodge type $\mu$} if $\rho_A[1/p]$ does for any trivialization of $P_A$ (see Definition \ref{defnHtype}). 
  
\begin{thm} \label{GLiu} Let $F'$ be a finite extension of $F$ with ring of integers $\La'$. The functor $\widehat{T}_{G, \La'}$ induces an equivalence of categories between $\GMod^{\phz, \widehat{\Gamma}}_{\fS_{\La'}}$ and the full subcategory of semi-stable representations of $\GRep_{\La'}(\Gamma_K)$.
\end{thm}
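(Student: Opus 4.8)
\emph{Plan.} The strategy is to reduce to the rank-$d$ case, i.e. to Liu's theorem \cite{Liu1}, via the Tannakian formalism, exactly as Proposition~\ref{normGequiv} reduces $T_{G,B}$ to $T_B$: one identifies both sides with categories of faithful exact tensor functors out of $\sideset{^f}{_{\La}}\Rep (G)$ and then post-composes with the equivalence furnished by \cite{Liu1}, taken with $\La'$-coefficients.

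First I would record the coefficient version of Liu's theorem: $\widehat{T}_{\La'}$ restricts to an equivalence between $\Mod^{\phz, \bh, \widehat{\Gamma}}_{\fS_{\La'}}$ and the full subcategory $\Rep^{\st}_{\La'}(\Gamma_K) \subset \Rep_{\La'}(\Gamma_K)$ of those $T$ with $T[1/p]$ semi-stable. Over $\Zp$ this is the main theorem of \cite{Liu1} once its normalization is reconciled with the covariant, Tate-twist-free conventions of \S4.2 (the relevant compatibilities being recorded in Proposition~\ref{hatT}); for general $\La'$ it follows by forgetting coefficients, since a projective $\fS_{\La'}$-module is finite free over $\fS$, an object of $\Mod^{\phz, \bh, \widehat{\Gamma}}_{\fS_{\La'}}$ is one of $\Mod^{\phz, \bh, \widehat{\Gamma}}_{\fS_{\Zp}}$ with a commuting $\La'$-action, a semi-stable $\La'$-lattice is a semi-stable $\Zp$-lattice with a commuting $\La'$-action, and $\widehat{T}_{\Zp}$ is an equivalence respecting such actions (this also uses full faithfulness of $\Upsilon_G$, Proposition~\ref{uniqueness}). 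By Proposition~\ref{hatT}, $\widehat{T}_{\La'}$ is moreover exact, $\otimes$-compatible and dual-compatible, and $\Rep^{\st}_{\La'}(\Gamma_K)$ is stable under saturated subobjects, quotients, tensor products and duals: for a saturated sequence $0 \to T' \to T \to T'' \to 0$, the representations $T'[1/p]$ and $T''[1/p]$ are subquotients in $\Rep_{\Qp}(\Gamma_K)$ of the semi-stable $T[1/p]$, hence semi-stable.

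Next I would reinterpret the two sides. The left-hand side is by definition $[\sideset{^f}{_{\La}}\Rep (G), \Mod^{\phz, \bh, \widehat{\Gamma}}_{\fS_{\La'}}]^{\otimes}$ (and any such functor automatically lands in $\Mod^{\phz, [-h,h], \widehat{\Gamma}}_{\fS_{\La'}}$ for $h \gg 0$, since $V \oplus V^{\vee}$ tensor-generates $\sideset{^f}{_{\La}}\Rep (G)$ by \cite[C.1.7]{LevinThesis} and height is controlled under tensor, dual and subquotient). For the right-hand side, Theorem~\ref{catbundle} over the Dedekind domain $\La'$, together with the bookkeeping of a continuous $\Gamma_K$-action, identifies $\GRep_{\La'}(\Gamma_K)$ with the category of faithful exact tensor functors $\sideset{^f}{_{\La}}\Rep (G) \to \Rep_{\La'}(\Gamma_K)$, sending $(P, \eta)$ to $W \mapsto (P(W), \eta)$. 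Under this identification $(P,\eta)$ is semi-stable if and only if $P(V)[1/p]$ is semi-stable, which (again using \cite[C.1.7]{LevinThesis} and closure of semi-stability under subquotients, tensor and dual) holds exactly when the associated fiber functor takes values in $\Rep^{\st}_{\La'}(\Gamma_K)$; so the full subcategory of semi-stable objects of $\GRep_{\La'}(\Gamma_K)$ is $[\sideset{^f}{_{\La}}\Rep (G), \Rep^{\st}_{\La'}(\Gamma_K)]^{\otimes}$.

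Finally, under these identifications $\widehat{T}_{G,\La'}$ is post-composition with $\widehat{T}_{\La'}$. Since $\widehat{T}_{\La'}$ is an exact tensor equivalence $\Mod^{\phz, \bh, \widehat{\Gamma}}_{\fS_{\La'}} \xrightarrow{\ \sim\ } \Rep^{\st}_{\La'}(\Gamma_K)$ by the first step, post-composition is an equivalence
$$[\sideset{^f}{_{\La}}\Rep (G), \Mod^{\phz, \bh, \widehat{\Gamma}}_{\fS_{\La'}}]^{\otimes} \ \xrightarrow{\ \sim\ }\ [\sideset{^f}{_{\La}}\Rep (G), \Rep^{\st}_{\La'}(\Gamma_K)]^{\otimes},$$
which is precisely the assertion; a quasi-inverse is $(P,\eta) \mapsto \bigl(W \mapsto \underline{\fM}_{\La'}(P(W))\bigr)$ for a quasi-inverse $\underline{\fM}_{\La'}$ of $\widehat{T}_{\La'}$, its image landing in $\Mod^{\phz, \bh, \widehat{\Gamma}}_{\fS_{\La'}}$ because $\underline{\fM}_{\La'}(P(V))$ does. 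The only nontrivial input is the first step; everything afterward is the same formal $2$-functoriality already exploited for $T_{G,B}$. Accordingly, the main obstacle I anticipate is purely bookkeeping at the level of $\GL_d$: reconciling the normalization of \cite{Liu1} (covariance, Tate twists) with $\widehat{T}_{\La'}$, extending Liu's equivalence to $\La'$-coefficients, and confirming that the essential image is \emph{exactly} the semi-stable lattices (with all their morphisms) rather than a proper subclass. Once that is in hand, the passage to $G$-structure is automatic.
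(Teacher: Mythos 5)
Your overall strategy — Tannakian reduction of both sides to fiber functors into the rank-one (i.e.\ $\GL_d$) categories, then post-composition with Liu's equivalence over $\Zp$ extended to $\La'$-coefficients — is exactly the route the paper takes. The one place where the paper does more work than you do, and where your sketch has a genuine gap, is the coefficient-extension step. You assert that ``an object of $\Mod^{\phz, \bh, \widehat{\Gamma}}_{\fS_{\La'}}$ is one of $\Mod^{\phz, \bh, \widehat{\Gamma}}_{\fS_{\Zp}}$ with a commuting $\La'$-action, a semi-stable $\La'$-lattice is a semi-stable $\Zp$-lattice with a commuting $\La'$-action, and $\widehat{T}_{\Zp}$ is an equivalence respecting such actions (this also uses full faithfulness of $\Upsilon_G$, Proposition~\ref{uniqueness}).'' The first of these is only an embedding, not an equivalence: a $\Zp$-finite-free Kisin module with a commuting $\La'$-action need not be $\fS_{\La'}$-\emph{projective}. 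Since $\fS_{\La'}$ is a product of two-dimensional regular local rings, finitely generated torsion-free modules need not be projective (think of $(u,p)\subset k[[u,p]]$), so projectivity is a genuine constraint. Consequently, when you apply the $\Zp$-quasi-inverse $\underline{\fM}_{\Zp}$ to a semi-stable $\La'$-lattice $L$, you get a $(\phz,\widehat{\Gamma})$-module over $\Zp$ that carries a $\La'$-action by functoriality, but you have not shown it is a finitely generated \emph{projective} $\fS_{\La'}$-module — and this is precisely what you later rely on in asserting that $\underline{\fM}_{\La'}(P(V))\in\Mod^{\phz,\bh,\widehat{\Gamma}}_{\fS_{\La'}}$. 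Proposition~\ref{uniqueness} is about full faithfulness of $\Upsilon_G$ (uniqueness of lattices) and does not touch projectivity over $\fS_{\La'}$.

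The paper closes exactly this gap by invoking the argument of \cite[Proposition 1.6.4(2)]{PST} (in the paper this is also the content of Lemma~\ref{fhwithcoeff}): one constructs the Kisin lattice over the residue field first and then lifts it, producing a projective $\fS_{\La'}$-lattice directly rather than trying to deduce projectivity abstractly from the $\Zp$-lattice with $\La'$-action. If you add that one input, the rest of your argument (the Tannakian 2-functoriality, the identification of the semi-stable subcategory via a single faithful representation and \cite[C.1.7]{LevinThesis}, and the post-composition) goes through and coincides with the paper's proof; the paper also cites \cite[Proposition I.4.4.2]{Saavedra} to conclude the tensor equivalence, which plays the role of the formal ``post-composition is an equivalence'' step in your write-up.
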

\begin{proof} Using the Tannakian description of both categories, it suffices to show that $\widehat{T}_{\La'}$ defines a tensor equivalence between $\Mod^{\phz, \bh, \widehat{\Gamma}}_{\fS_{\La'}}$ and semi-stable representations of $\Gamma_K$  on finite free $\La'$-modules.  When $F = \Qp$ and the Hodge-Tate weights are negative (in our convention), this is Theorem 2.3.1 in \cite{Liu1}.  Note that \cite{Liu1} is using contravariant functors so that our $\widehat{T}_{\La'}$ is obtained by taking duals. The restriction on Hodge-Tate weights can be removed by twisting by $\widehat{\fS}(1)$,  the $(\phz, \widehat{\Gamma})$-module corresponding to the inverse of the $p$-adic cyclotomic character.  

To define a quasi-inverse to $\widehat{T}_{\La'}$, let $L$ be a semi-stable $\Gamma_K$-representation  on a finite free $\La'$-module. Forgetting the coefficients, \cite{Liu1} constructs a $\widehat{\Gamma}$-structure $\widehat{T}^{-1}(L)$  on the unique Kisin lattice in $\underline{M}(L)$.   This $(\phz, \widehat{\Gamma})$-module over $\Zp$ has an action of $\La'$ by functoriality of the construction.  By an argument as in \cite[Proposition 1.6.4(2)]{PST}, the resulting $\fS_{\La'}$-module is projective and so this defines an object of $\Mod^{\phz, \bh, \widehat{\Gamma}}_{\fS_{\La'}}$ which we call $\widehat{T}_{\La'}^{-1}(L)$.

Finally, we appeal to Proposition I.4.4.2 in \cite{Saavedra} to conclude that $\widehat{T}_{\La'}$ and $\widehat{T}_{\La'}^{-1}$ define a tensor equivalence of categories given that $\widehat{T}_{\La'}$ respects tensor products (Proposition \ref{hatT}).
\end{proof}

\subsection{Faithfulness and existence result}

Fix an element $\tau$ in $\widehat{\Gamma}$ such that $\tau(\underline{\pi}) = \underline{\eps} \cdot \underline{\pi}$ where $\underline{\eps}$ is a compatible system of primitive $p^n$th roots of unity. If $p \neq 2$, then $\tau$ is a topological generator for $\widehat{\Gamma}_{p^{\infty}} := \Gal(K_{\infty, p^{\infty}}/K_{p^{\infty}})$. If $p = 2$, then some power of $\tau$ will generate $\widehat{\Gamma}_{p^{\infty}}$.  In both cases, $\tau$ together with $\widehat{\Gamma}_{\infty}$ topologically generate $\widehat{\Gamma}$ (see \cite[\S 4.1]{Liu1}). Given condition (4) in Definition \ref{hatmodule} the $\widehat{\Gamma}$-action is determined by the action of $\tau$.  

Recall the element $\mathfrak{t} \in W(\widetilde{\bE}^+)$ which is the period for $\fS(1)$ in the sense that $\phz(\mathfrak{t}) = c_0^{-1} E(u) \mathfrak{t}$. We will need a few structural results about $W(\widetilde{\bE}^+)$.

\begin{lemma} \label{tdiv} For any $\widetilde{\gamma} \in \Gamma_K$, we have the following divisibilities in $W(\widetilde{\bE}^+):$ 
$$
\widetilde{\gamma}(u) \mid u, \quad \widetilde{\gamma}(\phz(\ft)) \mid \phz(\ft), \text{ and } \quad \widetilde{\gamma}(E(u)) \mid E(u).
$$
\end{lemma}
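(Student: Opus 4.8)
The plan is to prove all three divisibilities by the same mechanism: for $x \in \{u,\ \varphi(\ft),\ E(u)\}$ I will show that $\widetilde{\gamma}(x)$ and $x$ generate the same principal ideal of $W(\widetilde{\bE}^+)$, i.e. $\widetilde{\gamma}(x) = \nu_x\, x$ for a unit $\nu_x \in W(\widetilde{\bE}^+)^\times$; the asserted divisibility (in fact in both directions) is then immediate. The three ideals involved are $(u) = ([\underline{\pi}])$, the kernel of Fontaine's map $\Theta \colon W(\widetilde{\bE}^+) \to \widehat{\cO}_{\overline{K}}$, and the ``period ideal'' attached to $\fS(1)$.

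\emph{The element $u$.} Here $u = [\underline{\pi}]$ is a Teichm\"uller element. Since $\pi \in K$, for each $n$ the element $\widetilde{\gamma}(\pi^{1/p^n})/\pi^{1/p^n}$ is a $p^n$-th root of unity, and these assemble compatibly under the $p$-power maps into a unit $\underline{c} := (\widetilde{\gamma}(\pi^{1/p^n})/\pi^{1/p^n})_n \in (\widetilde{\bE}^+)^\times$ with $\widetilde{\gamma}(\underline{\pi}) = \underline{c}\cdot \underline{\pi}$. Applying $\widetilde{\gamma}$ (which is functorial on Witt vectors) and multiplicativity of Teichm\"uller lifts gives $\widetilde{\gamma}(u) = [\widetilde{\gamma}(\underline{\pi})] = [\underline{c}]\, u$ with $[\underline{c}] \in W(\widetilde{\bE}^+)^\times$.

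\emph{The element $E(u)$.} Since $\Theta([\underline{\pi}]) = \pi$, we get $\Theta(E(u)) = \Theta(E([\underline{\pi}])) = E(\pi) = 0$, so $E(u) \in \ker\Theta$. The ideal $\ker\Theta$ is principal, generated by any distinguished element of degree one, and $E([\underline{\pi}])$ is such an element: as $E$ is Eisenstein, the leading Teichm\"uller coefficient of $E([\underline{\pi}])$ is $\underline{\pi}^{\, e}$, a pseudo-uniformizer of the correct valuation. Hence $E(u)$ generates $\ker\Theta$. Because $\Theta$ is $\Gamma_K$-equivariant, $\ker\Theta$ is $\Gamma_K$-stable, so $\widetilde{\gamma}(E(u))$ is again a generator, i.e. $\widetilde{\gamma}(E(u)) = \nu_E\, E(u)$ with $\nu_E \in W(\widetilde{\bE}^+)^\times$.

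\emph{The element $\varphi(\ft)$, and the main obstacle.} Recall $\varphi(\ft) = c_0^{-1} E(u)\, \ft$, and that the $\widehat{\Gamma}$-structure on the rank-one module $\fS(1)$ is given by a unit $\widehat{c}(\widetilde{\gamma}) \in \widehat{R}^\times \subset W(\widetilde{\bE}^+)^\times$ (\cite[Example 3.2.3]{Liu1}); unwinding the identification of $\varphi(\ft)$ with the period of $\fS(1)$ inside $W(\widetilde{\bE}^+)$ yields $\widetilde{\gamma}(\varphi(\ft)) = \widehat{c}(\widetilde{\gamma})\, \varphi(\ft)$. I expect this last step to be the delicate one, since the content is precisely that the relevant ratio is a unit in $W(\widetilde{\bE}^+)$ and not merely in $W(\widetilde{\bE}^+)[1/p]$. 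A self-contained route: $\widetilde{\gamma}$ commutes with $\varphi$ on $W(\widetilde{\bE}^+)$, so it suffices to treat $\ft$; writing $z := \widetilde{\gamma}(\ft)/\ft$, the Frobenius relation together with the $E(u)$-case give $\varphi(z) = \nu_E\, z$, and the explicit factorization $t = \lambda\, \ft$ with $\lambda = \prod_{n\geq 0}\varphi^n\!\big(E(u)/E(0)\big)$ combined with $\widetilde{\gamma}(t) = \chi_{\mathrm{cyc}}(\widetilde{\gamma})\, t$ exhibits $z = \chi_{\mathrm{cyc}}(\widetilde{\gamma})\prod_{n\geq 0}\varphi^n(\nu_E)^{-1}$, a convergent product of units of $W(\widetilde{\bE}^+)$. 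The delicate point (convergence and $\widetilde{\bE}^+$-integrality of this product) is cleanest to quote from the structural analysis of $\widehat{R}$ and of the period $\ft$ in \cite{Liu1, CL} (reviewed in \cite{LevinThesis}). Once $\widetilde{\gamma}(\ft) = (\text{unit})\cdot \ft$ is known, applying $\varphi$ gives $\widetilde{\gamma}(\varphi(\ft)) = (\text{unit})\cdot \varphi(\ft)$, completing the proof.
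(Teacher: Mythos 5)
Your overall strategy — showing each of $u$, $E(u)$, $\phz(\ft)$ generates a $\Gamma_K$-stable principal ideal of $W(\widetilde{\bE}^+)$, so that $\widetilde{\gamma}$ carries each to a unit multiple — is the right one, and your treatments of $u$ (via the Teichm\"uller identity $\widetilde{\gamma}([\underline{\pi}]) = [\underline{c}]\,[\underline{\pi}]$ with $\underline{c}$ a unit built from roots of unity) and of $E(u)$ (via $\Gamma_K$-equivariance of Fontaine's $\Theta$ together with the fact that $E(u)$ generates $\ker\Theta$) are correct and standard.

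The gap is in the third case, and you flag it yourself. Your "self-contained route" is not actually self-contained: the factorization $t = \lambda\,\ft$ with $\lambda = \prod_{n\geq 0}\phz^n\!\bigl(E(u)/E(0)\bigr)$ and the resulting identity $z = \chi_{\mathrm{cyc}}(\widetilde{\gamma})\prod_{n\geq 0}\phz^n(\nu_E)^{-1}$ take place in a ring strictly larger than $W(\widetilde{\bE}^+)$ — each factor of $\lambda$ has a $p$ in the denominator, and the infinite product converges only in a period ring (e.g.\ $B^+_{\mathrm{cris}}$ or Kisin's $\cO$), not $p$-adically or $u$-adically inside $W(\widetilde{\bE}^+)$. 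So the formula for $z$ does not by itself establish $z\in W(\widetilde{\bE}^+)^\times$, which is precisely the content of the lemma for $\phz(\ft)$; the integrality of the product is the whole point, and you have pushed it into a reference rather than proved it. The missing idea that keeps the argument parallel to the $E(u)$ case and stays inside $W(\widetilde{\bE}^+)$ is the fact (invoked later in the paper, citing \cite{FonPP}) that $\phz(\ft)$ generates the ideal $I^{[1]}W(\widetilde{\bE}^+)$; since $I^{[1]}$ is built from $\ker\Theta$ and $\phz$, it is $\Gamma_K$-stable, hence $\widetilde{\gamma}(\phz(\ft))$ is again a generator and the divisibility is immediate.
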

\begin{proof} See \cite[Lemma 9.3.1]{LevinThesis}.
\end{proof}

The $(\phz, \widehat{\Gamma})$-modules which give rise to crystalline representations satisfy an extra divisibility condition on the action of $\tau$, i.e., \cite[Cor. 4.10]{BDJ} and \cite[Prop. 9.3.4]{LevinThesis}.  We call this the \emph{crystalline condition}.

\begin{defn} An object $\widehat{\fM}_A \in \Mod^{\phz, [a,b], \widehat{\Gamma}}_{\fS_A}$ is \emph{crystalline} if for any $x \in \fM_A$ there exists $y \in \widetilde{\fM}_A$ such that $\tau(x) - x = \phz(\mathfrak{t}) u^p y$.
\end{defn}

\begin{prop} \label{allgamma} If $\widehat{\fM}_A$ is crystalline, then for all $x \in \fM_A$ and $\gamma \in \widehat{\Gamma}$ there exists $y \in \widetilde{\fM}_A$ such that $\gamma(x) - x = \phz(\mathfrak{t}) u^p y$.
\end{prop}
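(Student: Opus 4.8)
The statement to prove is: if $\widehat{\fM}_A$ is crystalline (so $\tau(x)-x \in \phz(\ft) u^p \widetilde{\fM}_A$ for all $x \in \fM_A$), then for every $\gamma \in \widehat{\Gamma}$ and every $x \in \fM_A$ we have $\gamma(x)-x \in \phz(\ft) u^p \widetilde{\fM}_A$.

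The plan is to decompose $\widehat{\Gamma}$ using the structure recalled just before the statement: $\widehat{\Gamma}$ is topologically generated by $\tau$ together with $\widehat{\Gamma}_{\infty}$. By condition (4) in Definition \ref{hatmodule}, $\fM_A$ is fixed pointwise by $\widehat{\Gamma}_{\infty}$, so for $\gamma \in \widehat{\Gamma}_{\infty}$ the difference $\gamma(x)-x$ is literally $0$ and there is nothing to prove. The content is therefore in the cocycle/multiplicativity argument: a general $\gamma$ is a limit of words in $\tau^{\pm 1}$ and elements of $\widehat{\Gamma}_{\infty}$, so I would first establish the result for $\gamma = \tau^n$ ($n \in \Z$) and for $\gamma = h \tau^n h'$ with $h, h' \in \widehat{\Gamma}_{\infty}$, then pass to the closure using continuity of the $\widehat{\Gamma}$-action on $\widetilde{\fM}_A$ and the fact that $\phz(\ft) u^p \widetilde{\fM}_A$ is a closed submodule (it is $W(\widetilde{\bE}^+)_A$-stable up to the divisibility lemma, hence closed for the natural topology).

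The key computational steps. First, stability of the ideal: I claim $\phz(\ft)u^p\widetilde{\fM}_A$ is stable under every $\widetilde{\gamma}\in\Gamma_K$. This uses Lemma \ref{tdiv}: $\widetilde{\gamma}(u)\mid u$ and $\widetilde{\gamma}(\phz(\ft))\mid\phz(\ft)$ in $W(\widetilde{\bE}^+)$, so $\widetilde{\gamma}(\phz(\ft)u^p)$ divides $\phz(\ft)u^p$, and since $\widetilde{\fM}_A$ is $\Gamma_K$-stable we get $\widetilde{\gamma}(\phz(\ft)u^p\widetilde{\fM}_A)\subseteq\phz(\ft)u^p\widetilde{\fM}_A$. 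Second, the product rule: for $\gamma_1,\gamma_2$ and $x\in\fM_A$, write $\gamma_1\gamma_2(x)-x=\gamma_1(\gamma_2(x)-x)+(\gamma_1(x)-x)$. If both $\gamma_i$ satisfy the conclusion for all elements of $\fM_A$ — but here is the subtlety: $\gamma_2(x)-x$ lies in $\widetilde{\fM}_A$, not $\fM_A$, so I need the conclusion phrased for $\widetilde{\fM}_A$. So I would first upgrade the crystalline condition from $\fM_A$ to $\widetilde{\fM}_A$: since $\widetilde{\fM}_A = W(\widetilde{\bE}^+)_A\otimes_{\phz,\fS_A}\fM_A$ and $\phz(\fS_A)$-generated by the image of $\fM_A$, and since $\tau$ acts semilinearly with $\tau(\phz(a))\equiv\phz(a)$ modulo a suitable ideal for $a\in\fS_A$ (using $\tau(u)=[\underline{\eps}]u$ and expanding), one checks $\tau(w)-w\in\phz(\ft)u^p\widetilde{\fM}_A$ for all $w\in\widetilde{\fM}_A$ — this needs that $[\underline{\eps}]-1$ itself lies in the relevant ideal, which is exactly the kind of estimate in \cite[\S 9.3]{LevinThesis}. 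Then the product rule gives closure under the semigroup generated by $\tau$ and $\widehat{\Gamma}_\infty$, and by step one applied with $\widetilde{\gamma}=\gamma_1$ the term $\gamma_1(\gamma_2(x)-x)$ stays in the ideal.

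The main obstacle. The genuinely delicate point is the passage $\fM_A \rightsquigarrow \widetilde{\fM}_A$ for the divisibility: one must verify that $\tau([\underline{\pi}]) - [\underline{\pi}] = ([\underline{\eps}]-1)[\underline{\pi}]$ and, more generally, the failure of $\tau$ to commute with the Witt-vector/$\phz$-structure on $W(\widetilde{\bE}^+)_A$, is divisible by $\phz(\ft)u^p$ in the appropriate sense — equivalently that $[\underline{\eps}]-1$ lies in $\phz(\ft)W(\widetilde{\bE}^+)$ (it does, up to a unit $\phz(\ft)$ is essentially a period of $\Z_p(1)$), together with the extra factor $u^p$ coming from $\tau(u)-u = ([\underline{\eps}]-1)u$ and $p$-divisibility of $[\underline{\eps}]-1$ in the ring $\widehat R$. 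I expect to quote \cite[Lemma 9.3.1, Prop. 9.3.4]{LevinThesis} and \cite[Cor. 4.10]{BDJ} for these integral estimates rather than reproving them. Once that upgrade is in hand, the rest is the routine cocycle bookkeeping plus a continuity/closedness argument to reach all of $\widehat{\Gamma}$, and finally one reduces the $G$-structure case to the $\GL$-case by evaluating on a faithful representation $V$, noting all the constructions are compatible with tensor functors.
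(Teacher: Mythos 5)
Your overall route — reduce to $\tau$ and $\widehat{\Gamma}_\infty$ using the topological generation of $\widehat{\Gamma}$, handle $\widehat{\Gamma}_\infty$ trivially via condition (4) of Definition \ref{hatmodule}, and propagate to words and then to all of $\widehat{\Gamma}$ by a cocycle computation and continuity — is the same route the paper takes (it cites the topological generation as the only ingredient). The two genuine ingredients, namely that $\phz(\ft)u^p\widetilde{\fM}_A$ is $\Gamma_K$-stable and the telescoping identity $\gamma_1\gamma_2(x)-x=\gamma_1(\gamma_2(x)-x)+(\gamma_1(x)-x)$, both appear in your write-up and are correct (for the stability, note that Lemma \ref{tdiv} applied to $\gamma$ and to $\gamma^{-1}$ together show $\gamma(\phz(\ft)u^p)$ and $\phz(\ft)u^p$ are associates in $W(\widetilde{\bE}^+)$, which is what you actually need — the single divisibility you quote gives containment in the wrong direction).

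The step you flag as the ``main obstacle'' — upgrading the crystalline condition to all of $\widetilde{\fM}_A$, i.e.\ proving $\tau(w)-w\in\phz(\ft)u^p\widetilde{\fM}_A$ for every $w\in\widetilde{\fM}_A$ — is both unnecessary and false. It is unnecessary because, as you yourself note at the very end of that paragraph, stability of the ideal under $\gamma_1$ already puts $\gamma_1(\gamma_2(x)-x)$ into $\phz(\ft)u^p\widetilde{\fM}_A$; one never has to feed $\gamma_2(x)-x$ back into the crystalline condition. It is false because for $w=a\otimes x$ with $a\in W(\widetilde{\bE}^+)_A$ and $x\in\fM_A$ a basis vector, $\tau(w)-w=(\tau(a)-a)\otimes x+\tau(a)\otimes(\tau(x)-x)$, and the first summand would require $\tau(a)-a\in\phz(\ft)u^p\,W(\widetilde{\bE}^+)_A$ for every $a$, which is not true — the crystalline condition constrains the semilinear module action, not the ring action of $\tau$ on all of $W(\widetilde{\bE}^+)_A$. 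So delete that step; the cocycle plus ideal stability plus continuity (with $\phz(\ft)u^p\widetilde{\fM}_A$ a closed $W(\widetilde{\bE}^+)_A$-submodule of the finite free module $\widetilde{\fM}_A$) is the whole proof. One more small point: Proposition \ref{allgamma} is stated for $\Mod^{\phz,[a,b],\widehat{\Gamma}}_{\fS_A}$, not for $G$-objects, so the concluding reduction ``evaluate on a faithful $V$'' has no role here.
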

\begin{proof} This is an easy calculation using that $\widehat{\Gamma}$ is topologically generated by $\widehat{\Gamma}_{\infty}$ and $\tau$ (\cite[Proposition 9.3.3]{LevinThesis}).
\end{proof} 

\begin{defn} We say an object $\widehat{\fP}_{A} \in  \GMod^{\phz, [a,b], \widehat{\Gamma}}_{\fS_A}$ is \emph{crystalline} if $\widehat{\fP}_{A}(W)$ is crystalline for all $W \in \sideset{^f}{_{\La}}\Rep (G)$.  For $\widehat{\fP}_{\F} \in  \GMod^{\phz, [a,b], \widehat{\Gamma}}_{\fS_{\F}}$, define the \emph{crystalline $(\phz, \widehat{\Gamma})$-module deformation groupoid} over $\cC_{\La}$ by 
$$
D^{\text{cris}, [a,b]}_{\widehat{\fP}_{\F}}(A) = \{ (\widehat{\fP}_A, \psi_0) \in D^{[a,b]}_{\widehat{\fP}_{\F}} (A) \mid \widehat{\fP}_A \text{  is crystalline} \}
$$
for any $A \in \cC_{\La}$. 
\end{defn} 

\begin{prop} \label{criscris} Let $F'$ be a finite extension of $F$ with ring of integers $\La'$.  The equivalence from Theorem \ref{GLiu} induces an equivalence between the full subcategory of crystalline objects in $\GMod^{\phz, \widehat{\Gamma}}_{\fS_{\La'}}$ with the category of crystalline representations in $\GRep_{\La'}(\Gamma_K)$.
\end{prop}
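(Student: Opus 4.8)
The plan is to reduce everything to the case $G = \GL_n$, where the statement is essentially Liu's (and \cite{BDJ}'s) characterization of crystalline representations among semi-stable ones, and then transport it through the Tannakian formalism. By Theorem \ref{GLiu} the functor $\widehat{T}_{G,\La'}$ is already an equivalence between $\GMod^{\phz,\widehat{\Gamma}}_{\fS_{\La'}}$ and the full subcategory of semi-stable objects of $\GRep_{\La'}(\Gamma_K)$, and crystalline representations form a full subcategory of the semi-stable ones. Hence it suffices to prove that, for $\widehat{\fP}_{\La'} \in \GMod^{\phz,\widehat{\Gamma}}_{\fS_{\La'}}$ with associated pair $(P,\rho) := \widehat{T}_{G,\La'}(\widehat{\fP}_{\La'})$, the module $\widehat{\fP}_{\La'}$ is crystalline if and only if $(P,\rho)$ is crystalline. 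By the construction of $\widehat{T}_{G,\La'}$ through its values on representations of $G$, for each $W \in \sideset{^f}{_{\La}}\Rep(G)$ the $W$-component of $(P,\rho)$ is naturally identified with the $\Gamma_K$-representation $\widehat{T}_{\La'}(\widehat{\fP}_{\La'}(W))$.

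First I would establish the vector case: for $\widehat{\fM}_{\La'} \in \Mod^{\phz,\bh,\widehat{\Gamma}}_{\fS_{\La'}}$, the object $\widehat{\fM}_{\La'}$ is crystalline, i.e. $\tau(x) - x \in \phz(\mathfrak{t})\, u^p\, \widetilde{\fM}_{\La'}$ for all $x \in \fM_{\La'}$, if and only if $\widehat{T}_{\La'}(\widehat{\fM}_{\La'})[1/p]$ is a crystalline representation of $\Gamma_K$. The crystalline condition only involves the $\fS$-module structure of $\widehat{\fM}_{\La'}$ and the operator $\tau$, so it is unchanged if one forgets the $\La'$-action and views $\widehat{\fM}_{\La'}$ as a $(\phz,\widehat{\Gamma})$-module over $\Zp$; likewise, a representation of $\Gamma_K$ on a finite free $\La'[1/p]$-module is crystalline exactly when its underlying $\Qp$-representation is. This reduces us to $\Zp$-coefficients, where the equivalence between the $\tau$-divisibility condition and crystallinity of $\widehat{T}$ is provided by \cite[Cor. 4.10]{BDJ} and \cite[Prop. 9.3.4]{LevinThesis}; as in the proof of Theorem \ref{GLiu} one first twists by $\widehat{\fS}(1)$ to normalize the Hodge--Tate weights.

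Then I would deduce the $G$-statement from the vector case in both directions. If $\widehat{\fP}_{\La'}$ is crystalline, then $\widehat{\fP}_{\La'}(W)$ is crystalline for all $W$, hence so is each $\widehat{T}_{\La'}(\widehat{\fP}_{\La'}(W))$, i.e. $(P,\rho)$ is crystalline. Conversely, suppose $(P,\rho)$ is crystalline; then $\rho(V)$ is crystalline for the fixed faithful $V$, so $\widehat{\fP}_{\La'}(V)$ is crystalline by the vector case. For general $W$, write $W$ as a subquotient of a direct sum of tensor products of $V$ and $V^{\vee}$ (\cite[C.1.7]{LevinThesis}); applying the exact tensor functor $\widehat{\fP}_{\La'}$ and then the exact tensor functor $\widehat{T}_{\La'}$ (Proposition \ref{hatT}) presents $\widehat{T}_{\La'}(\widehat{\fP}_{\La'}(W))$ as a subquotient of direct sums of tensor products of $\rho(V)$ and $\rho(V)^{\vee}$, which is crystalline because crystalline $\Gamma_K$-representations are stable under these operations; the vector case then yields that $\widehat{\fP}_{\La'}(W)$ is crystalline. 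Hence $\widehat{\fP}_{\La'}$ is crystalline, and since $\widehat{T}_{G,\La'}$ is already an equivalence of the ambient categories it restricts to the claimed equivalence of crystalline full subcategories.

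The hard part is the vector case — that is the one place where real $p$-adic Hodge theory enters, namely Liu's and \cite{BDJ}'s analysis of the action of $\tau$ on $\widehat{\fM}$; the rest is formal. The two points to watch are that the crystalline condition is genuinely insensitive to the coefficient ring (it is a divisibility inside $\widetilde{\fM}_{\La'}$ that ignores the $\La'$-structure) and that one may legitimately test crystallinity on a single faithful representation, which is valid because the crystalline $\Gamma_K$-representations form a Tannakian subcategory and $\widehat{T}_{\La'}$ is exact and monoidal.
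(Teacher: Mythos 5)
Your proposal is correct and follows essentially the same route as the paper: reduce the statement, via the Tannakian formalism, to the per-representation (vector) case, observe that the crystalline condition is insensitive to the $\La'$-coefficient structure so one can take $\Zp$-coefficients, and then cite \cite[Cor.\ 4.10]{BDJ} and \cite[Prop.\ 9.3.4]{LevinThesis} (twisting by $\widehat{\fS}(1)$ as needed). The only difference is cosmetic: the paper's definition of a crystalline $G$-valued representation already quantifies over all $W \in \Rep_F(G_F)$, so in your converse direction the detour through the faithful $V$ and the subquotient lemma \cite[C.1.7]{LevinThesis} is redundant — one can just apply the vector case to each $W$ directly — but this does not introduce any error.
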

\begin{proof} It suffices to show that if $\widehat{T}_A(\widehat{\fP}_{A}(W))$ is a lattice in a crystalline representation then $\widehat{\fP}_{A}(W)$ satisfies the crystalline condition.  This only depends on the underlying $(\phz, \widehat{\Gamma})$-module so we can take $A = \Zp$.  When $p > 2$, this is proven in Corollary 4.10 in \cite{BDJ}.  The argument for $p = 2$ is essentially the same and was omitted only because in \cite{BDJ} they need further divisibilities on $(\tau - 1)^n$ for which $p = 2$ becomes more complicated.  Details can be found in \cite[Proposition 9.3.4]{LevinThesis}.
\end{proof} 

Choose a crystalline object  $\widehat{\fP}_{\F} \in \GMod^{\phz, [a,b], \widehat{\Gamma}}_{\fS_{\F}}$. If $\fP_{\F}$ is the underlying $G$-Kisin module of $\widehat{\fP}_{\F}$, then we would like to study the forgetful functor
$$
\widehat{\Delta}:D^{\text{cris}, [a,b]}_{\widehat{\fP}_{\F}} \ra D^{[a,b]}_{\fP_{\F}}.
$$
More specifically, if $\mu$ and $a, b$ are as in the discussion before Definition \ref{muDbar} and $F = F_{[\mu]}$, we consider
$$
 \widehat{\Delta}^{\mu}: D^{\text{cris}, \mu}_{\widehat{\fP}_{\F}} := D^{\text{cris}, [a,b]}_{\widehat{\fP}_{\F}} \times_{ D^{[a,b]}_{\fP_{\F}}} D^{\mu}_{\fP_{\F}} \ra D^{\mu}_{\fP_{\F}}.
$$

We can now state our main theorem:
\begin{thm} \label{keythm} Assume that $p$ does not divide $\pi_1(G^{\der})$ where $G^{\der}$ is the derived group of $G$ and that $F = F_{[\mu]}$. If $\mu$ is a minuscule geometric cocharacter of $\Res_{(K \otimes_{\Qp} F)/F} G_F$ then 
$$
 \widehat{\Delta}^{\mu}:D^{\mathrm{cris}, \mu}_{\widehat{\fP}_{\F}} \ra D^{\mu}_{\fP_{\F}}
$$
is an equivalence of groupoids over $\cC_{\La}$. 
\end{thm}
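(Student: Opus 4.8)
The plan is to prove that $\widehat{\Delta}^{\mu}$ is an equivalence of groupoids over $\cC_{\La}$ by checking it is fully faithful and essentially surjective on each Artinian object $A$, and the key is to reduce to a purely linear-algebra statement about lifting the $\widehat{\Gamma}$-structure (equivalently, the operator $B_\tau$ of Proposition \ref{commconcrete}) along a small extension $A' \twoheadrightarrow A$ with kernel $I$ killed by $m_{A'}$. First I would observe that the forgetful functor $\widehat{\Delta}$ (before imposing $\mu$) is automatically faithful: a morphism of $(\phz,\widehat{\Gamma})$-modules with $G$-structure that is the identity on the underlying $G$-Kisin module is the identity, since the $\widehat{\Gamma}$-equivariance is a condition, not extra data on morphisms. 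For full faithfulness it then suffices to show that a $G$-Kisin module isomorphism between two crystalline $\widehat{\fP}_A$'s automatically intertwines the $\widehat{\Gamma}$-structures; this again is a uniqueness statement for the $B_\tau$'s, of the same nature as existence. So the whole theorem comes down to: given $\fP_A \in D^{\mu}_{\fP_{\F}}(A)$ lifting a crystalline $\widehat{\fP}_{\F}$, there is a \emph{unique} crystalline $\widehat{\Gamma}$-structure on $\fP_A$ deforming the given one.

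The heart of the argument is an obstruction/deformation computation using the adjoint representation $\Ad:G\ra \GL(\Lie G)$ of \S 4.1. Working with a trivialization of $\fP_A$, a $\widehat{\Gamma}$-structure is recorded by $B_\tau \in G(\widehat R_A)$ (together with the $\widehat{\Gamma}_\infty$-part, which is trivial by condition (b)) satisfying the cocycle-type identities (a)--(d) of Proposition \ref{commconcrete}, plus the crystalline divisibility $\tau(x)-x \in \phz(\ft)u^p\widetilde{\fP}_A$. Given a small extension $A'\twoheadrightarrow A$ with $m_{A'}I=0$, any two lifts of $B_\tau$ to $G(\widehat R_{A'})$ differ by an element of $\Lie G \otimes_{\La} (\widehat R \otimes_{\Zp} I)$, and the failure of the cocycle/commutation condition (a) to hold for a chosen lift is an element of the same module; the \emph{crystalline} condition forces these to lie in the submodule cut out by the factor $\phz(\ft)u^p$. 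The minusculeness of $\mu$ enters exactly here via Proposition \ref{Adpole} and the grading of Definition \ref{liegrading}: because $\mu$ is minuscule, $\Ad\circ\mu$ has weights in $\{-1,0,1\}$, so the relevant twist of $\Lie G$ by (the Frobenius-twisted) $\phi$ lands in the "$[-1,1]$-bounded" range, and one gets that both the space of lifts and the obstruction space are computed by the \emph{same} $\widehat R$-lattice calculation that appears for height-$[0,1]$ Kisin modules for $\GL_n$ in \cite{Liu1} (this is where the hypothesis $p\nmid\pi_1(G^{\der})$ is used, to ensure $\Ad$ is a "nice" representation and the relevant local model / Schubert variety for $\Ad\circ\mu$ is well-behaved, cf. Theorem \ref{locmodels} and Proposition \ref{closedschubert}). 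Concretely, I would reduce the existence and uniqueness of the lift of $B_\tau$ to the corresponding statement for $\widehat{\fP}_A(\Lie G)$ as a $(\phz,\widehat{\Gamma})$-module of height in $[-1,1]$ — here the divisibilities of Lemma \ref{tdiv} and Proposition \ref{allgamma} show the crystalline condition is exactly the condition that the obstruction class vanishes — and invoke Proposition \ref{criscris} together with the $\GL_n$ input of \cite{Liu1, BDJ}: a crystalline representation is determined by its restriction to $\Gamma_\infty$ together with its de Rham filtration, and for minuscule Hodge type the $\widehat{\Gamma}$-structure lifting is unobstructed. Essential surjectivity then follows by the standard "lift step by step along a tower of small extensions, starting from the crystalline $\widehat{\fP}_{\F}$" argument, using at each stage that both obstruction and ambiguity vanish.

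The main obstacle, and the step I would spend the most care on, is the identification of the crystalline divisibility condition with the statement that a certain cohomology class in $\Lie G \otimes (\text{torsion coefficient})$ lies in the span of $\phz(\ft)u^p$, and the verification that minusculeness makes this condition both necessary and sufficient for lifting — i.e., that there is no residual obstruction beyond it. Everything must be set up Tannakianly so that one only has to check it on the single adjoint representation $\Ad$ (using that $\Lie G$ generates the right tensor subcategory up to the subtlety that it need not be faithful — which is why one passes through a faithful $V$ and uses the hypothesis on $\pi_1(G^{\der})$ to control $\Ad\circ\mu$). Once that local computation is in hand, the fully faithful and essentially surjective claims both reduce to it by the small-extension induction, and the naturality/base-change compatibility of $\widehat{T}_{G,A}$ recorded after Proposition \ref{hatT} guarantees the equivalence is compatible with the structures in the diagram (\ref{musmoothmod}), which is what is needed in \S 4.3.
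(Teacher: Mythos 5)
There is a genuine gap. Your strategy is to prove essential surjectivity by a small-extension induction, claiming that the obstruction to lifting $B_\tau$ and the ambiguity of lifts both vanish because $\mu$ is minuscule. You never actually identify the obstruction or explain why it vanishes, and indeed the paper's proof strongly suggests this unobstructedness is \emph{not} available as a purely formal/local statement. The paper does not induct on small extensions at all. Instead, given $\fP_A\in D^\mu_{\fP_\F}(A)$, it directly runs the iteration $B_0=I$, $B_i:=C'\phz(B_{i-1})\gamma(C')^{-1}$ and shows (via Lemma \ref{conjugation}, which is where minusculeness of $\Ad\circ\mu$ enters) that the sequence contracts $u$-adically and converges in $G(W(\widetilde{\bE}^+)_A)$ \emph{once} the base case $B_1\in G(u^p)$ holds. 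The two remaining requirements --- that $B_1\in G(u^p)$ and that $B_\gamma$ land in $G(\widehat R_A)$ rather than merely $G(W(\widetilde{\bE}^+)_A)$ --- are shown to be \emph{closed} conditions $(\star_1),(\star_2)$ on $D^\mu_{\fP_\F}$, verified only at characteristic-zero points (using Lemma \ref{overF} on logarithmic poles, and the Breuil-module monodromy operator). The deduction that they hold on all of $D^\mu_{\fP_\F}$ then hinges on the hull $R^{(N),\mu}_{\fP_\F}$ being $\La$-flat and reduced, which comes from Theorem \ref{locmodels} via Proposition \ref{musmoothfinite}. That flatness/reducedness is the mechanism that spreads the construction from the generic fiber to torsion coefficients, and it has no analogue in your proposal.

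This also means you have misidentified the role of the hypothesis $p\nmid\pi_1(G^{\der})$: it is not there to make $\Ad$ or $\Ad\circ\mu$ ``nice,'' but to make the local model $M(\mu)$ (hence the hull of $D^\mu_{\fP_\F}$) normal with reduced special fiber, which is exactly what the closed-condition argument needs. Your suggestion to reduce to $\widehat{\fP}_A(\Lie G)$ as a $(\phz,\widehat\Gamma)$-module of height in $[-1,1]$ and invoke $\GL_n$ input is also problematic: $\Ad$ is not faithful, so the $\widehat\Gamma$-structure on $\fP_A(\Lie G)$ does not determine one on $\fP_A$, and even the $\GL_n$ statement you appeal to (``for minuscule Hodge type the $\widehat\Gamma$-structure lifting is unobstructed'') is not something established by a small-extension obstruction computation in the cited references --- the analogous $[0,1]$-height result for $\GL_n$ comes from the equivalence with finite flat group schemes, not from formal deformation theory. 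The part of your proposal that is sound, and does match the paper, is the fullness/faithfulness argument: uniqueness of the crystalline $\widehat\Gamma$-structure via the contraction property of conjugation by Frobenius (Lemma \ref{conjugation}). But essential surjectivity requires the geometric input you have left out.
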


\begin{rmk} This generalizes Theorem 9.3.13 in \cite{LevinThesis} where we worked with $G$-Kisin modules with height in $[0,1]$. See Remark \ref{assume} for more information. 
\end{rmk}

\begin{cor} \label{BTtype} Assume $F = F_{[\mu]}$ and that $\mu$ is minuscule.  Let $F'$ be finite extension of $F$ with ring of integers $\La'$.  There is an equivalence of categories between $G$-Kisin modules over $\fS_{\La'}$ with $p$-adic Hodge type $\mu$ and the subcategory of $\GRep_{\La'}(\Gamma_K)$ consisting of crystalline representations with $p$-adic Hodge type $\mu$. 
\end{cor}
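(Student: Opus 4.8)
The plan is to assemble the equivalence out of Theorem \ref{GLiu}, Proposition \ref{criscris}, Lemma \ref{Dx}/Proposition \ref{DRfiltcomparison} and Theorem \ref{keythm}. First I would reduce to the case where the residue field of $\La'$ equals $\F$, by replacing $\La$ with $\La\otimes_{W(\F)}W(\F')$; all the functors in play commute with this base change. Theorem \ref{GLiu} together with Proposition \ref{criscris} identifies $\widehat T_{G,\La'}$ with an equivalence between the crystalline objects of $\GMod^{\phz,\widehat\Gamma}_{\fS_{\La'}}$ and the crystalline representations in $\GRep_{\La'}(\Gamma_K)$; evaluating on a faithful representation $V$ and using Lemma \ref{Dx} together with Proposition \ref{DRfiltcomparison} (the filtered module $\fD$ attached to a Kisin module is its de Rham module), this equivalence matches $p$-adic Hodge type $\mu$ on the two sides. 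So the corollary reduces to showing that the functor forgetting the $\widehat\Gamma$-structure, from crystalline $(\phz,\widehat\Gamma)$-modules with $G$-structure over $\La'$ of type $\mu$ to $G$-Kisin modules over $\fS_{\La'}$ of type $\mu$, is an equivalence.

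For full faithfulness: faithfulness holds because a morphism of $(\phz,\widehat\Gamma)$-modules with $G$-structure is by definition a $\widehat\Gamma$-equivariant morphism of the underlying $G$-Kisin modules. For fullness I would take a morphism $f\colon\fP_{\La'}\to\fP'_{\La'}$ of $G$-Kisin modules underlying crystalline objects; via the identification $T_{G,\fS_{\La'}}\cong\omega_{\Gamma_\infty}\circ\widehat T_{G,\La'}$ and a faithful $V$ it induces a $\Gamma_\infty$-equivariant morphism of $\La'$-lattices in the associated crystalline $\Qp$-representations, which is automatically $\Gamma_K$-equivariant because restriction to $\Gamma_\infty$ is fully faithful on crystalline representations (see \cite{MFFGS}); being compatible with tensor products it underlies a morphism of crystalline $G$-representations, hence (Theorem \ref{GLiu}, Proposition \ref{criscris}) of crystalline $(\phz,\widehat\Gamma)$-modules with $G$-structure, which forgets to $f$.

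The substantive point is essential surjectivity. Given a $G$-Kisin module $\fP_{\La'}$ of $p$-adic Hodge type $\mu$ with reduction $\fP_\F$, I would first observe that since $\mu$ is minuscule, Proposition \ref{smallest} makes ``type $\leq\mu$'' equivalent to ``type $\mu$'', and that Theorem \ref{equivoftype} — applicable since $\La'$ is $\La$-flat and reduced, its only relevant point being $\La'\hookrightarrow\La'[1/p]$, at which $\fD_x$ is the filtration of $\fP_{\La'}[1/p]$ — puts $\fP_{\La'}$ (and each $\fP_{\La'}\otimes_{\La'}\La'/m_{\La'}^n$) in $D^{\mu}_{\fP_\F}$. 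After fixing a crystalline $\widehat\Gamma$-structure $\widehat\fP_\F$ on $\fP_\F$, Theorem \ref{keythm} gives that $\widehat\Delta^{\mu}\colon D^{\mathrm{cris},\mu}_{\widehat\fP_\F}\to D^{\mu}_{\fP_\F}$ is an equivalence of groupoids over $\cC_\La$; extending it to complete local Noetherian $\La$-algebras with finite residue field (as in \S 3.3) and applying it level by level to $\{\fP_{\La'}\otimes_{\La'}\La'/m_{\La'}^n\}$ produces a compatible system of crystalline $(\phz,\widehat\Gamma)$-modules with $G$-structure lifting it. Passing to the limit as in the proof of Theorem \ref{universalKisin} — forming $\widehat R_{\La'}\otimes_{\phz,\fS_{\La'}}\fP_{\La'}$ and assembling the $\widehat\Gamma$-action from the finite levels, which is legitimate since $\fP_{\La'}$ is $\fS_{\La'}$-projective and $\widehat R_{\La'}$ is $p$-adically complete — yields a crystalline $\widehat\fP_{\La'}$ with underlying module $\fP_{\La'}$, giving essential surjectivity.

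The hard part is the one non-formal input used above: the existence of a crystalline $\widehat\Gamma$-structure on a $G$-Kisin module of minuscule type over $\F$ (more generally, the fact that minuscule type forces the crystalline condition). This is the real content of the construction behind Theorem \ref{keythm}, and is where the minuscule hypothesis — through the height bound on the adjoint representation and the divisibilities of \S 4.1 controlling the action of $\tau$ — actually enters; in the present writeup I would invoke it directly. The remainder is bookkeeping: compatibility with change of coefficients, and checking that the defining conditions of a crystalline $(\phz,\widehat\Gamma)$-module with $G$-structure of type $\mu$ are stable under the relevant inverse limit (the crystalline divisibility being limit-stable because $\phz(\mathfrak{t})u^p$ is a non-zero divisor), both routine along the lines of Theorem \ref{universalKisin} and Proposition \ref{normGequiv}.
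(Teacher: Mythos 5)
Your proposal is structurally sensible and the full-faithfulness argument is fine, but the route you take to essential surjectivity introduces a hypothesis the corollary does not have, and this is a genuine gap. You invoke Theorem \ref{keythm} to obtain the equivalence $\widehat\Delta^{\mu}\colon D^{\mathrm{cris},\mu}_{\widehat\fP_\F}\to D^{\mu}_{\fP_\F}$, and Theorem \ref{keythm} assumes $p\nmid\pi_1(G^{\der})$. The paper is explicit, immediately after the corollary, that this hypothesis is \emph{not} required here. That assumption enters the proof of Theorem \ref{keythm} only at the final propagation step, where one shows that the closed conditions $(\star_1)$, $(\star_2)$ cut out all of $D^{\mu}_{\fP_\F}$: the argument reduces, via flatness and reducedness of $R^{(N),\mu}_{\fP_\F}$ (Theorem \ref{locmodels}, which is where $\pi_1(G^{\der})$ is used), to verifying the conditions at $\La'$-valued points. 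But for Corollary \ref{BTtype} you are handed a $\La'$-valued point to begin with, so there is nothing to propagate. The intended argument is to run the inductive construction of $B_\gamma$ from the proof of Theorem \ref{keythm} directly on $\fP_{\La'}$: the base case $(\star_1)$ is checked over $F'=\La'[1/p]$ via Lemma \ref{overF} (this is where the minuscule hypothesis enters), the convergence uses Lemma \ref{conjugation}, and $(\star_2)$ is checked via the monodromy-operator calculation over $S_{K_0}\otimes\La'$ — none of which touches the local model or $\pi_1(G^{\der})$.

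A second, related gap: your argument is conditional on having already chosen a crystalline $\widehat\Gamma$-structure $\widehat\fP_\F$ on the reduction $\fP_\F$, and you flag its existence as ``the hard part'' to ``invoke directly.'' In your framework this is exactly essential surjectivity at level $\F$, so it cannot be assumed — it has to be produced. In the artinian route it would again follow from the full strength of Theorem \ref{keythm} (hence again the unwanted $\pi_1(G^{\der})$ hypothesis). In the direct route it is not needed at all: one builds $\widehat\fP_{\La'}$ directly, and $\widehat\fP_\F$ is obtained by reduction. So the fix is to dispense with the detour through the deformation groupoid over $\cC_\La$ and instead apply the explicit construction to $\fP_{\La'}$ itself; the rest of your bookkeeping (Theorem \ref{GLiu}, Proposition \ref{criscris}, the matching of types via Lemma \ref{Dx} and Proposition \ref{DRfiltcomparison}, full faithfulness via Proposition \ref{hatfaithful}) then carries over as you wrote it.
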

Corollary \ref{BTtype} follows from the proof of Theorem \ref{keythm}.  It generalizes the equivalence between Kisin modules of Barsotti-Tate type and lattices in crystalline representations with Hodge-Tate weights in $\{-1, 0\}$ (\cite[Theorem 2.2.7]{Fcrystals}). Note that we do not require $p \nmid |\pi_1(G^{\der})|$ here.  For the relevant definitions, see  Definition \ref{defnHtype2} and the discussion before Theorem \ref{GLiu}. Before proving Theorem \ref{keythm} and Corollary \ref{BTtype}, we begin with some preliminaries on crystalline $(\phz, \widehat{\Gamma})$-modules with $G$-structure.  

\begin{defn} Define $G(u^{p^i})$ to be the kernel of the reduction map $G(W(\widetilde{\bE}^+)_A) \ra G(W(\widetilde{\bE}^+)_A /(\phz(\ft) u^{p^i}))$.
\end{defn}

\begin{prop} \label{Gcommconcrete} Choose $(\fP_A, \phi_{\fP_A}) \in \GMod_{\fS_A}^{\phz, \bh}$.  Fix a trivialization $\beta_A$ of $\fP_A$.  Let $C' \in G(\fS_A[1/(\phz(E(u))])$ be $\phi_{\fP_A}$ with respect to the trivialization $1 \otimes_{\phz} \beta_A$.  Then a crystalline $\widehat{\Gamma}$-structure on $\fP_A$ is the same as a continuous map
$$
B_{\bullet}:\widehat{\Gamma} \ra G( \widehat{R}_A)
$$
satisfying the following properties:
\begin{enumerate} [$(a)$]
\item  $C' \cdot \phz(B_{\gamma}) = B_{\gamma} \cdot \gamma(C') $ in $G(W(\widetilde{\bE})_A)$ for all $\gamma \in \widehat{\Gamma}$;
\item $B_{\gamma} = \Id$ for all $\gamma \in \widehat{\Gamma}_{\infty}$;
\item $B_{\gamma} \in G(u^{p})$ for all $\gamma \in \widehat{\Gamma}$;
\item $B_{\gamma \gamma'} = B_{\gamma} \cdot \gamma(B_{\gamma'})$ for all $\gamma, \gamma' \in \widehat{\Gamma}$.
\end{enumerate}
\end{prop}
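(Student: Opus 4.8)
The plan is to bootstrap from the $\GL$-version, Proposition \ref{commconcrete}, using the Tannakian recovery of $G$ from the pair $(\sideset{^f}{_{\La}}\Rep (G), V)$. Recall that an object of $\GMod^{\phz, \bh, \widehat{\Gamma}}_{\fS_A}$ lying over $(\fP_A, \phi_{\fP_A})$ is, by definition, a $\widehat{\Gamma}$-structure on each Kisin module $\fP_A(W)$, functorial in $W$ and compatible with tensor products and duals. The trivialization $\beta_A$ induces compatible bases of all the $\fP_A(W)$, hence identifies $\widehat{\fP_A(W)} = \widehat{R}_A \otimes_{\phz, \fS_A} \fP_A(W)$ with $W \otimes_{\La} \widehat{R}_A$. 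Since $K_{\infty, p^{\infty}}/K$ is totally ramified, $\widehat{\Gamma}$ acts trivially on $W(k)$, and it acts trivially on $A$; so on $W \otimes_{\La}\widehat{R}_A$ each $\gamma$ acts through the canonical $\gamma$-semilinear map $w \otimes r \mapsto w \otimes \gamma(r)$, and recording the linear discrepancy gives, by Proposition \ref{commconcrete} applied to each $\fP_A(W)$ in the basis from $\beta_A$, a continuous family $B^W_\bullet : \widehat{\Gamma} \ra \GL(W \otimes_{\La}\widehat{R}_A)$ satisfying the conditions (a)--(d) of \emph{loc.\ cit.} and varying functorially in $W$.

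Next I would invoke the fact --- used already in the proof of Proposition \ref{onerepheight} via \cite[C.1.7]{LevinThesis}, and which is the easy direction of Tannakian duality, namely $G = \underline{\Aut}^{\otimes}(\omega)$ over the forgetful fiber functor --- that a functorial, tensor- and dual-compatible family $\{B^W_\gamma\}_W$ of automorphisms of the base-changed fiber functor $\omega_{\widehat{R}_A}$ is the same datum as a single element $B_\gamma \in G(\widehat{R}_A)$, with $B^V_\gamma$ its image under $G \ia \GL(V)$. Under this dictionary: the cocycle relation \ref{commconcrete}(d) for all $W$ becomes (d); the condition $B^W_\gamma = \Id$ for $\gamma \in \widehat{\Gamma}_{\infty}$ --- equivalently that $\fP_A(W) \subset \widehat{\fP_A(W)}$ is pointwise $\widehat{\Gamma}_{\infty}$-fixed, i.e.\ condition (4) of Definition \ref{hatmodule} --- becomes $B_\gamma = \Id$ in $G(\widehat{R}_A)$, which is (b); and, writing $C' \in G(\fS_A[1/\phz(E(u))])$ for the Frobenius $\phi_{\fP_A}$ in the trivialization $1 \otimes_{\phz}\beta_A$ (which for each $W$ reduces to the matrix denoted $C'$ in \ref{commconcrete}(a) for $\fP_A(W)$), condition \ref{commconcrete}(a) for all $W$ says exactly $C' \cdot \phz(B_\gamma) = B_\gamma \cdot \gamma(C')$ in $G(W(\widetilde{\bE})_A)$, which is (a); this last condition is also precisely condition (3) of Definition \ref{hatmodule}. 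Thus (a), (b), (d) together encode the bare $\widehat{\Gamma}$-structure with $G$-structure (minus condition (5), handled below).

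It then remains to match the crystalline condition with (c). By definition $\widehat{\fP}_A$ is crystalline iff every $\widehat{\fP}_A(W)$ is, and by Proposition \ref{allgamma} this holds iff for every $W$, every $\gamma \in \widehat{\Gamma}$ and every $x \in \fP_A(W)$ one has $\gamma(x) - x \in \phz(\mathfrak{t})\,u^{p}\,\widetilde{\fP_A(W)}$; evaluating on the $\beta_A$-basis and using that $\widetilde{\fP_A(W)} = W(\widetilde{\bE}^+)_A \otimes_{\phz,\fS_A}\fP_A(W)$ is free on it, this says exactly that $B^W_\gamma - \Id$ has all entries in $\phz(\mathfrak{t})\,u^{p}\,W(\widetilde{\bE}^+)_A$ for every $W$. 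Checking this on the faithful $V$ and again using that $G(u^{p}) = \ker\!\big(G(W(\widetilde{\bE}^+)_A) \ra G(W(\widetilde{\bE}^+)_A/(\phz(\mathfrak{t})u^{p}))\big)$ is cut out inside $\GL(V)$ by the defining equations of $G$, this is equivalent to $B_\gamma \in G(u^{p})$, which is (c). One checks moreover that (c) subsumes both condition \ref{commconcrete}(c) and condition (5) of Definition \ref{hatmodule}, since $\phz(\mathfrak{t})u^{p}W(\widetilde{\bE}^+)_A \cap \widehat{R}_A \subseteq I_+(\widehat{R})_A$. Conversely, any continuous $B_\bullet : \widehat{\Gamma} \ra G(\widehat{R}_A)$ satisfying (a)--(d) produces, by functoriality of the induced family $\{B^W_\bullet\}_W$ and Proposition \ref{commconcrete}, a crystalline $\widehat{\Gamma}$-structure on $\fP_A$; the two passages are mutually inverse, giving the asserted bijection. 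The one point genuinely requiring care is the Tannakian step --- recovering $G(\widehat{R}_A)$, and the subgroup $G(u^p)$ inside $\GL(V\otimes_{\La}\widehat{R}_A)$, over the possibly non-Noetherian and non-reduced ring $\widehat{R}_A$; but this is the formal direction of Tannakian duality valid over an arbitrary $\La$-algebra, so it is a bookkeeping matter rather than a real obstacle.
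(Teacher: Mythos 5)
Your proposal is correct and follows essentially the same route as the paper's own (very terse) proof, which simply asserts that ``everything follows directly from Proposition \ref{commconcrete}'' and then records the single non-obvious point that the crystalline condition (c) subsumes condition (5) of Definition \ref{hatmodule} because $u\phz(\ft)\in I_+(\widehat R)$. What you have done is supply the Tannakian scaffolding that the paper leaves implicit: apply Proposition \ref{commconcrete} to each $\fP_A(W)$ in the $\beta_A$-basis, observe that a tensor- and dual-compatible functorial family $\{B^W_\gamma\}_W$ of automorphisms of $\omega_{\widehat R_A}$ is a point of $\underline{\Aut}^\otimes(\omega)(\widehat R_A)=G(\widehat R_A)$, and match the four conditions. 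Your observation that condition (c) is equivalent to the crystalline condition for all $W$ by testing on the faithful $V$ and using that $G(u^p)$ is the preimage in $G(W(\widetilde{\bE}^+)_A)$ of the identity modulo $\phz(\ft)u^p$ is the right argument, and your remark that $\phz(\ft)u^pW(\widetilde{\bE}^+)_A\cap\widehat R_A\subseteq I_+(\widehat R)_A$ is actually a slightly more careful phrasing of what is needed than the paper's ideal-containment $(u^p\phz(\ft))\subset I_+(\widehat R)_A$, since $B_\gamma-\Id$ has entries in the intersection rather than a priori in the principal ideal of $\widehat R_A$. The only cosmetic quibble is that the citation to the proof of Proposition \ref{onerepheight} / \cite[C.1.7]{LevinThesis} concerns the generation of $\Rep(G)$ by $V$ and $V^\vee$ rather than the reconstruction $G=\underline{\Aut}^\otimes(\omega)$; the latter is the Tannakian fact underlying Theorem \ref{catbundle} and is what you actually use.
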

\begin{proof}  Everything follows directly from Proposition \ref{commconcrete}.  The only remark to make is that because $u \in I_+(\widehat{R})$, $(u^p \phz(\ft)) \subset  I_+(\widehat{R})_A$.  Hence, the crystalline condition which is equivalent to condition (c) implies condition (5) from Definition \ref{hatmodule}.
\end{proof}

Before we begin the proof of Theorem \ref{keythm}, we have two important lemmas.

\begin{lemma} \label{conjugation}  Let $\fP_A \in  D^{\mu}_{\fP_{\F}}(A)$ and choose a trivialization $\beta_A$ of the bundle $\fP_A$.  If $C \in G(\fS_A[1/E(u)])$ is the Frobenius with respect to $\beta_A$, then for any $Y \in G(u^{p^i})$
$$
\phz(C)\phz(Y) \phz(C)^{-1} \in G(u^{p^{i+1}}),
$$
where $\phz(C) = C' \in G(W(\widetilde{\bE})_A)$ is the Frobenius with respect to $1 \otimes_{\phz} \beta_A$. 
\end{lemma}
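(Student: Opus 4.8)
The plan is to reduce everything to a statement about the pole/zero orders of the Frobenius matrix $C$ acting by conjugation, and then invoke the minuscule hypothesis on $\mu$ through Proposition \ref{Adpole}. First I would trivialize as in the statement, so that $C \in G(\fS_A[1/E(u)])$ is the Frobenius with respect to $\beta_A$ and $C' = \phz(C) \in G(W(\widetilde{\bE})_A[1/\phz(E(u))])$ is the Frobenius with respect to $1 \otimes_{\phz} \beta_A$. Recall that $\fP_A \in D^{\mu}_{\fP_{\F}}(A)$ with $\mu$ implicitly controlling the height in $[a,b]$; the key point is that after the decomposition $W(\widetilde{\bE}^+)_A \cong \prod_{\psi} (\text{local factor})$ over embeddings $\psi$ (as in the proof of Lemma \ref{Dx}), in each factor $C$ becomes, up to left and right multiplication by elements of $G$ of the positive loop group $G(W(\widetilde{\bE}^+)_A)$, an expression of the form $\mu_{\psi}(E(u))$ (or rather $\mu_\psi$ evaluated at a uniformizer of the local factor), because $\fP_A$ has type $\leq \mu$ and $\mu$ is minuscule so $S(\mu) = S^0(\mu)$ by Proposition \ref{closedschubert}.

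The heart of the argument is then the following: conjugation by $\mu_{\psi}(E(u))$ on $\Lie G$ shifts the $E(u)$-adic valuation by at most $\pm 1$, because $\mu$ is minuscule — this is exactly the content of Proposition \ref{Adpole} applied with $t = E(u)$ (or the uniformizer in each local factor). Concretely, if $Y \in G(u^{p^i})$, write $Y = \exp(u^{p^i}\phz(\ft)^{?} X)$ heuristically (or, more carefully without exponentials, observe $Y - \Id \in u^{p^i}\phz(\ft) \cdot \Mat(W(\widetilde{\bE}^+)_A)$ in the faithful representation $V$). Then $\phz(Y) - \Id \in \phz(u^{p^i}\phz(\ft)) \cdot \Mat(\cdots) = u^{p^{i+1}} \phz(\phz(\ft)) \cdot \Mat(\cdots)$, and $\phz(\phz(\ft)) = c_0^{-1}\phz(E(u))\phz(\ft)$ up to units, so $\phz(Y) - \Id \in u^{p^{i+1}}\phz(E(u))\phz(\ft)\cdot\Mat(\cdots)$. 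Now conjugating by $C' = \phz(C)$: the extra factor of $\phz(E(u))$ is precisely what compensates for the single pole that conjugation by the minuscule cocharacter $C'$ can introduce (Proposition \ref{Adpole}, or Lemma \ref{tdiv} for the $\widetilde\gamma$-divisibilities if one needs to track $\tau$). Hence $\phz(C)\phz(Y)\phz(C)^{-1} - \Id \in u^{p^{i+1}}\phz(\ft)\cdot\Mat(W(\widetilde{\bE}^+)_A)$, which says exactly $\phz(C)\phz(Y)\phz(C)^{-1} \in G(u^{p^{i+1}})$. One should check this on the faithful representation $V$ and then note $G(u^{p^{i+1}})$ is cut out inside $G(W(\widetilde{\bE}^+)_A)$ by the vanishing conditions that are detected on $V$ since $G \hookrightarrow \GL(V)$ is a closed immersion, using that $W(\widetilde{\bE}^+)_A/(\phz(\ft)u^{p^{i+1}})$-points behave well.

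The main obstacle I expect is bookkeeping the interaction between the two ``uniformizers'' $u$ and $\phz(\ft)$ (equivalently $E(u)$), and making the minuscule-cocharacter estimate rigorous at the level of the group $G$ rather than just $\GL(V)$ or $\Lie G$ — in particular ensuring that after the reductions (Weil-restriction decomposition, moving $C$ into the shape $\mu_\psi$ of a minuscule representative modulo positive loop group elements) the positive-loop-group ``error'' factors conjugate $G(u^{p^i})$ into itself and hence do no harm. One delicate point is that the decomposition into $\mu_\psi$ is only valid after base change to a field containing the Galois closure of $K$, while $A$ is a general Artin local ring; I would handle this by working integrally with the estimate $\Ad(C)$ raises $E(u)$-valuation by at most one — which holds over $\fS_A$ directly once the height is bounded by a minuscule $\mu$, via Proposition \ref{Adpole} applied after reduction to the split case and spreading out — rather than literally diagonalizing $C$. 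This is precisely the kind of estimate that \cite[Lemma 9.3.x]{LevinThesis} establishes in the height $[0,1]$ case, and the minuscule hypothesis is what lets the same argument run for general $[a,b]$.
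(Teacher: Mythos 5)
Your core idea is the same as the paper's: the minuscule hypothesis controls the adjoint action of $C$ so that conjugation introduces at most a single pole along $E(u)$, and that pole is exactly compensated by the factor $\phz(E(u))$ gained from $\phz(\phz(\ft)) = c_0^{-1}\phz(E(u))\phz(\ft)$. But there is a genuine gap in how you carry this out, and you essentially flag it yourself without filling it.

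The problem is your plan to ``check on the faithful representation $V$.'' The minuscule condition on $\mu$ gives height $[-1,1]$ only for the \emph{adjoint} bundle $\Lie G \otimes \fS_A$. The faithful representation $V$ has height $[a,b]$ where $b-a$ can be arbitrarily large, so in $\Mat_n$ conjugation $M \mapsto C M C^{-1}$ raises $E(u)$-poles by up to $b-a$, not $1$, and the single factor $\phz(E(u))$ coming from $\phz(\phz(\ft))$ does \emph{not} cancel it. (Indeed, the whole point of dropping the hypothesis ``$\rho \circ \mu$ minuscule for some $\rho$'' as explained in Remark~\ref{assume} is that $V$ is not assumed to have this property.) So the bound you want cannot be read off in $\GL(V)$, and the Cartan-decomposition detour you mention is both unnecessary here and problematic over an Artin local $A$, as you note. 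What is missing is a device for promoting the $\Lie G$-estimate to a statement about the group $G$. The paper does this by working directly with the coordinate ring $\cO_G$ filtered by powers of the augmentation ideal $I_e$: it identifies $G(u^{p^i})$ with homomorphisms $\cO_G \to W(\widetilde{\bE}^+)_A$ sending $I_e$ into $(\phz(\ft)u^{p^i})$, and proves the key containment $\Ad_{\cO_G}(C)^*(I_e) \subset \sum_{j \geq 1} I_e^j \otimes E(u)^{-j}\fS_A$ by successive approximation on $\gr^\bullet_{I_e}\cO_G \cong \bigoplus_j \Sym^j(\Lie G^\vee)$, where $\Sym^j$ has height $[-j,j]$ because $\Lie G$ has height $[-1,1]$. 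The $E(u)^{-j}$ pole on the degree-$j$ piece is then compensated because $\phz(Y)(I_e^j)$ is divisible by $\phz(E(u))^j\phz(\ft)^j u^{jp^{i+1}}$, and $j \geq 1$. This graded coordinate-ring bookkeeping is precisely the step your proposal lacks, and it is the real content of the lemma.
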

\begin{proof} Let $\cO_{G}$ denote the coordinate ring of $G$ and let $I_e$ be the ideal defining the identity so that $\cO_G/I_e = \La$ and $I_e/I_e^2 \cong (\Lie (G))^{\vee}$.  Then $G(u^{p^i})$ is identified with 
$$
\{ Y \in \Hom_{\La} (\cO_G, W(\widetilde{\bE}^+)_A) \mid Y(I_e) \subset (\phz(\ft)u^{p^i}) \}.
$$

Conjugation by $C$ induces an automorphism of $G_{\fS_A[1/E(u)]}$.   Let $\Ad_{\cO_G}(C)^*:\cO_G \otimes_{\La} \fS_A[1/E(u)]  \ra \cO_G  \otimes_{\La} \fS_A[1/E(u)]$ be the corresponding map on coordinate rings.  The key observation is that
\begin{equation} \label{heightcontrol}
\Ad_{\cO_G}(C)^*(I_e \otimes 1) \subset \sum_{j \geq 1} I_e^j \otimes_{\La} E(u)^{-j} \fS_A.
\end{equation}
By successive approximation, one is reduced to studying the induced automorphism of $\oplus_{j \geq 0} (I_e^j/I_e^{j+1} \otimes_{\La} \fS_A[1/E(u)])$.  The $j$th graded piece is $\Sym^j(\Lie(G)^{\vee}) \otimes_{\La} \fS_A[1/E(u)]$ as a representation of $G(\fS_A[1/E(u)])$.  Since $\mu$ is minuscule, $\Lie(G) \otimes_{\La} \fS_A$ has height in $[-1, 1]$ and so $\Sym^j(\Lie(G)^{\vee} \otimes_{\La} \fS_A)$ has height in $[-j, j]$.  Thus, 
$$
\Ad_{\cO_G}(C)^*(\Sym^j(\Lie(G)^{\vee} \otimes_{\La} \fS_A) \subset E(u)^{-j} (\Sym^j(\Lie(G)^{\vee}) \otimes_{\La} \fS_A)
$$  
from which one deduces (\ref{heightcontrol}).   

Let $Y \in G(u^{p^i})$. Then $\phz(Y)(I_e) \subset \phz( \phz(\ft)u^{p^i}) \subset (\phz(E(u)) \phz(\ft) u^{p^{i+1}})$.    For any $x \in I_e$, 
$$
(\phz(C)\phz(Y) \phz(C)^{-1})(x) = (\phz(Y) \otimes 1)((1 \otimes \phz)(\Ad_{\cO_G}(C)^*(x)))
$$
which is a priori only in $W(\widetilde{\bE})_A$.  But since for any $b \in I_e^j$, $\phz(Y)(b)$ is divisible by $\phz(E(u))^j \phz(\ft)^j u^{jp^{i+1}}$, we have $\Ad(\phz(C))(\phz(Y))(x) \in  (\phz(\ft) u^{p^{i+1}})$ so $\phz(C)\phz(Y) \phz(C)^{-1}$ lies in $G(u^{p^{i+1}})$.
\end{proof}

By \cite[Corollary 1.3.15]{Fcrystals}, a $\Gamma_{\infty}$-representation coming from a finite height torsion-free Kisin module $\fM$ extends to a crystalline $\Gamma_K$-representation if and only if the canonical Frobenius equivariant connection on $\fM \otimes_{\fS} {\cO[1/\la]}$ has at most logarithmic poles. \cite[Proposition 2.2.2]{Fcrystals} states furthermore that if $\fM$ has height in $[0,1]$ then the condition of logarithmic poles is always satisfied.  The following lemma is a version of \cite[Proposition 2.2.2]{Fcrystals} for $G$-Kisin modules with minuscule type: 
\begin{lemma} \label{overF}  Let $F'/F$ be any finite extension containing $F_{[\mu]}$ and let $(\fP_{F'}, \phi_{F'})$ be any $G$-Kisin module over $F'$. Fix a trivialization of $\fP_{F'}$ and let $C \in G(\fS_{F'}[1/E(u)])$ be the Frobenius with respect to this trivialization.  If the $G$-filtration $\fD_{\fP_{F'}}$ over $K \otimes_{\Qp} F'$ defined before Lemma $\ref{Dx}$ has type $\mu$, then the right logarithmic derivative  $\frac{dC}{du} \cdot C^{-1} \in (\Lie G \otimes  \fS_{F'}[1/E(u)])$ has at most logarithmic poles along $E(u)$, i.e., lies in $E(u)^{-1}(\Lie G \otimes  \fS_{F'})$.
\end{lemma}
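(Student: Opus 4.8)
The plan is to reduce to a local computation at each zero of $E(u)$ and there bound the logarithmic derivative term by term, the crucial input being Proposition \ref{Adpole}. \emph{Reduction to the completed local rings.} First, the assertion is unchanged under enlarging $F'$: for a finite extension $F''/F'$ the map $\fS_{F'} \to \fS_{F''}$ is faithfully flat and $\Lie G \otimes_\La \fS_{F'}$ is $\fS_{F'}$-free, so an element of $\Lie G \otimes \fS_{F'}[1/E(u)]$ lies in $E(u)^{-1}(\Lie G \otimes \fS_{F'})$ as soon as it does after base change to $\fS_{F''}$. Thus I may assume $F'$ contains a Galois closure of $K$ and a splitting field of $G$. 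As in the proof of Lemma \ref{Dx}, the $E(u)$-adic completion $\widehat{\fS}_{F'}$ of $\fS_{F'}$ then decomposes as $\prod_{\psi : K \hookrightarrow F'} F'[[t_\psi]]$ with $t_\psi := u - \psi(\pi)$, and $E(u)$ is a unit times $t_\psi$ in the $\psi$-factor. Since $\frac{dC}{du} C^{-1}$ is visibly regular away from $V(E(u))$, and since $\Lie G \otimes \fS_{F'} = (\Lie G \otimes \fS_{F'}[1/E(u)]) \cap (\Lie G \otimes \widehat{\fS}_{F'})$ by the Beauville--Laszlo gluing lemma (cf. Lemma \ref{descent}), it suffices to show that in each factor $F'[[t_\psi]]$ the element $\frac{dC}{du} C^{-1}$ has at most a simple pole along $t_\psi$. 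Note finally that this pole condition is independent of the choice of trivialization: replacing the trivialization by its translate under $g \in G(\fS_{F'})$ changes $\frac{dC}{du} C^{-1}$ by $\Ad(g)$ of itself plus the regular term $\frac{dg}{du} g^{-1}$, both of which preserve the property of having at most a simple pole.

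\emph{The shape of the Frobenius.} Fix $\psi$ and write $t = t_\psi$, $C$ for the $\psi$-component of the Frobenius, and $' = d/dt$. As in the proof of Lemma \ref{Dx}, the pair $(\phz^*(\fP_{F'}), \text{trivialization})$ in the $\psi$-factor defines the point $z_\psi = [C] \in \Gr_{G_{F'}}(F')$ in the coordinate $t$, and — by the identification of $p$-adic Hodge type with relative position in the affine Grassmannian used in the proof of Theorem \ref{equivoftype} (via \cite[Proposition 3.5.11]{LevinThesis}) — the hypothesis that $\fD_{\fP_{F'}}$ has type $\mu$ forces $z_\psi \in S(\mu_\psi)$. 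Here each $\mu_\psi$ is a minuscule cocharacter of $G_{F'}$: indeed $\mu = (\mu_\psi)_\psi$ is minuscule if and only if every $\mu_\psi$ is, since $\Lie(\Res_{(K \otimes_{\Qp} F)/F} G_F) = \bigoplus_\psi \Lie G_{\overline{F}}$ with $\mu$ acting on the $\psi$-summand through $\mu_\psi$. Hence by Propositions \ref{smallest} and \ref{closedschubert}, $S(\mu_\psi) = S^0(\mu_\psi)$ is a single $L^+G_{F'}$-orbit, so $C = h_\psi \cdot \mu_\psi(t) \cdot k_\psi$ with $h_\psi, k_\psi \in G(F'[[t]])$, for a fixed dominant representative $\mu_\psi$ valued in a split maximal torus (replacing $\mu_\psi$ by a sign or Weyl conjugate is harmless below, as minusculeness is preserved).

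\emph{The computation and the main obstacle.} Dropping the subscript $\psi$ and applying the product rule,
\[
\frac{dC}{du}\, C^{-1} \;=\; h'h^{-1} \;+\; \Ad(h)\!\left(\mu(t)'\,\mu(t)^{-1}\right) \;+\; \Ad(h)\,\Ad(\mu(t))\!\left(k'k^{-1}\right).
\]
Since $h, k \in G(F'[[t]])$, both $h'h^{-1}$ and $k'k^{-1}$ lie in $\Lie G \otimes F'[[t]]$ and $\Ad(h)$ preserves this lattice, so the first term is regular. For the middle term, $\mu(t)'\,\mu(t)^{-1} = \tfrac1t\, d\mu(1) \in \tfrac1t \Lie G$, so its $\Ad(h)$-conjugate lies in $t^{-1}(\Lie G \otimes F'[[t]])$. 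For the last term, $\Ad(\mu(t))(k'k^{-1}) \in t^{-1}(\Lie G \otimes F'[[t]])$ by Proposition \ref{Adpole} — this is exactly where minusculeness of $\mu_\psi$ enters — and $\Ad(h)$ again preserves $t^{-1}(\Lie G \otimes F'[[t]])$. Therefore $\frac{dC}{du} C^{-1} \in t^{-1}(\Lie G \otimes F'[[t_\psi]]) = E(u)^{-1}(\Lie G \otimes F'[[t_\psi]])$ in each $\psi$-factor, which by the reduction above gives the lemma. I expect the only real difficulties to lie in the first two steps — matching the filtration-theoretic condition ``type $\mu$'' with the concrete form $C = h \mu(t) k$, and descending the pole bound from the completed factors back to $\fS_{F'}$ — rather than in the final computation. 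Note also that one cannot shortcut the argument by passing to the faithful representation $V$ and invoking Kisin's result for height-$[0,1]$ Kisin modules, since $\rho \circ \mu$ need not be minuscule for $\GL(V)$ (cf.\ Remark \ref{assume}); the intrinsic appeal to Proposition \ref{Adpole} is essential.
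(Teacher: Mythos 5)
Your proof is correct and takes essentially the same route as the paper: both decompose $\fS_{F'}$ at the zeros of $E(u)$, write the Frobenius in the local factor as $h\,\mu_\psi(t)\,k$ with $h,k \in G(F'[[t]])$ via the Cartan decomposition (equivalently, the fact that $S(\mu_\psi)=S^0(\mu_\psi)$ for $\mu_\psi$ minuscule), expand the logarithmic derivative by the product rule, and invoke Proposition \ref{Adpole} to bound the $\Ad(\mu_\psi(t))(k'k^{-1})$ term. Your extra remarks on enlarging $F'$, Beauville--Laszlo descent from the completed factors, and independence of trivialization are all sound, just left implicit in the paper's presentation.
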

\begin{proof} 
Choose an embedding $\sigma:K_0 \ra F'$.  Without loss of generality, we assume that $\sigma(E(u))$ splits in $F'$ and write $\sigma(E(u)) = (u- \psi_1(\pi))(u - \psi_2(\pi)) \ldots (u- \psi_e(\pi))$ over embeddings $\psi_i:K \ra F'$ which extend $\sigma$.  Let $C_{\sigma}$ denote the $\sigma$-component of $C$ under the decomposition of $\fS_{F'}[1/E(u)]$ as a $W \otimes_{\Zp} F' \cong \prod_{K_0 \ra F'} F'$-algebra.   We can furthermore compute the ``pole" at $\psi_i(\pi)$ by working in the completion at $u - \psi_i(\pi)$ which is isomorphic to $F'[\![t]\!]$ with $t = u - \psi_i(\pi)$.  

Let $\mu_{\psi_i} \in X_*(G_F)$ be the $\psi_i$-component of $\mu$.  Fix a maximal torus $T$ of $G_{F'}$ such that $\mu_{\psi_i}$ factors through $T$. The Cartan decomposition for $G(F'(\!(t)\!))$ combined with the assumption that $\fD_{\fP_{F'}}$ has type $\mu$ implies that 
$$
C_{\sigma} = B_i \mu_{\psi_i}(t) D_i
$$
where $B_i$ and $D_i$ are in $G(F'[\![t]\!])$ (see discussion before Proposition \ref{Adpole} for definition of $\mu_{\psi_i}(t)$).  Finally, we compute that $\frac{dC_{\sigma}}{du} C_{\sigma}^{-1}$ equals
$$
\frac{dB_i}{dt} B_i^{-1} + \Ad(B_i)\left(\frac {d\mu_{\psi_i}(t)}{dt} \mu_{\psi_i}(t)^{-1}\right)  +  \Ad(B_i) \left(\Ad (\mu_{\psi_i}(t)) \left(\frac{dD_i}{dt}  D_i^{-1}\right) \right).     
$$
We have $\frac{dB_i}{dt} B_i^{-1} \in (\Lie G \otimes F'[\![t]\!])$. Using a faithful representation on which $T$ acts diagonally, we have $\frac{d\mu_{\psi_i}(t)}{dt} \mu_{\psi_i}(t)^{-1} \in \frac{1}{t} (\Lie G \otimes F'[\![t]\!])$.   Finally, since $\mu_{\psi_i}$ is minuscule, $\Ad (\mu_{\psi_i}(t)) (X) \in \frac{1}{t} (\Lie G \otimes F'[\![t]\!])$ for any $X \in \Lie G$ so in particular for $\frac{dD_i}{dt} D_i^{-1}$ by Proposition \ref{Adpole}. 
\end{proof} 
\begin{proof}[Proof of Theorem \ref{keythm}]
The faithfulness of $\widehat{\Delta}^{\mu}$ is clear.  For fullness, let $\widehat{\fP}_A, \widehat{\fP}_A' \in D^{\text{cris}, \mu}_{\widehat{\fP}_{\F}}(A)$ and let $\psi:\fP_A \cong \fP'_A$ be an isomorphism of underlying $G$-Kisin modules.  To show $\psi$ is equivariant for the $\widehat{\Gamma}$-actions, we can identify $\fP_A$ and $\fP_A'$ using $\psi$ and choose a trivialization of $\fP_A$.  Then, it suffices to show that $(\fP_A, \phi_{\fP_A})$ has at most one crystalline $\widehat{\Gamma}$-structure.  Let $B_{\tau}$ and $B'_{\tau}$ in $G(W(\widetilde{\bE}^+)_A)$ define the action of $\tau$ with respect to the chosen trivialization of $\phz^*(\fP_A)$ for the two $\widehat{\Gamma}$-structures.  By the crystalline property, $B_{\tau} (B'_{\tau})^{-1} \in G(u^{p})$.  By Proposition \ref{commconcrete} if Frobenius is given by $C'$ with respect to the trivialization, 
$$
B_{\tau} (B'_{\tau})^{-1} = C' \phz(B_{\tau} (B'_{\tau})^{-1}) (C')^{-1}.
$$
But then by Lemma \ref{conjugation},  $B_{\tau} (B'_{\tau})^{-1}  = I$ since it is in $G(u^{p^i})$ for all $i \geq 1$.  

We next attempt to construct a crystalline $\widehat{\Gamma}$-structure on any $\fP_A \in D^{\mu}_{\fP_{\F}}(A)$.  Along the way, we will have to impose certain closed conditions on $D^{\mu}_{\fP_{\F}}$ to make our construction work.  In the end, we will reduce to $A$ flat over $\Zp$ to show that these conditions are always satisfied. Fix a trivialization $\beta_A$ of $\fP_A$. We want elements $\{ B_{\gamma} \} \in G(\widehat{R}_A)$ for all $\gamma \in \widehat{\Gamma}$ satisfying the conditions from Proposition \ref{Gcommconcrete}. Choose an element $\gamma \in \widehat{\Gamma}$.  Let $C$ denote the Frobenius with respect to $\beta_A$ and let $C' = \phz(C)$ be the Frobenius with respect to $1 \otimes_{\phz} \beta_A$.

We use the topology on $G(W(\widetilde{\bE})_A)$ induced from the topology on $W(\widetilde{\bE})_A$ (see the discussion before Definition \ref{hatmodule}). Take $B_0 = I$. For all $i \geq 1$,  define
\begin{equation} \label{commXY}
B_i := C' \phz(B_{i-1}) \gamma(C')^{-1} \in G(W(\widetilde{\bE})_A).
\end{equation}
If $\fP_A$ admits a $\widehat{\Gamma}$-structure, then the $B_i$ converge to $B_{\gamma}$ in $G(\widehat{R}_A)$ or equivalently in $G(W(\widetilde{\bE})_A)$. 

\textbf{Base case:}  $B_1 = C' \gamma(C')^{-1} \in G(u^{p}).$  Let $V$ be a faithful $n$-dimensional representation of $G$ such that $\fP_A(V)$ has height in $[a,b]$. Set $r = b-a$. Consider $C$ as an element of $\GL_n(\fS_A[1/E(u)])$ such that
$$
C'' := E(u)^{-a} C \in \Mat_n(\fS_A) \text{ and } D'' := E(u)^b C^{-1} \in \Mat_n(\fS_A)
$$  
with $C'' D'' = E(u)^{r} I$. Working in $\Mat_n( W(\widetilde{\bE})_A)$, we compute that
$$
C' \gamma(C')^{-1} - I = \phz  \left ( \frac{1}{E(u)^{-a} \gamma(E(u))^b} (C'' \gamma(D'') - E(u)^{-a} \gamma(E(u))^b I) \right ).
$$
It would suffice then to show that $u \phz(\ft) E(u)^{r - 1}$ divides $C'' \gamma(D'') - E(u)^{-a} \gamma(E(u))^b I$ in $\Mat_n( W(\widetilde{\bE}^+)_A)$ as then $u \ft$ divides  $\frac{1}{E(u)^{-a} \gamma(E(u))^b} (C'' \gamma(D'') - E(u)^{-a} \gamma(E(u))^b I)$ using Lemma \ref{tdiv}.

Consider $P(u_1, u_2) = C''(u_1) D''(u_2)$ where we replace $u$ by $u_1$ in $C'' \in \Mat_n(\fS_A)$ and $u$ in $u_2$ for $D''$.  Let $P_{ij}(u_1, u_2) = \sum_{k \geq 0} c_k^{ij}(u_1) u_2^k$ be the $(i,j)$th entry where $c_k^{ij}(u_1)$ is  a power series in $u_1$ with coefficients in $W \otimes_{\Zp} A$.   We have that $P_{ij}(u,u) = \delta_{ij} E(u)^r$.    The $(i, j)$th entry of $C'' \gamma(D'')$ is 
$$
P_{ij}(u, [\underline{\eps}] u) = \sum_{k \geq 0} [\underline{\eps}] ^k c_k^{ij}(u) u^k
$$
where $\underline{\eps} = (\zeta_{p^i})_{i \geq 0}$ is the sequence of $p^n$-th roots of unity such that $\gamma(\pi^{1/p^n}) = \zeta_{p^n} \pi^{1/p^n}$.  Note that $ \phz(\ft)$ divides $[\underline{\eps}] - 1$ since $[\underline{\eps}] - 1 \in I^{[1]} W(\widetilde{\bE}^+)$ (see \cite[Proposition 5.1.3]{FonPP}) and $\phz(\ft)$ is a generator for this ideal.  Then, 
$$
P_{ij}(u, [\underline{\eps}] u) = \sum_{k \geq 0} ([\underline{\eps}]^k - 1) c_k^{ij}(u) u^k  + \delta_{ij} E(u)^r.
$$
 Since $u ([\underline{\eps}] - 1) E(u)^{r-1}$ divides $E(u)^r - E(u)^{-a} \gamma(E(u))^b$, it suffices to show that $u ([\underline{\eps}] - 1) E(u)^{r-1}$ divides  $\sum_k ([\underline{\eps}]^k - 1) c_k^{ij}(u) u^k$.  Using the Taylor expansion for $x^k - 1$ at $x = 1$, we have
$$
[\underline{\eps}]^k - 1 = \sum_{\ell=1}^{k} \binom{k}{\ell} ([\underline{\eps}] - 1)^{\ell}
$$
from which we deduce that
$$
 \sum_{k \geq 0} ([\underline{\eps}]^k - 1) c_k^{ij}(u) u^k = u ([\underline{\eps}] - 1) \left( \sum_{\ell \geq 1}  ([\underline{\eps}] - 1)^{\ell -1} u^{\ell - 1} \sum_{k \geq 0} \binom{k+\ell}{\ell} c_{k +\ell}^{ij}(u) u^{k} \right)
$$
Since $E(u)$ divides $[\underline{\eps}] - 1$, we are reducing to showing that $$E(u)^{r-\ell} \mid u^{\ell - 1} \sum_{k \geq 0} \binom{k+\ell}{\ell} c_{k +\ell}^{ij}(u) u^{k}$$ for $1 \leq \ell \leq r -1$ where the expression on the right is exactly  $\frac{u^{\ell - 1}}{\ell !} \left( \frac{d^{\ell} P_{ij}(u_1, u_2)}{du_2^{\ell}}|_{(u, u)} \right)$.   

Let $(\star_1)$ be the condition that $E(u)^{r-\ell}$ divides $\frac{d^{\ell} P_{ij}(u_1, u_2)}{du_2^{\ell}}|_{(u, u)}$ for all $(i, j)$ and $1 \leq \ell \leq r -1$.   This is a closed condition on $D^{\mu}_{\fP_{\F}}$.   
 
\textbf{Induction step:} Let $\fP_A \in D^{\mu}_{\fP_{\F}}(A)$ satisfying $(\star_1)$ with trivialization as above so that $B_1 = C' \gamma(C')^{-1} \in G(u^{p}).$  We have
$$
B_{i+1} B_i^{-1} = C \phz(B_i B_{i-1}^{-1}) C^{-1}.
$$
As $C = \phz(C')$, we can apply Lemma \ref{conjugation} to conclude that $B_{i+1} B_i^{-1} \in G(u^{p^{i+1}})$, i.e., $B_{i+1} B_i^{-1} \equiv I \mod \phz(\ft) u^{p^{i+1}} W(\widetilde{\bE}^+)_A$.   Since $W(\widetilde{\bE}^+)_A$ is separated and complete, $\varinjlim B_i = B_{\gamma} \in G(W(\widetilde{\bE}^+)_A)$ and $B_{\gamma}$ satisfies $B_{\gamma} \gamma(C) = C \phz(B_{\gamma})$.  It is easy to see that for any $\gamma, \gamma'$,  $B_{\gamma} \gamma'(B_{\gamma}) = B_{\gamma \gamma'}$ by continuity so we have a $\widehat{\Gamma}$-action.   If $\gamma \in \widehat{\Gamma}_{\infty}$, then $\gamma$ acts trivially on $\fS_A$ and so on $C$ as well so $B_{\gamma} = I$.

Let $(\star_2)$ denote the condition that $B_{\gamma} \in G(\widehat{R}_A)$ for all $\gamma \in \widehat{\Gamma}$.  We claim this is also a closed condition on $D^{\mu}_{\fP_{\F}}$.  Since $W(\widetilde{\bE}^+)/\widehat{R}$ is $\Zp$-flat, the sequence 
$$
0 \ra \widehat{R}_A \ra W(\widetilde{\bE}^+)_A \ra (W(\widetilde{\bE}^+)/\widehat{R}) \otimes_{\Zp} A \ra 0
$$
is exact for any $A$.  Any flat module over an Artinian ring is free so vanishing of an element $f \in (W(\widetilde{\bE}^+)/\widehat{R}) \otimes_{\Zp} A$ is a closed condition on $\Spec A$.  

We have shown that any element $\fP_A \in D^{\mu}_{\fP_{\F}}(A)$ which satisfies $(\star_1)$ and $(\star_2)$ admits a crystalline $\widehat{\Gamma}$-structure and so lies in $D^{\text{cris}, \mu}_{\widehat{\fP}_{\F}}(A)$.  It suffices then to show that the closed subgroupoid defined by the conditions $(\star_1)$ and $(\star_2)$ is all of $D^{\mu}_{\fP_{\F}}$.  Recall that $D^{\mu}_{\fP_{\F}}$ admits a formally smooth representable hull $D^{(N), \mu}_{\fP_{\F}} = \Spf R^{(N), \mu}_{\fP_{\F}}$ where $R^{(N), \mu}_{\fP_{\F}}$ is flat and reduced by Theorem \ref{locmodels} and Proposition \ref{musmoothfinite}. Since $R^{(N), \mu}_{\fP_{\F}}$ is flat and $R^{(N), \mu}_{\fP_{\F}}[1/p]$ is reduced and Jacobson, any closed subscheme of $\Spec R^{(N), \mu}_{\fP_{\F}}$ which contains $\Hom_{\La} (R^{(N), \mu}_{\fP_{\F}}, F')$ for all $F'/F$ finite is the whole space.  It suffices then to show that for any $F'/F$ finite and $\La'$ the ring of integers of $F'$ every object of  $D^{\mu}_{\fP_{\F}}(\La')$ satisfies $(\star_1)$ and $(\star_2)$.   

For $(\star_1)$, choose $\gamma \in \widehat{\Gamma}$.  Then, set $Q_{\ell}(u) :=  \left( \frac{d^{\ell} P_{ij}(u_1, u_2)}{du_2^{\ell}}|_{(u, u)} \right) \in \Mat_n(\fS_{\La'})$ (we ignore $\frac{u^{\ell - 1}}{\ell !}$ since we are in the torsion-free setting).  We can check that $E(u)^{r-\ell} \mid Q_{\ell}(u)$ working over $F' = \La'[1/p]$ or any finite extension thereof.  In particular, we can put ourselves in the situation of Lemma \ref{overF}.  We compute then that
\begin{equation*}
\begin{split}
Q_{\ell}(u) &=  (E(u)^{-a} C) \frac{d^{\ell}}{du^{\ell}} (E(u)^b C^{-1})\\
&= (E(u)^{-a} C) \sum^{\ell}_{m = 0} \binom{\ell}{m} \frac{d^{m} E(u)^b}{du^{m}}   \frac{d^{\ell - m} C^{-1}}{du^{\ell - m}}\\
&= \sum^{\ell}_{m = 0} \binom{\ell}{m} \left( E(u)^{-a} \frac{d^{m} E(u)^b}{du^{m}}\right)  \left(C  \frac{d^{\ell - m} C^{-1}}{du^{\ell - m}}\right).
\end{split}
\end{equation*}
Since $E(u)^{r - m}$ divides $E(u)^{-a} \frac{d^{m} E(u)^b}{du^{m}}$, it suffices to show that 
$$
Y_k := E(u)^k \left(C \frac{d^{k} C^{-1}}{du^{k}}\right) \in \Mat_n(\fS_{F'})
$$
for all $k \geq 0$ (applied with $k = \ell - m$).  The case $k = 0$ is trivial.  By Lemma \ref{overF},  $X_C := E(u) \frac{dC}{du} C^{-1} = - E(u) C \frac{d (C^{-1})}{du}$ is an element of $\Lie G \otimes \fS_{F'}$ considered as subset of $\Lie (\GL(V)) \otimes \fS_{F'}$ so in particular $Y_1 \in \Mat_n(\fS_{F'})$.  The product rule applied to $\frac{d}{du}(E(u)^k C \frac{d^{k-1} C^{-1}}{du})$ implies that
$$
Y_k = \frac{d}{du} (E(u) Y_{k-1}) - k \frac{d E(u)}{du} Y_{k-1} + Y_1 Y_{k -1}
$$ 
so by induction on $k$, $Y_k \in \Mat_n(\fS_{F'})$ for all $k \geq 0$.

For $(\star_2)$, recall that $\widehat{R} = R_{K_0} \cap W(\widetilde{\bE}^+)$ (see pg. 5 of \cite{Liu1}) so it suffices to show that $B_{\gamma} \in G(R_{K_0} \otimes_{\Zp} \La')$ or equivalently $B_{\gamma} \in \GL_n(R_{K_0} \otimes_{\Zp} \La')$ with respect to $V$.  Denote by $\fM_V$ the Kisin module $\fP_{\La'}(V)$ of rank $n$ . Since $\phz(E(u))$ is invertible in $S_{K_0} $, $C'$ lies in $\GL_n(S_{K_0} \otimes_{\Zp} \La')$ and defines a Frobenius on the Breuil module $\cM_V := S_{K_0} \otimes_{\fS, \phz} \fM_V $. Using a similar argument to above, one can construct the monodromy operator $N_{\cM_V}$ on $\cM_V$ inductively taking $N_0 = 0$ and setting 
\begin{equation} \label{Ncomm}
N_{i+1} := p C' \phz(N_i)( C')^{-1}  + u \frac{dC'}{du} (C')^{-1}.
\end{equation}
The sequence $\{ N_i \}$ converges to an element of $\Mat_n(u^p S_{K_0})$. For each $N_i$, let $\widetilde{N}_i$ be the induced derivation on $\cM_V$ over $-u \frac{d}{du}$ which on the chosen basis is given by $N_i$. Equation (\ref{Ncomm}) is equivalent to
\begin{equation} \label{Ncomm2}
\widetilde{N}_{i+1} \phi_{\cM_V} = p \phi_{\cM_V} \widetilde{N}_{i}.
\end{equation}
Let  $\underline{\eps}(\gamma) := \gamma([\underline{\pi}] )/[\underline{\pi}]$. Define a $\gamma$-semilinear map $\widetilde{B}_i$ on $R_{K_0} \otimes_{S_{K_0}} \cM_V$ by 
$$
\widetilde{B}_i(x) = \sum_{j \geq 0} \frac{(- \log \underline{\eps}(\gamma))^j}{j!}\otimes (\widetilde{N}_i)^j (x) 
$$
for all $x \in \cM_V$. Equation (\ref{Ncomm2}) implies that
$$
\widetilde{B}_{i + 1} \phi_{\cM_V} = \phi_{\cM_V} \widetilde{B}_i.
$$
By induction on $i$, one deduces that $\widetilde{B}_i$ is exactly the $\gamma$-semilinear morphism induced by the matrix $B_i$ defined in (\ref{commXY}).   

If $N_{\cM_V}$ is the limit of the $\widetilde{N}_i$ and $\widetilde{B}_{\gamma}$ is the $\gamma$-semilinear morphism induced by $B_{\gamma}$, then we have the following formula
$$
\widetilde{B}_{\gamma}(x) := \sum_{j \geq 0} \frac{(- \log \underline{\eps}(\gamma))^j}{j!}\otimes N_{\cM}^j (x)
$$
for all $x \in \cM_V$.   Working with respect to the chosen basis for $\cM_V$, we deduce that $B_{\gamma} \in \GL_n(R_{K_0} \otimes_{\Zp} \La')$ as desired.
\end{proof} 

\subsection{Applications to $G$-valued deformation rings} 

Let $\overline{\eta}:\Gamma_K \ra G(\F)$ be a continuous representation.  As before, $\mu$ is a minuscule geometric cocharacter of $\Res_{(K \otimes_{\Qp} F)/F} G_F$. Let $R^{\cris, \mu}_{\overline{\eta}}$ be the univeral $G$-valued framed crystalline deformation ring with $p$-adic Hodge type $\mu$ over $\La_{[\mu]}$.  Let $X^{\cris, \mu}_{\overline{\eta}}$ be the projective $R^{\cris, \mu}_{\overline{\eta}}$-scheme as in Corollary \ref{cristypes}.  The following theorem on the geometry of $X^{\cris, \mu}_{\overline{\eta}}$ has a number of important corollaries.  The proof uses the main results from \S 3.2 and \S 4.2. We can say more about the connected components when $K$ is unramified over $\Qp$ (see Theorem \ref{connectedunramified}).  

\begin{thm} \label{main} Assume $p \nmid \pi_1(G^{\der})$. Let $\mu$ be a minuscule geometric cocharacter of $\Res_{(K \otimes_{\Qp} F)/F} G_F$. Then $X^{\cris, \mu}_{\overline{\eta}}$ is normal and $X^{\cris, \mu}_{\overline{\eta}} \otimes_{\La_{[\mu]}} \F_{[\mu]}$ is reduced. 
\end{thm}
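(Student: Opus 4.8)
The plan is to reduce the global geometric statement about $X^{\cris, \mu}_{\overline{\eta}}$ to the already-established geometry of the Pappas--Zhu-type local model $M(\mu)$ for the Weil-restricted group $\Res_{(K \otimes_{\Qp} F)/F} G_F$ (Theorem \ref{locmodels}), via the smooth equivalences built in \S 3 and \S 4. Since normality and reducedness-of-special-fiber are local properties that are insensitive to passing along formally smooth morphisms (a map that is flat with geometrically regular fibers reflects and preserves normality and reducedness), it suffices to check these properties on the completed local rings $\widehat{\cO}^{\mu}_{\overline{x}}$ at every closed point $\overline{x} \in X^{\cris, \mu}_{\overline{\eta}}(\F')$. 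By Corollary \ref{cristypes}, such a point corresponds to a $G$-Kisin lattice $\fP_{\F'}$, and $\Spf \widehat{\cO}^{\mu}_{\overline{x}} \to D^{\cris, \mu}_{\overline{x}}$ is a closed immersion which is an isomorphism modulo $p$-power torsion. So the first order of business is to identify $D^{\cris, \mu}_{\overline{x}}$ with $D^{\mu}_{\fP_{\F'}}$ and then to kill the $p$-power torsion.

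First I would observe that Theorem \ref{keythm} gives an equivalence $\widehat{\Delta}^{\mu}: D^{\mathrm{cris}, \mu}_{\widehat{\fP}_{\F'}} \xrightarrow{\sim} D^{\mu}_{\fP_{\F'}}$, and combined with Proposition \ref{criscris} (crystalline $(\phz,\widehat{\Gamma})$-modules with $G$-structure correspond to crystalline $G$-representations of $\Gamma_K$), this says that deforming $\fP_{\F'}$ as a $G$-Kisin module of type $\mu$ is \emph{the same} as deforming the associated crystalline $\Gamma_K$-representation together with a compatible $G$-Kisin lattice. Unwinding the definition of $D^{\cris, \mu}_{\overline{x}}$ in Corollary \ref{cristypes}, this yields a formally smooth (indeed, after choosing trivializations, an isomorphism up to the framing variables) comparison $D^{\cris, \mu}_{\overline{x}} \to D^{\mu}_{\fP_{\F'}}$: the framing contributes a smooth factor, and the crystalline deformation ring already records exactly the crystalline representations of type $\mu$. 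Then the chain of formally smooth morphisms (\ref{musmoothmod}),
$$
D^{\mu}_{\fP_{\F'}} \xleftarrow{\pi^{\mu}} \widetilde{D}^{(\infty),\mu}_{\fP_{\F'}} \xrightarrow{\Psi^{\mu}} \overline{D}^{\mu}_{Q_{\F'}},
$$
together with the fact (Proposition \ref{musmoothfinite}, \ref{mudescent}) that $\overline{D}^{\mu}_{Q_{\F'}}$ is represented by the completed local ring of $M(\mu)$ at an $\F'$-point, transports normality and reducedness-of-special-fiber from $M(\mu)$ to $D^{\mu}_{\fP_{\F'}}$, hence to $D^{\cris,\mu}_{\overline{x}}$. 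This is where the hypothesis $p \nmid \pi_1(G^{\der})$ enters, since it is exactly what is needed for Theorem \ref{locmodels} to apply.

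The remaining issue, and what I expect to be the main obstacle, is the passage from $D^{\cris, \mu}_{\overline{x}}$ back to $\widehat{\cO}^{\mu}_{\overline{x}}$ itself: Corollary \ref{cristypes} only gives that the surjection $R^{\cris,\mu}_{\overline{x}} \to \widehat{\cO}^{\mu}_{\overline{x}}$ has $p$-power torsion kernel. To conclude normality and reduced special fiber for $\widehat{\cO}^{\mu}_{\overline{x}}$ I would argue as follows: $R^{\cris,\mu}_{\overline{x}}$ is (by the transport above) normal and $\La$-flat with $M(\mu)$-type special fiber, in particular reduced and $p$-torsion-free; a $\La$-flat normal domain has no $p$-power torsion in any quotient by a $p$-torsion-free ideal, so in fact the kernel, being simultaneously $p$-power torsion and a submodule of a $p$-torsion-free module, must vanish once one knows $\widehat{\cO}^{\mu}_{\overline{x}}$ is $\La$-flat. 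Flatness of $\widehat{\cO}^{\mu}_{\overline{x}}$ over $\La$ is built into the definition of $X^{\cris,\mu}_{\overline{\eta}}$ as a flat closure (Definition \ref{leqmu}), so the two rings coincide, and the geometric conclusions descend. One then globalizes: since $X^{\cris, \mu}_{\overline{\eta}}$ is proper over $\Spec R^{\cris,\mu}_{\overline{\eta}}$ and normality/reducedness of the special fiber have now been checked at all closed points of the special fiber — and $X^{\cris,\mu}_{\overline{\eta}}$ is $\La_{[\mu]}$-flat with Jacobson, excellent generic and special fibers — these properties hold on the whole scheme. The one genuinely delicate bookkeeping point is making sure the identification $D^{\cris,\mu}_{\overline{x}} \cong (\text{smooth factor}) \times D^{\mu}_{\fP_{\F'}}$ is compatible with the isomorphism $\Theta[1/p]$ and with the flat-closure operation defining $X^{\cris,\mu}_{\overline{\eta}}$, so that no spurious non-reduced or non-normal locus is introduced when one algebraizes; this uses Theorem \ref{equivoftype} exactly as in the proof of Corollary \ref{cristypes} and the reducedness of the generic fiber of the crystalline deformation ring.
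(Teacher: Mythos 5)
Your overall architecture is correct — reduce to completed local rings, invoke Theorem \ref{keythm} and the smooth modification (\ref{musmoothmod}) to transport the geometry of $M(\mu)$ from Theorem \ref{locmodels}, and kill the $p$-power torsion kernel using $\La$-flatness — and you have correctly identified all the major inputs. However, there is a genuine gap in the claimed ``transport'' step, namely the assertion that $D^{\cris,\mu}_{\overline{x}} \to D^{\mu}_{\fP_{\F'}}$ is formally smooth, or that $R^{\cris,\mu}_{\overline{x}}$ is normal and $\La$-flat. Recall that an object of $D^{\cris,\mu}_{\overline{x}}(A)$ includes the datum of a map $y:R^{\cris,\leq\mu}_{\overline{\eta}}\to A$, i.e., the requirement that the associated $\Gamma_K$-deformation $\eta_y$ actually factor through the crystalline deformation ring. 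For Artinian $A$, this is \emph{not} an intrinsic property of $\eta_y$ that the $\widehat{\Gamma}$-structure theory hands you: $R^{\cris,\leq\mu}_{\overline{\eta}}$ is defined as a flat closure of generic-fiber components, so ``lying in $\Spf R^{\cris,\leq\mu}_{\overline{\eta}}$'' is a closed condition that cannot simply be deformed along a surjection $A'\twoheadrightarrow A$. Theorem \ref{keythm} and Proposition \ref{criscris} give you a canonical crystalline $\widehat{\Gamma}$-extension of any $\fP_A\in D^{\mu}_{\fP_{\F'}}(A)$ to a $\Gamma_K$-object, but they do not tell you that the resulting deformation lands in the flat-closure ring; for $F'$-points this follows from Theorem \ref{GLiu} and Corollary \ref{cristypes}, but not for general Artinian $A$.

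The paper avoids this by introducing an auxiliary groupoid $D^{\text{cris},\mu,\square}_{\widehat{\fP}_{\F}}$ (Definition \ref{framedversions}) of framed deformations of the $(\phz,\widehat{\Gamma})$-module with $G$-structure, in which one does \emph{not} impose the $R^{\cris,\leq\mu}_{\overline{\eta}}$-factorization condition, only that the framed $\Gamma_K$-deformation be identified with $\widehat{T}_{G,A}$ of the $\widehat{\Gamma}$-module. This groupoid \emph{is} manifestly formally smooth over $D^{\cris,\mu}_{\widehat{\fP}_{\F}}\cong D^{\mu}_{\fP_{\F}}$ (framing adds a smooth factor), hence $\La$-flat, normal, reduced special fiber by the chain (\ref{musmoothmod}) and Theorem \ref{locmodels}. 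One then shows $D^{\text{cris},\mu,\square}_{\widehat{\fP}_{\F}}$ and $\Spf\widehat{\cO}^{\mu}_{\overline{x}}$ coincide: both sit as closed subfunctors of $D^{[a,b]}_{\overline{x}}$ (using Proposition \ref{hatfaithful} for the former), they have identical $F'$-points for all finite $F'/F$ (by Theorem \ref{GLiu} and Corollary \ref{cristypes}), $\widehat{\cO}^{\mu}_{\overline{x}}$ is $\La$-flat with formally smooth generic fiber, and $D^{\text{cris},\mu,\square}_{\widehat{\fP}_{\F}}$ is $\La$-flat; from this one deduces equality and hence the desired geometry of $\widehat{\cO}^{\mu}_{\overline{x}}$. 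Your proposal also omits the preliminary step of producing the crystalline $\widehat{\Gamma}$-structure $\widehat{\fP}_{\F}$ on $\fP_{\F}$ in the first place (the paper obtains it by lifting an $F'$-point and applying Proposition \ref{criscris}, then invoking uniqueness from Proposition \ref{hatfaithful}); this is needed before any of the $\widehat{\Gamma}$-deformation groupoids are even defined.
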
 

\begin{cor} \label{connectedcomponents} Assume $p \nmid \pi_1(G^{\der})$. Let $X^{\cris, \mu}_{\overline{\eta}, 0}$ denote the fiber of $X^{\cris, \mu}_{\overline{\eta}}$ over the closed point of $\Spec R^{\cris, \mu}_{\overline{\eta}}$.  The connected components of  $\Spec R^{\cris, \mu}_{\overline{\eta}}[1/p]$ are in bijection with the connected components of $X^{\cris, \mu}_{\overline{\eta}, 0}$. 
\end{cor}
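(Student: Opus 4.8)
The plan is to realise the asserted bijection as the composite
\[
\pi_0\bigl(\Spec R^{\cris,\mu}_{\overline{\eta}}[1/p]\bigr)\;\xrightarrow{\ \sim\ }\;\pi_0\bigl(X^{\cris,\mu}_{\overline{\eta}}\bigr)\;\xrightarrow{\ \sim\ }\;\pi_0\bigl(X^{\cris,\mu}_{\overline{\eta},0}\bigr),
\]
the first bijection passing through the generic fibre of the resolution $\Theta$ and the second being a theorem-on-formal-functions argument over the complete local ring $R:=R^{\cris,\mu}_{\overline{\eta}}$. The essential inputs are Theorem~\ref{main} (normality of $X^{\cris,\mu}_{\overline{\eta}}$) and the fact, recorded in Corollary~\ref{cristypes} via Propositions~\ref{resolution}, \ref{fhimage} and Theorem~\ref{stlocus}, that $\Theta[1/p]$ is an isomorphism onto $\Spec R[1/p]$; beyond these everything is formal. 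I will also use that, $\mu$ being minuscule, type $\le\mu$ coincides with type $\mu$ (Proposition~\ref{smallest}), so that $X^{\cris,\mu}_{\overline{\eta}}=X^{\cris,\le\mu}_{\overline{\eta}}$ is, by its construction in Definition~\ref{leqmu}, $\La_{[\mu]}$-flat and projective over $R$.

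For the first bijection: since $\Theta[1/p]$ is an isomorphism, it induces a bijection $\pi_0(\Spec R[1/p])\cong\pi_0(X^{\cris,\mu}_{\overline{\eta}}[1/p])$. Write $X:=X^{\cris,\mu}_{\overline{\eta}}$. Normality (Theorem~\ref{main}) together with local Noetherianity forces $X$ to be a finite disjoint union $X=\coprod_i X_i$ of integral normal schemes. Each $X_i$ is open and closed in $X$, hence $\La_{[\mu]}$-flat, hence --- being integral and nonzero --- dominates $\Spec\La_{[\mu]}$; therefore its generic fibre $X_i[1/p]$ is a nonempty dense open subscheme of $X_i$, so is irreducible and in particular connected. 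Thus $\pi_0(X)=\pi_0(\coprod_i X_i[1/p])=\pi_0(X[1/p])$, and combining with the previous sentence gives the first bijection. Here normality is genuinely used: a connected non-normal $\La_{[\mu]}$-flat scheme can acquire a disconnected generic fibre.

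For the second bijection I would pass to the Stein factorisation $X\to\Spec B\to\Spec R$ of the projective --- hence proper --- morphism $\Theta$, where $B=\Gamma(X,\mathcal{O}_X)$ is finite over the complete local Noetherian ring $R$; its idempotents correspond to the connected components of $X$, so $B=\prod_i B_i$ with $\Spec B_i$ exactly the $X_i$. Since $X_i$ is integral and finite over the complete local ring $R$, the factor $B_i=\Gamma(X_i,\mathcal{O}_{X_i})$ is a complete local normal domain, the map $X_i\to\Spec B_i$ is surjective with connected fibres, and $\Spec B_i\to\Spec R$ is local, so the closed point of $\Spec R$ lies in the image of each $X_i$ and the fibre of $X_i$ over it is nonempty. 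By the theorem on formal functions, $\varprojlim_n\Gamma(X_i,\mathcal{O}_{X_i}/\mathfrak{m}_{B_i}^n)$ is the completion $\widehat{B_i}=B_i$, which is local; hence a nontrivial idempotent of the global sections of the closed fibre $X_i\times_{B_i}\kappa(B_i)$ would lift, through the tower with nilpotent transition kernels, to a nontrivial idempotent of $B_i$ --- impossible. So $X_i\times_{B_i}\kappa(B_i)$ is connected, and therefore so is its finite nilpotent thickening, the fibre of $X_i$ over the closed point of $\Spec R$. Since $X^{\cris,\mu}_{\overline{\eta},0}$ is the disjoint union over $i$ of these fibres, we obtain $\pi_0(X^{\cris,\mu}_{\overline{\eta},0})=\{i\}=\pi_0(X)$, which closes the chain.

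The hard part is not in this corollary at all but upstream in Theorem~\ref{main}: the normality of $X^{\cris,\mu}_{\overline{\eta}}$, which is where the Pappas--Zhu local models and the $(\varphi,\widehat{\Gamma})$-module equivalence of \S4.2 enter. Granting that, the only points to watch are that the flat-closure construction of Definition~\ref{leqmu} really delivers $\La_{[\mu]}$-flatness and projectivity over $R$ (so that both identifications apply), and the routine bookkeeping of the Stein factorisation over the non-Dedekind base $R$. I would also remark that reducedness of the special fibre in Theorem~\ref{main}, while available, is not needed for the present bijection; its role elsewhere is to render the target $X^{\cris,\mu}_{\overline{\eta},0}$, a projective scheme over a finite field, reduced and hence tractable for the explicit determination of connected components in applications.
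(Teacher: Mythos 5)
Your proof is correct and shares the overall skeleton of the paper's one-line argument --- pass through the generic fibre of $\Theta$ and then relate $\pi_0$ of $X:=X^{\cris,\mu}_{\overline{\eta}}$ to $\pi_0$ of the closed fibre --- but you substitute a different key ingredient in the middle. The paper handles the comparison $\pi_0(X[1/p])\cong\pi_0(X^{\cris,\mu}_{\overline{\eta},0})$ in one stroke by citing the ``reduced fibre trick'' of \cite[2.4.10]{MFFGS}, whose hypotheses are precisely $\La$-flatness of $X$ together with reducedness of $X\otimes_{\La_{[\mu]}}\F_{[\mu]}$: if $e$ is an idempotent on $X[1/p]$ cutting out a clopen piece and $n$ is minimal with $p^n e\in\Gamma(X,\cO_X)$, then $(p^n e)^2=p^n(p^n e)$ forces $p^n e$ to be nilpotent modulo $p$, hence zero modulo $p$ by reducedness, hence divisible by $p$ by flatness, so $n=0$; thus idempotents on the generic fibre already come from $X$, and the formal-functions step then identifies $\pi_0(X)$ with $\pi_0(X_0)$ just as in your second bijection. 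You instead invoke normality of $X$ from Theorem~\ref{main}: a connected normal Noetherian $\La$-flat scheme is integral, so its generic fibre is a dense open of an irreducible scheme and therefore connected. Both properties are supplied by Theorem~\ref{main}, so the trade is cost-free; what your route buys is independence from the Kisin citation, at the price of re-deriving the formal-functions comparison, which you do correctly (Stein factorization over the complete local base, lifting idempotents through a tower with square-zero kernels, and noting that the fibre over the closed point of $\Spec R$ is a nilpotent thickening of the fibre over the closed point of $\Spec B_i$). One caveat: your closing remark that reducedness of the special fibre ``is not needed'' is true of \emph{your} argument but not of the paper's, where it is exactly the hypothesis the cited lemma consumes.
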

\begin{proof}
By Theorem \ref{stlocus}, $\Spec R^{\cris, \mu}_{\overline{\eta}}[1/p] = X^{\cris, \mu}_{\overline{\eta}}[1/p].$ Since $X^{\cris, \mu}_{\overline{\eta}} \otimes_{\La} \F$ is reduced (by Theorem \ref{main}), the bijection between $\pi_0(X^{\cris, \mu}_{\overline{\eta}}[1/p])$ and $\pi_0(X^{\cris, \mu}_{\overline{\eta},0})$ follows from the ``reduced fiber trick'' \cite[2.4.10]{MFFGS}.
\end{proof}

\begin{rmk} Both Theorem \ref{main} and Corollary \ref{connectedcomponents} hold for unframed $G$-valued crystalline deformation functors when they are representable by exactly the same arguments.
\end{rmk}

Before we begin the proof, we introduce a few auxiliary deformation groupoids.  The relationship between various deformation spaces is described in the diagram below.  Let $D^{\square}_{\overline{\eta}}$ be the deformation functor of $\overline{\eta}$, that is, $D^{\square}_{\overline{\eta}}(A)$ is the set of homomorphism $\eta:\Gamma_K \ra G(A)$ lifting $\overline{\eta}$.   Let $\fP_{\F}$ be the $G$-Kisin module associated to a $\F$-point $\overline{x}$ of $X^{\cris, \mu}_{\overline{\eta}}$ .
    
\begin{defn} \label{framedversions} Define $D^{[a,b]}_{\overline{x}}(A)$ to be the category of triples
$$
 \{\eta_A \in D^{\square}_{\overline{\eta}}(A) , \fP_A \in D^{[a,b]}_{\fP_{\F}}(A), \delta_A:T_{G,\fS_A}(\fP_A) \cong \eta_A|_{\Gamma_{\infty}} \}.
$$
Let $\widehat{\fP}_{\F}$ be a crystalline $\widehat{\Gamma}$-structure on $\fP_{\F}$ together with an isomorphism $\widehat{T}_{G, \F}(\widehat{\fP}_{\F}) \cong \overline{\eta}$. Define $D^{\text{cris}, \mu, \square}_{\widehat{\fP}_{\F}}(A)$ to be the category of triples
$$ 
\{\eta_A \in D^{\square}_{\overline{\eta}}(A) , \widehat{\fP}_A \in D^{\cris, \mu}_{\widehat{\fP}_{\F}}(A), \delta_A:\widehat{T}_{G,A}(\widehat{\fP}_A) \cong \eta_A \}.
$$
\end{defn}

\begin{prop} \label{hatfaithful} For any $\widehat{\fP}_{\F}$, the forgetful functor from $D^{\cris, \mu, \square}_{\widehat{\fP}_{\F}}$ to $D^{[a,b]}_{\overline{x}}$ is fully faithful.  
\end{prop}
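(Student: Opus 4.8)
The plan is to reduce the full-faithfulness of the forgetful functor $D^{\cris, \mu, \square}_{\widehat{\fP}_{\F}} \to D^{[a,b]}_{\overline{x}}$ to the full-faithfulness of $\widehat{\Delta}^{\mu}$ established in Theorem~\ref{keythm}, using that the $\widehat{\Gamma}$-structure is canonically determined by the underlying $G$-Kisin module in the crystalline minuscule setting. Concretely, an object of $D^{\cris, \mu, \square}_{\widehat{\fP}_{\F}}(A)$ is a triple $(\eta_A, \widehat{\fP}_A, \delta_A)$ with $\widehat{\fP}_A \in D^{\cris, \mu}_{\widehat{\fP}_{\F}}(A)$ and $\delta_A : \widehat{T}_{G,A}(\widehat{\fP}_A) \cong \eta_A$ as $\Gamma_K$-representations, while its image in $D^{[a,b]}_{\overline{x}}(A)$ is the triple $(\eta_A, \fP_A, \omega_{\Gamma_\infty}(\delta_A))$ where $\fP_A = \widehat{\Delta}^{\mu}(\widehat{\fP}_A)$ and we use the natural isomorphism $T_{G,\fS_A} \cong \omega_{\Gamma_\infty} \circ \widehat{T}_{G,A}$ recalled just before Theorem~\ref{GLiu}.

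For faithfulness: a morphism in $D^{\cris,\mu,\square}_{\widehat{\fP}_{\F}}$ is a pair of isomorphisms (of $\Gamma_K$-reps and of $(\phz,\widehat\Gamma)$-modules with $G$-structure) compatible with the $\delta$'s; its image forgets the $\widehat\Gamma$-equivariance. Since $\widehat{\Delta}^{\mu}$ is faithful (Theorem~\ref{keythm}) and the map on $\Gamma_K$-representations is literally retained in the target datum via the $\Gamma_\infty$-compatibility with $\delta_A$, any morphism in the source that dies in the target must already be the identity on both components. For fullness: given a morphism in $D^{[a,b]}_{\overline{x}}(A)$ between images of two objects $(\eta_A,\widehat{\fP}_A,\delta_A)$ and $(\eta_A',\widehat{\fP}_A',\delta_A')$, it consists of an isomorphism $\eta_A \cong \eta_A'$ of $\Gamma_K$-reps and an isomorphism $\psi:\fP_A \cong \fP_A'$ of underlying $G$-Kisin modules compatible with the $\Gamma_\infty$-structures. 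By the fullness half of Theorem~\ref{keythm}, $\psi$ automatically upgrades to an isomorphism $\widehat{\fP}_A \cong \widehat{\fP}_A'$ of crystalline $(\phz,\widehat\Gamma)$-modules with $G$-structure; one then checks this upgraded isomorphism is compatible with the given isomorphism of $\Gamma_K$-representations. This last compatibility holds because $\widehat{T}_{G,A}$ is a functor and the $\delta$'s are part of the data: the square relating $\widehat{T}_{G,A}(\widehat{\fP}_A) \cong \eta_A \cong \eta_A' \cong \widehat{T}_{G,A}(\widehat{\fP}_A')$ commutes after restriction to $\Gamma_\infty$ (by hypothesis on the target morphism), and since a $\Gamma_K$-equivariant map is determined by its restriction to $\Gamma_\infty$ on these modules—the Galois representations here are recovered from Frobenius-fixed points inside $\widetilde{\cM}_A$, on which the full $\Gamma_K$-action is already encoded—it commutes on the nose.

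The main obstacle I expect is precisely this last point: verifying that an isomorphism of underlying $G$-Kisin modules which is a priori only compatible with the $\Gamma_\infty$-representations is in fact compatible with the full $\Gamma_K$-representations once upgraded to a $(\phz,\widehat\Gamma)$-equivariant map. This requires knowing that the forgetful functor $\GRep_A(\Gamma_K) \to \GRep_A(\Gamma_\infty)$ is faithful on the image of $\widehat{T}_{G,A}$ — equivalently that a $\Gamma_K$-equivariant endomorphism of $\widehat{T}_{G,A}(\widehat{\fP}_A)$ is determined by its restriction to $\Gamma_\infty$. This follows from Proposition~\ref{hatT}(1): the isomorphism $\theta_A : T_{\fS_A}(\fM_A) \to \widehat{T}_A(\widehat{\fM}_A)$ is an $A[\Gamma_\infty]$-module isomorphism and is functorial in $\Mod^{\phz,\bh,\widehat{\Gamma}}_{\fS_A}$, so applying it through a faithful representation $V$ of $G$ identifies the $\Gamma_K$-representation $\widehat{T}_{G,A}(\widehat{\fP}_A)(V)$ with $T_{G,\fS_A}(\fP_A)(V)$ as $\Gamma_\infty$-modules compatibly with all the structure; full faithfulness of $\widehat{T}_{G,A}$ restricted to crystalline objects (Proposition~\ref{criscris}, via Theorem~\ref{GLiu}) then does the rest. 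Once this is in hand the proposition follows formally, the Tannakian bookkeeping being the only remaining routine verification.
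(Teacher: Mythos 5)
Your proof is correct and non-circular (Theorem~\ref{keythm} is established in \S4.2, before Proposition~\ref{hatfaithful}), but it is a genuinely different argument from the one in the paper. The paper does not route through Theorem~\ref{keythm} at all: it reduces to $\GL_n$ via a faithful representation of $G$ and then appeals directly to a general full-faithfulness statement for $(\phz,\widehat\Gamma)$-modules, namely Ozeki's result \cite[Cor.~4.3]{Ozeki} (extended from effective to bounded height), which says that a morphism $f$ of underlying Kisin modules such that $T_{\fS_A}(f)$ is $\Gamma_K$-equivariant under the identification $\theta_A$ is automatically a morphism of $(\phz,\widehat\Gamma)$-modules. That lemma needs neither the crystalline nor the minuscule hypothesis and gives Proposition~\ref{hatfaithful} for an arbitrary cocharacter $\mu$; the key input is a weak form of Liu's comparison isomorphism. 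Your route instead leans on the fullness of $\widehat\Delta^{\mu}$ from Theorem~\ref{keythm}, whose proof (via Lemma~\ref{conjugation} and the $G(u^{p^i})$ contraction argument) requires $\mu$ minuscule — so you get a less general statement than the one the paper records, though it suffices for everything downstream in \S4.3. Two small clean-ups would tighten your write-up: first, since $D^{\square}_{\overline\eta}$ is a set-valued (framed) functor, morphisms in $D^{[a,b]}_{\overline{x}}(A)$ and $D^{\cris,\mu,\square}_{\widehat{\fP}_{\F}}(A)$ force $\eta_A=\eta_A'$ on the nose, so there is no ``isomorphism of $\Gamma_K$-reps'' datum floating around; second, the compatibility-with-framings step does not need Proposition~\ref{criscris} or Theorem~\ref{GLiu} — it follows immediately from the functoriality of $\theta_A$ in Proposition~\ref{hatT}(1) together with the trivial fact that two $\Gamma_K$-equivariant maps agreeing as $A$-module maps are equal.
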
 
\begin{proof}
One reduces immediately to the case of $\GL_n$ and then we have the following more general fact: Choose any $\widehat{\fM}'_A, \widehat{\fM}_A \in \Mod^{\phz, \bh, \widehat{\Gamma}}_{\fS_A}$.  Let $f:\fM'_A \ra \fM_A$ be a map of underlying Kisin modules such that $T_{\fS_A}(f)$ is $\Gamma_K$-equivariant $($under the identification of $\widehat{T}_{\fS_A} \cong T_{\fS_A})$. Then, $f$ is a map of $(\phz, \widehat{\Gamma})$-modules.  This is proven in \cite[Corollary 4.3]{Ozeki} when height is in $[0,h]$ but can be easily extended to bounded height.  The key input is a weak form of Liu's comparison isomorphism (\cite[3.2.1]{Liu07} which can be found in \cite[Proposition 9.2.1]{LevinThesis}. 
\end{proof} 

The diagram below illustrates some of the relationships between the different deformation problems.  The diagonal maps on the left and the map labeled sm are formally smooth.  Maps labeled with $c \sim$ indicate that the complete stalk at a point of the target represents that deformation functor.   The horizontal equivalences are consequences of the Theorem \ref{keythm} and  the proof of Theorem \ref{main}  respectively. 
    
\begin{equation} \label{bigdiagram}
\xymatrix{
& \widetilde{D}^{(\infty),\mu}_{\fP_{\F}} \ar[dl]_{\pi^{\mu}} \ar[dr]^{\Psi^{\mu}} & & D^{\cris, \mu, \square}_{\widehat{\fP}_{\F}}  \ar[d]^{sm} \ar@{^{(}->}[dr] \ar[r]^{\sim} & D^{\cris, \mu}_{\overline{x}} \ar[r]^{c \sim}  \ar@{^{(}->}[d]  & X^{\cris, \mu}_{\overline{\eta}} \ar@{^{(}->}[d] \\
\overline{D}^{\mu}_{Q_{\F}}& & D^{\mu}_{\fP_{\F}} & D^{\cris, \mu}_{\widehat{\fP}_{\F}} \ar[l]_{\sim}  & D_{\overline{x}}^{[a,b]} \ar[r]^{c \sim}  & X^{[a,b]}_{\overline{\eta}}
}
\end{equation}

\begin{proof}[Proof of Theorem \ref{main}]  Let $\overline{x}$ be a point of the special fiber of $X^{\cris,  \mu}_{\overline{\eta}}$ defined over a finite field $\F'$.  Since $X^{\cris, \mu}_{\overline{\eta}}[1/p] = \Spec R^{\cris, \mu}_{\overline{\eta}}[1/p]$ is formally smooth over $F$ (\cite[Proposition 5.1.5]{Balaji}), it suffices to show that the completed stalk $\widehat{\cO}^{\mu}_{\overline{x}}$ at $\overline{x}$ is normal and that $\widehat{\cO}^{\mu}_{\overline{x}} \otimes_{\La_{[\mu]}} \F_{[\mu]}$ is reduced.  To accomplish this, we compare $\widehat{\cO}^{\mu}_{\overline{x}}$ with $\overline{D}^{\mu}_{Q_{\F'}}$ from \S 3.3 and then use as input the corresponding results for the local model $M(\mu)$.

These properties can be checked after an \'etale extension of $\La_{[\mu]}$.  $R^{\cris, \mu}_{\overline{\eta}}$ commutes with changing coefficients using the abstract criterion in \cite[Proposition 1.4.3.6]{CMbook} as does the formation of $X^{\cris, \mu}_{\overline{\eta}}$ by Proposition \ref{fhfunctoriality}.  We can assume then, without loss of generality, that $\La = \La_{[\mu]}$ and $\F' = \F$.  Let $\fP_{\F}$ be the $G$-Kisin module defined by $\overline{x}$. Since $\mu$ is minuscule, $X^{\cris, \mu}_{\overline{\eta}} = X^{\cris, \leq \mu}_{\overline{\eta}}$ (see Corollary \ref{closedschubert}).  

Since $\widehat{\cO}^{\mu}_{\overline{x}}$ is non-empty and $\La$-flat (assuming that $R^{\cris, \mu}_{\overline{\eta}}$ is non-empty), it has an $F'$-point for some finite extension $F'/F$.  Any such point gives rise to a crystalline lift $\rho$ of $\overline{x}$ to $\cO_{F'}$ such that the unique Kisin lattice in $\underline{M}_{G, \cO_{F'}}(\rho)$ reduces to $\fP_{\F} \otimes_{\F} \F'$.  Replace $\F'$ by $\F$.  Then by Proposition \ref{criscris}, the corresponding $G(\cO_{F'})$-valued representation is isomorphic to $\widehat{T}_{G, \cO_{F'}}(\widehat{\fP}_{\cO_{F'}})$ for some crystalline $(\phz, \widehat{\Gamma})$-module with $G$-structure.  Reducing modulo the maximal ideal, we obtain a crystalline $\widehat{\Gamma}$-structure $\widehat{\fP}_{\F}$ on $\fP_{\F}$.   By Proposition \ref{hatfaithful}, this is the unique such structure.  

Recall the deformation problem $D^{\cris, \mu}_{\overline{x}}$ from Corollary \ref{cristypes} and $D^{[a,b]}_{\overline{x}}$ from Definition \ref{framedversions}.  The natural map 
$$
D^{\cris, \mu}_{\overline{x}} \ra D^{[a,b]}_{\overline{x}}
$$
is a closed immersion (by Theorem \ref{stlocus}). By Corollary \ref{cristypes}, $\Spf \cO^{\mu}_{\overline{x}}$ is closed in $D^{\cris, \mu}_{\overline{x}}$. 

Fix the isomorphism $\beta_{\F}:\widehat{T}_{G, \F}(\widehat{\fP}_{\F}) \cong \overline{\eta}$. Consider the groupoid $D^{\text{cris}, \mu, \square}_{\widehat{\fP}_{\F}}$ from Definition \ref{framedversions}. There is a natural morphism then from $D^{\text{cris}, \mu, \square}_{\widehat{\fP}_{\F}}$ to $D^{[a,b]}_{\overline{x}}$ given by forgetting the $\widehat{\Gamma}$-structure.  By Proposition \ref{hatfaithful}, this morphism is fully faithful, hence a closed immersion by considering tangent spaces.

We claim that $D^{\text{cris}, \mu, \square}_{\widehat{\fP}_{\F}} =  \Spf \cO^{\mu}_{\overline{x}}$ as closed subfunctors of $D^{[a,b]}_{\overline{x}}$.  Since they are both representable, we look at their $F'$-points for any finite extension $F'$ of $F$.  By Theorem \ref{GLiu} and Corollary \ref{cristypes},
$$
D^{\text{cris}, \mu, \square}_{\widehat{\fP}_{\F}}(F') =  D^{\cris, \mu}_{\overline{x}}(F') = \Spf \cO^{\mu}_{\overline{x}}(F').
$$
Since $\cO^{\mu}_{\overline{x}}$ is $\La$-flat and $\cO^{\mu}_{\overline{x}}[1/p]$ is formally smooth over $F$, we deduce that  $\Spf \cO^{\mu}_{\overline{x}} \subset D^{\text{cris}, \mu, \square}_{\widehat{\fP}_{\F}}$.  

Finally, $D^{\text{cris}, \mu, \square}_{\widehat{\fP}_{\F}}$ is formally smooth over $D^{\mu}_{\fP_{\F}}$ by Theorem \ref{keythm}.  By  \ref{musmoothmod}, there exists a diagram
$$
\xymatrix{
& \Spf S^{\mu} \ar[dl]  \ar[dr] & \\
D^{\text{cris}, \mu, \square}_{\widehat{\fP}_{\F}}& & \overline{D}^{\mu}_{Q_{\F}}, \\
}
$$
where $S^{\mu} \in \widehat{\cC}_{\La}$ and both morphisms are formally smooth ($Q_{\F}$ is as in \S 3.2).  The functor $ \overline{D}^{\mu}_{Q_{\F}}$ is represented by a completed stalk $R^{\mu}_{Q_{\F}}$ on $M(\mu)$.  In particular,  $R^{\mu}_{Q_{\F}}$ is $\La$-flat so the same is true of $D^{\text{cris}, \mu, \square}_{\widehat{\fP}_{\F}}$. Thus, $D^{\text{cris}, \mu, \square}_{\widehat{\fP_{\F}}} =  \Spf \cO^{\mu}_{\overline{x}}$. By Theorem \ref{locmodels}, $R^{\mu}_{Q_{\F}}$ is normal, Cohen-Macaulay and $R^{\mu}_{Q_{\F}} \otimes_{\La} \F$ is reduced so the same is true for $\widehat{\cO}^{\mu}_x$.
\end{proof}

\begin{thm} \label{connectedunramified} Assume $K/\Qp$ is unramified, $p > 3$, and $p \nmid \pi_1(G^{\mathrm{ad}})$.  Then the universal crystalline deformation ring $R^{\cris, \mu}_{\overline{\eta}}$ is formally smooth over $\La_{[\mu]}$. 
\end{thm}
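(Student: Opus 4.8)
The plan is to bootstrap from Theorem \ref{main}. That theorem, via diagram (\ref{musmoothmod}) and (\ref{bigdiagram}), already identifies each completed local ring of $X^{\cris,\mu}_{\overline{\eta}}$, up to formally smooth morphisms, with a completed local ring of the generalized local model $M(\mu)$ for the Weil‑restricted group $\Res_{(K\otimes_{\Qp}F)/F}G_F$ with maximal special parahoric level. So it suffices to (a) upgrade ``normal with reduced special fibre'' to genuine \emph{smoothness} of $M(\mu)$ over $\La_{[\mu]}$ in the unramified case, thereby getting smoothness of $X^{\cris,\mu}_{\overline{\eta}}$, and (b) show that the projective morphism $\Theta\colon X^{\cris,\mu}_{\overline{\eta}}\to\Spec R^{\cris,\mu}_{\overline{\eta}}$ is an isomorphism, so that this smoothness descends to $R^{\cris,\mu}_{\overline{\eta}}$.

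\textbf{Reduction to the adjoint group, and smoothness of the local model.} First I would reduce to the case $G=G^{\mathrm{ad}}$. Using the central isogeny $G\to G^{\mathrm{ad}}$ together with $G\to G/G^{\der}$ (a torus, whose crystalline deformation ring of fixed type is formally smooth), the $G$‑valued crystalline deformation problem of type $\mu$ is controlled, compatibly with $\mu$ and its reflex field, by the $G^{\mathrm{ad}}$‑ and torus‑valued ones; the hypothesis $p\nmid\pi_1(G^{\mathrm{ad}})$ makes the relevant isogeny cohomology (the obstruction and ambiguity in lifting $\overline{\eta}$ and $\mu$ along $G\to G^{\mathrm{ad}}$) vanish, so formal smoothness descends. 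One is thus reduced to $G=G^{\mathrm{ad}}$, where $G^{\der}=G$ and the hypothesis becomes $p\nmid\pi_1(G^{\der})$, exactly the standing assumption of Theorems \ref{main} and \ref{locmodels}. Now since $K/\Qp$ is unramified, $\cO_K\otimes_{\Zp}\La$ is étale over $\La$, so the parahoric $\mathcal{G}:=\Res_{(\cO_K\otimes_{\Zp}\La)/\La}G$ is a reductive group scheme over $\La$ (hyperspecial level). For hyperspecial level and $\mu$ minuscule, $M(\mu)$ is smooth over $\La_{[\mu]}$: by Proposition \ref{closedschubert} the generic fibre affine Schubert variety is the flag variety $\mathcal{G}_F/\mathcal{P}(\mu)$, the special fibre of $M(\mu)$ is the minuscule affine Schubert variety in the affine Grassmannian of the (reductive, by étaleness of $k_0/\Fp$) special fibre group, again a flag variety of the same dimension, and the structural results of \cite{LevinThesis, LevinLM} on the Weil‑restricted local model identify $M(\mu)$ itself with $\mathcal{G}/\mathcal{P}(\mu)$, hence smooth and projective. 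Transporting this through the formally smooth morphisms of (\ref{musmoothmod}) shows every $\widehat{\cO}^{\mu}_{\overline{x}}$ is formally smooth over $\La_{[\mu]}$, so $X^{\cris,\mu}_{\overline{\eta}}$ is smooth over $\La_{[\mu]}$.

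\textbf{$\Theta$ is an isomorphism.} Taking the faithful representation to be $V=\Lie G$ (faithful since $G=G^{\mathrm{ad}}$), minusculity forces the height bound $[a,b]=[-1,1]$, so $b-a=2<p-1$ precisely because $p>3$; combined with $e=1$ (as $K/\Qp$ is unramified) this puts us in the Fontaine--Laffaille range. In this range the $G$‑Kisin lattice attached to a crystalline $G$‑representation of $p$‑adic Hodge type $\mu$ is unique over every coefficient ring in $\widehat{\cC}_{\La_{[\mu]}}$ --- over $\La$‑flat coefficients this is Proposition \ref{uniqueness}, and the new input is that for $e(b-a)<p-1$ the same uniqueness holds over Artinian (torsion) coefficients, so the fibres of $\Theta$ are singletons. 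Hence $\Theta$ is quasi‑finite, therefore finite (being projective) and a monomorphism; since $\Theta[1/p]$ is an isomorphism onto $\Spec R^{\cris,\mu}_{\overline{\eta}}[1/p]$ and both $X^{\cris,\mu}_{\overline{\eta}}$ and $R^{\cris,\mu}_{\overline{\eta}}$ are $\La_{[\mu]}$‑flat, the kernel of $R^{\cris,\mu}_{\overline{\eta}}\twoheadrightarrow\Theta_*\cO_{X^{\cris,\mu}_{\overline{\eta}}}$ is $p$‑power torsion, hence zero, so $\Theta$ is an isomorphism. Therefore $R^{\cris,\mu}_{\overline{\eta}}$ is formally smooth over $\La_{[\mu]}$; being regular it is a domain, so $\Spec R^{\cris,\mu}_{\overline{\eta}}[1/p]$ is irreducible, in particular connected (alternatively, the closed fibre $X^{\cris,\mu}_{\overline{\eta},0}$ is a single point and Corollary \ref{connectedcomponents} applies).

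\textbf{Main obstacle.} The crux is step (a): knowing that the Weil‑restricted generalized local model $M(\mu)$ at hyperspecial level and minuscule $\mu$ is genuinely \emph{smooth} --- equivalently, that it equals the partial flag variety $\mathcal{G}/\mathcal{P}(\mu)$ --- rather than merely normal with reduced special fibre. This is not formal from ``both fibres smooth of the same dimension'' and relies on the fine structure of the Pappas--Zhu‑type construction for $\Res_{L/F}H$ from \cite{LevinThesis, LevinLM}, in particular the correct identification of the degeneration's special fibre with the affine Grassmannian of the special‑fibre group and the propagation of smoothness to the total space. A secondary difficulty is making the reduction to $G^{\mathrm{ad}}$ fully rigorous in the $G$‑valued crystalline setting and pinning down where exactly $p\nmid\pi_1(G^{\mathrm{ad}})$ (as opposed to $\pi_1(G^{\der})$) and $p>3$ are forced, together with the torsion‑coefficient uniqueness of $G$‑Kisin lattices in the Fontaine--Laffaille range underlying step (b).
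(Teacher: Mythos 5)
Your proposal takes a genuinely different route from the paper, and the difference is exactly where the gap lies: you attempt to replace $G$ by $G^{\mathrm{ad}}$ at the outset, deducing formal smoothness of the $G$-valued crystalline deformation ring from that of the $G^{\mathrm{ad}}$- and torus-valued ones via the central isogeny $G \ra G^{\ad}\times G/G^{\der}$. The paper never performs this reduction, and for good reason: relating $G$-valued crystalline deformation rings across a central isogeny requires controlling both liftability of representations (an $\mathrm{H}^2$ obstruction involving the kernel) and how the $p$-adic Hodge type $\mu$ and reflex field transform, and then one must argue that formal smoothness descends back to $R^{\cris,\mu}_{\overline{\eta}}$ from the product of the two auxiliary rings. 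You gesture at this with ``the relevant isogeny cohomology vanishes,'' but that is precisely the missing content, and $p\nmid\pi_1(G^{\ad})$ alone does not obviously supply it over arbitrary coefficient rings in $\widehat{\cC}_{\La}$. Moreover, the step where you conclude ``$V=\Lie G$ is faithful'' only works after the (unjustified) reduction; for general $G$ with nontrivial center, $\Lie G$ is not a faithful representation, so the Fontaine--Laffaille-style uniqueness argument does not apply directly to the full $G$-Kisin module.

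What the paper does instead is keep $G$ throughout. For smoothness of $X^{\cris,\mu}_{\overline{\eta}}$ it uses that in the unramified case $M(\mu)=\prod_{\psi}S(\mu_\psi)$ is a product of flag varieties, needing only $p\nmid\pi_1(G^{\der})$ via \cite{TwistedAG}. To show $\Theta$ is a closed immersion, it does not reduce to $G^{\ad}$ but rather \emph{pushes forward} along $\Ad\colon G\ra G^{\ad}\ia\GL(\Lie G)$: given two $G$-Kisin lattices $\fP_B,\fP'_B$ in the same $\Gamma_\infty$-representation, it applies Liu's uniqueness \cite[Theorem 2.4.2]{Liu07} to the adjoint pushforward $\fP_B(V^{\ad})$ (height in $[-1,1]$, $e=1$, $p>3$), concludes the two lattices differ by an element $z$ of the center $Z=\ker(G\ra G^{\ad})$, and then shows by an explicit leading-term computation in the cocharacter lattice of $Z'=G/G^{\der}$ that $z$ is integral. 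The hypothesis $p\nmid\pi_1(G^{\ad})$ enters exactly here, to make $Z$ (equivalently its finite part $Z_{\tors}$) smooth/étale so that one can lift trivializations and split $Z=Z_{\tors}\times\Gm^s$. Your approach, if the $G^{\ad}$-reduction could be made rigorous, would indeed be cleaner and would trivialize this second half; but as written that reduction is the hole, and without it your step (b) does not have a faithful minuscule representation to feed into the uniqueness lemma.
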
 
\begin{proof} First, replace $\La$ by $\La_{[\mu]}$.  Without loss of generality, we can assume that $F$ contains all embeddings of $K$ since this can be arranged by a finite \'etale base change.  When $K/\Qp$ is unramified, $\Gr_G^{E(u), W}$ is a product of $[K:\Qp]$ copies of the affine Grassmanian $\Gr_G$ (see \cite[Proposition 10.1.11]{LevinThesis}). If $\mu = (\mu_{\psi})_{\psi:K \ra F}$, then $M(\mu)_F = \prod_{\psi} S(\mu_{\psi})$ where $S(\mu_{\psi})$ are affine Schubert varieties of $\Gr_{G_F}$. Under the assumption that $p \nmid \pi_1(G^{\mathrm{der}})$, there is a flat closed $\La$-subscheme of $\Gr_G$ which abusing notation we denote by $S(\mu_{\psi})$, whose fibers are the affine Schubert varieties for $\mu_{\psi}$ (see Theorem 8.4 in \cite{TwistedAG}, especially the discussions in \S 8.e.3 and 8.e.4). Thus,
$$
M(\mu) = \prod_{\psi:K \ra F} S(\mu_{\psi}).
$$
Since $\mu_{\psi}$ is minuscule, $S(\mu_{\psi})$ is isomorphic to a flag variety for $G$ hence $M(\mu)$ is smooth (see Proposition \ref{closedschubert}). The proof of Theorem \ref{main} shows that the local structure of $X^{\cris, \mu}_{\overline{\eta}}$ is smoothly equivalent to the local structure of $M(\mu)$. Thus, $X^{\cris, \mu}_{\overline{\eta}}$ is formally smooth over $\La$.

Finally, we have to show that
$$
\Theta:X^{\cris, \mu}_{\overline{\eta}} \ra \Spec R^{\cris, \mu}_{\overline{\eta}}
$$
is an isomorphism. Since $\Theta[1/p]$ is an isomorphism and $R^{\cris, \mu}_{\overline{\eta}}$ is $\La$-flat, it suffices to show that $\Theta$ is a closed immersion. Let $m_R$ be the maximal ideal of $R^{\cris, \mu}_{\overline{\eta}}$.  Consider the reductions
$$
\Theta_n:X^{\cris, \mu}_{\overline{\eta}, n} \ra \Spec R^{\cris, \mu}_{\overline{\eta}}/m_R^n.
$$
We appeal to an analogue of Raynaud's uniqueness result for finite flat models (\cite[Theorem 3.3.3]{Raynaud}). For any Artin local $\Zp$-algebra $A$ and any finite $A$-algebra $B$, let $\fP_B$ and $\fP_B'$ be two distinct points in the fiber of $\Theta_n$ over $x:R^{\cris, \mu}_{\overline{\eta}} \ra A$, i.e., $G$-Kisin lattices in $P_{x} \otimes_A B$.  Let $V^{\ad}$ denote the adjoint representation of $G$. Under the assumption that $p >3$,  \cite[Theorem 2.4.2]{Liu07} (which generalizes Raynaud's result) implies that $\fP_A(V^{\ad}) = \fP_A'(V^{\ad})$ as Kisin lattices in $(P_{x} \otimes_A B)(V^{\ad})$ using that $\mu$ is minuscule. 

Since $B$ is Artinian, without loss of generality we can assume it is local ring. Choose a trivialization of $\fP_B$. There exists $g \in G(\cO_{\cE, B})$ such that $\fP_{B}' = g.\fP_B$ (working inside the affine Grassmanian as in Theorem \ref{GKisinproj}).  The results above implies that $\Ad(g) \in G^{\ad}(\fS_A)$.  By assumption,  $Z := \ker(G \ra G^{\ad})$ is \'etale so after possibly extending the residue field $\F$ we can lift $\Ad(g)$ to an element $\widetilde{g} \in G(\fS_A)$  so that $g = \widetilde{g} z$ where $z \in Z(\cO_{\cE, A})$.  We want to show that $z \in Z(\fS_A)$. We can write $Z$ as a product $Z_{\tors} \times (\Gm)^s$ for some $s \geq 0$.  Since $Z_{\tors}$ has order prime to $p$ by assumption, $Z_{\tors}(\cO_{\cE, A}) = Z_{\tors}(\fS_A)$ so we can assume $$z \in (\Gm(\cO_{\cE, A}))^s = ((A \otimes_{\Zp} W)((u))^{\times})^s. $$  

For any embedding $\psi:W \ra \cO_F$, we associate to $z$ the $s$-tuple $\la_{\psi}$ of integers of the degrees of the leading terms of each component base changed by $\psi$. To show  that $\la_{\psi} = 0$ we can work over $A/m_A = \F$. We think of $\la_{\psi}$ as a cocharacter of $Z$.   Consider the quotient  of $G$ by its derived group $Z' := G/G^{\der}$.  The map $X_*(Z) \ra X_*(Z')$ is injective.   Any character $\chi$ of $Z'$ defines a one-dimensional representation $L_{\chi}$ of $G$ so in particular, we can consider $\fP_B(L_{\chi})$ and $\fP_B'(L_{\chi})$ as Kisin lattices in $P_x(L_{\chi})$. Writing $\fS_{\F} \cong \oplus_{\psi:W \ra \cO_F} \F[[u_{\psi}]]$, a Kisin lattice of $P_x(L_{\chi})$ has type $(h_{\psi})$ exactly when $\phi_{P_x}(e) = (a_{\psi} u^{h_{\psi}})e$ for a basis element $e$ and $a_{\psi} \in \F$.  Since both $\fP_B$ and $\fP_{B}'$ have type $\mu$, $\fP_B(L_{\chi})$ and $\fP_B'(L_{\chi})$ both have type $h_{\psi} := \langle \chi, \mu_{\psi} \rangle$.  However, a direct computation shows that $\fP_B'(L_{\chi})$ has type $h_{\psi} + \langle \chi, p \la_{\psi'} - \la_{\psi} \rangle$ where $\psi' = \phz \circ \psi$.  Thus, $\la_{\psi} = p \la_{\psi'}$.  We deduce that $p^{[K:\Qp]} \la_{\psi} = \la_{\psi}$ and so $\la_{\psi} = 0$.   

We are reduced to the following general situation: $X \ra \Spec A$ is proper morphism which is injective on $B$-points for all $A$-finite algebras $B$ where $A$ is a local Artinian ring.  By consideration of the one geometric fiber, $X \ra \Spec A$ is quasi-finite, hence finite.  Thus, $X = \Spec A'$. By Nakayama, it suffices to show $A/m_A \ra A'/(m_A) A'$ is surjective so we can assume $A = k$ is a field.  Surjectivity follows from considering the two morphisms $A' \rightrightarrows A' \otimes_{k} A'$ which agree by injectivity of $X \ra \Spec A$ on $A$-finite points.         
\end{proof}

\end{document}